\documentclass[oneside,english,reqno]{amsart}

\makeatletter

\usepackage[top=2cm,bottom=2cm,left=3cm,right=3cm]{geometry}
\usepackage{graphicx} 
\usepackage{subcaption}
\usepackage{blindtext}
\usepackage[utf8]{inputenc} 
\usepackage{amsmath}
\usepackage{amsfonts}
\usepackage{mathtools}
\usepackage{graphicx}
\usepackage{listings}
\usepackage{epigraph}
\usepackage{color}
\usepackage{textcomp}
\usepackage{gensymb}
\usepackage{wrapfig}
\usepackage[OT2, T1]{fontenc}

\usepackage{rotating}
\usepackage[normalem]{ulem}
\useunder{\uline}{\ul}{}
\usepackage{float}
\usepackage{booktabs}
\usepackage{xspace}
\usepackage{csquotes}
\usepackage{mathrsfs}
\usepackage{stmaryrd}
\usepackage[english]{babel}
\usepackage{amsthm}
\usepackage{ragged2e}
\usepackage{dsfont}
\usepackage{geometry}
\usepackage{tabularx}
\usepackage{indentfirst}
\usepackage{caption}
\usepackage{eso-pic}
\usepackage{url}
\usepackage{afterpage}
\usepackage{parskip}
\usepackage{listings}
\usepackage{fancyhdr}
\usepackage{textcomp}
\usepackage{multirow}
\usepackage[utf8]{inputenc}
\usepackage{setspace}
\usepackage[safe]{tipa}
\usepackage{subcaption}
\usepackage[export]{adjustbox}
\usepackage{wrapfig}
\usepackage{array}
\usepackage[table]{xcolor}
\usepackage{tikz-cd}
\usepackage{pdfpages}
\usepackage{comment}
\usepackage{todonotes}
\newtheorem{theoremA}{Theorem}

\newtheorem{theoremB}{Theorem}

\newtheorem{theoremC}{Theorem}

\newtheorem{theoremD}{Theorem}

\DeclareSymbolFont{cyrletters}{OT2}{wncyr}{m}{n}
\DeclareMathSymbol{\Sha}{\mathalpha}{cyrletters}{"58}
\DeclareMathSymbol{\Yu}{\mathalpha}{cyrletters}{"10}

\newcommand{\dd}{\mathrm{d}}
\newcommand{\Mbar}{\overline{\mathcal{M}}}

\newcommand{\M}{\mathcal{M}}
\newcommand{\Mct}{\mathcal{M}^{\textnormal{ct}}}

\newcommand{\Z}{\mathcal{Z}}

\newcommand{\1}{\mathds{1}}

\newcommand{\st}{such that\xspace}
\newcommand{\Id}{\textnormal{Id}}

\newcommand{\wrt}{with respect to\xspace}

\newcommand{\End}{\textnormal{End}}
\newcommand{\Hom}{\textnormal{Hom}}

\newtheorem{theorem}{Theorem}[section]

\newtheorem{proposition}[theorem]{Proposition}
\newtheorem{lemma}[theorem]{Lemma}
\newtheorem{corollary}[theorem]{Corollary}

\newtheorem{definition}[theorem]{Definition}
\newtheorem{remark}[theorem]{Remark}

\makeatletter
\newcommand{\vast}{\bBigg@{3}}
\newcommand{\Vast}{\bBigg@{4}}
\makeatother

\date{}

\makeatother

\PassOptionsToPackage{table}{xcolor}

\usepackage[colorlinks,allcolors={blue}, hidelinks]{hyperref}

\begin{document}
\global\long\def\DR#1#2{{\rm DR}_{#1}\left(#2\right)}%
\global\long\def\DRR#1#2{{\rm DR}_{#1}^{1}\left(#2\right)}%

\title{Reconstruction of F-cohomological field theories on moduli of compact type}

\author{Ga{\"e}tan Borot}
\address{G.~Borot:\newline Humboldt-Universit{\"a}t zu Berlin, Institut f{\"u}r Mathematik und Institut f{\"u}r Physik, Unter den Linden 6, 10099 Berlin, Germany.}%
\email{gaetan.borot@hu-berlin.de}

\author{Silvia Ragni}
\address{S.~Ragni:\newline Humboldt-Universit{\"a}t zu Berlin, Institut f{\"u}r Mathematik, Unter den Linden 6, 10099 Berlin, Germany.}
\email{silvia.ragni.1@hu-berlin.de}

\author{Paolo Rossi}
\address{P.~Rossi:\newline Dipartimento di Matematica "Tullio Levi-Civita". 
Universit\`a degli Studi di Padova, 
Via Trieste 63,
35121 Padova, Italy.}
\email{paolo.rossi@math.unipd.it}

\begin{abstract}
We prove an analogue of Givental--Teleman reconstruction for F-cohomological field theories on the moduli space of compact type. We apply it to reconstruct the restriction of the extended $r$-spin classes to the extended direction and deduce relations between $\kappa$-classes (both in compact type).
\end{abstract}

\maketitle
\tableofcontents{}

\newpage

\section{{\Large Introduction}}
\label{Sec1}
\medskip

Cohomological field theories (CohFTs) were first introduced in \cite{kontsevich_gromov-witten_1994} to formalise the universal properties of Gromov--Witten theory. They are families of cohomology classes on the moduli spaces of stable, marked curves, compatible with their boundary stratification and equivariant with respect to permutations of the marked points, whose degree $0$ part form a two-dimensional topological field theory (TFT) --- equivalently, a Frobenius algebra \cite{abrams_two-dimensional_1996}. Their top degree part (resp. intersection with polynomials in $\psi$-classes) is encoded in a generating series called potential (respectively, ancestor potential), whose genus $0$ part encodes the structure of a Frobenius manifold \cite{dubrovin_geometry_1996}: an analytic manifold whose tangent space at each point is a Frobenius algebra, together with certain integrability conditions for such algebra bundles. In \cite{givental_semisimple_2001}, Givental introduced a group acting on ancestor potentials and 
used it to conjecturally reconstruct them from the Frobenius manifold in semi-simple cases.  The fact that this action preserves the property of being the ancestor potential of a CohFT was later justified in \cite{Lee1,Lee2,FSZ}. In \cite{teleman_structure_2012}, Teleman lifted this action
to the level of CohFTs\footnote{The history of this action is convoluted. It seems to have been discovered independently by Kazarian and by Kontsevich, but first appears in print in \cite{teleman_structure_2012}.} and proved Givental's conjecture. Thanks to Givental--Teleman theory, CohFTs have turned very useful in algebraic geometry, \textit{e.g.} to compute Gromov--Witten invariants or to find relations in the tautological ring \cite{pandharipande_relations_2015}. They also have been used in combination with the geometry of the double ramification cycle to construct integrable hamiltonian hierarchies \cite{BuryakDR,buryak_new_2017}.

F-cohomological field theories (F-CohFTs) are variants of CohFTs where we only require compatibility with the boundary stratification of the moduli of stable curves with compact Jacobian (i.e. stable curves whose dual graph is a tree), and reduce the permutation invariance to single out one special marked point. They were introduced in \cite{buryak_extended_2021}, although a precursor example (the notion of partial CohFT) can be found in Fan--Jarvis--Ruan--Witten theories \cite{BCFG}. Combined with the double ramification cycle they give rise to non-hamiltonian integrable hierarchies \cite{ABLRint}. The degree $0$ part of a F-CohFT is not a Frobenius algebra anymore, but a F-TFT, that is a commutative associative algebra equipped with a distinguished vector $\alpha$ corresponding to the degree $0$ part of the F-CohFT on  the moduli of pointed elliptic curves. The genus $0$ part of the potential of a F-CohFT determines a flat F-manifold. This is a weaker version of Frobenius manifold, in particular lacking a flat metric compatible with the product, and dating back to \cite{hertling_weak_1999,manin_f_2005,LPR09}. Flat F-manifolds also appear in open Gromov--Witten theory \cite{Basa}. The authors of \cite{arsie_semisimple_2023} introduced a variant of the Givental group, called \emph{F-Givental group} in \cite{borot_symmetries_2024}, acting on F-CohFTs. They proved that its action is transitive on flat F-manifold potentials in the semi-simple case; moreover, they showed that, given a semi-simple flat F-manifold and choice of $\alpha$, it is possible to produce a F-CohFT whose associated flat F-manifold is the original one.

In fact, in higher genus the action of the F-Givental group on F-CohFTs is far from being transitive and the reconstruction fails in general. For instance, the shifted extended $2$-spin F-CohFT of \cite{buryak_extended_2021} gives rise to a semi-simple flat F-manifold, but the F-CohFT constructed from the latter in \cite{arsie_semisimple_2023} does not agree with the shifted extended $2$-spin classes.

The goal of the paper is to show that the lack of transitivity of the F-Givental group action can be repaired and a complete analogue of the Givental--Teleman theory in the F-world exists, provided one works in restriction to the moduli of compact type. This restriction is denoted ${}_{|\textnormal{ct}}$. On the F-CohFT side the key assumption is the invertibility of $\alpha$ (for CohFTs this assumption is equivalent to semi-simplicity).

\begin{theoremA}
\label{thm:transfree}
    The F-Givental group acts freely and transitively on the set of invertible compact-type F-CohFTs with given underlying F-TFT: if $\Omega$ is an invertible (compact-type) F-CohFT on a vector space $V$ and $\omega$ is the underlying F-TFT, then there exist unique $R \in \textnormal{End}(V)\llbracket z \rrbracket$ and $T \in z^2V\llbracket z\rrbracket$ \st 
\[
(RT\omega){}_{|\textnormal{ct}}=\Omega_{|\textnormal{ct}}.
\] 
Furthermore, if the unit $\1$ is flat, then $T(z) = z(\1 - R^{-1}(z)[\1])$. 
\end{theoremA}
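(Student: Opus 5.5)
Our plan is to follow Teleman's strategy, simplified by the absence of non-tree strata. First, we show that on the moduli of compact type an invertible F-CohFT is determined by its genus $0$ part together with the genus $1$, one-point data. The key input is that $\M^{\textnormal{ct}}_{g,n}$ is built from trees of genus-$0$ and genus-$1$ pieces modulo tautological relations. Concretely, we use the cohomological dimension bound $\textnormal{deg} \le 2g-3+n$ on $\Mct_{g,n}$ (Looijenga/Graber--Vakil type vanishing). The $\lambda_g$-pairing $H^*(\Mct_{g,n})\times\lambda_g\to H^*(\Mbar_{g,n})$ then reduces higher genus to $\lambda_g$-evaluations, where Getzler's/Ionel's relations apply.

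A cleaner route, closer to Teleman, is as follows. Invertibility of $\alpha$ lets us define the operator of gluing on a genus-one tail, $\phi\colon v\mapsto \Omega_{1,2}(v,\cdot)$ restricted to the boundary divisor. We then use that $\Omega_{g,n}$ restricted to the locus of curves with $g$ elliptic tails attached to a rational component equals $\omega_{0,n+g}(\ldots,\alpha,\ldots,\alpha)$, which is invertible data. We would proceed in three steps.

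Step 1 (genus $0$). We use the result of \cite{arsie_semisimple_2023}, assumed earlier in the paper, that the F-Givental group acts transitively on genus-$0$ F-CohFTs with given F-TFT. Here we must check that invertibility of $\alpha$ gives enough semisimplicity; otherwise we construct $R$ order by order in $z$, solving the genus-$0$ equation $[R_{k+1},\cdot]$-type recursion. Uniqueness of $R$ at genus $0$ holds only modulo the stabiliser, so we fix the remaining freedom in $R$ and $T$ using genus $1$, $n=1$. The translation $T\in z^2V\llbracket z\rrbracket$ is determined by matching $\Omega_{1,1}$ and the $\kappa$/$\psi$ parts, and invertibility of $\alpha$ makes this matching solvable uniquely.

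Step 2 (higher genus, compact type). Set $\Omega' = R^{-1}T^{-1}\Omega$; it agrees with $\omega$ in genus $0$ and on $\Mct_{1,n}$. We show by induction on $(g,n)$ that $\Omega'_{g,n}$ restricted to compact type is of degree $0$. Following Teleman, positive-degree classes on $\Mct_{g,n}$ are detected by restriction to boundary divisors together with pushforward along forgetful maps. Since $\Mct_{g,n}$ has no $\delta_{\textnormal{irr}}$ and is a tree, every class of positive degree pulled back to all tree boundary divisors is controlled. We would use that $H^{>0}$ classes vanishing on all boundary strata of $\Mct$ and compatible with forgetful maps must vanish; the flatness of the unit handles the forgetful maps. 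The main obstacle is exactly this vanishing statement, since Teleman's proof relies on Mumford/Harer stability and topology of $\M_{g,n}$, not of compact type. We would first try to reduce via the $\lambda_g$-pairing: $\lambda_g$ kills $\Mbar\setminus\Mct$, so classes on $\Mct$ pair into $\lambda_g$-tautological theory, where $R^*(\Mct_{g,n})$ is Gorenstein-like in degree $2g-3+n$ (Faber--Pandharipande). However, we need all cohomology, not just the tautological part. Alternatively, we would use the invertible element $\alpha$ to express $\Omega'_{g,n}$ via degeneration to elliptic tails, writing $\Omega'_{g,n}\cdot\alpha$ contributions recursively.

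Step 3 (uniqueness and the flat-unit formula). Freeness follows since the only $(R,T)$ fixing $\omega$ on all genus $0$ and genus $1$ data is trivial, by direct order-by-order computation. For a flat unit, the string equation forces the dilaton-shift form: requiring $RT\omega$ to have flat unit gives $T(z)=z(\1-R^{-1}(z)[\1])$, exactly as in the classical Givental unit-preserving case.
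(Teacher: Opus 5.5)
There is a genuine gap. Your organising claim is false: you assume that an invertible compact-type F-CohFT, or equivalently its F-Givental element, is pinned down by genus-$0$ and genus-$1$ data, with higher genus following by induction over boundary strata.

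Take $V=\mathbb{C}$ and $\Omega=\exp(c\lambda_1)$ with $c\neq 0$. Since $\lambda_1$ is additive under separating gluings and pulls back under forgetful maps, this is an F-CohFT with flat unit, $\omega=\mathbf{1}$ and invertible $\alpha=1$. By Mumford's formula $12\lambda_1=\kappa_1-\sum_i\psi_i+\delta$ and \eqref{RT1}, its restriction to compact type is $RT\mathbf{1}$ with $R(z)=e^{cz/12}$ and $T(z)=z(1-e^{-cz/12})$.

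Now $\lambda_1$ vanishes on $\Mbar_{0,1+n}$. It also vanishes on $\Mct_{1,1+n}$, which is the preimage of $\M_{1,1}$, and $H^2(\M_{1,1};\mathbb{Q})=0$. On the other hand $\lambda_1\neq 0$ on $\M_{g,1}$ for $g\geq 3$. So this non-trivial group element fixes all genus-$0$ and genus-$1$ compact-type data of $\mathbf{1}$. More generally, the odd part of the diagonal ambiguity in Theorem~\ref{thm:rec}, given by the classes $\textnormal{ch}_{2k-1}(\mathbb{H})$, is invisible in genus $\leq 1$ on compact type.

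Consequences for your plan:
Step~1 cannot fix $R$ and $T$.
The freeness argument of Step~3 fails.
In Step~2, your $\Omega'=\exp(c\lambda_1)$ agrees with $\omega$ in genus $\leq 1$ without being of degree $0$.

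The vanishing statement you are aiming for is also false. The class $\lambda_1\in H^2(\Mct_{2,1})$ restricts to zero on every stratum, because none of the vertex spaces $\M_{0,k}$, $\M_{1,k}$, $\M_{2,1}$ carries a non-zero rational $\lambda_1$. Yet it is non-zero, being pulled back from the generator of $H^2(\Mct_2;\mathbb{Q})\cong H^2(\mathcal{A}_2;\mathbb{Q})=\mathbb{Q}\lambda_1$. More fundamentally, an induction over boundary strata says nothing about the restriction of $\Omega$ to the open stratum $\M_{g,1+n}$, and that is where the content of the theorem lies.

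The missing idea is large-genus stabilisation, which is where invertibility of $\alpha$ is really used. The paper passes to moduli of hyperbolic surfaces with pinned or free geodesic boundaries. It then uses Harer stability and Madsen--Weiss: in low degree, $H^*(\M_{g,1+n})$ is freely generated by $\psi$- and $\kappa$-classes. This lets it extract $\hat{T}(\boldsymbol{\kappa})=\exp(\sum_m\hat{t}_m\kappa_m)=\lim_h\alpha^{-h}\cdot\Omega^{\bullet}_{h,1}$, and $R$ from analogous limits of $\Omega^{\circ}_{h,1+1}$. Gysin sequences for the circle bundles near separating divisors then give $R_{\textnormal{out}}(-z,\boldsymbol{0})\circ R_{\textnormal{in}}(z,\boldsymbol{0})=\Id_V$ and $\Omega_{|\M}=(RT\omega)_{|\M}$.

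Extension to $\Mct$ is a Mayer--Vietoris patching that is valid only in the stability range. That range is reached by gluing a genus-$h$ component at the root and dividing out the invertible restriction of $\Omega_{h,1+1}$ to $\M_{h,1+1}$, with induction on dimension. Freeness is likewise proved on $\M_{g,1}$ for large $g$, where $\psi_1,\kappa_1,\kappa_2,\ldots$ are free generators. Your string-equation argument for the flat-unit formula is the right one, but it too only forces $R(z)[\hat{T}^{-1}(z)]=\1$ once the $\psi$-classes are free.

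Finally, Theorem~\ref{thm:transfree} assumes neither semisimplicity nor a flat unit. Your Step~1 leans on semisimplicity, which invertibility of $\alpha$ does not imply (e.g.\ $V=\mathbb{C}[\epsilon]/(\epsilon^2)$ with $\alpha=\1$). Your Step~2 uses a flat unit to handle forgetful maps.
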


In \cite{borot_symmetries_2024} additional linear symmetries of F-CohFTs which do not commute with the F-Givental group action were described, but they  leave invariant the restriction of the initial F-CohFT to the moduli of compact type. It would be interesting to know if the full F-CohFTs could be constructed from its F-TFT by taking into account the action of this larger group. In the semi-simple case, the ancestor potential of the F-CohFT $RT\omega$ is computed by F-topological recursion \cite{borot_symmetries_2024}. Yet, it does not mean that the ancestor potential of $\Omega$ will necessarily be, because higher genus potential are sensitive to the classes on $\Mbar$.

After Theorem~\ref{thm:transfree} a primary question is to understand how much of this unique F-Givental group element can be reconstructed from the genus $0$ potential, \textit{i.e.} from the underlying flat F-manifold structure. Here the stronger semi-simplicity assumption comes handy.

\begin{theoremB}
\label{thm:rec}
    Let $\Omega$ be an invertible semi-simple (compact-type) F-CohFT on $V$ and $(M,\nabla,\cdot)$ be the associated germ of flat F-manifold near $0$ in $V$. Denote $(\partial_i)_{i = 1}^N$ the canonical basis of vector fields, $(u^i)_{i = 1}^N$  canonical coordinates, and $U = \textnormal{diag}(u^1,\ldots,u^N)$. Then $R(z)$ from Theorem~\ref{thm:transfree} is such that the columns of $R(z)H^{-1}e^{U/z}$ expressed in the canonical basis form a basis of flat sections for the deformed connection $\nabla - z^{-1} \cdot$. This determines $R(z)$ from the flat F-manifold up to pre-composition with $\exp(\sum_{k \geq 1} D_k z^k)$, where $D_k$ are represented by constant diagonal matrices in the canonical basis.  Furthermore,  $\Upsilon(z) := R(z)[\1 - z^{-1}T(z)]$ is uniquely determined by the flat F-manifold, see \eqref{vaceqn}.
    \end{theoremB}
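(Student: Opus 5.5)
Since $\Omega_{|\textnormal{ct}}=(RT\omega)_{|\textnormal{ct}}$ by Theorem~\ref{thm:transfree}, and genus~$0$ moduli spaces are entirely of compact type, the genus~$0$ potential of $\Omega$ coincides with that of $RT\omega$. The plan is therefore to compute the flat F-manifold of $RT\omega$ from the F-Givental action formulas and read off constraints on $R$ and $T$. I would use the explicit genus-$0$ form of the action, as in \cite{arsie_semisimple_2023,borot_symmetries_2024}: the $R$-action is a sum over trees with edge contributions $\frac{\Id - R(z)R^{-1}(w)}{z+w}$ (suitably adapted to the distinguished point) and leg contributions $R^{-1}(\psi)$ at the non-special legs and $R(\psi)$ at the special leg, while $T$ translates. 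Because $\omega$ is a degree-$0$ F-TFT, the descendant potential of $T\omega$ in genus $0$ is explicit. The fundamental solution of the deformed connection $\nabla - z^{-1}\cdot$ for $RT\omega$ is then obtained from two-point genus-$0$ descendants with one special insertion, i.e.\ from $S(z)$-type series built out of $\psi$-classes on $\Mbar_{0,n}$.

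The key steps are as follows. First, for the trivial theory ($R=\Id$, with $T$ arbitrary) I verify that in the canonical frame the deformed flat sections are $H^{-1}e^{U/z}$ times constants. Here $H$ is the normalisation matrix relating the flat and canonical frames, and $u^i$ are the canonical coordinates of the translated F-TFT, which is semisimple by assumption; this is a direct computation with idempotents. Second, I show that the $R$-action multiplies the fundamental solution by $R(z)$ on the left. This is the F-analogue of Givental's statement that the $R$-action acts on the $J$-function/$S$-matrix by $R$. It is proved by applying the tree sum to the two-point function with one special leg, and using the string/TRR relations in genus $0$ to collapse the edge terms. Combining the two steps gives that the columns of $R(z)H^{-1}e^{U/z}$ are flat.

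For uniqueness, I take two admissible $R$ and $\tilde R$ and set $A(z)=R^{-1}\tilde R$. Flatness forces $e^{-U/z}H A H^{-1} e^{U/z}$ to be $z$-dependent but constant in $u$. Expanding in powers of $z$ and using that the $u^i$ are distinct and independent coordinates, the off-diagonal entries must vanish. What remains is a constant diagonal series with constant term $\Id$, and its logarithm gives $\sum_k D_k z^k$. I also need to check that every such ambiguity is realised, i.e.\ that it preserves flatness, which is immediate since diagonal constants commute with $e^{U/z}$.

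For $\Upsilon$, the genus-$0$ one-point function with the special point, the vacuum vector, transforms as $\Upsilon=R(z)[\1-z^{-1}T(z)]$ under the action. I would then show that it satisfies the linear equation \eqref{vaceqn} in $u$, namely the flatness of a section of the deformed connection tied to the unit vector field. Any diagonal ambiguity $\exp(\sum D_kz^k)$ has to be compensated by a corresponding change of $T$, and the combination $\Upsilon$ stays invariant. I expect the main obstacle to be the second step: adapting Givental's $S$-matrix intertwining argument to the asymmetric F-setting, where only the special point carries $R$ rather than $R^{-1}$ and there is no metric with which to transpose. I would first attempt it by an induction on the number of marked points using the genus-$0$ topological recursion relation for F-CohFTs.
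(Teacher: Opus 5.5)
There is a genuine gap: your argument never controls how $R$ depends on the point of the flat F-manifold. In the statement, $R$ is not a single element. For each $t$ in the germ, Theorem~\ref{thm:transfree} applied to the formal shift ${}_t\Omega$ produces $R_t(z)$, which is defined through large-genus limits of ${}_t\Omega$. Flatness of the columns of $R_t(z)H^{-1}e^{U/z}$ is a differential equation in $t$ for this family (Proposition~\ref{prop:diffTR}). Your Steps 1--2 only use the genus-$0$ potential of the single orbit $RT\omega$ attached to the base point.

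Even granting Step~2 completely, it can only show that the flat continuation of $R_0(z)H^{-1}e^{U/z}$ keeps the form $\tilde R_t(z)H^{-1}e^{U/z}$ with $\tilde R_t\in\End(V)\llbracket z\rrbracket$. Running it at every base point and using your uniqueness argument, which is correct, you get $R_t=\tilde R_t\exp(D(t,z))$ with $D(t,z)$ diagonal. Nothing forces $D$ to be independent of $t$. No genus-$0$ argument can force this: as the theorem itself says, diagonal modifications of $R$ (compensated in $T$) leave the genus-$0$ potential unchanged. Rank one makes this stark. There $H\Psi$ is constant, so the flatness statement is exactly $\dd R_t=0$, yet the genus-$0$ potential of ${}_t\Omega$ does not determine $R_t$ at all.

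The missing ingredient is an all-genus input describing how the decomposition of Theorem~\ref{thm:transfree} varies under formal shift. The paper computes $\nabla_{\partial_\nu}\Omega_{g,1+1}{}_{|\M_{g,1+1}}=f_*\Omega_{g,1+2}(-\otimes\partial_\nu)_{|\M_{g,1+1}}$ in two ways for large $g$:
\begin{itemize}
\item directly, from $R(-\psi_1)\circ(\alpha^g\cdot\hat T(\boldsymbol{\kappa})\cdot)\circ R^{-1}(\psi_2)$;
\item geometrically, from the contributions of $\M_{g,1+2}$ and the two boundary strata of $f^{-1}(\M_{g,1+1})$.
\end{itemize}
Both sides are then read in the free stable cohomology via the specialisations $\psi_1=-z,\ \boldsymbol{\kappa}=0$ and $\kappa_m=z^m$. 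The $g$-dependent terms match thanks to Lemma~\ref{lem:alphaHR1}, including the genus-one identity $\dd(H^i\sqrt{\alpha^i})=0$.

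An alternative would be to prove in all genera that ${}_t(RT\omega)$ coincides on compact type with $\tilde R_t\tilde T_t\omega_t$, and then invoke uniqueness in Theorem~\ref{thm:transfree}. That is a substantial statement your plan does not contain.

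The same objection applies to your treatment of $\Upsilon$. The equation you aim for is again a differential equation in $t$ for $R_t[\hat T_t^{-1}]$, where $\hat T_t$ is also defined by large-genus limits. Moreover, the invariance of $\Upsilon$ under the diagonal ambiguity is asserted rather than proved. The paper obtains $\nabla^{-z}_X\Upsilon=-z^{-1}X$ from the two differential equations for $R$ and $\hat T$, hence $\nabla_{\1}\Upsilon=z^{-1}(\Upsilon-\1)$ and \eqref{vaceqn}.
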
    

The odd diagonal ambiguities $D_{2k + 1}$ are well-known in F-CohFTs and already appear in CohFTs. By Mumford's formula they correspond on the moduli of compact type to multiplication of the F-TFT by 
\[
\exp\bigg(\sum_{k \geq 1}  D_{2k - 1} \frac{(2k)!}{B_{2k}} \textnormal{ch}_{2k - 1}(\mathbb{H})_{|\textnormal{ct}}\bigg),
\]
where $\mathbb{H}$ is the Hodge bundle and $B_{k}$ are the Bernoulli numbers. The Chern characters $\textnormal{ch}_{2k - 1}(\mathbb{H})$ vanish in genus zero so cannot be detected by the flat F-manifold. In CohFTs the even ambiguities are ruled out by the symplectic condition, but they are allowed in F-CohFTs. They correspond to multiplication of the F-TFT by $\exp(\sum_{k \geq 1} D_{2k} \theta_{2k})$, where
\[
\theta_{2k} := \kappa_{2k} + \psi_{1}^{2k} - \sum_{i = 2}^{1+n} \psi_i^{2k} + \sum_{\delta} \sum_{m + m' = 2k - 1} (-1)^{m'} \psi^m (\psi')^{m'} \cup [\delta].
\]
Here the sum ranges over all boundary divisors $\delta$ of separating type, $\psi$ is associated to the node in the component containing the first marked point, and $\psi'$ to the opposite node. One can check that $\theta_{2k} \in H^*(\Mbar_{0,n})$ for $n > 0$ pulls back to $\theta_{2k} \in H^*(\Mbar_{0,n+1})$ by the forgetful morphism, implying by successive pullbacks from $H^*(\Mbar_{0,3})$ that $\theta_{2k}$ always vanishes in genus $0$. In passing this gives an explicit formula in genus $0$ for $\kappa_{2k}$  in terms of $\psi$- and $\kappa$-classes. We do not know if those classes come from a natural geometric construction (like the Hodge classes did).

\begin{theoremC}
\label{thm:recHom} Let $\Omega$ be a conformal, invertible, semi-simple (compact-type) F-CohFT. Then $\Omega_{|\textnormal{ct}}$ is uniquely determined by the 1-jet of the conformal flat F-manifold at the origin, see \eqref{dzdzR}.
\end{theoremC}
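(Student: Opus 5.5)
By Theorem~\ref{thm:transfree}, $\Omega_{|\textnormal{ct}}$ is determined by the underlying F-TFT $\omega$ together with the unique pair $(R,T)$ satisfying $(RT\omega)_{|\textnormal{ct}}=\Omega_{|\textnormal{ct}}$. It therefore suffices to show three things: $\omega$ is determined by the 1-jet of the conformal flat F-manifold at the origin; $R(z)$ is determined by that 1-jet; and $T(z)$ is then determined as well. The proof is organised along these three steps.

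\textbf{Step 1: the F-TFT.} The product at the origin, together with the distinguished vector $\alpha$, is part of the data. I expect $\alpha$ to be recoverable from the conformal structure; failing that, I would include it in what is meant by the 1-jet. The product at $0$ is the $0$-jet of the multiplication.

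\textbf{Step 2: $R(z)$.} By Theorem~\ref{thm:rec}, $R(z)$ is fixed by the flat F-manifold only up to pre-composition with $\exp(\sum_{k\ge1}D_kz^k)$, where the $D_k$ are constant diagonal matrices in the canonical basis. Conformality of $\Omega$ means it is homogeneous with respect to the Euler vector field $E$ of grading degree. The standard consequence, as in Teleman's argument, is that $R$ is homogeneous: $(z\partial_z+\mathcal{L}_E)R=0$, with $\mathcal{L}_E$ acting through the grading operator $\mu=\nabla E-(1-\delta/2)$ or its F-analogue. Substituting $R=\sum_k R_kz^k$ into the flat-section equation of Theorem~\ref{thm:rec}, restricted to the origin, and combining it with homogeneity yields a recursion of the form
\[
[R_{k+1},U]=(k+\mu)R_k+(\text{terms in }\partial U,\ \Psi^{-1}d\Psi),
\]
which is the equation I expect \eqref{dzdzR} to encode. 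The off-diagonal part of $R_{k+1}$ is fixed by the commutator with $U$, which is invertible off the diagonal because the canonical coordinates $u^i$ are pairwise distinct at a semi-simple point. The diagonal part of $R_{k+1}$ is fixed by the diagonal part of the same equation at the next order, using the homogeneity $E=\sum u^i\partial_i$. In this way homogeneity kills the diagonal ambiguities $D_k$. Only $U(0)$, $\mu$ and the first derivatives of the structure constants at $0$ enter, all of which lie in the 1-jet.

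\textbf{Step 3: $T(z)$.} The vacuum $\Upsilon(z)=R(z)[\1-z^{-1}T(z)]$ is determined by the flat F-manifold via \eqref{vaceqn}. Conformality should likewise make \eqref{vaceqn} a recursion in the 1-jet, exactly as for $R$. Then $T(z)=z(\1-R^{-1}(z)\Upsilon(z))$ is determined. If $\1$ is flat, this is immediate from Theorem~\ref{thm:transfree}.

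\textbf{Main obstacle.} The hardest part will be Step 2: proving that conformality of $\Omega$ genuinely implies homogeneity of the unique $R$ from Theorem~\ref{thm:transfree}, and showing that the diagonal recursion has no kernel. For the first point, I would compare $\Omega$ with its image under the Euler flow and use the uniqueness in Theorem~\ref{thm:transfree}. For the second, the diagonal equation will take the form $k\,(R_k)_{ii}=\cdots$, which is solvable precisely because $k\neq0$. This step must also cover the even ambiguities $D_{2k}$, which the symplectic condition does not exclude in the F-setting.
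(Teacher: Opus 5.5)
Your architecture matches the paper's. You start from Theorem~\ref{thm:transfree}, then use homogeneity $(\mathcal{L}_E+z\partial_z)R(z)=0$ and $(\mathcal{L}_E+z\partial_z+1)\hat{T}(z)=0$ (Proposition~\ref{prop:c1conf}). You then contract the differential equations of Proposition~\ref{prop:diffTR} with $E$ to get the pointwise system \eqref{dzdzR}, and solve it order by order in $z$. After contraction, the extra terms you anticipate are exactly absorbed into $\boldsymbol{\mu}$ and $K\cdot$.

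The one genuinely different ingredient is how you obtain homogeneity. The paper applies $\mathcal{L}_E$ directly to $\textnormal{sp}_R$ and $\textnormal{sp}_T$ of $\Omega_{g,1+1}{}_{|\M_{g,1+1}}=R(-\psi_1)\circ(\alpha^g\cdot\hat{T}(\boldsymbol{\kappa})\cdot)\circ R^{-1}(\psi_2)$ for large $g$. There, after specialisation, the $-\textnormal{deg}$ in \eqref{confOmega} becomes $-z\partial_z$, and $\mathcal{L}_E(\alpha^g\cdot)=g\Delta\,\alpha^g\cdot$. Your Euler-flow argument also works, provided you check three things:
\begin{itemize}
\item $\Omega_{g,1+n}\mapsto e^{sL}\circ e^{s(g\Delta+n-1-\textnormal{deg})}\Omega_{g,1+n}\circ(e^{-sL})^{\otimes n}$ is again a compact-type F-CohFT, because the weight $g\Delta+n-1$ is additive under gluing;
\item integrated conformality identifies it with the formal shift at $\phi_s(t)$, where $\phi_s$ is the flow of $E$;
\item it intertwines the F-Givental action with $R(z)\mapsto e^{sL}R(e^{-s}z)e^{-sL}$ and $T(z)\mapsto e^{sL}T(e^{-s}z)$. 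Here the rescaling of each edge weight compensates the degree shift of $\textnormal{gl}_{\Gamma*}$, and $|V|-|E|=1$ for trees.
\end{itemize}
Freeness then gives homogeneity of $R$ and $T$ at once, which you also need in Step~3. Your route is more conceptual; the paper's is a short computation on smooth curves in the stable range.

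Three points in your write-up need correcting.

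First, $\alpha$ is not recoverable from the flat F-manifold. Lemma~\ref{lem:alphaHR1} only gives that $\alpha^i(H^i)^2$ is constant. For instance, the constant $a$ in Proposition~\ref{prop:1FCoh} is not fixed by the flat F-manifold and has to be computed separately, as in Lemma~\ref{crstarlema}. So your fallback of taking the F-TFT as given data is necessary, and it is what the paper tacitly does.

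Second, semi-simplicity does not make the $u^i(0)$, i.e.\ the eigenvalues of $K\cdot$, pairwise distinct; they all vanish if $E(0)=0$. What makes the recursion uniquely solvable, and justifies your unproved claim that the diagonal equation reads $k\,(R_k)_{ii}=\cdots$, is the following. Contract Lemma~\ref{lem:gadi} with $E=\sum_i u^i\partial_i$ and use $\mathcal{L}_E H^i=-\frac{\Delta}{2}H^i$; this gives $\boldsymbol{\mu}=\gamma U-U\gamma$ in the canonical basis. Hence $\boldsymbol{\mu}$ vanishes on the blocks where the $u^i(0)$ coincide. The block-off-diagonal part of $R_{k+1}$ is then fixed by the commutator with $K\cdot$ at order $k$. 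Its block-diagonal part is fixed by the block-diagonal part of the equation at order $k+1$, whose coefficient is $k+1\neq 0$.

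Third, in Step~3 do not try to turn \eqref{vaceqn} itself into a recursion. Instead, contract the first-order equation $\nabla^{-z}_X\Upsilon(z)=-z^{-1}X$ from the proof of Corollary~\ref{cor:RTfial} with $X=E$, and use $(\mathcal{L}_E+z\partial_z+1)\Upsilon(z)=0$. This yields the second equation of \eqref{dzdzR}.
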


These results are directly relevant for the double ramification hierarchies obtained from F-CohFTs, as its flows only depend on the restriction to the moduli of compact type.

The precise definitions will be given in the text. Theorem~\ref{thm:transfree} is proved in Section~\ref{S3}, Theorem~\ref{thm:rec} in Section~\ref{sec:resdiff}-\ref{sec:proofnu} and Theorem~\ref{thm:recHom} in Section~\ref{sec:recHomo}. The strategy for the proofs follows closely the one invented by Teleman in \cite{teleman_structure_2012} for CohFTs. We propose a slightly different and essentially self-contained exposition of the arguments, \textit{e.g.} fixing some arbitrary choices by means of hyperbolic geometry, adding some explanations and details, etc. We hope that our presentation can facilitate the navigation of an interested reader in Teleman's original paper too. Theorem~\ref{thm:rec} requires computations with flat F-manifold structures having some new features compared to those for CohFTs and Frobenius manifold structures, in particular relating $\alpha$ and the Christoffel symbols (see Lemma~\ref{lem:alphaHR1}). We give a careful comparison between flat sections of the deformed flat connection and the differential equations for the R-element of the F-Givental group following from the analysis of the F-CohFT, and explain how those equations compare to \cite{arsie_semisimple_2023}.

We illustrate Theorem~\ref{thm:recHom} by reconstructing a 1-dimensional compact-type F-CohFT coming from the extension of Witten $r$-spin class \cite{JKV,BCT,buryak_extended_2021} and deriving vanishing results for certain polynomials in $\kappa$-classes in $H^*(\M^{\textnormal{ct}})$.

\begin{theoremD}
\label{thm:vani}
Let $r \geq 2$ be an integer, $g \geq 0$ and $n \geq 1$ such that $2g - 2 + n > 0$. Define $P_{m}^{(r)}(\boldsymbol{\kappa}) \in H^{2m}(\M_{g,n}^{\textnormal{ct}})$ by the formulae
\[
\sum_{m \geq 0} (rm - 1)!^{(r)} z^m = \exp\bigg(\sum_{m \geq 1} s_m z^m\bigg),\qquad \exp\bigg(-\sum_{m \geq 1} s_m \kappa_m\bigg) = 1+\sum_{m \geq 1} P_{m}^{(r)}(\boldsymbol{\kappa}) 
\]
involving the $r$-fold factorial $(rm - 1)!^{(r)} = (rm - 1)(rm - r - 1) \cdots (r - 1)$.  Then 
\begin{equation}
P_{m}^{(r)}(\boldsymbol{\kappa}) = \left\{\begin{array}{lll} 0 &&  \textnormal{if}\,\, (r - 1)(2g - 2 + n) < rm,  \\
(-1)^{g}r^{g-m} c^{r,\star}_{g,n}(\partial_t^{\otimes n-1})_{|\textnormal{ct}} && \textnormal{if}\,\, (r - 1)(2g - 2 + n) = rm,
\end{array}\right.
\end{equation}
where $c^{r,\star}$ is the restriction to the $r^\textnormal{th}$ subspace of the extended $r$-spin class of \cite{buryak_extended_2021}, \textit{cf.} Section~\ref{Sec:extrspin}.
\end{theoremD}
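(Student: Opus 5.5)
We plan to deduce Theorem~\ref{thm:vani} from Theorem~\ref{thm:recHom}, applied to the one-dimensional compact-type F-CohFT $\Omega_{g,1+n'}(\partial_t^{\otimes n'}) := c^{r,\star}_{g,1+n'}(\partial_t^{\otimes n'})_{|\textnormal{ct}}$. This is the restriction of the extended $r$-spin F-CohFT of \cite{buryak_extended_2021} to the extended direction.

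\textbf{Step 1: the underlying structure.} We first record the properties of this F-CohFT from Section~\ref{Sec:extrspin}.
\begin{itemize}
\item It is one-dimensional, so it is automatically semi-simple.
\item Its F-TFT is $\omega_{g,1+n'} = \alpha^g$, with $\alpha$ proportional to $r$ (up to sign). In particular $\alpha$ is invertible.
\item It is conformal (homogeneous). The degree of $c^{r,\star}_{g,n}$ is linear in $g$ and $n$, and we expect it to equal $\tfrac{r-1}{r}(2g-2+n)$, consistent with the threshold in the statement.
\end{itemize}

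\textbf{Step 2: applying Theorem~\ref{thm:transfree}.} In dimension one, Theorem~\ref{thm:transfree} gives
\[
\Omega_{|\textnormal{ct}} = (RT\omega)_{|\textnormal{ct}}, \qquad R(z)=\exp\Big(\sum_k a_k z^k\Big),
\]
with $T$ determined by $R$. The $R$-action with scalar $R$ and the translation $T$ together act on a rank-one theory only through $\kappa$-classes. The graph sums should collapse, because in dimension one the edge terms combine with the translations. We expect the outcome to be
\[
\Omega_{g,n} = \alpha^g \exp\Big(-\sum_m s_m \kappa_m\Big),
\]
after a rescaling of $z$ by a power of $r$. Here $e^{\sum s_m z^m} = \sum_m (rm-1)!^{(r)} z^m$ is the vacuum/$R$-series.

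\textbf{Step 3: identifying $R$.} We determine $R$ through Theorem~\ref{thm:recHom}: conformality forces the flat-section equation $(\nabla - z^{-1}\cdot)$ together with the Euler homogeneity. In dimension one with a nontrivial $1$-jet this reduces to a first-order recursion of the form
\[
a_{m} = (rm-1)\cdot(\text{const})\, a_{m-1}
\]
for the coefficients of $R^{-1}[\1]$ (or of $\Upsilon$). This should reproduce the $r$-fold factorials $(rm-1)!^{(r)}$. The genus-$0$ flat F-manifold of the extended direction is explicit: its potential is polynomial of degree $r$ or thereabouts in $t$, which fixes the constants.

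\textbf{Step 4: comparing degrees.} Expanding
\[
c^{r,\star}_{g,n}(\partial_t^{\otimes n-1})_{|\textnormal{ct}} = (\pm r)^{g}\, r^{-m}\sum_{m} P^{(r)}_m(\boldsymbol\kappa)
\]
and comparing degrees, the left side is pure of degree $\tfrac{r-1}{r}(2g-2+n)$. The components of degree $m$ strictly above that value must therefore vanish, and the component at equality gives the second case of the statement. The sign and power-of-$r$ bookkeeping should yield $(-1)^g r^{g-m}$.

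The main obstacle is Step 3 together with the normalisation in Step 2: extracting the precise $R$-matrix, including the $r$-dependent rescaling, from the $1$-jet via \eqref{dzdzR}. We also need to confirm that the even diagonal ambiguities $D_{2k}$ are fixed by conformality. First we would try to solve \eqref{dzdzR} directly in dimension one and match the low-order terms against known genus-$0$ and genus-$1$ values of the extended $r$-spin classes.
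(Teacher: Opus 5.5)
\textbf{There is a genuine gap, in Step 1.} The theory $c^{r,\star}$ itself is \emph{not} invertible or semi-simple. By the homogeneity property, $c^{r,\star}_{g,1+n'}(\partial_t^{\otimes n'})$ is pure of cohomological degree $\frac{(r-1)(2g-1+n')}{r}>0$, so its degree-$0$ part vanishes identically. Concretely, $\partial_t\cdot\partial_t=F''(0)\,\partial_t=0$ and $\alpha=0$. For the shifted theory one finds $F''(t)=At^{r-1}$ and $\alpha=a\,t^{r-1}\partial_t$. A one-dimensional algebra with zero product is not semi-simple and has no unit. So neither Theorem~\ref{thm:transfree} nor Theorem~\ref{thm:recHom} applies at the origin. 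The $1$-jet data there is also degenerate: the Euler field $E=r^{-1}t\partial_t$ vanishes at $t=0$, i.e.\ $K=0$, so \eqref{dzdzR} produces no recursion. Your Step~4 exposes the inconsistency. An identity $c^{r,\star}=(\text{const})\,\alpha^g\big(1+\sum_m P^{(r)}_m\big)$ with $\alpha$ invertible has a nonzero degree-$0$ term. Yet you also (correctly) say the left-hand side is pure of positive degree. Comparing degrees would then force every component below the critical degree to vanish too, including the constant $1$.

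\textbf{The missing idea} is to apply the reconstruction to the formal shift ${}_t c^{r,\star}$ at $t\neq 0$, not to $c^{r,\star}$, and to let $t$ do the degree bookkeeping. For $t\neq0$ the shift is conformal with $\Delta=2-\frac{2}{r}$, and it is invertible and semi-simple. Its data are $F=\frac{(-t)^{r+1}}{(r+1)r^r}$ and $\alpha=a\,t^{r-1}\partial_t$. The constant $a=(-r)^{2-r}$ must be computed separately; the paper does this with the self-gluing property \eqref{compaself} in genus $1$. It is not simply ``proportional to $r$''.

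At such $t$, \eqref{dzdzR} gives $\boldsymbol{\mu}=0$, hence $R=\textnormal{Id}$. The edge weights therefore vanish and only the one-vertex tree survives. This, not a collapse of graph sums with a nontrivial scalar $R$, is why only $\kappa$-classes appear; a nontrivial $R$ would produce $\psi$-classes and boundary terms, cf.\ \eqref{RT1}. The factorials sit in $\hat T^{-1}=\Upsilon$, whose $z^m$-coefficient is $(rm-1)!^{(r)}(-A^{-1}t^{-r})^m$. The factor $t^{-r}$ comes from $K=E(t)=r^{-1}At^r\1$.

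One obtains
\[
{}_tc^{r,\star}_{g,1+n'}(\partial_t^{\otimes n'})_{|\textnormal{ct}}=a^gA^{g+n'-1}\sum_{m\geq0}(-A)^{-m}\,t^{(r-1)(2g-1+n')-rm}\,P^{(r)}_m(\boldsymbol{\kappa})\,\partial_t .
\]
The formal shift is analytic at $t=0$, so the coefficients of negative powers of $t$ vanish; this is exactly the vanishing statement. The $t^0$ coefficient is $c^{r,\star}$ itself, which gives the second case with the constants $a,A$. The sub-critical terms carry positive powers of $t$ and simply disappear at $t=0$, which resolves the contradiction in your Step~4.
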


This result is the combined consequence of Proposition~\ref{prop:1FCoh}, Corollary~\ref{vanishingcor} and Lemma~\ref{crstarlema}. Pixton has described a generating set  for all relations among $\kappa$-classes on $\M^{\textnormal{ct}}$ \cite{Pixtonthesis}, and our $r = 2$ relations are part of it\footnote{The observation of a relation between the extended $2$-spin F-CohFT and a subset of Pixton's relations is due to A. Buryak, and comes from the explicit computation in \cite{ABLR2} of the F-Givental group element $R(z)$ from its homogeneous flat F-manifold and the analysis of the restriction to the $2^\textnormal{nd}$ subspace of the corresponding F-Givental action.}. We have checked that for $m \leq 4$ our relations are linear combinations of Pixton's ones, as they should. Yet, it is not obvious to us how to derive our relations in all generality from Pixton's results.

\noindent \textbf{Conventions.} Algebras are not assumed unital unless specified otherwise. We denote $[n]$ the set of integers between $1$ and $n$. If $\mathcal{Y} \subseteq \mathcal{X}$ and $\alpha$ is a cohomology class on $\mathcal{X}$, we denote $\alpha_{|\mathcal{Y}}$ its restriction to $\mathcal{Y}$, \textit{i.e.} the pullback of $\alpha$ by the natural inclusion $\mathcal{Y} \hookrightarrow \mathcal{X}$.

\noindent \textbf{Acknowledgements.} We thank A. Giacchetto for insightful discussions on theories without flat unit, as well as D. Klompenhouwer, S. Perletti and S. Shadrin for comments. We are grateful to A. Buryak for discussing and pointing out a relation between the extended $2$-spin theory and certain relations of Pixton's among $\kappa$-classes on compact type. S.R. is funded by the Deutsche Forschungsgemeinschaft RTG 2965 --- Project number 512730679. P.R. is supported by the University of Padova and is affiliated to the INFN under the national project MMNLP and to the INdAM group GNSAGA.

\vspace{0.5cm}

\section{{\Large Review of F-CohFTs}}

\medskip

\subsection{Definition and properties}

\label{F-CohFT, definition section}

For $2g - 1 + n > 0$, the Deligne--Mumford moduli space of stable curves of genus $g$ with marked points labelled $1,1+1,\ldots,1+n$ is denoted $\Mbar_{g,1+n}$. The marked point labelled $1$ will often play a special role, stressed in the notation $1 + n$. Let
\begin{equation}
\label{fmor}
f : \Mbar_{g,1+n+1} \longrightarrow \Mbar_{g,1+n}.
\end{equation}
be the morphism forgetting the last marked point. Let
\begin{equation}
\label{glmor}
\textnormal{gl} : \Mbar_{g_1,1 + n_1 + 1}\times\Mbar_{g_2,1 + n_2} \longrightarrow \Mbar_{g,1+n}
\end{equation}
with the implicit equalities $g = g_1 + g_2$ and $n = n_1 + n_2$ be the morphism gluing the last marked point of a stable curve in $\Mbar_{g_1,1+n_1+1}$ with the first marked point of a stable curve in $\Mbar_{g_2,1+n_2}$. The permutation group in $n$ elements is denoted $\mathfrak{S}_n$. An element $\sigma \in \mathfrak{S}_n$ acts as an automorphism of $\overline{\mathcal{M}}_{g,1+n}$ still denoted $\sigma$ by permutation of the marked points labelled $1+1,\ldots,1+n$.

\begin{definition}
    Let $V$ be a finite dimensional $\mathbb{C}$-vector space. A \emph{F-CohFT} is a collection
    \[
    \Omega_{g,1+n} \in \Hom\big(V^{\otimes n},V \otimes H^{\mathrm{even}}(\Mbar_{g,1+n})\big)
    \]
    indexed by integers $g,n \geq 0$ such that $2g - 1 + n > 0$ and satisfying the following properties for any $v_1, \dots, v_n \in V$.
    \begin{itemize}
        \item It is $\mathfrak{S}$-equivariant:
        \[
        \forall \sigma \in \mathfrak{S}_n\qquad \Omega_{g,1+n}(v_{\sigma(1)}\otimes\cdots\otimes v_{\sigma(n)})= \sigma^*\Omega_{g,1+n}( v_1\otimes\cdots\otimes v_n).
        \]
        \item It is compatible with any of the gluing morphisms \eqref{glmor}:
        \begin{equation}
        \label{compamor}
        \textnormal{gl}^*\Omega_{g,1+n}(v_1\otimes\dots\otimes v_n) =\ \Omega_{g_1,1+n_1+1}\big(v_1\otimes\cdots\otimes v_{n_1}\otimes \Omega_{g_2,1+n_2}(v_{n_1+1}\otimes\dots\otimes v_{n})\big).
        \end{equation}
    \end{itemize}
    A F-CohFT admits a \emph{flat unit} if there exists $\1 \in V$ such that, for any $g,n \geq 0$ and $v,v_1,\ldots,v_n$ we have
\[
\Omega_{0,1+2}(v \otimes \1) = v , \qquad f^*\Omega_{g,1+n}(v_1\otimes\cdots\otimes v_n)=\,\Omega_{g,1+n+1}(v_1\otimes\dots\otimes v_n\otimes\1).
\]
A F-TFT is a F-CohFT concentrated in cohomological degree $0$.
\end{definition}
As for CohFTs, one could replace $H^{\mathrm{even}}(\Mbar_{g,1+n})$ in the definition of F-CohFT with the full cohomology $H^*(\Mbar_{g,1+n})$ at the cost of having to deal with $\mathbb{Z}_2$-graded objects and the corresponding Koszul signs. We avoid this for simplicity. 
The analogue of the well-known equivalence between $0$-th cohomology parts of CohFTs (\textit{i.e.} TFTs) and Frobenius algebras is as follows.

\begin{lemma}\cite{buryak_extended_2021}
The cohomological-degree $0$ part of a F-CohFT $\Omega$, denoted $\omega$, is uniquely determined by the commutative associative algebra structure on $V$ given by
\[
\forall v_1,v_2 \in V\qquad v_1\cdot v_2:= \omega_{0,1+2}(v_1\otimes v_2).
\]
together with the distinguished element $\alpha:= \omega_{1,1} \in V$. We used the identification $H^0(\Mbar_{g,1+n}) \cong \mathbb{C}$ to consider $\omega_{g,1+n}$ as an element of $\Hom(V^{\otimes n},V)$.  Conversely, any commutative associative algebra structure on $V$ together with a choice of distinguished element comes from a unique F-TFT.
\end{lemma}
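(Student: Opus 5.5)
Since $H^0(\Mbar_{g,1+n})\cong\mathbb{C}$ (the moduli space is connected), each $\omega_{g,1+n}$ is a linear map $V^{\otimes n}\to V$ that is $\mathfrak{S}_n$-invariant. The plan is to show that every $\omega_{g,1+n}$ can be written, through gluing, in terms of $\omega_{0,1+2}$ and $\omega_{1,1}$. The tool is the degree-$0$ part of the gluing axiom \eqref{compamor}. Pullback on $H^0$ is the identity on constants, so for any gluing morphism we get an equality of linear maps
\[
\omega_{g,1+n}(v_1\otimes\cdots\otimes v_n)=\omega_{g_1,1+n_1+1}\big(v_1\otimes\cdots\otimes v_{n_1}\otimes\omega_{g_2,1+n_2}(v_{n_1+1}\otimes\cdots\otimes v_n)\big).
\]
This holds whenever both factors are stable, because the image of $\textnormal{gl}$ is nonempty.

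First I would record the basic properties of the product. Commutativity of $v_1\cdot v_2=\omega_{0,1+2}(v_1\otimes v_2)$ is $\mathfrak{S}_2$-invariance. Associativity comes from gluing $\Mbar_{0,1+1+1}\times\Mbar_{0,1+2}\to\Mbar_{0,1+3}$, which gives $\omega_{0,1+3}(v_1\otimes v_2\otimes v_3)=v_1\cdot(v_2\cdot v_3)$. Combining this with $\mathfrak{S}_3$-invariance of the left-hand side gives associativity and $\omega_{0,1+3}=$ the triple product. Induction with the gluing $(g_1,n_1)=(0,1)$, $(g_2,n_2)=(0,n-1)$ then gives $\omega_{0,1+n}(v_1\otimes\cdots\otimes v_n)=v_1\cdots v_n$ for all $n\geq2$.

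For genus $g\geq1$ I would do induction on $(g,n)$. When $n\geq1$, glue $\Mbar_{g,1+0+1}\times\Mbar_{0,1+n}$ to obtain $\omega_{g,1+n}(v_1\otimes\cdots\otimes v_n)=\omega_{g,1+1}(v_1\cdots v_n)$ for $n\ge 2$; the case $n=1$ is trivial. The next step is to compute $\omega_{g,1+1}$ and $\omega_{g,1}$:
\begin{itemize}
\item Gluing $\Mbar_{1,1+0+1}\times\Mbar_{g-1,1}$ gives $\omega_{g,1}=\omega_{1,1+1}(\omega_{g-1,1})$ for $g\geq2$.
\item Gluing $\Mbar_{0,1+1+1}\times\Mbar_{g,1}$ gives $\omega_{g,1+1}(v)=v\cdot\omega_{g,1}$.
\end{itemize}
Hence $\omega_{g,1}=\alpha\cdot\omega_{g-1,1}$ for $g\ge2$, and by induction $\omega_{g,1}=\alpha^{g}$, with the convention that $\alpha^1=\alpha$ and no unit is needed. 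Altogether, $\omega_{g,1+n}(v_1\otimes\cdots\otimes v_n)=\alpha^g\cdot v_1\cdots v_n$ for $n\geq1$ or $g\geq1$. This proves that $\omega$ is determined by $(\cdot,\alpha)$.

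For the converse I would define $\omega$ by this formula and check the axioms. Symmetry follows from commutativity. For gluing, the right-hand side equals $\alpha^{g_1}v_1\cdots v_{n_1}\cdot\alpha^{g_2}v_{n_1+1}\cdots v_n=\alpha^{g}v_1\cdots v_n$ by associativity and commutativity. Two conventions must be handled so that no unit is needed: when $g=0$ the power $\alpha^0$ is simply omitted, and when $n=0$ the product $v_1\cdots v_n$ is omitted. With these conventions the identity holds in every case, including when one factor is $\omega_{g_2,1}=\alpha^{g_2}$.

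The main obstacle is bookkeeping in a non-unital algebra: I have to make sure every stable $(g,1+n)$ with $2g-1+n>0$ is reached by the induction, and that the boundary cases $(g,0)$ and $(0,2)$ never implicitly use a unit. Uniqueness itself is immediate once the recursion is established.
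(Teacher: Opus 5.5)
Your proof is correct and takes essentially the route the paper indicates: the paper cites \cite{buryak_extended_2021} for this statement, and in Lemma~\ref{lem:FTFT} it evaluates F-TFTs through degree-$0$ compatibility with restriction to the strata of Figure~\ref{fig:FTFT}. The only difference is that you iterate single-edge gluings, whereas the paper restricts in one step along $\textnormal{gl}_{\Gamma}$ for a degenerate tree; your explicit unit-free check of the converse fills in what the paper leaves to the citation.
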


At various stages we will make extra assumptions on F-CohFTs.

\begin{definition}
A F-CohFT is \emph{invertible} if $V$ is unital and $\alpha$ has an inverse for the product $\cdot$.
\end{definition}
 If $V$ is semisimple, then $V$ is automatically unital. If a F-CohFT on $V$ has a flat unit, then $V$ is a unital algebra. But, if $V$ is unital, the unit may not satisfy the flat unit axiom.

The panorama of known examples of F-CohFTs that are not CohFTs is not currently as ample as the one of CohFTs. The most studied F-CohFTs come from modifications of CohFTs. For instance, the FJRW partial CohFTs of \cite{BCFG} arise from reductions of larger CohFTs and the extended $r$-spin classes of \cite{buryak_extended_2021} are limits of families of CohFTs. In \cite{buryak_rossi_classification_2025,buryak_xu_yang_2026} the relation of rank-$1$ F-CohFTs with integrable systems is explored, with classification purposes, in terms of the F-Givental group action recalled in Section~\ref{sec:Fgro}. Gromov-Witten theory with non-compact targets \cite{lu_GW_for_non_compact_targets_2006} and open Gromov-Witten theory (see \textit{e.g.} \cite{Solomonopen,Zongopen}) might provide a geometric source of further interesting examples.

\subsection{Stratification of moduli spaces}
\label{F-Givental group section}

The stratification of $\Mbar_{g,1+n}$ by stable graphs of type $(g,1+n)$ is well-known: vertices correspond to connected components of the normalisation of a stable curve and remember their respective genera, (unoriented) edges correspond to nodes and half-edges to images of the nodes in the normalisation, leaves correspond to marked points. The leaves are labelled from $1$ to $1 + n$ and the sum of genera at the vertices is $g$ minus the first Betti number of the graph.

The moduli space of compact type $\mathcal{M}_{g,1+n}^{\textnormal{ct}} \subseteq \Mbar_{g,1+n}$ is the locus of stable curves with compact Jacobian, or equivalently, stable curves in which all nodes are separating. It is the union of strata corresponding to \emph{stable trees}, \textit{i.e.} stable graphs with first Betti number $0$. Stable trees can be canonically rooted at the leaf labelled $1$, and their edges receive a canonical orientation flowing from the leaves labelled $1+1,\ldots,1+n$ (ingoing, drawn at the top) towards the root leaf (outgoing, drawn at the bottom). If $v$ is a vertex in a stable tree, we denote $g(v)$ the genus it carries, and $n(v)$ its number of ingoing edges, \textit{i.e.} the valency of $v$ minus $1$. As is shown in Figure~\ref{fig:stabtreeaut}, stable trees can have non-trivial automorphisms\footnote{This corrects \cite[Remark 4.2]{arsie_semisimple_2023} or \cite[above Theorem  3.2]{borot_symmetries_2024}. Nevertheless, all arguments in these articles are valid once the adequate automorphism factors are added. In fact, the stable trees admitting non-trivial automorphisms do not contribute to the DR hierarchy associated to F-CohFTs \cite{ABLRint}, as subtrees without ingoing leaves come with a factor $\textnormal{DR}_g(\mathbf{0})\lambda_g = (-1)^{g}\lambda_g^2 = 0$ \cite{DRJanda}.}. Yet, this possibility is rather limited: automorphisms only originate from the permutations of isomorphic subtrees without ingoing leaves and which are attached to a common vertex.

\begin{figure}[h]
\begin{center}
\includegraphics[width=0.15\textwidth]{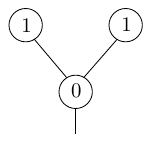}
\caption{\label{fig:stabtreeaut} A stable tree of genus $2$ with an automorphism group of order $2$.}
\end{center}
\end{figure}

For the definition of the F-Givental group we need to consider strata associated to stable trees.

\begin{definition}
\label{def:inclu}Let $T_{g,1+n}$ the set of stable trees of type $(g,1+n)$. If $\Gamma \in T_{g,1+n}$, we denote
\[
\M_{\Gamma} = \prod_{\textnormal{vertex}\,\,v} \M_{g(v),1+n(v)},\qquad \Mbar_{\Gamma} := \prod_{\textnormal{vertex}\,\,v} \Mbar_{g(v),1+n(v)}.
\]
Replacing each vertex $v$ of $\Gamma$ with a stable curve of genus $g(v)$ with $1 + n(v)$ marked points, contracting edges of $\Gamma$ to nodes, and labelling the remaining marked points as they were labelled in $\Gamma$, we obtain a proper morphism $\textnormal{gl}_{\Gamma} : \Mbar_{\Gamma} \longrightarrow \Mbar_{g,1+n}$. We call $\mathcal{S}_{\Gamma} := \textnormal{gl}_{\Gamma}(\M_{\Gamma})$ the stratum associated to $\Gamma$. The number of automorphisms of $\Gamma$ is denoted $\# \textnormal{Aut}(\Gamma)$ and it coincides with the degree of $\textnormal{gl}_{\Gamma}$ as a map from $\Mbar_{\Gamma}$ onto its image.
\end{definition}
Formally, the moduli space of compact type is
\[
\M_{g,1+n}^{\textnormal{ct}} := \bigg(\bigsqcup_{\Gamma \in T_{g,1+n}} \mathcal{S}_{\Gamma}\bigg) \subset \Mbar_{g,1+n}.
\]
Since for any stable tree $\Gamma$ the morphism $\textnormal{gl}_{\Gamma}$ is a composition of gluing morphisms like \eqref{glmor}, F-CohFTs are compatible with the restriction to $\Mbar_{\Gamma}$. This means that for a F-CohFT $\Omega$, we can express $\textnormal{gl}_\Gamma^* \Omega_{g,1+n}$ by multiplying the classes $\Omega_{g(v),1+n(v)}$ associated to the vertices $v$ of $\Gamma$ and composing the multilinear maps along the tree. Since $\textnormal{gl}_{\Gamma}^*$ is an isomorphism in cohomological degree zero,  F-TFTs can be calculated in all topologies (use the stable trees of Figure~\ref{fig:FTFT}).

\begin{lemma}
\label{lem:FTFT}
Let $\omega$ be a F-TFT. Denoting $\alpha := \omega_{1,1}$, for any $g,n \geq 0$ such that $2g - 1 + n > 0$ and $v_1,\ldots,v_n \in V$ we have $\omega_{g,1+n}(v_1 \otimes \cdots \otimes v_n) = \alpha^{g} \cdot v_1 \cdots v_n$.
\end{lemma}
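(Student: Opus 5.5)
We argue by induction on $2g - 1 + n$, using only the gluing axiom \eqref{compamor} and the fact that, since $H^0$ of each (connected) $\Mbar_{g,1+n}$ is $\mathbb{C}$, pullback by any gluing morphism \eqref{glmor} is the identity in cohomological degree $0$. Thus for a F-TFT, \eqref{compamor} becomes a purely algebraic identity:
\[
\omega_{g,1+n}(v_1\otimes\cdots\otimes v_n)=\omega_{g_1,1+n_1+1}\big(v_1\otimes\cdots\otimes v_{n_1}\otimes \omega_{g_2,1+n_2}(v_{n_1+1}\otimes\cdots\otimes v_n)\big)
\]
whenever both factors are stable. The base cases are $(g,n)=(0,2)$, where $\omega_{0,1+2}(v_1\otimes v_2)=v_1\cdot v_2$ by definition of the product, and $(g,n)=(1,0)$, where $\omega_{1,1}=\alpha$ by definition. (The case $(0,1)$ is excluded by $2g-1+n>0$.)

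For genus $0$ and $n\geq 3$, we take $g_1=g_2=0$, $n_1=1$, $n_2=n-1$. The identity above gives $\omega_{0,1+n}(v_1\otimes\cdots\otimes v_n)=\omega_{0,1+2}\big(v_1\otimes\omega_{0,1+(n-1)}(v_2\otimes\cdots\otimes v_n)\big)$. Induction then gives $v_1\cdot v_2\cdots v_n$.

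For $g\geq 1$ we split off a genus-one vertex with no ingoing leaves. Take $g_1=g-1$, $n_1=n$, $g_2=1$, $n_2=0$: this glues $\Mbar_{g-1,1+n+1}\times\Mbar_{1,1}$. It is admissible because $\Mbar_{g-1,1+n+1}$ is stable ($2(g-1)-1+n+1=2g-2+n\ge 0$, with the only failure being $g=1,n=0$, which is the base case $\alpha$). The identity gives
\[
\omega_{g,1+n}(v_1\otimes\cdots\otimes v_n)=\omega_{g-1,1+n+1}(v_1\otimes\cdots\otimes v_n\otimes\alpha),
\]
and induction yields $\alpha^{g-1}\cdot v_1\cdots v_n\cdot\alpha=\alpha^g\cdot v_1\cdots v_n$, using commutativity and associativity of $\cdot$. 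The empty product for $n=0$ is interpreted accordingly: for instance $\omega_{2,1}=\omega_{1,1+1}(\alpha)=\alpha\cdot\alpha$, where $\omega_{1,1+1}(v)=\alpha\cdot v$ comes from gluing $\Mbar_{0,1+1+1}\times\Mbar_{1,1}$.

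Since every (g,n) reduces to the base cases in finitely many steps, this proves the formula and shows that $\omega$ is determined by $(\cdot,\alpha)$. The only point needing care is checking that each gluing used has stable factors, and that the $n=0$ cases route through $(0,1+2)$ and $(1,1)$. No real obstacle is expected.
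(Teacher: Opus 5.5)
Your proof is correct and is essentially the paper's argument: the paper restricts $\omega_{g,1+n}$ to one suitable stable-tree stratum (one tree for $g=0$, one for $g\geq 1$), uses that $\textnormal{gl}_\Gamma^*$ is an isomorphism in cohomological degree $0$, and composes along the tree, while your induction unrolls $\textnormal{gl}_\Gamma$ into successive elementary gluings \eqref{glmor}. One cosmetic slip: stability of $\Mbar_{g-1,1+n+1}$ requires $2g-2+n>0$ rather than $\geq 0$, but you correctly identify $(g,n)=(1,0)$ as the only exception.
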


\begin{figure}[h!]
\begin{center}
\includegraphics[width=0.5\textwidth]{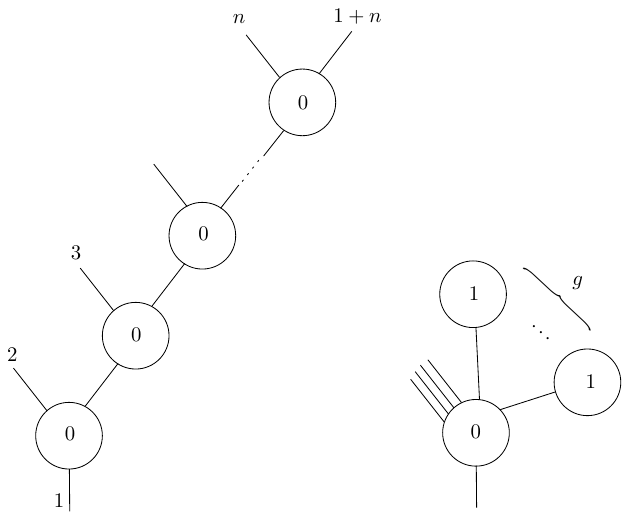}
\caption{\label{fig:FTFT}The evaluation of F-TFT follows from compatibility with the restriction to the stratum on the left ($g = 0$) and on the right ($g \geq 1$). }
\end{center}
\end{figure}

\subsection{F-Givental group}
\label{sec:Fgro}
We now review the F-Givental group and its action on (compact-type) F-CohFTs \cite{arsie_semisimple_2023}. The proofs are omitted, as they are completely analogue to the ones for the Givental group action on CohFTs that can be found  \textit{e.g.} in \cite{pandharipande_relations_2015}.

\begin{definition}
    \label{Taction}Consider a F-CohFT $\Omega$ on $V$. Take $T(z)\in z^2 V \llbracket z \rrbracket$. The \emph{translation of $\Omega$ by $T$} is the collection of classes $T\Omega$ defined by 
    \[
    (T\Omega)_{g,1+n}(v_1\otimes \cdots \otimes v_n)= \sum_{m\geq 0}\dfrac{1}{m!}(f_m)_*\,\Omega_{g,1+n+m}\big(v_1 \otimes \dots \otimes v_n \otimes T(\psi_{1+n+1}) \otimes \dots \otimes T(\psi_{1+n+m})\big),
    \]
    where $f_m: \Mbar_{g,1+n+m} \rightarrow \Mbar_{g,1+n}$ forgets the $m$ last marked points (due to the condition $T(z) = O(z^2)$, the sum over $m$ has only finitely many non-zero terms).
    \end{definition}

Equivalently, the translation can be formulated in terms of $\kappa$-classes.

\begin{lemma}
\label{expkappa}Assume that the algebra $(V,\cdot)$ has a unit $\1$. Let $T(z) = \sum_{m \geq 2} t_m z^m$ for $t_2,t_3,\ldots \in \mathbb{C}$ and define
\[
\hat{T}(z) := \bigg(\1 - \frac{T(z)}{z}\bigg)^{-1} := \exp\bigg(\sum_{m \geq 1} \hat{t}_m z^m\bigg).
\]
Then, we have
\[
\exp\bigg(\sum_{m \geq 1} \hat{t}_m \kappa_m\bigg) = \sum_{m \geq 0} \frac{1}{m!} f_{m*}\bigg(\prod_{i = 1}^{m} T(\psi_{1+n+i})\bigg)
\]
\end{lemma}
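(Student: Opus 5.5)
The plan is to reduce everything to the classical push-forward formula for $\psi$-classes along forgetful maps, and then resum with the exponential formula for permutations. We read $T(z)=\sum_{m\geq 2} t_m z^m$ as $\big(\sum_m t_m z^m\big)\1$. All quantities therefore live in the commutative subalgebra $\mathbb{C}\1\otimes H^*(\Mbar_{g,1+n})$, and the identity is purely scalar. We use the standard convention $\kappa_j = f_*(\psi_{1+n+1}^{j+1})$ on $\Mbar_{g,1+n}$, with the usual $\psi$-classes (not pulled back) at the forgotten points.

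\textbf{Step 1 (push-forward formula).} Recall the well-known formula, proved by induction on $m$ from the comparison $\psi_i = f^*\psi_i + D_{i,\,\textnormal{last}}$ and the projection formula. For $a_1,\ldots,a_m \geq 1$,
\[
(f_m)_*\Big(\prod_{i=1}^m \psi_{1+n+i}^{a_i+1}\Big) = \sum_{\sigma \in \mathfrak{S}_m} \prod_{c \,\textnormal{ cycle of }\sigma} \kappa_{\sum_{i\in c} a_i}.
\]
If this is not considered standard, it is the step needing genuine work. The inductive argument forgets the last point $1+n+m$. Its $\psi$-power pushes forward either to a $\kappa$, which creates a new cycle, or through a diagonal divisor $D_{i,1+n+m}$, which merges its exponent into point $i$ and inserts $m$ into the cycle of $i$. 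Together with $\kappa_a = f^*\kappa_a + \psi_{\textnormal{last}}^a$, this reproduces the cycle structure.

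\textbf{Step 2 (expansion).} Write $T(\psi) = \psi\cdot \tau(\psi)$ with $\tau(z) := T(z)/z = \sum_{a\geq 1} t_{a+1} z^a$. Expanding multilinearly,
\[
\sum_{m\geq 0}\frac{1}{m!}(f_m)_*\prod_{i=1}^m T(\psi_{1+n+i}) = \sum_{m\geq 0}\frac{1}{m!}\sum_{a_1,\ldots,a_m\geq 1}\Big(\prod_i t_{a_i+1}\Big)\sum_{\sigma\in\mathfrak{S}_m}\prod_{c}\kappa_{\sum_{i\in c}a_i}.
\]
For fixed $g,n$ the sum is finite by degree reasons, so all manipulations are legitimate as formal power series in the $t$'s.

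\textbf{Step 3 (exponential formula).} The weight is multiplicative over the cycles of $\sigma$. Moreover, the number of cyclic orders on a $k$-element set is $(k-1)!$. The exponential formula then gives that the total equals $\exp(L)$, where
\[
L = \sum_{k\geq 1}\frac{(k-1)!}{k!}\sum_{a_1,\ldots,a_k\geq 1}\Big(\prod_i t_{a_i+1}\Big)\kappa_{a_1+\cdots+a_k} = \sum_{j\geq 1}\kappa_j\,[z^j]\sum_{k\geq 1}\frac{\tau(z)^k}{k}.
\]
Since $\sum_k \tau^k/k = -\log(1-\tau(z)) = \log \hat T(z) = \sum_j \hat t_j z^j$, we get $L=\sum_j \hat t_j \kappa_j$, which is the claim.

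The only potential obstacle is Step 1, together with keeping the $\kappa$/$\psi$ conventions consistent. The resummation in Steps 2–3 is routine bookkeeping.
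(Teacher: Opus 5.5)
Your proof is correct. The paper gives no argument of its own for this lemma; it only cites \cite[Proposition 6.13]{teleman_structure_2012} and \cite[Lemma 6.3.2]{Eynardbook}. Your route is the standard self-contained one: the Arbarello--Cornalba push-forward formula \cite{AC94} for $\psi$-powers at the forgotten points, followed by the exponential formula for permutations.

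Compared with quoting the references, this approach buys two things:
\begin{itemize}
\item It shows where $\hat T$ comes from. Cycles of length $k$ carry the weight $(k-1)!/k! = 1/k$, which is exactly what turns $\tau(z) = T(z)/z$ into $-\log(1-\tau(z)) = \log \hat T(z)$.
\item It uses only commutativity and associativity of the product on $V \otimes H^{\textnormal{even}}$. So it proves the lemma verbatim for $t_m \in V$, not just for scalar multiples of $\1$. That is the generality in which the paper actually uses the lemma: in Proposition~\ref{prop:Zkappa}, Definition~\ref{defT} and Proposition~\ref{prop:RTflatunit}, one has $\hat t_m \in V$.
\end{itemize}

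One correction to your sketch of Step~1: the diagonal divisors are not what merges cycles. When you forget the last point you have $\psi_i^{a+1} = f^*\psi_i^{a+1} + D_{i,\textnormal{last}} \cdot f^*\psi_i^{a}$. The correction term is killed by the product, because $\psi_{\textnormal{last}} \cdot D_{i,\textnormal{last}} = 0$ and the last point carries a positive power of $\psi_{\textnormal{last}}$.

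All merging instead comes from $\kappa_b = f^*\kappa_b + \psi_{\textnormal{last}}^b$ (Lemma~\ref{lem39}), applied to the $\kappa$'s already produced. Each newly forgotten point absorbs an arbitrary subset of the existing cycles. For $m = 2$, the first push-forward gives $\kappa_{a_2}\psi_{1+n+1}^{a_1+1}$, and the second then gives $\kappa_{a_1}\kappa_{a_2} + \kappa_{a_1+a_2}$. In general, the number of such merge histories producing a block of size $k$ is $(k-1)!$. So this recursion does reproduce $\sum_{\sigma \in \mathfrak{S}_m} \prod_{c} \kappa_{\sum_{i \in c} a_i}$.

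Since the formula is classical, this slip in the sketch does not affect the validity of your proof.
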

\begin{proof}
See \textit{e.g.} in \cite[Proposition 6.13]{teleman_structure_2012},  or by a different method \cite[Lemma 6.3.2]{Eynardbook}.
\end{proof}

\begin{proposition}
    If $\Omega$ is a F-CohFT, so is $T\Omega$.  Translations form an abelian group \wrt the sum, and Definition~\ref{Taction} is a left group action on F-CohFTs.
\end{proposition}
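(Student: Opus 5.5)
We check the three claims separately: $T\Omega$ is a F-CohFT, translations compose additively, and the identity acts trivially. Throughout we use the two standard facts about $\psi$-classes under the maps \eqref{fmor} and \eqref{glmor}. First, on $\Mbar_{g,1+n+m}$ the pullback of a $\psi$-class at a point that is not forgotten differs from the $\psi$-class upstairs by boundary corrections. Second, $\psi$-classes at the forgotten points $1+n+i$ restrict, on a glued stratum, to the $\psi$-class of the component carrying that point.

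\textbf{Equivariance.} Take $\sigma\in\mathfrak{S}_n$. It lifts to $\Mbar_{g,1+n+m}$ as a permutation of the first $n$ non-root points that fixes the last $m$. It therefore commutes with $f_m$ and preserves $\psi_{1+n+i}$. Equivariance of $\Omega_{g,1+n+m}$ then gives equivariance of $(T\Omega)_{g,1+n}$ directly.

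\textbf{Gluing.} Consider \eqref{glmor} and the fibre square formed by $\textnormal{gl}$, $f_m$ and the disjoint union over splittings $m=m_1+m_2$ of
\[
\textnormal{gl}_{m_1,m_2}:\Mbar_{g_1,1+n_1+m_1+1}\times\Mbar_{g_2,1+n_2+m_2}\to\Mbar_{g,1+n+m},
\]
composed with the appropriate relabelling of the extra points. This square is Cartesian up to these labelling choices, because forgotten points can land on either component; there is no excess intersection, since both the divisor and its preimage are codimension $1$. Base change gives
\[
\textnormal{gl}^*f_{m*}=\sum_{m_1+m_2=m}\binom{m}{m_1}(f_{m_1}\times f_{m_2})_*\textnormal{gl}_{m_1,m_2}^*.
\]
The $\psi_{1+n+i}$ restrict to $\psi$-classes on the component where point $1+n+i$ sits. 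Compatibility \eqref{compamor} of $\Omega$ then splits the integrand as the $\Omega$ on the first factor, with the $\Omega$ on the second factor inserted in the glued slot. The binomial coefficient combines with $1/m!$ into $1/(m_1!\,m_2!)$, which is exactly the gluing formula for $T\Omega$. The cohomological degree stays even because $T$ has even-degree coefficients in $\psi$. Finiteness of the sum over $m$ follows from $T=O(z^2)$: each insertion raises the degree by at least one beyond the dimension added, so the degree exceeds the dimension for large $m$.

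\textbf{Group action.} We must show $T_1(T_2\Omega)=(T_1+T_2)\Omega$ and that $T=0$ acts as the identity. The identity case is immediate. For the composition, expand $T_1(T_2\Omega)$. We get $f_{m*}$ applied to $(T_2\Omega)_{g,1+n+m}$, which involves $f_{k*}$ with insertions $T_1(\psi_{1+n+i})$ at the $m$ points. The $\psi_{1+n+i}$ live on $\Mbar_{g,1+n+m}$, and pulling them back along $f_k$ introduces corrections supported on divisors where a point $1+n+i$ bubbles off with some of the $k$ forgotten points on a genus-$0$ component. This correction is the main obstacle.

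We handle it with the known argument from Pandharipande--Pixton--Zvonkine. Write $f_k^*\psi_{1+n+i}=\psi_{1+n+i}-D$. The classes $\psi_{1+n+i}\cdot D$ vanish, and $D$ pushes forward to zero under $f_k$ together with the remaining insertions. The latter holds because on such a divisor the bubble component is $\Mbar_{0,2+j}$; there the $\Omega$-factor, after pushforward of $T$-insertions, involves $\psi$-monomials at forgotten points and the node of total degree at least $j$ on a space of dimension $j-1$, so it vanishes. Hence $f_k^*\psi_{1+n+i}^a$ may be replaced by $\psi_{1+n+i}^a$ under the pushforward. The double sum then becomes
\[
\sum_{m,k}\frac{1}{m!\,k!}f_{m+k*}\,\Omega\bigl(\cdots\otimes T_1^{\otimes m}\otimes T_2^{\otimes k}\bigr).
\]
By the binomial theorem and equivariance this equals $\sum_N\frac{1}{N!}f_{N*}\Omega\bigl(\cdots\otimes (T_1+T_2)^{\otimes N}\bigr)$. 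Abelianity is clear from this formula.
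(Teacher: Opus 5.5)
Your argument is correct and is the standard proof from Pandharipande--Pixton--Zvonkine, which the paper cites in place of a proof. You use base change along the flat forgetful map plus the projection formula for the gluing axiom, and you compare $f_k^*\psi$ with $\psi$ for the composition law.

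One small slip: the claim that $\psi_{1+n+i}\cdot D=0$ is false once two or more forgotten points can sit on the bubble. For example, forgetting points $4,5$ from $\Mbar_{0,5}$ to $\Mbar_{0,1+2}$ gives $D=\psi_3$, so $\psi_3\cdot D=\psi_3^2\neq 0$. You do not need this claim, though. Every term of $(\psi_{1+n+i}-D)^a-\psi_{1+n+i}^a$ contains a factor of some bubble divisor $D_{0,\{1+n+i\}\cup S}$. On that divisor, the $T_2$-insertions alone have degree $\geq 2|S|>|S|-1=\dim\Mbar_{0,2+|S|}$, so each such term vanishes.
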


Let $R(z)\in \End(V) \llbracket z \rrbracket$ be a group-like element, \textit{i.e.} $R(z)=\Id_V+ O(z)$. We define the associated edge weight
\begin{equation}
\label{Wweight}
E_R(z,z'):=\frac{\Id_V -R^{-1}(z)R(-z')}{z+z'}\in \End(V)\llbracket z,z' \rrbracket.
\end{equation}
There are two noteworthy differences with the definition of R-elements and edge weights in the Givental group. First, $R(z)$ do not need to satisfy a symplectic condition. Second, there are two distinct factors involving $R$ in the formula for edge weight: $R^{-1}(z)$ is associated to ingoing edges, while $R(-z)$ is associated to outgoing edges.

\begin{definition}
\label{Reaction}
    Consider a F-CohFT $\Omega$ on $V$ and let $R(z)$ be a group-like element of $\End(V)\llbracket z \rrbracket$. The R-transformation of $\Omega$ is the collection of classes $R\Omega$ defined by
 \[
 R\Omega_{g,1+n} = \sum_{\Gamma \in T_{g,1+n}} \frac{1}{\# \textnormal{Aut}(\Gamma)}\, \textnormal{gl}_{\Gamma *}\Omega_{\Gamma}
 \]
 where $\Omega_{\Gamma} \in \Hom\big(V^{\otimes n},V \otimes H^{\textnormal{even}}(\Mbar_{\Gamma})\big)$ is obtained in the following way. We first place
 \begin{itemize}
 \item $R^{-1}(\psi_{1+i})$ at the $(1+i)$-th ingoing leaf, for each $i \in [n]$;
 \item $\Omega_{g(v),1+n(v)}$ at each vertex $v$;
 \item $E_R(\psi,\psi')$  at each oriented edge $v' \rightarrow v$, where $\psi,\psi'$ are the psi-classes associated to the image of the node in the components corresponding to $v,v'$;
 \item $R(-\psi_1)$ at the root.
 \end{itemize}
Then, we tensor classes in $H^{*}(\Mbar_{\Gamma}) \cong \bigotimes_{v} H^{*}(\Mbar_{g(v),1+n(v)})$ and compose multilinear maps involving $V$ along edges of the stable tree, following the orientation.
\end{definition}

\begin{figure}[t]
\begin{center}
\includegraphics[width=0.65\linewidth]{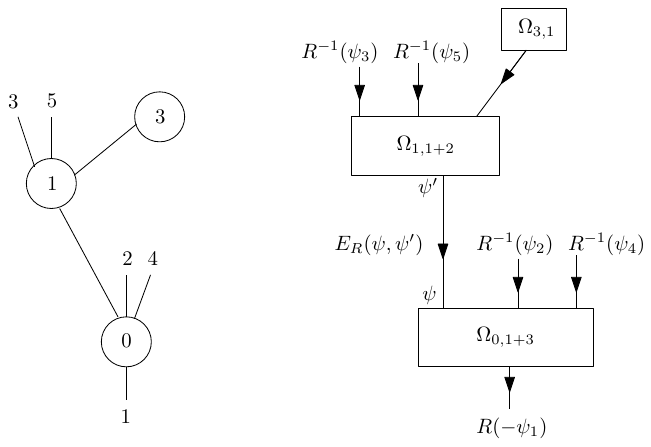}
\caption{A stable tree in $T_{4,1+4}$ and the corresponding $\textnormal{Cont}$.}
\end{center}
\end{figure}

\begin{proposition}
    If $\Omega$ is a F-CohFT, so is $R\Omega$. Group-like elements in $\textnormal{End}(V)\llbracket z \rrbracket$ form a group for the composition in $V$ and multiplication in $z$, and Definition~\ref{Reaction} is a left group action on F-CohFTs.
\end{proposition}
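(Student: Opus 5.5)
The proof follows the standard argument for the Givental group acting on CohFTs, as in \cite{pandharipande_relations_2015}, adapted to rooted stable trees. There are three things to show: $R\Omega$ satisfies the two F-CohFT axioms; group-like elements form a group; and the assignment is a left action, meaning $R_1(R_2\Omega) = (R_1R_2)\Omega$ and $\Id_V$ acts trivially.

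\emph{Equivariance} is immediate. Relabelling the ingoing leaves permutes $T_{g,1+n}$ bijectively and preserves automorphism groups. The leaf weights $R^{-1}(\psi_{1+i})$ are attached uniformly to every ingoing leaf, and $\Omega$ itself is $\mathfrak{S}$-equivariant.

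\emph{Compatibility with gluing.} Fix $\textnormal{gl} : \Mbar_{g_1,1+n_1+1}\times\Mbar_{g_2,1+n_2} \to \Mbar_{g,1+n}$, which corresponds to the tree $\Gamma_0$ with one edge $e_0$. I compute $\textnormal{gl}^*\textnormal{gl}_{\Gamma*}\Omega_\Gamma$ using the excess intersection formula for boundary strata. The pullback is a sum over generic $(\Gamma_0,\Gamma)$-structures, i.e. trees $\Gamma'$ with a contraction to $\Gamma$ and a contraction to $\Gamma_0$ that are jointly surjective on edges. Since all graphs are trees there are no self-loops, so there are two cases for $e_0$.
\begin{itemize}
\item If $e_0$ is not an edge of $\Gamma$, it splits a vertex $v$ of $\Gamma$. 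Then $\Omega_{g(v),1+n(v)}$ pulls back by its own gluing axiom \eqref{compamor}.
\item If $e_0$ is an edge of $\Gamma$, the excess bundle contributes the factor $-(\psi+\psi')$ on that edge.
\end{itemize}
Summing both cases, the $e_0$-edge carries
\[
1 - (\psi+\psi')\,E_R(\psi,\psi') = R^{-1}(\psi)R(-\psi'),
\]
where $\psi$ is on the lower component and $\psi'$ on the upper one. This is exactly the product of an outgoing root factor $R(-\psi')$ for the upper tree and an ingoing leaf factor $R^{-1}(\psi)$ for the lower tree. Here the upper tree is the one glued into the last leaf of $\Mbar_{g_1,1+n_1+1}$. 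Hence the pullback factorises as
\[
R\Omega_{g_1,1+n_1+1}\big(\cdots\otimes R\Omega_{g_2,1+n_2}(\cdots)\big).
\]
The remaining bookkeeping is that the automorphism factors $1/\#\textnormal{Aut}$ combine correctly. I would handle this as in the CohFT case, by writing the sum over trees as a sum over labelled trees modulo relabelling. The main obstacle is the careful orientation bookkeeping, namely checking that the non-symmetric edge weight \eqref{Wweight} produces $R^{-1}$ on the lower side and $R(-\cdot)$ on the upper side, consistently with the rooting at leaf $1$.

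\emph{Group and action.} Group-like elements form a group because $\Id_V + z\End(V)\llbracket z\rrbracket$ is closed under products and formal inverses. For the action property, I expand $R_1(R_2\Omega)$ as a sum over pairs consisting of a tree and, at each vertex, a subtree. This is the same as a sum over trees $\Gamma$ whose edges are coloured $1$ or $2$. Composing leaf and root factors gives $(R_1R_2)^{-1}(\psi)$ and $R_1R_2(-\psi)$. On each edge the colour-$1$ and colour-$2$ weights combine, using the identity
\[
E_{R_1}(z,z') + R_1^{-1}(z)\,E_{R_2}(z,z')\,R_1(-z') = E_{R_1R_2}(z,z'),
\]
which is verified directly from \eqref{Wweight}. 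This yields $(R_1R_2)\Omega$. Finally, $\Id_V$ gives $E=0$, so only the one-vertex tree survives and $\Id_V$ acts trivially.
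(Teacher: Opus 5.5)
Your approach is the standard CohFT argument that the paper itself defers to (it omits the proof and cites \cite{pandharipande_relations_2015}). The equivariance and compatibility parts are correct. In particular, the orientation in $1-(\psi+\psi')E_R(\psi,\psi') = R^{-1}(\psi)R(-\psi')$ is right: $R^{-1}(\psi)$ lands on the last leaf of the lower factor and $R(-\psi')$ on the root of the upper factor.

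The problem is the edge identity in the action step, which is false as written. Expanding \eqref{Wweight} gives
\[
E_{R_1}(z,z') + R_1^{-1}(z)E_{R_2}(z,z')R_1(-z') = \frac{\Id_V - R_1^{-1}(z)R_2^{-1}(z)R_2(-z')R_1(-z')}{z+z'} = E_{R_2R_1}(z,z'),
\]
and this is not $E_{R_1R_2}(z,z')$ unless $R_1$ and $R_2$ commute. You have swapped the roles of the inner and outer elements. Your identity is the one governing $R_2(R_1\Omega)$, with outer $R_2$ and inner $R_1$. It is not the one governing $R_1(R_2\Omega)$, and as stated it contradicts your own correct leaf and root factors $(R_1R_2)^{-1}(\psi)$ and $(R_1R_2)(-\psi_1)$.

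Here is the correct bookkeeping for $R_1(R_2\Omega)$. Outer (colour $1$) edges carry $E_{R_1}$, and each vertex carries a copy of $R_2\Omega$. An outer edge $v'\to v$ is therefore sandwiched between two factors:
\begin{itemize}
\item the root factor $R_2(-\psi')$ of the inner tree at $v'$;
\item the leaf factor $R_2^{-1}(\psi)$ of the inner tree at $v$.
\end{itemize}
So an outer edge carries $R_2^{-1}(\psi)E_{R_1}(\psi,\psi')R_2(-\psi')$. Inner (colour $2$) edges carry $E_{R_2}(\psi,\psi')$. The identity you need is
\[
E_{R_2}(z,z') + R_2^{-1}(z)E_{R_1}(z,z')R_2(-z') = E_{R_1R_2}(z,z'),
\]
and this one does follow directly from \eqref{Wweight}.

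This is not cosmetic. Which element gets conjugated on the edges is exactly what makes the edge weights consistent with the leaf and root factors, and hence what makes the action left rather than right. With this correction, the argument goes through. You still need the usual automorphism bookkeeping when you rewrite the double sum as a sum over trees with $2$-coloured edges. Every colouring is allowed, since contracting the colour-$2$ edges recovers the outer tree and the edge sum factorises edge by edge.
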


\begin{proposition}
\label{afterbeforeprop}
    Take $R(z)$ be a group-like element of $\End(V)\llbracket z \rrbracket$ and let\footnote{A stands for 'after', B for 'before'.} $T_{\textnormal{A}}(z), T_{\textnormal{B}}(z)$ be two elements of $z^2V\llbracket z \rrbracket$  related by $T_{\textnormal{A}}(z)=R(z)[T_{\textnormal{B}}(z)]$. Then, for every F-CohFT $\Omega$ we have
    \[
    T_{\textnormal{A}}R\Omega= RT_{\textnormal{B}}\Omega,
    \]
    where we mean applying first the $R$-action on $\Omega$ and then translation by $T_{\textnormal{A}}$ action, or applying first translation by $T_{\textnormal{B}}$ and then the $R$-action.
\end{proposition}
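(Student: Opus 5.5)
We will prove Proposition~\ref{afterbeforeprop} by expanding both sides as sums over stable trees decorated with forgotten points and matching the decorations term by term, as in the analogous statement for CohFTs in \cite{pandharipande_relations_2015}.

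\textbf{Expansion of $RT_{\textnormal{B}}\Omega$.} By Definition~\ref{Reaction}, this is a sum over stable trees $\Gamma \in T_{g,1+n}$. At each vertex $v$ we place $(T_{\textnormal{B}}\Omega)_{g(v),1+n(v)}$, which by Definition~\ref{Taction} is a pushforward along forgetful maps of $\Omega_{g(v),1+n(v)+m_v}$ with $T_{\textnormal{B}}(\psi)$ inserted at the extra points. The key geometric input is the comparison of $\psi$-classes under forgetful maps. On $\Mbar_\Gamma$, the classes $\psi$ at the legs and half-edges of $\Gamma$ are not the pullbacks of the corresponding classes upstairs. They differ by boundary divisors in which the forgotten points bubble off together with that leg or half-edge on a genus-$0$ component. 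We therefore rewrite the whole expression as a sum over larger trees $\tilde\Gamma$ with $n+m$ leaves, where the last $m$ leaves are forgotten. We push forward along $f_m$ and keep track of the factor $\frac{1}{m!}$ together with the automorphism factors.

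\textbf{Expansion of $T_{\textnormal{A}}R\Omega$.} Here we first form $(R\Omega)_{g,1+n+m}$ with inputs $T_{\textnormal{A}}(\psi_{1+n+i})$ and then push forward by $f_m$. The leaf $1+n+i$ carries $R^{-1}(\psi)\,T_{\textnormal{A}}(\psi) = R^{-1}(\psi)\,R(\psi)[T_{\textnormal{B}}(\psi)] = T_{\textnormal{B}}(\psi)$. So every forgotten leaf simply carries $T_{\textnormal{B}}(\psi)$, attached to whichever vertex it sits on. This is where the relation $T_{\textnormal{A}}=R[T_{\textnormal{B}}]$ is used. The remaining task is to push forward along $f_m$ a sum over trees with $n+m$ leaves. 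Trees in which a forgotten leaf sits on a vertex that becomes unstable after forgetting contract onto smaller trees.

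\textbf{Matching, and the main obstacle.} The main obstacle is the bookkeeping of $\psi$-class corrections under $f_m$. We have $\psi_i = f^*\psi_i + D_{i,\textnormal{new}}$, and for a power series this gives
\[
R^{-1}(\psi_i) = f^*R^{-1}(\psi_i) + \text{(divisor terms)}.
\]
We plan to prove the statement first for $m=1$ at the level of a single forgotten point. Since both sides are group actions and translations add, the reduction to small $T$ is done by the usual argument: take $T_{\textnormal{B}} = \epsilon\, t z^k$ and compare the derivatives in $\epsilon$. We then check that the divisor corrections on ingoing legs, on the root leg and on the two sides of each oriented edge cancel against the contributions of trees with a genus-$0$, $3$-valent vertex carrying the forgotten point. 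This uses the identities
\begin{align*}
\frac{R^{-1}(\psi)-R^{-1}(\psi')}{\psi-\psi'} &\quad \text{on an ingoing leg,}\\
\frac{R(-\psi)-R(-\psi')}{\psi-\psi'} &\quad \text{at the root,}
\end{align*}
and the analogous identity for $E_R$. The orientation asymmetry between $R^{-1}(z)$ on ingoing edges and $R(-z)$ on outgoing edges is exactly what makes the edge-weight identity close up. With these three local identities, the first-order statement follows, and exponentiating gives the proposition.
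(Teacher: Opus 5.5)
Your skeleton is sound: expand both sides over stable trees, note that a forgotten leaf of $T_{\textnormal{A}}R\Omega$ carries $R^{-1}(\psi)[T_{\textnormal{A}}(\psi)] = T_{\textnormal{B}}(\psi)$, and reduce to first order in $\epsilon$. This is the standard argument; the paper gives no proof and defers to \cite{pandharipande_relations_2015}. The gap is in the step you call the main obstacle. The mechanism you describe is not the one at work, and the real key input, $T_{\textnormal{B}} \in z^2V\llbracket z \rrbracket$ (at first order, $T_{\textnormal{B}}(0)=0$), is never used in your matching.

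The divisor corrections and the contributions of trees with a trivalent genus-$0$ vertex carrying the forgotten point $p$ do not cancel against each other; each vanishes separately. At an ingoing leaf, $R^{-1}(\tilde\psi_i) = f^*R^{-1}(\psi_i) - p_{i*}E_R(\psi_i,0)$. The correction multiplies $T_{\textnormal{B}}(\tilde\psi_p)$, and $p_i^*\tilde\psi_p = 0$, so it vanishes because $T_{\textnormal{B}}(0) = 0$. A trivalent genus-$0$ vertex carrying $p$ lives on $\Mbar_{0,3}$, where $T_{\textnormal{B}}(\psi_p) = T_{\textnormal{B}}(0) = 0$. The same happens at the root and at half-edges. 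Likewise, trees with a vertex that becomes unstable after forgetting do not ``contract onto smaller trees''. They contribute zero, because on $\Mbar_{0,k+2}$ or $\Mbar_{0,k+1}$ the $k$ factors $T_{\textnormal{B}}(\psi)$ have complex degree at least $2k$, which exceeds the dimension.

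The divided-difference identities and the in/out asymmetry cannot close the argument on their own. Insert a constant vector $u$ at $p$ instead of $T_{\textnormal{B}}(\psi_p)$. Your two terms at leaf $i$ become $-E_R(\psi_i,0)[v_i]\cdot u$ and $E_R(\psi_i,0)[v_i \cdot u]$, whose sum is the commutator of $E_R(\psi_i,0)$ with multiplication by $u$. This is generically nonzero; the same kind of commutator appears in the proof of Proposition~\ref{prop:diffTR}. Accordingly, the proposition already fails at first order for $T_{\textnormal{B}} = \epsilon u$, e.g.\ in type $(0,1+2)$ for a semi-simple F-TFT with $R_1$ not diagonal in the idempotent basis. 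Your identities are the relevant ones for unit insertions, where $u = \1$ kills the commutator, not for this statement.

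Once the vanishing is used, no identity involving $R$ is needed, and the infinitesimal reduction becomes optional. For any $m$, the difference between the upstairs $\psi$-classes and the $f_m^*$-pullbacks of the downstairs ones is supported on divisors where $k \geq 1$ forgotten points bubble off on a genus-$0$ component. The same degree count kills these. Hence:
\begin{itemize}
\item every leaf, root and edge weight may be replaced by its pullback;
\item the projection formula turns each vertex into $f_{m_v*}$ of $\Omega_{g(v),1+n(v)+m_v}$ with $T_{\textnormal{B}}(\psi)$ at the $m_v$ extra points;
\item $\frac{1}{m!}$ times the number of ways of distributing the labelled forgotten points over the vertices gives $\prod_v \frac{1}{m_v!}$, which is exactly the vertex weight of $RT_{\textnormal{B}}\Omega$, with the usual orbit-stabiliser count for automorphisms.
\end{itemize}
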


In other words, translations and $R$-transformations combine together in a semi-direct product of groups, which acts on F-CohFTs. We call this group the \emph{F-Givental group}. If $V = \mathbb{C}$ and we act on the trivial F-CohFT $\mathbf{1}$ given by the fundamental class in every $(g,n)$, we have the concise formula
\begin{equation}
\label{RT1}
(RT\mathbf{1})_{g,1+n} = \exp\bigg(\sum_{k \geq 1} \hat{t}_k \kappa_{k} + r(-\psi_1) - \sum_{i = 2}^{n+1} r(\psi_i) + \sum_{\Gamma}  \textnormal{gl}_{\Gamma *} \frac{r(\psi) - r(-\psi')}{\psi + \psi'}\bigg),
\end{equation}
where $R(z) = e^{r(z)}$ and the sum ranges over stable trees $\Gamma \in T_{g,1+n}$ with a single edge. This comes from the treatment of self-intersections of boundary divisors, see \textit{e.g.} \cite[Lemma 3.10]{MVpaper}.

\begin{proposition}
    Let $\Omega$ be a F-CohFT on $V$ with a flat unit $\1\in V$. Let $R(z)$ be a group-like element of $\End(V)\llbracket z \rrbracket$ and define
    \[
    T_{\textnormal{A}}(z)=z\big(R(z)[\1]-\1\big) \qquad \textnormal{and} \qquad  T_{\textnormal{B}}(z)=z\big(\1-R^{-1}(z)[\1]\big).
    \]
    Then $R.\Omega := T_{\textnormal{A}}R\Omega=RT_{\textnormal{B}}\Omega$ is a F-CohFT on $V$ with the same flat unit $\1$.
    \end{proposition}

\begin{remark} For CohFTs and TFTs, the compatibility property holds not only for stable trees but also for stable graphs. In other words, CohFTs are algebras over the modular operad $H^*(\Mbar_{g,n})$, while F-CohFTs are algebras over the graded \emph{operad} $\bigoplus_{g \geq 0} H^*(\Mbar_{g,1+n})$. Restricting the latter to the genus $0$ part gives the so-called hypercommutative algebras, and the $R$-action on them first appeared (at the infinitesimal level) in \cite[Section 6]{SM}.
\end{remark}

\vspace{0.5cm}

\section{{\Large Geometry of invertible F-CohFTs}}

\medskip
\label{S3}
\label{topological field theory}

\subsection{Variants of F-CohFTs related to other moduli spaces} \label{sec:variantFCohFT} For the proof of Theorem~\ref{thm:transfree} it is crucial to work not only with $\Mbar_{g,1+n}$ or $\Mct_{g,1+n}$, but also with the moduli of smooth curves $\M_{g,1+n}$ and certain bundles over it with a more differential (rather than algebraic) geometric perspective. Here some arbitrary choices have to be made, which we fix using hyperbolic structures. 

\begin{definition}
\label{defpi}For $2g - 1 + n > 0$, let $\mathcal{M}_{g,1+n}^{\circ}$ be the moduli space of hyperbolic structures on a smooth real surface of genus $g$ with $n$ unit-length geodesic boundaries labelled $1,\ldots,1+n$. Let
\[
\pi : \mathcal{M}^{\bullet}_{g,1+n} \longrightarrow \mathcal{M}_{g,1+n}^{\circ}
\]
be the $(\mathbb{S}_1)^{1 + n}$-bundle whose fibers parametrise the choice of an origin point on each boundary.
\end{definition}
It is well-known that in the smooth category, $\mathcal{M}_{g,1+n}^{\circ}$ is isomorphic to the moduli space $\mathcal{M}_{g,1+n}$ parametrising smooth complex curves with marked points and the bundle $\mathcal{M}_{g,1+n}^{\bullet}$ is isomorphic to the bundle over $\mathcal{M}_{g,1+n}$ whose fibers parametrise tangent vectors at each marked point modulo rescaling by a positive real number. We denote
\begin{equation}
\label{thetaiso}
\theta : \mathcal{M}_{g,1+n}^{\circ} \longrightarrow \mathcal{M}_{g,1+n}
\end{equation}
the isomorphism equipping the bordered hyperbolic surfaces with their associated complex structure, and gluing along each boundary a punctured complex disc to get a smooth complex curve with marked points. We define $\psi$- and $\kappa$-classes on $\M_{g,1+n}^{\circ}$ by pulling back the corresponding classes on $\M_{g,1+n}$.

We can construct the analogue of the gluing morphism \eqref{glmor} for these two new moduli spaces. Observe each element of $\mathcal{M}_{g,1+n}^{\bullet}$ is represented by real surfaces $\Sigma$ in which each boundary component is canonically identified to $\mathbb{S}_1$, using the hyperbolic length of paths along the boundary and issuing from its origin point. For each integer decomposition $g = g_1 + g_2$ and $n = n_1 + n_2$, we have a smooth gluing map
\begin{equation}
\label{glmorbull}
\textnormal{gl}^{\bullet} : \mathcal{M}_{g_1,1 + n_1 + 1}^{\bullet} \times \mathcal{M}_{g_2,1+n_2}^{\bullet} \longrightarrow \mathcal{M}_{g,1+n}^{\bullet}.
\end{equation}
It is obtained by gluing the last boundary component of a hyperbolic surface in $\mathcal{M}_{g_1,1 + n_1 + 1}^{\bullet}$ with the first boundary component of a hyperbolic surface in $\mathcal{M}_{g_2,1+n_2}^{\bullet}$ matching their common canonical identification to $\mathbb{S}_1$, and forgetting the origin point. The result is a hyperbolic surface because we glued geodesic boundaries of hyperbolic surfaces. We define $\kappa$-classes on $\M_{g,1+n}^{\bullet}$ by pulling back with $\pi$ the corresponding classes on $\M_{g,1+n}^{\circ}$. Doing the same with $\psi$-classes yields zero, as we discuss in Section~\ref{sec:free}.

We introduce the locus $\mathcal{N} \subset \mathcal{M}_{g,1+n}^{\circ}$ of hyperbolic surfaces admitting a geodesic of length $\leq 1$ separating it into two components of genus $g_1$ and $g_2$, the first one containing the boundaries labelled $1,\ldots,1 + n_1$. The locus $\partial \mathcal{N} \subset \mathcal{M}_{g,1+n}^{\circ}$ consists of surfaces where the same splitting happens with a geodesic of length exactly $1$. Cutting along this geodesic defines a map (Figure~\ref{fig:numap})
\begin{equation}
\label{numap}
\nu : \partial \mathcal{N} \longrightarrow \mathcal{M}_{g_1,1+n_1 + 1}^{\circ} \times \mathcal{M}_{g_1,1+n_2}^{\circ}.
\end{equation}
This is a $\mathbb{S}_1$-bundle, as the twist was forgotten in the cutting. The cohomology of this bundle will be discussed in Section~\ref{sec:free}.

\begin{figure}[h!]
\begin{center}
\includegraphics[width=0.5\textwidth]{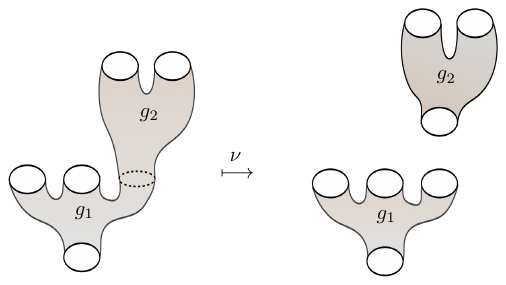}
\caption{\label{fig:numap} The map $\nu$ of \eqref{numap}, where $g_1,g_2$ indicate the genus of each component.}
\end{center}
\end{figure}

\begin{figure}[h!]
\begin{center}
\includegraphics[width=0.3\textwidth]{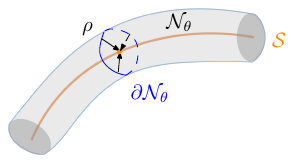}
\caption{Tubular neighborhood and circle bundle \eqref{eq:rhomap}. \label{fig:rhomap}}
\end{center}
\end{figure}

Back to the moduli spaces of complex curves, let $\mathcal{S} \subset \M_{g,1+n}^{\textnormal{ct}}$ be the stratum
\begin{equation}
\label{Sopenst}
\mathcal{S} := \textnormal{gl}(\M_{g_1,1+n_1+1} \times \M_{g_2,1+n_2}).
\end{equation}
The moduli spaces of bordered surfaces allow us defining a thickening $\mathcal{N}_{\theta} := \mathcal{S} \cup \theta(\mathcal{N}) \subset \M_{g,1+n}^{\textnormal{ct}}$: this is a tubular neighborhood of $\mathcal{S}$ admitting a smooth strong deformation retraction $r : \mathcal{N}_{\theta} \rightarrow \mathcal{S}$. The restriction of $r$ to $\partial \mathcal{N}_{\theta} = \theta(\partial \mathcal{N})$ is a $\mathbb{S}_1$-bundle
\begin{equation}
\label{eq:rhomap}
\rho : \partial \mathcal{N}_{\theta} \longrightarrow \mathcal{S}.
\end{equation}
By their geometric construction, the bundles $\rho$ and $\nu$ are related by the commutative diagram
\begin{equation}
\label{comu}
\rho \circ \theta = \textnormal{gl} \circ (\theta_1^{-1},\theta_2^{-1}) \circ \nu,
\end{equation}
where $\theta_1$ and $\theta_2$ are isomorphisms like \eqref{thetaiso} for the two factors. The thickening will be used for cohomological computations in the following way.
\begin{lemma}
\label{lemaphi} If $\phi \in H^*(\M_{g,1+n}^{\textnormal{ct}})$, then $\rho^*(\phi_{|\mathcal{S}}) = \phi_{|\partial \mathcal{N}_{\theta}}$.
\end{lemma}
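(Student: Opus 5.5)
The plan is to deduce the statement from the homotopy invariance of cohomology, applied to the tubular neighbourhood $\mathcal{N}_{\theta}$. Write $\iota_{\mathcal{S}} : \mathcal{S} \hookrightarrow \mathcal{N}_{\theta}$, $\iota_{\partial} : \partial \mathcal{N}_{\theta} \hookrightarrow \mathcal{N}_{\theta}$ and $j : \mathcal{N}_{\theta} \hookrightarrow \M_{g,1+n}^{\textnormal{ct}}$ for the inclusions. Restrictions are pullbacks along inclusions, so
\[
\phi_{|\mathcal{S}} = \iota_{\mathcal{S}}^* j^*\phi, \qquad \phi_{|\partial \mathcal{N}_{\theta}} = \iota_{\partial}^* j^* \phi .
\]
It therefore suffices to prove the identity for the class $\phi' := j^*\phi \in H^*(\mathcal{N}_{\theta})$, namely
\[
\rho^* \iota_{\mathcal{S}}^* \phi' = \iota_{\partial}^* \phi' .
\]

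The key input is that $r : \mathcal{N}_{\theta} \to \mathcal{S}$ is a strong deformation retraction. This gives two facts:
\begin{itemize}
\item $r \circ \iota_{\mathcal{S}} = \Id_{\mathcal{S}}$;
\item $\iota_{\mathcal{S}} \circ r \simeq \Id_{\mathcal{N}_{\theta}}$ by a homotopy.
\end{itemize}
Hence $r^*$ and $\iota_{\mathcal{S}}^*$ are mutually inverse isomorphisms, and
\[
\phi' = r^* \iota_{\mathcal{S}}^* \phi' .
\]
Restricting this to $\partial \mathcal{N}_{\theta}$ and using $r \circ \iota_{\partial} = \rho$, which is the definition of $\rho$ in \eqref{eq:rhomap}, we get
\[
\iota_{\partial}^*\phi' = \iota_{\partial}^* r^* \iota_{\mathcal{S}}^*\phi' = (r\circ\iota_{\partial})^* \iota_{\mathcal{S}}^*\phi' = \rho^* (\phi_{|\mathcal{S}}),
\]
which is the claim.

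The one point needing care is the topological setting. The spaces $\M_{g,1+n}^{\textnormal{ct}}$, $\mathcal{S}$ and $\mathcal{N}_{\theta}$ are orbifolds (or Deligne--Mumford stacks), and the homotopy must be compatible with this structure. I would work with rational (or complex) coefficients, where orbifold cohomology coincides with the cohomology of the coarse space. The retraction $r$ is built from the hyperbolic collar construction, which is natural with respect to isomorphisms of surfaces, so it descends to coarse spaces and is a genuine homotopy equivalence there. This is the main point to verify, but it is essentially built into the construction of $\mathcal{N}_{\theta}$ asserted before the statement, so I expect the argument to reduce to the formal computation above.
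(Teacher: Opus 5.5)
Your proposal is correct and follows the paper's proof: the paper also uses the strong deformation retraction $r$ to get $r^*(\phi_{|\mathcal{S}}) = \phi_{|\mathcal{N}_{\theta}}$, then restricts to $\partial\mathcal{N}_{\theta}$, where $r$ becomes $\rho$. Your remark about using rational coefficients on coarse spaces is extra care that the paper leaves implicit.
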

\begin{proof} Since $r$ induces an isomorphism in cohomology, we have $r^*(\phi_{|\mathcal{S}}) = \phi_{|\mathcal{N}_{\theta}}$. Restricting to $\partial \mathcal{N}_{\theta}$ gives $\rho^*\phi_{|\mathcal{S}}$ in the left-hand side and $\phi_{|\partial \mathcal{N}_{\theta}}$ in the right-hand side.
\end{proof}

\begin{definition}
A \emph{free-boundary F-CohFT} on $V$ is a collection
\[
\Omega_{g,1+n} \in \textnormal{Hom}\big(V^{\otimes n},V \otimes H^{\textnormal{even}}(\mathcal{M}_{g,1+n}^{\circ})\big)
\]
indexed by integers $g,n \geq 0$ such that $2g - 1 + n \geq 0$, which is $\mathfrak{S}$-equivariant and is compatible with any of the maps \eqref{numap},  \textit{i.e.} for any $v_1,\ldots,v_n \in V$
\begin{equation}
\label{compascirc}
\Omega_{g,1+n}(v_1 \otimes \cdots \otimes v_n)_{|\partial N} = \nu^*\Omega_{g_1,1+n_1 + 1}\big(v_1 \otimes \cdots \otimes v_{n_1} \otimes \Omega_{g_2,1+n_2}(v_{n_1 + 1} \otimes \cdots \otimes v_n)\big).
\end{equation}
A \emph{pinned-boundary F-CohFT} on $V$ is a collection
\[
\Omega_{g,1+n} \in \textnormal{Hom}\big(V^{\otimes n},V \otimes H^{\textnormal{even}}(\mathcal{M}_{g,1+n}^{\bullet})\big)
\]
indexed by integers $g,n \geq 0$ such that $2g - 1 + n \geq 0$, which is $\mathfrak{S}$-equivariant and is compatible with the map \eqref{glmorbull}, \textit{i.e.} for any $v_1,\ldots,v_n \in V$:
\begin{equation}
\label{compasbull}
(\textnormal{gl}^{\bullet})^* \Omega_{g,1+n}(v_1 \otimes \cdots \otimes v_n) = \Omega_{g_1,1+n_1 + 1}\big(v_1 \otimes \cdots \otimes v_{n_1} \otimes \Omega_{g_2,1+n_2}(v_{n_1 + 1} \otimes \cdots \otimes v_{n})\big).
\end{equation}
\end{definition}

\begin{proposition}
\label{propF1} If $\Omega$ is a F-CohFT on $V$, then $\Omega^{\circ} := \theta^*(\Omega_{|\M})$ is a free-boundary F-CohFT and $\Omega^{\bullet} := \pi^* \Omega^{\circ}$ is a pinned-boundary F-CohFT\footnote{$\Omega^{\circ}$ and $\Omega^{\bullet}$ are called $\Z$ with lower and upper indices respectively in \cite{teleman_structure_2012}.} on $V$. 
\end{proposition}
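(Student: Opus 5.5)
We have to check the two defining properties for $\Omega^{\circ}$ and then for $\Omega^{\bullet}$: $\mathfrak{S}$-equivariance and compatibility with the gluing/cutting maps, \eqref{compascirc} and \eqref{compasbull} respectively.

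\textbf{Equivariance.} The isomorphism $\theta$ and the bundle projection $\pi$ commute with relabelling the boundaries $1+1,\ldots,1+n$. So equivariance of $\Omega$ on $\Mbar_{g,1+n}$ restricts to $\M_{g,1+n}$ and pulls back directly to $\Omega^{\circ}$ and $\Omega^{\bullet}$.

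\textbf{Compatibility for $\Omega^{\circ}$.} Let $\phi := \Omega_{g,1+n}(v_1\otimes\cdots\otimes v_n)$, viewed in $H^*(\M^{\textnormal{ct}}_{g,1+n})$. Since $\theta(\partial\mathcal{N}) = \partial\mathcal{N}_\theta$, we have
\[
\Omega^{\circ}_{|\partial\mathcal{N}} = \theta^*(\phi_{|\partial\mathcal{N}_\theta}).
\]
By Lemma~\ref{lemaphi} this equals $\theta^*\rho^*(\phi_{|\mathcal{S}})$. The commutative diagram \eqref{comu} rewrites this as
\[
\nu^*(\theta_1^{-1},\theta_2^{-1})^*\,\textnormal{gl}^*\phi .
\]
The gluing axiom \eqref{compamor} for $\Omega$, restricted to the open part $\M_{g_1,1+n_1+1}\times\M_{g_2,1+n_2}$, identifies $\textnormal{gl}^*\phi$ with $\Omega_{g_1,1+n_1+1}(\cdots\otimes\Omega_{g_2,1+n_2}(\cdots))$. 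Here one uses that $\mathcal{S}$ is the image of this open product and that $\textnormal{gl}$ maps the product onto $\mathcal{S}$. The map $(\theta_1^{-1},\theta_2^{-1})$ goes from the product of the $\circ$-spaces to $\M\times\M$, so pulling back through it converts the classes into $\Omega^{\circ}$ on each factor. This gives exactly \eqref{compascirc}.

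The main technical point is that Lemma~\ref{lemaphi} requires $\phi$ to be defined on the tubular neighbourhood $\mathcal{N}_\theta$, which contains boundary points. It is therefore essential to use the restriction to $\M^{\textnormal{ct}}$ rather than to $\M$. This is fine because $\mathcal{S}\subset\M^{\textnormal{ct}}$. One also has to be careful that $\textnormal{gl}$ may have degree $\#\textnormal{Aut}$ onto $\mathcal{S}$. For a single separating edge with labelled marked points on both sides, and for $\mathcal{S}$ defined by the specified splitting of labels, this degree is $1$. The only exception is the genus-split case with no legs on one side, where equal genera could matter; the pullback identity is still fine there, since we pull back rather than push forward.

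\textbf{Compatibility for $\Omega^{\bullet}$.} Observe that forgetting the origin points commutes with gluing. More precisely,
\[
\pi\circ\textnormal{gl}^{\bullet} = \iota\circ\nu^{-1}\text{-type map},
\]
in the following sense. The glued surface lies in $\partial\mathcal{N}$, because the glued geodesic has length $1$, and cutting it back gives the pair of $\circ$-surfaces. Concretely, $\textnormal{gl}^{\bullet}$ factors as a map $\mathcal{M}^{\bullet}_{g_1,1+n_1+1}\times\mathcal{M}^{\bullet}_{g_2,1+n_2}\to\partial\mathcal{N}\subset\mathcal{M}^{\circ}_{g,1+n}$ (after composing with $\pi$), followed by the inclusion. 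Its composite with $\nu$ equals $\pi_1\times\pi_2$. Hence
\[
(\textnormal{gl}^{\bullet})^*\pi^*\Omega^{\circ} = (\pi_1\times\pi_2)^*\nu^{*}(\ldots)
\]
by \eqref{compascirc}. This is precisely \eqref{compasbull} for $\Omega^{\bullet}$, which completes the proof.
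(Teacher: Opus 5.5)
Your proposal is correct and follows the paper's proof essentially step for step. For $\Omega^{\circ}$ you use Lemma~\ref{lemaphi}, the diagram \eqref{comu} and the gluing axiom on the open stratum, and for $\Omega^{\bullet}$ the factorisation $\nu\circ\pi\circ\textnormal{gl}^{\bullet}=(\pi_1,\pi_2)$ through $\partial\mathcal{N}$. There are only cosmetic slips: the map in \eqref{comu} should really be $(\theta_1,\theta_2)$ rather than the inverses; your last display should read $(\pi\circ\textnormal{gl}^{\bullet})^*\nu^*(\ldots)=(\pi_1\times\pi_2)^*(\ldots)$; and a single-edge rooted tree never has automorphisms, since leaf $1$ distinguishes the two components, so the ``exceptional case'' you mention does not arise.
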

\begin{proof}
Let $\Omega$ be a F-CohFT. Take $g_i,n_i \geq 0$ for $i = 1,2$ such that $2g_i - 1 + n_i > 0$, set $g = g_1 + g_2$ and $n = n_1 + n_2$, and take $w^{(1)} \in V^{\otimes n_1}$ and $w^{(2)} \in V^{\otimes n_2}$. Introduce $\Omega_{g,1+n}^{\circ}  := \theta^*(\Omega_{g,1+n}{}_{|\M_{g,1+n}})$. We examine its compatibility with any of the maps \eqref{numap}:
\begin{equation*}
\begin{split}
\Omega^{\circ}_{g,1+n}(w^{(1)} \otimes w^{(2)})_{|\partial N} & = \theta^* \Omega_{g,1+n}(w^{(1)} \otimes w^{(2)})_{|\partial N_{\theta}} \\
& = (\rho \circ \theta)^* \Omega_{g,1+n}(w^{(1)} \otimes w^{(2)})_{|\mathcal{S}} \\
& = (\rho \circ \theta)^* \textnormal{gl}_{*} \Omega_{g,1+n_1 + 1}\big(w^{(1)} \otimes \Omega_{g_2,1+n_2}(w^{(2)})\big).
\end{split}
\end{equation*}
In the second line we used Lemma~\ref{lemaphi} and in the third line the compatibility property of the F-CohFT $\Omega$. The commutative diagram \eqref{comu} then yields
\begin{equation*}
\begin{split}
\Omega^{\circ}_{g,1+n}(w^{(1)} \otimes w^{(2)})_{|\partial N} & = \nu^* \theta_{1}^*\Omega_{g_1,1+n_1 + 1}\big(w^{(1)} \otimes \theta_{2}^*\Omega_{g_2,1+n_2}(w^{(2)})\big) \\
& = \nu^*\Omega_{g_1,1+n_1 + 1}^{\circ}\big(w^{(1)} \otimes \Omega_{g_2,1+n_2}^{\circ}(w^{(2)})\big).
\end{split}
\end{equation*}
This proves that $\Omega^{\circ}$ is a free-boundary F-CohFT.

Now let $\Omega^{\bullet}_{g,1+n} := \pi^*\Omega^{\circ}_{g,1+n}$. We examine its compatibility with the gluing map \eqref{glmorbull}. Notice that the image of $\pi \circ \textnormal{gl}^{\bullet}$ consists of hyperbolic surfaces obtained by gluing along a geodesic boundary of length $1$, therefore is included in $\partial \mathcal{N}$. Then, recalling the compatibility properties of $\Omega^{\circ}$, we compute
\begin{equation*}
\begin{split}
(\textnormal{gl}^{\bullet})^* \Omega_{g,1+n}^{\bullet}(w^{(1)} \otimes w^{(2)}) & = (\pi \circ \textnormal{gl}^{\bullet})^* \Omega^{\circ}_{g,1+n}(w^{(1)} \otimes w^{(2)}) \\
& = (\pi \circ \textnormal{gl}^{\bullet})^* \Omega_{g,1+n}^{\circ}(w^{(1)} \otimes w^{(2)})_{|\partial \mathcal{N}} \\
& = (\pi \circ \textnormal{gl}^{\bullet})^* \nu^* \Omega^{\circ}_{g_1,1+n_1+1}\big(w^{(1)} \otimes \Omega^{\circ}_{g_2,1+n_2}(w^{(2)})\big).
\end{split}
\end{equation*}
By their geometric construction we have $\nu \circ \pi \circ \textnormal{gl}^{\bullet} = (\pi_1,\pi_2)$ where $\pi_i$ are the bundle projections of Definition~\ref{defpi} on each of the two factors. Thus
\begin{equation*}
\begin{split}
(\textnormal{gl}^{\bullet})^* \Omega_{g,1+n}^{\bullet}(w^{(1)} \otimes w^{(2)}) & = \pi_1^*\Omega^{\circ}_{g_1,1+n_1+1}\big(w^{(1)} \otimes \pi_2^*\Omega^{\circ}_{g_2,1+n_2}(w^{(2)})\big) \\
 & = \Omega^{\bullet}_{g_1,1+n_1+1}\big(w^{(1)} \otimes \Omega^{\bullet}_{g_2,1+n_2}(w^{(2)})\big).
\end{split}
\end{equation*}
This proves that $\Omega^{\bullet}$ is a pinned-boundary F-CohFT.
\end{proof}

In some sense $\Omega^{\bullet}$ and $\Omega^{\circ}$ of Proposition~\ref{propF1} only capture information about the F-CohFT $\Omega$ close to the boundary divisors in $\M^{\textnormal{ct}}$. Following Teleman's strategy, one first seeks to reconstruct them from the F-TFT (Sections~\ref{sec:pinned}-\ref{sec:free}) and in a second step, one tries to extend this reconstruction to the whole $\M^{\textnormal{ct}}$ (Sections~\ref{sec:patch}-\ref{sec:proofthmA}).

\subsection{Stability theorems and cohomological results}
\label{sec:stab}
We review the structural properties of the cohomology of moduli spaces of curves which play a crucial role in  \cite{teleman_structure_2012} and for us. Pick $P \in \M_{0,1+2}^{\bullet}$ and $\Yu \in \M_{1,1+1}^{\bullet}$ and introduce the maps (Figures~\ref{fig:gammamap} and \ref{fig:phimap})
\begin{equation}
\label{varphieq}\begin{split}
\gamma & = \textnormal{gl}^{\bullet}(P,-) : \M_{g,1+n}^{\bullet} \longrightarrow \M_{g,1+n+1}^{\bullet}, \\
\varphi & = \textnormal{gl}^{\bullet}(\Yu,-) : \M_{g,1+n}^{\bullet} \longrightarrow \M_{g+1,1+n}^{\bullet},
\end{split}
\end{equation}
which increase the number of boundaries or the genus by one, respectively. Since the moduli spaces are path-connected the map they induce in (co)homology do not depend on the choices made for the surfaces $P$ or $\Yu$.

\begin{figure}[h!]
\begin{center}
\includegraphics[width=0.55\textwidth]{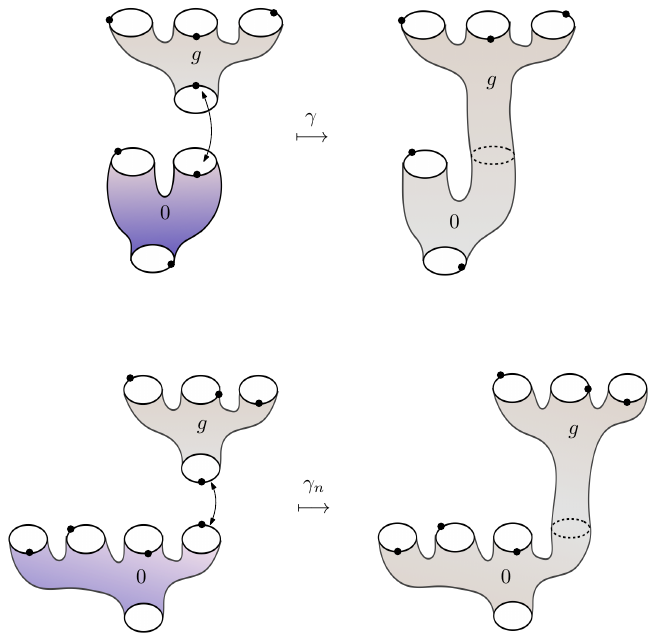}
\caption{Gluing fixed spheres with $1 + n \geq 3$ boundaries (in purple) \label{fig:gammamap}}
\end{center}
\end{figure}

\medskip
\medskip

\begin{figure}[h!]
\begin{center}
\includegraphics[width=\textwidth]{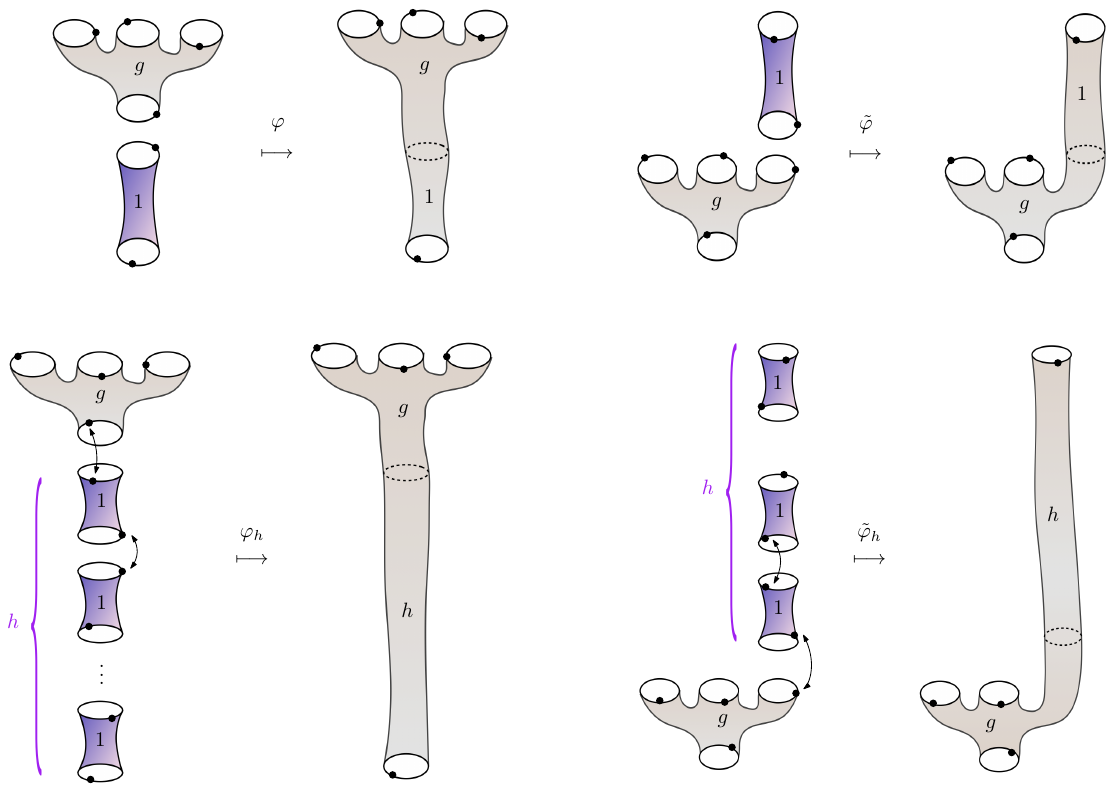}
\caption{Gluing fixed tori with two boundaries (in purple) \label{fig:phimap}}
\end{center}
\end{figure}

\begin{theorem} \cite{harer_stability_1985,ivanov_subgroups_1992} \label{thm:Harerstab}
Let $g,n \geq 0$ such that $2g - 1 + n > 0$. The map $\gamma^*$ in cohomology degree $\leq \frac{2g}{3}$  and the map $\varphi^*$ in cohomology degree $\leq \frac{2}{3}(g - 1)$ are isomorphisms.
\end{theorem}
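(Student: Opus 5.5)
The plan is to deduce the statement from Harer's homological stability for mapping class groups of surfaces with boundary, as sharpened by Ivanov, by identifying $\M^{\bullet}_{g,1+n}$ with a classifying space and $\gamma,\varphi$ with the standard stabilisation homomorphisms. The first step is to show that $\M^{\bullet}_{g,1+n}$ is a model for $B\Gamma_{g,1+n}$. Here $\Gamma_{g,1+n}$ is the mapping class group of a genus $g$ surface $\Sigma$ with $1+n$ boundary circles, fixing the boundary pointwise.

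For the classifying-space step, consider the Teichmüller space $\mathcal{T}^{\bullet}_{g,1+n}$ of hyperbolic metrics on $\Sigma$ with unit-length geodesic boundaries, together with an origin point on each boundary, modulo isotopies fixing $\partial\Sigma$ pointwise. Fix a pants decomposition containing no boundary curve. The interior curves are parametrised by Fenchel--Nielsen length and twist coordinates in $\mathbb{R}_{>0}\times\mathbb{R}$. The position of each origin point, measured relative to a seam of the adjacent pair of pants, is a real twist parameter in $\mathbb{R}$ rather than an angle in $\mathbb{S}_1$. Hence $\mathcal{T}^{\bullet}_{g,1+n}$ is a cell and therefore contractible.

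The group $\Gamma_{g,1+n}$ acts on $\mathcal{T}^{\bullet}_{g,1+n}$ freely and properly discontinuously. Freeness holds because a boundary-fixing isometry of a connected hyperbolic surface with marked boundary points is the identity. The full Dehn twist about the $i$-th boundary curve acts by translating the $i$-th boundary twist parameter by one period. This matches the fact that $\pi$ is an $(\mathbb{S}_1)^{1+n}$-bundle over $\M^{\circ}_{g,1+n}$, whose quotient is by the mapping class group permuting nothing but not fixing boundary points. It follows that $\M^{\bullet}_{g,1+n} = \mathcal{T}^{\bullet}_{g,1+n}/\Gamma_{g,1+n}$, and so $\M^{\bullet}_{g,1+n} \simeq B\Gamma_{g,1+n}$.

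The second step identifies the maps. The map $\gamma = \textnormal{gl}^{\bullet}(P,-)$ lifts to an equivariant map of Teichmüller spaces. On fundamental groups it is induced by the inclusion $\Sigma \hookrightarrow \Sigma\cup_{\partial} P$, obtained by gluing a pair of pants along one boundary and extending mapping classes by the identity. This is exactly Harer's map $\Gamma_{g,b}\to\Gamma_{g,b+1}$, which adds a boundary component. Likewise, $\varphi$ glues a torus with two boundary components along one boundary and induces Harer's genus stabilisation $\Gamma_{g,b}\to\Gamma_{g+1,b}$. The choice of $P$ or $\Yu$ is irrelevant up to homotopy, because the moduli spaces are path-connected, as already noted in the text.

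The last step is to invoke the stability theorem. Harer's theorem, in Ivanov's improved range, states that these homomorphisms induce isomorphisms on $H_*(-;\mathbb{Z})$ in degrees $\leq \frac{2g}{3}$ for the boundary-adding map and $\leq \frac{2}{3}(g-1)$ for the genus-increasing map. By the universal coefficient theorem over $\mathbb{C}$, the same ranges hold in cohomology.

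I expect the main obstacle to be the first step. One must check carefully that the pinned-boundary moduli space is aspherical with the right fundamental group, and in particular that freeing the origin points on the boundaries corresponds precisely to the central extension by boundary Dehn twists. The other delicate point is matching the numerical ranges to the normalisation used in the cited references, whose conventions for $b$ and for the two stabilisation maps differ. If an exact match of ranges fails, I would fall back on the weaker range that is safely implied. This is harmless for the later applications, which only need stability for $g$ large.
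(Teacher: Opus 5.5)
Your proposal is correct and follows the paper, which gives no argument beyond citing Harer and Ivanov. Your identification of $\M^{\bullet}_{g,1+n}$ with $B\Gamma_{g,1+n}$ (via the contractible Teichm\"uller space with real boundary twist parameters), and of $\gamma,\varphi$ with the boundary- and genus-stabilisation homomorphisms, is exactly the translation the paper leaves implicit; one correction is that the exact ranges $\frac{2g}{3}$ and $\frac{2}{3}(g-1)$ come from Boldsen's (and Randal-Williams') sharpening of Harer's argument, surveyed in \cite{Wahl}, not from Ivanov, whose range is weaker. Citing that sharpening gives the statement as written, so your weaker fallback is unnecessary, although, as you note, only large-genus stability is used later in the paper.
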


From this result, Looijenga \cite[Proposition 2.1]{looijenga_stable_1994}  deduced that $H^k(\M^{\bullet}_{g,1+n}) \simeq H^k(\M_g)$ and $H^k(\M_{g,1+n}) \simeq H^k(\M_g)[\psi_1,\ldots,\psi_{1+n}]$ for $g,n \geq 0$ and degree in the stability range $0 \leq k \leq \frac{2g}{3}$.  Together with Mumford's conjecture proved by Madsen and Weiss \cite{MW}, this provides a concise description of the stable cohomology of the moduli spaces.
\begin{theorem}
\label{Mumfordwithpsiclasses}
   For $g,n \geq 0$ such that $2g - 1 + n > 0$ we have isomorphisms in degree $\leq \frac{2g}{3}$ :
   \[
   H^*(\M^\bullet_{g,1+n})\simeq \mathbb{C}[\kappa_1, \kappa_2, \ldots],\qquad H^*(\M_{g,1+n})\simeq \mathbb{C}[\psi_1,\ldots, \psi_{1+n},\kappa_1, \kappa_2, \ldots].
   \]
\end{theorem}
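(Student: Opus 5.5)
The statement is a repackaging of known results, so I would assemble it from Harer stability (Theorem~\ref{thm:Harerstab}), Looijenga's computation quoted just above, and the Madsen--Weiss theorem, working throughout in the range $k \leq \frac{2g}{3}$. The plan has two parts: first the pinned-boundary space $\M^\bullet_{g,1+n}$, then the space $\M_{g,1+n}$ of smooth curves, which I would treat as a torus-bundle quotient of the former.

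For $\M^\bullet_{g,1+n}$, I would first identify it up to homotopy with the classifying space of the mapping class group $\Gamma_{g,1+n}$ of a surface with $1+n$ boundary components fixed pointwise. This holds because the fibres of $\pi$ record exactly the framing of each boundary, so $\M^\bullet_{g,1+n}$ is the quotient of the contractible Teichmüller space of bordered surfaces with boundary parametrisations by the pure boundary-fixing mapping class group. Next I would invoke Theorem~\ref{thm:Harerstab}. The maps $\gamma^*$ (adding boundaries) and $\varphi^*$ (raising the genus) are isomorphisms in the stable range, and gluing discs relates the result to $\M_g$; this gives Looijenga's isomorphism $H^k(\M^\bullet_{g,1+n}) \cong H^k(\M_g)$. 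The Madsen--Weiss theorem then says the stable cohomology of $\M_\infty$ is $\mathbb{C}[\kappa_1,\kappa_2,\ldots]$. Since the $\kappa$-classes on $\M^\bullet$ are defined by pullback along $\pi$ and are compatible with $\gamma,\varphi$ (gluing a fixed surface changes $\kappa_m$ only by the constant contribution of that surface, which is zero in positive degree after pullback), the isomorphism sends $\kappa_m$ to $\kappa_m$. This yields the first description.

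For $\M_{g,1+n}$, I would use the $(\mathbb{S}_1)^{1+n}$-bundle $\pi$. Its $i$-th circle factor is the unit circle bundle of the tangent line at the $i$-th marked point, whose Euler class is $\pm\psi_i$. The iterated Gysin sequences, or equivalently the Serre spectral sequence of the torus bundle, in the stable range give a surjection $H^*(\M_{g,1+n}) \to H^*(\M^\bullet_{g,1+n})$ whose kernel is the ideal generated by $\psi_1,\ldots,\psi_{1+n}$. Combined with Looijenga's statement $H^k(\M_{g,1+n}) \simeq H^k(\M_g)[\psi_1,\ldots,\psi_{1+n}]$, which is the $E_2$-degeneration of that spectral sequence, and with the fact that the $\kappa$-classes on $\M_{g,1+n}$ restrict to those on $\M^\bullet$, this gives $H^*(\M_{g,1+n}) \cong \mathbb{C}[\psi_1,\ldots,\psi_{1+n},\kappa_1,\ldots]$ in degree $\leq \frac{2g}{3}$. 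Concretely, a section that splits $\pi^*$ on the $\kappa$-subalgebra exists because the $\kappa$-classes are already defined downstairs.

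The main obstacle is the degeneration, i.e.\ showing the $\psi_i$ are algebraically independent over the $\kappa$-ring in the stable range rather than merely generating. Here I would defer to Looijenga's Proposition 2.1. Failing that, I would argue by comparing with the stable splitting $\M_{g,1+n}\simeq \M^\bullet_{g,1+n}\times_{hT}\mathrm{pt}$: in the stable range the torus action is homotopically trivial, as one sees by stabilising the boundary tori into a disc, so the cohomology is $H^*(\M^\bullet)\otimes H^*(BT^{1+n})$. A secondary point is bookkeeping the degree bound, where I would take the minimum of Harer's ranges for $\gamma^*$ and $\varphi^*$ and use the bound as stated.
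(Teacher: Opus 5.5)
Your proposal follows the paper's route: the paper gives no independent argument, and simply combines Harer--Ivanov stability, Looijenga's Proposition~2.1 and the Madsen--Weiss theorem exactly as you do, likewise deferring the algebraic independence of the $\psi_i$ over the $\kappa$-ring to Looijenga. One small correction: the isomorphism $H^k(\M_{g,1+n})\simeq H^k(\M_g)[\psi_1,\ldots,\psi_{1+n}]$ comes from the degeneration of the spectral sequence of the Borel fibration $\M^\bullet_{g,1+n}\to\M_{g,1+n}\to B\big((\mathbb{S}_1)^{1+n}\big)$, not of the Serre spectral sequence of the torus bundle $\pi$ (whose transgression sends the degree-one fibre classes to $\pm\psi_i\neq 0$), and your observation that the $\kappa$-classes are pulled back from $\M_{g,1+n}$ already gives this degeneration by Leray--Hirsch in the stable range.
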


For an overview of stability results, see \cite{laptev_stable_2005,Wahl}. In the context of free- or pinned-boundary (F-)CohFTs, these results allow gluing surfaces of high genus to reach the stability range where we better control what happens in cohomology. This is the first main idea of Teleman \cite{teleman_structure_2012}.

The stable cohomology $H^*(\M_{\infty,1+n}^{\bullet})$ can be defined as an inverse limit of the cohomology rings $H^*(\M^{\bullet}_{g,1+n})$ using the system of morphisms $\varphi$, and Theorem~\ref{Mumfordwithpsiclasses} identifies it with the free ring with generators $\kappa_m$ in degree $2m$ for each $m \geq 1$. By definition of inverse limits, every polynomial in these generators admits a restriction for every finite $g$ to an element of $H^*(\M_{g,1+n}^{\bullet})$, where the generators are interpreted as the actual $\kappa$-classes on $\M_{g,1+n}^{\bullet}$ (\textit{i.e.} the pullback via $\pi$ of the kappa classes on $\M_{g,1+n}^{\circ}$), which now have relations. By the same reason and nilpotency of $H^*(\M_{g,1+n}^{\bullet})$, formal series in the generators (elements of the completion of the stable cohomology) also restrict to elements in $H^*(\M_{g,1+n})$. While we keep the same notation for elements in the stable cohomology ring and for their finite-genus restriction, the context makes clear in which ring we are working. A similar remark applies to the stable cohomology $H^*(\M_{\infty,1+n})$.

Finally, we record elementary properties (see \textit{e.g.} \cite{AC94}) of $\psi$ and $\kappa$ classes with respect to the gluing morphism $\textnormal{gl} : \Mbar_{g_1,1+n_1 + 1} \times \Mbar_{g_2,1+n_2} \rightarrow \Mbar_{g,n}$ and the forgetful morphism $f: \Mbar_{g,1+n+1}\longrightarrow \Mbar_{g,1+n}$. The latter has sections $p_i : \Mbar_{g,1+n} \rightarrow \Mbar_{g,1+n+1}$ indexed by $i \in [n]$ and following the $i^{\text{th}}$ marked point --- this relies on the canonical identification of the universal curve $\overline{\mathcal{C}}_{g,1+n}$ with $\Mbar_{g,1+n+1}$. 
 
\begin{lemma}
 \label{lem39}  Let $i \in [n]$, $k \geq 0$ and $m \geq 1$. If we denote with tilde the classes on $\Mbar_{g,n+1}$ and without tilde those on $\Mbar_{g,n}$, we have
\begin{equation}
\label{forgetfulpsiteleman}
  \tilde{\psi}_i^k = f^* \psi_i^k + p_{i*} \psi_i^{k - 1}, \qquad \tilde{\kappa}_m = f^*\kappa_m + \tilde{\psi}^m_{n + 1}.
  \end{equation}
  If we denote with $\kappa^{(i)}$ the $\kappa$-classes associated to the $i$-th factor in $\Mbar_{g,1+n_1 + 1} \times \Mbar_{g,1+n_2}$, we have
  \begin{equation}
  \label{rpsi}
   \textnormal{gl}^*\kappa_m = \kappa_m^{(1)} + \kappa_m^{(2)}.
\end{equation}
\end{lemma}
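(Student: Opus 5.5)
The statement collects three standard facts: the comparison of $\psi$-classes under forgetting a point, the comparison of $\kappa$-classes under forgetting a point, and the additivity of $\kappa$-classes under gluing. We prove each from the geometry of the universal curve, identifying $\Mbar_{g,1+n+1}$ with the universal curve $\overline{\mathcal{C}}_{g,1+n}$ via $f$. The sections $p_i$ have images the divisors $D_i$ on which the $i$-th and last points lie on a contracted rational bubble.

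\textbf{The $\psi$-class formula.} We compare the cotangent line bundles. Write $\tilde{\mathbb{L}}_i$ for the cotangent line at the $i$-th point on $\Mbar_{g,1+n+1}$ and $\mathbb{L}_i$ for the one on $\Mbar_{g,1+n}$. Stabilisation gives a natural map $f^*\mathbb{L}_i \to \tilde{\mathbb{L}}_i$. It is an isomorphism away from $D_i$. It vanishes to first order along $D_i$, because on $D_i$ the $i$-th point sits on a contracted bubble. Hence
\[
\tilde{\mathbb{L}}_i \cong f^*\mathbb{L}_i(D_i),\qquad \tilde{\psi}_i = f^*\psi_i + [D_i].
\]
We also need two facts about $D_i$. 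Since $D_i$ is the image of $p_i$, we have $[D_i]=p_{i*}1$. Moreover $\tilde{\psi}_i\cdot[D_i]=0$, because on $D_i$ the line $\tilde{\mathbb{L}}_i$ is the cotangent line at a point of a fixed three-pointed $\mathbb{P}^1$, hence trivial.

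We then prove the formula for $\tilde{\psi}_i^k$ by induction on $k\ge1$; the case $k=1$ is the identity above. Expanding
\[
\tilde{\psi}_i^k=\tilde{\psi}_i^{k-1}\bigl(f^*\psi_i+[D_i]\bigr)
\]
and using $\tilde{\psi}_i[D_i]=0$ for $k\ge2$, only $\tilde{\psi}_i^{k-1}f^*\psi_i$ survives. We apply the induction hypothesis to $\tilde{\psi}_i^{k-1}$ and use the projection formula
\[
p_{i*}(\alpha)\,f^*\beta=p_{i*}(\alpha\, p_i^*f^*\beta)=p_{i*}(\alpha\beta),
\]
which holds because $f\circ p_i=\mathrm{id}$. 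This gives $f^*\psi_i^k+p_{i*}\psi_i^{k-1}$.

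\textbf{The $\kappa$-class formula.} We use the Arbarello--Cornalba definition $\kappa_m=\pi_*(\psi_{n+2}^{m+1})$, where $\pi$ forgets an extra point. Consider the fibre square formed by the two forgetful maps $\Mbar_{g,1+n+2}\to\Mbar_{g,1+n+1}$ and $\Mbar_{g,1+n+1}\to\Mbar_{g,1+n}$. Apply the $\psi$-formula with $k=m+1$ to the class $\psi_{n+3}$ of the extra point, now relative to forgetting the $(n+2)$-th point. This gives
\[
\tilde{\psi}_{n+3}^{m+1}=f'^*\psi_{n+3}^{m+1}+p_{*}(\ldots),
\]
where $p$ is the section along which the $(n+3)$-th point collides with the $(n+2)$-th. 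Pushing forward by $\pi$:
\begin{itemize}
\item the first term yields $f^*\kappa_m$ by flat base change;
\item the second term yields $\tilde{\psi}_{n+2}^m$, since the section composed with $\pi$ is the identity.
\end{itemize}
This proves $\tilde{\kappa}_m=f^*\kappa_m+\tilde{\psi}^m_{n+2}$, which is the stated formula with the paper's labelling of the last point.

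\textbf{Additivity under gluing.} The pullback of the universal curve under $\textnormal{gl}$ is the union of the two universal curves, glued along the node sections. The relative dualising sheaf restricts to each component as its own dualising sheaf twisted by the node, and the standard $\psi$-twisted definition of $\kappa$ is exactly designed to make this additive. Equivalently, we can argue by base change: the extra point lies on one component or the other, and $\psi_{\mathrm{extra}}$ restricts to the corresponding $\psi$ on that factor.

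The only delicate step is the base-change argument near the node, and keeping consistent the convention that $\kappa$ is defined via $\psi$ of the extra point rather than via $K_\pi$ alone. I would cite \cite{AC94} for that compatibility rather than redo it.
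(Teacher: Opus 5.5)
Your argument is correct. It is the standard Arbarello--Cornalba proof, which is what the paper relies on: the paper gives no proof of its own and only refers to \cite{AC94}.

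The one step worth tightening is the ``flat base change'' in the $\kappa$ part. The square of forgetful maps is not cartesian, since $\Mbar_{g,1+n+2}$ is a small resolution of the fibred square of the universal curve. So the identity $\pi'_*f'^*=f^*\pi_*$ (with $\pi,\pi'$ forgetting the extra point) also needs the remark that this birational map sends the fundamental class to the fundamental class.
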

If we restrict to $\M_{g,1+n} \cong \M_{g,1+n}^{\circ}$ these relation holds without the second term in the first equality of \eqref{forgetfulpsiteleman}. If we pullback to $\M_{g,1+n}^{\bullet}$ they hold with all $\psi$-classes set to zero.

\subsection{Calculating pinned-boundary F-CohFTs}
\label{sec:pinned}
The large genus analysis of pinned-boundary F-CohFTs will allow us determining them completely from the underlying F-TFT and some elements in the stable cohomology. The results in this section are valid for any pinned-boundary F-CohFT, but for logical clarity we formulate them for the particular one $\Omega^{\bullet}$ associated to a given F-CohFT $\Omega$ by Proposition~\ref{propF1}.

\begin{proposition}
\label{prop:Zkappa} Let $\Omega$ be an invertible F-CohFT, $\omega$ the associated F-TFT and $\alpha = \omega_{1,1}$. Then, there exists an element $\hat{T}(\boldsymbol{\kappa}) \in V \otimes \mathbb{C}\llbracket \kappa_1,\kappa_2,\ldots \rrbracket$ defined in the completion of the stable cohomology by
\[
\hat{T}(\boldsymbol{\kappa}) = \lim_{h \rightarrow \infty} \alpha^{-h} \cdot \Omega_{h,1}^{\bullet}.
\]
This element is of the form $\hat{T}(\boldsymbol{\kappa}) = \exp(\sum_{m \geq 1} \hat{t}_m \kappa_m)$ for some $\hat{t}_1,\hat{t}_2,\ldots \in V$. Furthermore, for any $g,n \geq 0$ such that $2g - 1 + n > 0$ and $w \in V^{\otimes n}$ we have
\begin{equation}
\label{omen0}
\Omega_{g,1+n}^{\bullet}(w) = \hat{T}(\boldsymbol{\kappa}) \cdot \omega_{g,1+n}(w).
\end{equation}
\end{proposition}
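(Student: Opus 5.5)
We follow Teleman's argument for pinned-boundary CohFTs, using only the gluing maps $\gamma$ and $\varphi$ of \eqref{varphieq} and the stability theorems. The idea is to compare $\Omega^{\bullet}_{g,1+n}$ with $\Omega^{\bullet}_{h,1}$ for very large $h$ by gluing a high-genus surface, and then to use stability to pull classes back and forth without loss of information.

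\textbf{Step 1: existence of the limit.} By \eqref{compasbull} with $\Yu$, $\varphi^*\Omega^{\bullet}_{h+1,1} = \Omega^{\bullet}_{1,1+1}(\Yu)\big(\Omega^{\bullet}_{h,1}\big)$. Here $\Omega^{\bullet}_{1,1+1}$ evaluated at the point $\Yu$ lies in $H^0$, so it equals the degree-zero map $v \mapsto \omega_{1,1+1}(v) = \alpha\cdot v$ by Lemma~\ref{lem:FTFT}. Hence $\varphi^*(\alpha^{-h-1}\cdot\Omega^{\bullet}_{h+1,1}) = \alpha^{-h}\cdot \Omega^{\bullet}_{h,1}$, using invertibility of $\alpha$. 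These classes are therefore compatible under the inverse system defining the stable cohomology, and by Theorem~\ref{thm:Harerstab} they stabilise in each fixed degree. This defines $\hat{T}(\boldsymbol{\kappa}) \in V\otimes\mathbb{C}\llbracket\boldsymbol{\kappa}\rrbracket$ via Theorem~\ref{Mumfordwithpsiclasses}. The same computation shows that for finite $h$ the restriction of $\hat{T}$ is exactly $\alpha^{-h}\cdot\Omega^{\bullet}_{h,1}$ in the stable range. Its degree-zero part is $\alpha^{-h}\cdot\alpha^h = \1$.

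\textbf{Step 2: formula \eqref{omen0}.} Fix $(g,n)$ and $w$. Choose a large $h$ and glue a fixed genus-$h$ surface to $\M^{\bullet}_{g,1+n}$. There are two ways to do this, and comparing them is the heart of the argument.
\begin{itemize}
\item Glue $\M^{\bullet}_{g,1+n}$ on top of a fixed surface of genus $h$ with two boundaries, $Y_h\in\M^\bullet_{h,1+1}$. The gluing map $\M^\bullet_{g,1+n}\to\M^\bullet_{g+h,1+n}$ is an iterate of $\varphi$ and $\gamma$-type maps. Its pullback of $\Omega^\bullet_{g+h,1+n}(w)$ is $\Omega^\bullet_{h,1+1}(Y_h)(\Omega^\bullet_{g,1+n}(w)) = \alpha^h\cdot\Omega^\bullet_{g,1+n}(w)$.
\item Alternatively, glue the fixed genus-$0$ pieces $P$ repeatedly to reach $\M^\bullet_{g+h,1+n}$ from $\M^\bullet_{g+h,1}$. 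Stability (Theorem~\ref{thm:Harerstab} for $\gamma$) makes this an isomorphism in low degree. The resulting pullback of $\Omega^\bullet_{g+h,1+n}(w)$ is $\Omega^\bullet_{g+h,1}$ composed with degree-zero vertex maps, namely multiplication by $w$ via $\omega_{0,1+2}$. This gives $\Omega^\bullet_{g+h,1+n}(w) = \Omega^\bullet_{g+h,1}\cdot \omega_{0,1+n}(w)$ in the stable range. Note that $\Omega^\bullet_{g+h,1}$ is a $V$-valued class, and the product with $V$ is taken pointwise through the F-TFT.
\end{itemize}
Combining the two, and noting that $\kappa$-classes pull back to $\kappa$-classes under gluing since $\psi$ vanishes on $\M^\bullet$ (Lemma~\ref{lem39}), we get
\[
\alpha^h\cdot\Omega^\bullet_{g,1+n}(w) = \alpha^{g+h}\cdot\hat{T}(\boldsymbol{\kappa})\cdot w_1\cdots w_n .
\]
Dividing by $\alpha^h$ gives \eqref{omen0}. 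Degree by degree this holds for all finite $(g,n)$, because $h$ can be chosen arbitrarily large, so no stability range for $(g,n)$ itself is needed.

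\textbf{Step 3: exponential form.} Apply \eqref{omen0} to $\Omega^\bullet_{h_1+h_2,1}$ pulled back along the gluing of a genus-$h_1$ surface with one extra boundary to a genus-$h_2$ surface with one boundary. The additivity $\textnormal{gl}^*\kappa_m=\kappa_m^{(1)}+\kappa_m^{(2)}$ then yields
\[
\hat{T}(\boldsymbol{\kappa}^{(1)}+\boldsymbol{\kappa}^{(2)}) = \hat{T}(\boldsymbol{\kappa}^{(1)})\cdot\hat{T}(\boldsymbol{\kappa}^{(2)}).
\]
This identity holds in the stable range, and by Künneth stability it holds in $\mathbb{C}\llbracket\boldsymbol{\kappa}^{(1)},\boldsymbol{\kappa}^{(2)}\rrbracket$. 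Since the constant term of $\hat{T}$ is $\1$, it has a logarithm in the commutative algebra $V$. That logarithm is additive in $\boldsymbol{\kappa}$, so it is linear: $\sum_m \hat{t}_m\kappa_m$.

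The main obstacle is Step 2: justifying that degree-zero vertex evaluations factor out of the stable classes, and controlling the stability ranges carefully. The point needing the most care is that the gluing with $P$ induces isomorphisms compatible with the $V$-module structure, and that the independence of the choice of $P$ and $\Yu$ (path-connectedness) is used correctly.
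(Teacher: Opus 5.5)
Your proof is correct, but it orders the steps differently from the paper.

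\textbf{The paper's route.} After obtaining $\Omega^\bullet_{g,1}=\alpha^g\cdot\hat T(\boldsymbol{\kappa})$ exactly as in your Step~1, the paper first proves multiplicativity of $\hat T$, by gluing two one-boundary surfaces to a fixed pair of pants. Only then does it prove \eqref{omen0}, via the map $\mu:\M^\bullet_{h,1}\times\M^\bullet_{g,1+n}\to\M^\bullet_{g+h,1+n}$. There the genus-$h$ piece is a moving factor, so the paper must invert the non-scalar class $\Omega^\bullet_{h,1}=\alpha^h\cdot\hat T(\boldsymbol{\kappa}^{(1)})$. It then needs multiplicativity to cancel $\hat T(\boldsymbol{\kappa}^{(1)})$ against $\hat T(\boldsymbol{\kappa}^{(1)}+\boldsymbol{\kappa}^{(2)})$ on a product space.

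\textbf{Your route.} You attach a \emph{fixed} surface $Y_h$ at the root, i.e.\ you extend Lemma~\ref{ghnlem} to $n\geq 1$, so only the scalar factor $\alpha^h$ appears. Combined with $\gamma$-stability on $\M^\bullet_{g+h,1+n}$, this yields \eqref{omen0} for every $(g,n)$ directly from the $n=0$ case. It uses only invertibility of $\alpha$, with no product spaces and no appeal to the exponential form. Multiplicativity then follows from the $(h_1,1)$ instance of \eqref{omen0}, glued onto $\M^\bullet_{h_2,1}$.

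\textbf{Comparison.} Your ordering is slightly more economical and cleanly separates formula \eqref{omen0} from the group-like property of $\hat T$. The paper's version, with a moving high-genus piece and additivity of $\boldsymbol{\kappa}$, is the template it reuses with $\nu_{\boldsymbol h}$ in Proposition~\ref{prop:Rkappapsi}.

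\textbf{Minor points.}
\begin{itemize}
\item Gluing by $Y_h$ is homotopic to $\varphi_h$; no $\gamma$-type maps enter that bullet.
\item The identity $\alpha^{-h}\cdot\Omega^\bullet_{h,1}=\hat T(\boldsymbol{\kappa})$ holds in all degrees, not only in the stable range. Your own Step~2 with $n=0$ shows this.
\end{itemize}
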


If we compare with Lemma~\ref{expkappa}, the multiplication of $\omega_{g,1+n}(w)$ by $\hat{T}(\boldsymbol{\kappa})$ coincides with the action of a translation on the F-TFT, leading us to the following definition.

\begin{definition}
\label{defT}
If $\Omega$ is an invertible F-CohFT, we call $T(z) := z\big(\1 - \exp(-\sum_{m \geq 1} \hat{t}_m z^m)\big)$, that we see as an element of the F-Givental group.
\end{definition}

Before starting the proof we examine the behavior of $\Omega^{\bullet}$ under the genus-increasing map
\[
\varphi_h  : \M_{g,1+n}^{\bullet} \longrightarrow \M_{g+h,1+n}^{\bullet}
\]
consisting in gluing $h$ times a copy of $\Yu$ to the first boundary, \textit{i.e.} by iterating maps $\varphi = \textnormal{gl}^{\bullet}(\Yu,-)$ like in \eqref{varphieq}.

\begin{lemma}
\label{ghnlem} For any $g,h \geq 0$ we have $\varphi_h^* \Omega_{g+h,1}^{\bullet} = \alpha^{h} \cdot \Omega_{g,1}^{\bullet}$.
\end{lemma}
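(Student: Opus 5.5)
The plan is to argue by induction on $h$, using only the gluing compatibility \eqref{compasbull} of the pinned-boundary F-CohFT $\Omega^{\bullet}$ and the identification of its degree-zero part on $\M_{1,1+1}^{\bullet}$.

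The case $h=0$ is trivial. For $h=1$, the map $\varphi = \textnormal{gl}^{\bullet}(\Yu,-)$ is the restriction of \eqref{glmorbull} with $(g_1,n_1+1)=(1,1)$ and $(g_2,n_2)=(g,0)$ to the slice $\{\Yu\}\times\M_{g,1}^{\bullet}$. Pulling back \eqref{compasbull} along the inclusion $\iota_{\Yu}:\M_{g,1}^{\bullet}\to \M_{1,1+1}^{\bullet}\times\M_{g,1}^{\bullet}$ gives
\[
\varphi^*\Omega_{g+1,1}^{\bullet} = \Omega_{1,1+1}^{\bullet}\big|_{\Yu}\big(\Omega_{g,1}^{\bullet}\big).
\]
Restricting a class to a point keeps only its degree-zero component. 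Since $\Omega^{\bullet}=\pi^*\theta^*(\Omega_{|\M})$ and $H^0$ is one-dimensional for these connected spaces, that component is $\omega_{1,1+1}\in \textnormal{Hom}(V,V)$. By Lemma~\ref{lem:FTFT}, $\omega_{1,1+1}(v)=\alpha\cdot v$. Applied to the $V$-valued cohomology class $\Omega_{g,1}^{\bullet}$ (linearly in the $V$ factor), this yields $\varphi^*\Omega^{\bullet}_{g+1,1}=\alpha\cdot\Omega^{\bullet}_{g,1}$.

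For general $h$, write $\varphi_h=\varphi\circ\varphi_{h-1}$ (the map to $\M^{\bullet}_{g+h,1}$ from $\M^{\bullet}_{g+h-1,1}$ composed with the previous one). Then
\[
\varphi_h^*\Omega^{\bullet}_{g+h,1}=\varphi_{h-1}^*\big(\alpha\cdot\Omega^{\bullet}_{g+h-1,1}\big)=\alpha\cdot\alpha^{h-1}\cdot\Omega^{\bullet}_{g,1},
\]
since multiplication by the constant vector $\alpha$ commutes with pullback.

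The only delicate point is checking that evaluating at the fixed point $\Yu$ really kills all positive-degree parts and leaves the TFT value. This holds because pullback to a point factors through $H^0$, and $H^0(\M^{\bullet}_{1,1+1})\cong H^0(\Mbar_{1,1+1})$ via the pullbacks $\pi^*$ and $\theta^*$ and the restriction to the open part, all of which are isomorphisms in degree $0$. We also have to respect the ordering convention in \eqref{compasbull}: $\Yu$ carries the outgoing boundary $1$, and its second boundary is glued to the first boundary of the genus-$g$ surface. This matches the placement of $\Omega_{g,1}^{\bullet}$ as the input of $\Omega_{1,1+1}^{\bullet}$.
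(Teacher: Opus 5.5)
Your proposal is correct and follows essentially the same route as the paper: reduce to $h=1$ by induction, apply the gluing compatibility \eqref{compasbull} to $\M_{1,1+1}^{\bullet}\times\M_{g,1}^{\bullet}$, and restrict to $\{\Yu\}\times\M_{g,1}^{\bullet}$, so that $\Omega^{\bullet}_{1,1+1}$ is replaced by its degree-zero part $\omega_{1,1+1}=\alpha\cdot$. You spell out the justification for the point restriction and the composition order more explicitly than the paper does, but the argument is the same.
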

\begin{proof}
By induction the statement reduces to checking $h = 1$. Compatibility of $\Omega^{\bullet}$ with the gluing map $\textnormal{gl}^{\bullet} : \M_{1,1+1}^{\bullet} \times \M_{g,1}^{\bullet} \rightarrow \M_{g+1,1}^{\bullet}$ yields $(\textnormal{gl}^{\bullet})^*\Omega^{\bullet}_{g+1,1} = \Omega^{\bullet}_{1,1+1}[\Omega^{\bullet}_{g,1}]$. Restricting to the locus $\{\Yu\} \times \M_{g,1}^{\bullet}$ replaces $\Omega^{\bullet}_{1,1+1}$ with $\omega_{1,1+1}$, which is the multiplication by $\alpha$.
\end{proof}

\begin{proof}[Proof of Proposition~\ref{prop:Zkappa}]
Taking $n = 0$ and $g \geq 1$ in Lemma~\ref{ghnlem} shows that $\varphi_h^*(\alpha^{-(g + h)} \cdot \Omega_{g+h,1}^{\bullet}) = \alpha^{-g} \cdot \Omega_{g,1}^{\bullet}$ is independent of $h$. Therefore, as $h \rightarrow \infty$ the sequence $\alpha^{-h} \cdot \Omega_{h,1}^{\bullet}$ admits a limit in the completion of $V \otimes H^*(\M_{\infty,1}^{\bullet})$, that is $V \otimes \mathbb{C}\llbracket \kappa_1,\kappa_2,\cdots\rrbracket$. Denoting $\hat{T}(\boldsymbol{\kappa})$ this limit, we have
\begin{equation}
\label{Omegabul11}\Omega_{g,1}^{\bullet} = \alpha^g \cdot \hat{T}(\boldsymbol{\kappa}),
\end{equation}
Since $\omega_{g,1} = \alpha^g$, this is the $n = 0$ case of \eqref{omen0}.

Let $g_1,g_2 \geq 1$ and set $g = g_1 + g_2$. Choose $P \in \M_{0,1+2}^{\bullet}$ and define the map $\mu : \M_{g_1,1}^{\bullet} \times \M_{g_2,1}^{\bullet} \longrightarrow \M_{g,1}^{\bullet}$ which glues the two last boundaries of $P$ to two surfaces with a single boundary and respective genus $g_1$ and $g_2$ (Figure~\ref{fig:mumap}). The compatibility of the pinned-boundary F-CohFT $\Omega^{\bullet}$ with $\mu$  yields
\[
\mu^* \Omega_{g,1}^{\bullet} = \Omega_{g_1,1}^{\bullet} \cdot \Omega_{g_2,1}^{\bullet}.
\]
Combining with \eqref{Omegabul11} we get $\mu^* \hat{T}(\boldsymbol{\kappa}) = \hat{T}(\boldsymbol{\kappa}^{(1)}) \cdot \hat{T}(\boldsymbol{\kappa}^{(2)})$ as an equality of classes on $\M_{g_1,1}^{\bullet} \times \M_{g_2,1}^{\bullet}$. Since this is valid for all $g_1,g_2 \geq 1$ and $\mu^* \boldsymbol{\kappa} = \boldsymbol{\kappa}^{(1)} + \boldsymbol{\kappa}^{(2)}$, we can send $g_1,g_2$ to infinity and get
\begin{equation}
\label{addZ}
\hat{T}(\boldsymbol{\kappa}^{(1)} + \boldsymbol{\kappa}^{(2)}) = \hat{T}(\boldsymbol{\kappa}^{(1)}) \cdot \hat{T}(\boldsymbol{\kappa}^{(2)}),
\end{equation}
The restriction of \eqref{Omegabul11} to cohomological degree $0$ forces $\hat{T}(\boldsymbol{0})$ to be the unit for the product $\cdot$, and then \eqref{addZ} implies the exponential form of $\hat{T}(\boldsymbol{\kappa})$.
 
 \begin{figure}[h!]
\begin{center}
\includegraphics[width=0.5\textwidth]{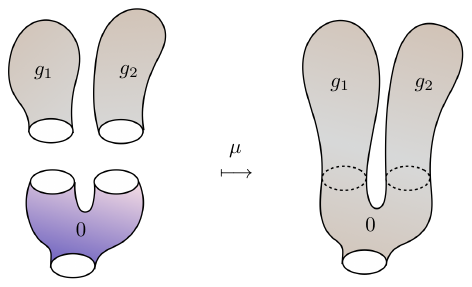}
\caption{Gluing two surfaces to a fixed pair of pants (in purple) \label{fig:mumap}}
\end{center}
\end{figure}

Now take $g,n,h \geq 1$ and consider the more general map $\mu : \M_{h,1}^{\bullet} \times \M_{g,1+n}^{\bullet} \longrightarrow \M_{g+h,1+n}^{\bullet}$ which glues the second boundary of $P$ to a surface of genus $h$ with a single boundary, and the third boundary of $P$ to the first boundary of a surface of genus $g$ with $1 + n$ boundaries. The compatibility of $\Omega^{\bullet}$ with this map yields
\[
\mu^* \Omega_{g+h,1+n}^{\bullet} = \Omega_{h,1}^{\bullet} \cdot \Omega_{g,1+n}^{\bullet}.
\]
Since the degree $0$ part of $\Omega_{h,1}^{\bullet}$ is $\omega_{h,1} = \alpha^h$ hence invertible, $\Omega_{h,1}^{\bullet}$ is also invertible and we have
\begin{equation}
\label{tounders}
\Omega_{g,1+n}^{\bullet} = (\Omega_{h,1}^{\bullet})^{-1} \cdot \mu^* \Omega_{g+h,1+n}^{\bullet}.
\end{equation}
We want to the relate the right-hand side to $\hat{T}(\boldsymbol{\kappa})$ by choosing the added genus $h$ sufficiently large to apply the stability theorems of Section~\ref{sec:stab}. By \eqref{Omegabul11} we already know that for any $h$ the first factor is
\begin{equation}
\label{Omegunag}
(\Omega_{h,1}^{\bullet})^{-1} = \alpha^{-h} \cdot \hat{T}^{-1}(\boldsymbol{\kappa}^{(1)}),
\end{equation} 
where the exponent ${}^{(1)}$ insists that the $\kappa$-classes are those attached to the first factor space $\M_{h,1}^{\bullet}$. To understand the second factor in \eqref{tounders}, we first choose $\Sigma \in \M_{0,1+n+1}^{\bullet}$ and use the compatibility of $\Omega^{\bullet}$ with the gluing map $\gamma_n := \textnormal{gl}^{\bullet}(\Sigma,-) : \M_{g+h,1}^{\bullet} \longrightarrow \M_{g+h,1+n}^{\bullet}$. Like in the proof of Lemma~\ref{ghnlem}, this gives for any $w \in V^{\otimes (n+1)}$
\begin{equation}
\label{gammanuns}
\gamma_n^*\Omega^{\bullet}_{g+h,1+n}(w) = \omega_{0,1+n + 1}(w \otimes \Omega^{\bullet}_{g+h,1}) = \Omega^{\bullet}_{g+h,1} \cdot \omega_{0,1+n}(w) = \alpha^{g+h} \cdot \hat{T}(\boldsymbol{\kappa}) \cdot \omega_{0,1+n}(w).
\end{equation}
Since the left-hand side of \eqref{tounders} can only contain cohomology classes of degree $\leq 6g - 4 + 2n$, we only need to understand $\Omega_{g+h,1+n}^{\bullet}$ for this degree range. For $h$ large enough this is in the stability range for Theorem~\ref{thm:Harerstab}, guaranteeing that $\gamma_n^*$ is an isomorphism. Together with \eqref{Omegunag} and \eqref{gammanuns}, the multiplicativity property \eqref{addZ} of $\hat{T}$ and the fact that pulling back by $\mu$ or $\gamma_n$ preserve $\kappa$-classes, this implies
\[
\Omega_{g,1+n}^{\bullet}(w) = \alpha^{g} \cdot \hat{T}(\boldsymbol{\kappa}^{(2)}) \cdot \omega_{0,1+n}(w).
\]
where ${}^{(2)}$ stresses that the $\kappa$-classes are associated to a second factor space in the gluing of a genus $h$ surface with a genus $g$ surface to obtain a genus $g + h$ surface, so that $\boldsymbol{\kappa} = \boldsymbol{\kappa}^{(1)} + \boldsymbol{\kappa}^{(2)}$. After Lemma~\ref{lem:FTFT} we recognise $\omega_{g,1+n}(w) = \alpha^{g} \cdot \omega_{0,1+n}(w)$, and thus the claimed \eqref{omen0} once the exponents are dropped from the notation of the $\kappa$-classes.
\end{proof}

\subsection{Calculating free-boundary F-CohFTs}
\label{sec:free}

We carry out a similar analysis to extract, from a given free-boundary F-CohFTs, R-elements of the F-Givental group that can reconstruct it from the underlying F-TFT. Again, the result holds for any free-boundary F-CohFT but is stated for the particular one $\Omega^{\circ}$ associated to a given F-CohFT $\Omega$ by Proposition~\ref{propF1}.  

Here it is necessary to work with moduli spaces for surfaces with a mix of pinned and free boundaries. We call (Figure~\ref{fig:pimap})
\[
\pi_{\textnormal{in}} : \M_{g,1+n}^{\bullet \textnormal{in}} \longrightarrow \M_{g,1+n}^{\circ}\qquad \textnormal{and}\qquad \pi_{\textnormal{out}} : \M_{g,1+n}^{\bullet \textnormal{out}} \longrightarrow \M_{g,1+n}^{\circ}
\]
the $(\mathbb{S}_1)^{n}$-bundle (resp. the $\mathbb{S}_1$-bundle) whose fiber above $\Sigma \in \M_{g,1+n}^{\circ}$ parametrises choices of origins on each of the last $n$ boundaries of $\Sigma$ (resp. the choice of an origin on the first boundary).

\begin{figure}[h!]
\begin{center}
\includegraphics[width=0.33\textwidth]{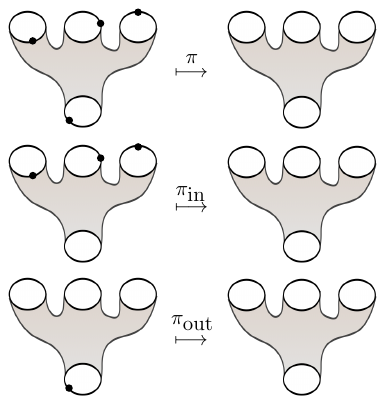}
\caption{Pinning boundaries \label{fig:pimap}}
\end{center}
\end{figure}

\begin{proposition}
\label{prop:Rkappapsi}
Let $\Omega$ be an invertible F-CohFT. Then, there exist two elements $R_{\textnormal{in}}(z,\boldsymbol{\kappa})$ and $R_{\textnormal{out}}(z,\boldsymbol{\kappa})$ in $\End(V) \otimes \mathbb{C}\llbracket z,\kappa_1,\kappa_2,\ldots\rrbracket$ defined by the formulae, for any $v \in V$
\begin{equation}
\label{RoutRin}
\begin{split}
R_{\textnormal{out}}(\psi_1,\boldsymbol{\kappa})[v] & := \lim_{h \rightarrow \infty} \pi_{\textnormal{in}}^* \Omega_{h,1+1}^{\circ}(\alpha^{-h} \cdot v), \\
R_{\textnormal{in}}(\psi_2,\boldsymbol{\kappa})[v] & := \lim_{h \rightarrow \infty} \alpha^{-h} \cdot \pi_{\textnormal{out}}^* \Omega_{h,1+1}^{\circ}(v).
\end{split}
\end{equation}

These elements satisfy $R_{\textnormal{out}}(-z,\boldsymbol{0}) \circ R_{\textnormal{in}}(z,\boldsymbol{0}) = \textnormal{Id}_{V\llbracket z\rrbracket}$ and $R_{\textnormal{out}}(0,\boldsymbol{0}) = R_{\textnormal{in}}(0,\boldsymbol{0}) = \Id_{V}$, and
\begin{equation}
\label{thetwotoR}
\forall v \in V\qquad R_{\textnormal{in}}(z,\boldsymbol{\kappa})[v] = \hat{T}(\boldsymbol{\kappa}) \cdot R_{\textnormal{in}}(z,\boldsymbol{0})[v],\qquad R_{\textnormal{out}}(z,\boldsymbol{\kappa})[v] = R_{\textnormal{out}}(z,\boldsymbol{0})[\hat{T}(\boldsymbol{\kappa})  \cdot v].
\end{equation}

Besides, we have for any $g,n \geq 0$ such that $2g - 1 + n > 0$ and any $v_1,\ldots,v_n \in V$
\begin{equation}
\label{Omegacununu}
\Omega_{g,1+n}^{\circ}(v_1 \otimes \cdots \otimes v_n) = R_{\textnormal{out}}(\psi_1,\boldsymbol{0})\bigg[\hat{T}(\boldsymbol{\kappa}) \cdot \omega_{g,1+n}\bigg(\bigotimes_{i = 1}^{n} R_{\textnormal{in}}(\psi_{1+i},\boldsymbol{0})[v_i]\bigg)\bigg].
\end{equation}
\end{proposition}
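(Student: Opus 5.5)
We follow the same strategy as in the proof of Proposition~\ref{prop:Zkappa}, now on the mixed moduli spaces $\M^{\bullet\textnormal{in}}_{h,1+1}$ and $\M^{\bullet\textnormal{out}}_{h,1+1}$. By Theorem~\ref{Mumfordwithpsiclasses}, together with the remark that pinning a boundary kills the corresponding $\psi$-class, their stable cohomology is $\mathbb{C}[\psi_1,\boldsymbol{\kappa}]$ and $\mathbb{C}[\psi_2,\boldsymbol{\kappa}]$ respectively.

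\textbf{Existence of the limits.} For $\M^{\bullet\textnormal{in}}$, we glue a copy of $\Yu$ to the pinned second boundary through a fixed pair of pants. This is the analogue of $\varphi$, acting on the pinned side, and it is compatible with $\pi_{\textnormal{in}}$. The gluing property \eqref{compascirc}, pulled back to the pinned cover where the $\mathbb{S}_1$-bundle $\nu$ trivialises, gives
\[
\varphi^*\pi_{\textnormal{in}}^*\Omega^{\circ}_{h+1,1+1}(v)=\pi_{\textnormal{in}}^*\Omega^{\circ}_{h,1+1}(\alpha\cdot v).
\]
Hence $\pi_{\textnormal{in}}^*\Omega^{\circ}_{h,1+1}(\alpha^{-h}\cdot v)$ is stable under $h\mapsto h+1$. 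Harer stability then yields a limit $R_{\textnormal{out}}(\psi_1,\boldsymbol{\kappa})$ in the completed stable cohomology. The case of $R_{\textnormal{in}}$ is symmetric: we glue at the pinned first boundary, and the factor $\alpha$ now appears as $\alpha\cdot\Omega^{\circ}(v)$.

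\textbf{Factoring out the $\kappa$-dependence.} To get \eqref{thetwotoR}, we split off a genus-$h'$ surface with one pinned boundary via a pair of pants, exactly as with the map $\mu$ in the proof of Proposition~\ref{prop:Zkappa}. Using \eqref{Omegabul11} and $\mu^*\boldsymbol{\kappa}=\boldsymbol{\kappa}^{(1)}+\boldsymbol{\kappa}^{(2)}$, we obtain
\[
R(z,\boldsymbol{\kappa}^{(1)}+\boldsymbol{\kappa}^{(2)})=\hat{T}(\boldsymbol{\kappa}^{(1)})\cdot R(z,\boldsymbol{\kappa}^{(2)}).
\]
For $R_{\textnormal{out}}$ the genus-$h'$ piece sits on the input side, so $\hat T$ multiplies $v$ before $R_{\textnormal{out}}$ is applied. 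Setting $\boldsymbol{\kappa}^{(2)}=\boldsymbol 0$ then gives both formulae in \eqref{thetwotoR}. The normalisations $R(0,\boldsymbol 0)=\Id_V$ follow by restricting to degree $0$, since $\omega_{h,1+1}(v)=\alpha^h\cdot v$.

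\textbf{Inverse relation.} Consider the $\mathbb{S}_1$-bundle $\nu$ obtained by cutting $\M^{\circ}_{h_1+h_2,1+1}$ along a separating geodesic into a genus-$h_1$ piece (free first boundary, new boundary) and a genus-$h_2$ piece. On the total space of this bundle, both cut boundaries carry $\psi$-type classes $\psi,\psi'$ with $\psi+\psi'=0$ there, by the Gysin/Euler class relation for the circle bundle. Before passing to the limit, the gluing identity reads
\[
\alpha^{-h_1-h_2}\Omega^{\circ}_{h_1+h_2,1+1}\big|_{\partial\mathcal{N}}=\nu^*\big(\Omega^{\circ}_{h_1,1+1}(\alpha^{-h_1}\,\cdot)\circ\alpha^{-h_2}\Omega^{\circ}_{h_2,1+1}\big).
\]
We then pull back to the locus where the outer boundaries are pinned. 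There the left-hand side, being a degree-$0$-dominated class on a stably trivial cohomology in $\psi$, becomes $\hat T$-multiplication. As $h_1,h_2\to\infty$, the right-hand side tends to $R_{\textnormal{out}}(\psi,\cdot)\circ R_{\textnormal{in}}(\psi',\cdot)$ with $\psi'=-\psi$. Setting $\boldsymbol\kappa=0$ gives $R_{\textnormal{out}}(-z,\boldsymbol 0)R_{\textnormal{in}}(z,\boldsymbol 0)=\Id$. The main obstacle is making this Euler-class identification rigorous: one needs an injectivity statement (for instance the Gysin sequence in the stable range) so that an identity in the cohomology of the circle bundle forces the identity of power series in $z$.

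\textbf{General formula \eqref{Omegacununu}.} We cut $\Sigma\in\M^{\circ}_{g,1+n}$ along collars around each boundary and attach large-genus handles through pairs of pants at every boundary. Compatibility then expresses $\Omega^{\circ}_{g+\sum h,1+n}$, pulled back, as a composition of the $\Omega^{\circ}_{h_i,1+1}$ tubes with a pinned-boundary core $\Omega^\bullet_{g,1+n}$. The core is computed by \eqref{omen0}, and on the core all $\psi$-classes die. Dividing by $\alpha^{h_i}$, inverting the tubes using the inverse relation, and passing to the stable range via Theorem~\ref{thm:Harerstab} (the $\psi$-classes are now free generators in $H^*(\M_{g,1+n})$) produces $R_{\textnormal{out}}(\psi_1,\boldsymbol0)$ and $R_{\textnormal{in}}(\psi_{1+i},\boldsymbol0)$ around $\hat T(\boldsymbol\kappa)\cdot\omega_{g,1+n}$. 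Here \eqref{thetwotoR} is used to collect all $\hat T$ factors into a single $\hat T(\boldsymbol\kappa)$ with the total $\boldsymbol\kappa$.
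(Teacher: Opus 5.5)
The genuine gap is in your last step. In your construction the small-genus piece is the \emph{pinned} core $\Omega^{\bullet}_{g,1+n}$, and the free boundaries are the outer ends of the large-genus tubes. Compatibility therefore only gives you a formula for $\Omega^{\circ}_{g+h,1+n}$ with $h$ large, in stable degrees. The class the statement is about, $\Omega^{\circ}_{g,1+n}$ with free boundaries and arbitrary (possibly small) $g$, never appears, so ``inverting the tubes'' has nothing to act on.

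You also cannot conclude by passing to the stable range. \eqref{Omegacununu} is an identity in all of $H^{\textnormal{even}}(\M^{\circ}_{g,1+n})$, where the $\psi$- and $\kappa$-classes are not free for small $g$ (e.g.\ they all vanish on $\M_{0,1+3}$). Harer stability only controls degrees $\leq \frac{2g}{3}$ of that space itself.

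The missing idea, which is how the paper argues, is to keep the unknown free-boundary class as the core. Attach the tubes through the $(\mathbb{S}_1)^{1+n}$-bundle $\nu_{\boldsymbol{h}}$ of \eqref{eq_nuh} and pin only the \emph{outer} ends. The left-hand side is then $\Omega^{\bullet}_{g+h,1+n}$, known from Proposition~\ref{prop:Zkappa} with $\tilde{\boldsymbol{\kappa}} = \boldsymbol{\kappa} + \sum_j \boldsymbol{\kappa}^{(j)}$. Iterating the Gysin sequence with the tubes in the stable range, where their inner classes $\psi'_j$ are free, lets you read the identity in $H^*(\M^{\circ}_{g,1+n})\llbracket \boldsymbol{\kappa}^{(1)},\ldots,\boldsymbol{\kappa}^{(1+n)}\rrbracket$ after substituting $\psi'_j = -\psi_j$. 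The core's own $\psi_j$ never need to be free.

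Then you invert the tubes. The output tube contributes $\alpha^{h_1}\cdot\hat{T}(\boldsymbol{\kappa}^{(1)})\cdot R_{\textnormal{in}}(-\psi_1,\boldsymbol{0}) = \alpha^{h_1}\cdot\hat{T}(\boldsymbol{\kappa}^{(1)})\cdot R^{-1}_{\textnormal{out}}(\psi_1,\boldsymbol{0})$. The input tubes are undone by feeding them $v'_i$ with $\pi^*_{\textnormal{in}}\Omega^{\circ}_{h_{1+i},1+1}(v'_i) = v_i$.

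Your inverse-relation step has the orientations reversed, and this matters for the tube inversion just described. After pinning the outer boundaries, the genus-$h_1$ piece (the one containing boundary $1$) is $\pi^*_{\textnormal{out}}\Omega^{\circ}_{h_1,1+1}$. It therefore contributes $w \mapsto \alpha^{h_1}\cdot R_{\textnormal{in}}(\psi,\boldsymbol{\kappa}^{(1)})[w]$, while the genus-$h_2$ piece contributes $v \mapsto R_{\textnormal{out}}(\psi',\boldsymbol{\kappa}^{(2)})[\alpha^{h_2}\cdot v]$. The composite is $R_{\textnormal{in}}\circ R_{\textnormal{out}}$, and the powers of $\alpha$ must be removed on the outer sides.

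Your displayed identity, with all powers of $\alpha$ inserted between the two factors, does not follow from compatibility: it would need $\Omega^{\circ}_{h_1,1+1}$ to commute with multiplication by $\alpha$. With the left-hand side given by Proposition~\ref{prop:Zkappa}, the correct identity is $\hat{T}(\boldsymbol{\kappa}^{(1)}+\boldsymbol{\kappa}^{(2)})\cdot v = R_{\textnormal{in}}(\psi,\boldsymbol{\kappa}^{(1)})\circ R_{\textnormal{out}}(-\psi,\boldsymbol{\kappa}^{(2)})[v]$. The stated relation still follows, since one-sided inverses in $\End(V)\llbracket z\rrbracket$ are two-sided.

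Your first two steps are fine. The second one, gluing a pinned genus-$h'$ surface through a pair of pants on the pinned side, is a legitimate alternative to the paper, which reads \eqref{thetwotoR} off the two-tube identity above. Your route avoids the circle bundle at that stage.
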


\begin{definition}
\label{defR}
Given an invertible F-CohFT $\Omega$, we define $R(z) \in \End(V)\llbracket z \rrbracket$ by
\[
R(z) = R_{\textnormal{in}}^{-1}(z,\boldsymbol{0}) = R_{\textnormal{out}}(-z,\boldsymbol{0}).
\]
\end{definition}
This convention for the definition of $R(z)$ allows a fair distribution of signs in its relation to $R_{\textnormal{in}}(z)$ and $R_{\textnormal{out}}(z)$. It is a group-like element, \textit{i.e.} we have $R(0) = \textnormal{Id}_V$. 

\begin{corollary}
\label{coformsmooth}
In the situation of Proposition~\ref{prop:Rkappapsi}, we have for any $g,n \geq 0$ such that $2g - 1 + n > 0$ and $v_1,\ldots,v_n \in V$ the equality in $H^{\textnormal{even}}(\M_{g,1+n})$
\begin{equation}
\label{Omegacirquesmooth}
\Omega^\circ_{g,1+n}(v_1 \otimes \cdots \otimes v_n) = R(-\psi_1) \bigg[\hat{T}(\boldsymbol{\kappa}) \cdot \alpha^{g} \cdot \prod_{i = 1}^{n} R^{-1}(\psi_{1+i})[v_i]\bigg].
\end{equation}
\end{corollary}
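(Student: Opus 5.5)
This is a direct rewriting of \eqref{Omegacununu} from Proposition~\ref{prop:Rkappapsi}, using Definition~\ref{defR} and the explicit form of the F-TFT from Lemma~\ref{lem:FTFT}. I would carry it out in three substitutions.

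First, Definition~\ref{defR} gives $R(z) = R_{\textnormal{out}}(-z,\boldsymbol{0})$, hence $R_{\textnormal{out}}(\psi_1,\boldsymbol{0}) = R(-\psi_1)$. The same definition gives $R^{-1}(z) = R_{\textnormal{in}}(z,\boldsymbol{0})$, hence $R_{\textnormal{in}}(\psi_{1+i},\boldsymbol{0}) = R^{-1}(\psi_{1+i})$ for each $i \in [n]$. Both definitions of $R$ agree because Proposition~\ref{prop:Rkappapsi} gives $R_{\textnormal{out}}(-z,\boldsymbol{0})\circ R_{\textnormal{in}}(z,\boldsymbol{0}) = \Id$. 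These are power series in $z$ with coefficients in $\End(V)$, and $z$ is substituted by a nilpotent $\psi$-class, so the substitution makes sense in $H^{\textnormal{even}}(\M_{g,1+n})$.

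Second, I evaluate the inner term $\omega_{g,1+n}\big(\bigotimes_i R^{-1}(\psi_{1+i})[v_i]\big)$. Since $\omega_{g,1+n}$ is the degree-$0$ multilinear map, extending it linearly over the cohomology coefficients and applying Lemma~\ref{lem:FTFT} gives
\[
\omega_{g,1+n}\bigg(\bigotimes_{i=1}^n R^{-1}(\psi_{1+i})[v_i]\bigg) = \alpha^g \cdot \prod_{i=1}^n R^{-1}(\psi_{1+i})[v_i].
\]
The product here is the algebra product in $V$, extended over the commutative ring $H^{\textnormal{even}}$. Substituting this and the first step into \eqref{Omegacununu} yields \eqref{Omegacirquesmooth}.

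The only delicate point is the meaning of the equality in $H^{\textnormal{even}}(\M_{g,1+n})$ rather than on $\M^{\circ}_{g,1+n}$. This is just transport of structure via the isomorphism $\theta$ of \eqref{thetaiso}, since $\psi$- and $\kappa$-classes on $\M^{\circ}$ were defined by pulling back along $\theta$. I would also check that the multilinear extension of $\omega$ over even-degree classes creates no sign issues. It does not, because all classes are even and the product is commutative.
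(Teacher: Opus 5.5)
Your proposal is correct and follows the same route as the paper: you substitute Definition~\ref{defR} and the F-TFT formula of Lemma~\ref{lem:FTFT} into \eqref{Omegacununu}, then transport the identity along the isomorphism $\theta$. Your extra remarks, that a one-sided inverse of a group-like series is two-sided and that the series truncate on nilpotent classes, are harmless bookkeeping which the paper leaves implicit.
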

\begin{proof} We use \eqref{Omegacununu}, the value of the F-TFT and the isomorphism $\theta : \M_{g,1+n}^{\circ} \rightarrow \M_{g,1+n}$.
\end{proof}
We recognise in this formula a bit of the F-Givental group on the F-TFT $\omega$, \textit{cf.} Definition~\ref{Reaction}. More precisely, it coincides to the restriction of $RT\omega$ to the moduli of smooth complex curves $\M_{g,1+n}$.

The proof of Proposition~\ref{prop:Rkappapsi} follows a pattern similar to Proposition~\ref{prop:Zkappa} and for this reason we give less details. But, there is an extra ingredient we should insist on, namely the description of the cohomology of circle bundles from Gysin--Leray sequence.

\begin{theorem}\cite[Proposition 14.33]{bott_differential_2010} \label{LerayGysin}
Let $\mathcal{X} \rightarrow \mathcal{B}$ be a smooth $\mathbb{S}_1$-bundle. Then
\[
\forall k \geq 0\qquad H^k(\mathcal{X}) = H^k\big((H^*(\mathcal{B})[\eta],\mathrm{d})\big),
\]
where $\eta$ is a generator of cohomological degree $1$ and $\mathrm{d}$ is the unique differential such that $\mathrm{d}\eta = c_1(\mathcal{X})$ and $\mathrm{d}(H^*(\mathcal{B})) = 0$. In other words
\begin{equation}
\label{HSS}
H^k(\mathcal{X}) = \frac{H^k(\mathcal{B})}{\textnormal{Im}\big( c_1(\mathcal{X}) \cup - \big)} \oplus \textnormal{Ker}\big(c_1(\mathcal{X}) \cup -\big).\eta,
\end{equation}
where the cup product acts on $H^{k - 2}(\mathcal{B})$ for the first summand, and on $H^{k - 1}(\mathcal{B})$ for the second.
\end{theorem}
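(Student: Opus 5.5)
The statement is classical, so the plan is to derive it from the Leray spectral sequence of $\mathcal{X}\to\mathcal{B}$, or equivalently from a de Rham model, rather than to invoke \cite{bott_differential_2010} as a black box. First I need the bundle to be orientable, so that the local system $\mathcal{H}^1$ of fiberwise cohomology is trivial. In all our applications this holds: $\pi$, $\pi_{\textnormal{in}}$, $\pi_{\textnormal{out}}$, $\nu$ and $\rho$ come from $U(1)$-actions, rotating origins or twisting. In general one assumes orientability, which is implicit in writing $c_1(\mathcal{X})$. With complex coefficients, the $E_2$ page of the Leray spectral sequence is then $E_2^{p,q}=H^p(\mathcal{B})\otimes H^q(\mathbb{S}_1)$, which is concentrated in the two rows $q=0,1$. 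Writing the row $q=1$ as $H^*(\mathcal{B})\,\eta$, the only possibly non-zero differential is $d_2:E_2^{p,1}\to E_2^{p+2,0}$, $b\eta\mapsto b\cup e$, where $e$ is the Euler class. For a circle bundle associated with a complex line bundle, $e=c_1(\mathcal{X})$ up to a sign convention, and the sign does not affect kernels or images. Thus $E_3=E_\infty$ is exactly the cohomology of the differential graded algebra $(H^*(\mathcal{B})[\eta],\mathrm{d})$ with $\eta^2=0$, $\mathrm{d}\eta=c_1(\mathcal{X})$ and $\mathrm{d}|_{H^*(\mathcal{B})}=0$.

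Concretely, an element of degree $k$ is $a+b\eta$ with $a\in H^k(\mathcal{B})$ and $b\in H^{k-1}(\mathcal{B})$, and $\mathrm{d}(a+b\eta)=\pm b\cup c_1(\mathcal{X})$. The cocycles are therefore $H^k(\mathcal{B})\oplus \textnormal{Ker}(c_1\cup -)\,\eta$, and the coboundaries are the images $\mathrm{d}(a'+b'\eta)=\pm b'c_1$ with $b'\in H^{k-2}(\mathcal{B})$. Taking the quotient gives exactly \eqref{HSS}, the first summand being $H^k(\mathcal{B})/\textnormal{Im}(c_1\cup -)$ and the second $\textnormal{Ker}(c_1\cup -)\,\eta$.

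The remaining issue, and the only real obstacle, is to pass from $E_\infty$ to $H^k(\mathcal{X})$ itself. The spectral sequence only gives a two-step filtration $0\to E_\infty^{k,0}\to H^k(\mathcal{X})\to E_\infty^{k-1,1}\to 0$, which is the Gysin sequence. Over $\mathbb{C}$ this short exact sequence of vector spaces splits, so the additive statement \eqref{HSS} follows immediately. If one also wants the ring structure of the model, I would instead build an explicit quasi-isomorphism. Choose a connection, that is a global angular form $\eta\in\Omega^1(\mathcal{X})$ integrating to $1$ on the fibers with $\mathrm{d}\eta=-\pi^*e$ for a representative $e$ of the Euler class. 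Send $\eta$ to this form and the base classes to pullbacks of chosen representatives, giving $\Omega^*(\mathcal{B})[\eta]\to\Omega^*(\mathcal{X})$. One then proves it is a quasi-isomorphism by a Mayer--Vietoris induction over a good cover of $\mathcal{B}$, where on each trivialising open set $U\times\mathbb{S}_1$ the statement is the Künneth formula. The minimal-model replacement of $\Omega^*(\mathcal{B})$ by $H^*(\mathcal{B})$ is then harmless, because only the cohomology of the two-row complex enters and $\mathrm{d}$ is determined by $c_1(\mathcal{X})$.
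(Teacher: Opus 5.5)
Your argument is correct and is the standard one: the paper gives no proof of its own and only quotes Bott--Tu, whose \S 14 obtains the Gysin sequence the same way you do. They use the two-row $E_2$ page of the Leray spectral sequence of an oriented circle bundle, with $d_2$ equal to cup product with the Euler class and the extension split over $\mathbb{C}$; you are also right that orientability is implicitly required, since the Klein bottle, viewed as a circle bundle over $\mathbb{S}_1$ with reflection monodromy, violates the formula. The only thing to discard is the last sentence of your optional ring-structure paragraph, because replacing $\Omega^*(\mathcal{B})$ by $H^*(\mathcal{B})$ is not harmless multiplicatively when $\mathcal{B}$ is not formal: the product of lifts of $b\eta$ and $b'\eta$ picks up an $\eta$-component given by the Massey product $\langle b, c_1(\mathcal{X}), b'\rangle$, which the model $(H^*(\mathcal{B})[\eta],\mathrm{d})$ does not see. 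This does not affect your proof, since the statement is only degreewise and additive.
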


In particular, since $-\psi_i  \in H^2(\M_{g,1+n}^{\circ})$ is the first Chern class of the $\mathbb{S}_1$-bundle whose fibers parametrise the choice of an origin on the $i$-th boundary, the $\psi_i$-class on the $\mathbb{S}_1$-bundle itself is killed. This justifies that the stable cohomology is  $\mathbb{C}[\psi_2,\ldots,\psi_{1+n},\kappa_1,\kappa_2,\cdots]$  for  $\M_{g,1+n}^{\bullet \textnormal{out}}$ while it is $\mathbb{C}[\psi_1,\kappa_1,\kappa_2,\cdots]$ for $\M_{g,1+n}^{\bullet \textnormal{in}}$, and accounts for the difference in the stable cohomologies of the two types of moduli spaces in Theorem~\ref{Mumfordwithpsiclasses}.

It will appear in the proof of Proposition~\ref{prop:Rkappapsi} that $R(-z)$ (resp. $R^{-1}(z)$) is the class we need to attach to a pinned outgoing (resp. incoming) boundary to make it free, with $z$ replaced by the $\psi$-class associated to this boundary. For a pinned boundary the $\psi$-class is killed and we are left with $R(z=0) = \Id_{V}$. In particular, pullback by $\pi_{\textnormal{in}}$ is pinning the incoming boundaries, only leaving a $\psi$-class associated to an outgoing boundary. This explains the exchanged role of $\textnormal{in}$ and $\textnormal{out}$ between $R$s and $\pi^*$s in Proposition~\ref{prop:Rkappapsi}.

\begin{proof}[Proof of Proposition~\ref{prop:Rkappapsi}]
The method of proof of Lemma~\ref{ghnlem} shows  for any $v \in V$, $g \geq 0$ and $h \geq 1$
\begin{equation*}
\begin{split}
 \varphi^*_h \pi^*_{\textnormal{in}}\Omega^{\circ}_{g+h,1}(v) & =  \pi^*_{\textnormal{in}}\Omega^{\circ}_{g,1+1}(\alpha^h \cdot v), \\
\tilde{\varphi}^*_h \pi^*_{\textnormal{out}} \Omega^{\circ}_{g+h,1}(v) & =  \alpha^h \cdot \pi^*_{\textnormal{out}}\Omega^{\circ}_{g,1+1}(v).
\end{split}
\end{equation*}
Here, $\tilde{\varphi}_h$ is obtained by $h$ iterations of maps like $\tilde{\varphi} = \textnormal{gl}^{\bullet}(-,\Yu)$ which glue $\Yu$ to the incoming boundary, while $\varphi_h$ was obtained by iterations of maps like $\textnormal{gl}^{\bullet}(\Yu,-)$ which glue $\Yu$ to the outcoming boundary. The pullbacks $\pi_{\textnormal{in}}^*$ and $\pi_{\textnormal{out}}^*$ have the effect of pinning the corresponding boundaries, which is necessary before being able to glue them. This justifies the existence of the limits $R_{\textnormal{in}}$ and $R_{\textnormal{out}}$ in \eqref{RoutRin}, and we have for any $g \geq 1$ and $v \in V$
\begin{equation}
\label{RinRoutstable}
\begin{split}
\pi^*_{\textnormal{in}} \Omega_{g,1+1}^{\circ}(v) & = R_{\textnormal{out}}(\psi_1,\boldsymbol{\kappa})[\alpha^{g} \cdot v], \\
\pi^*_{\textnormal{out}}\Omega_{g,1+1}^{\circ}(v) & = \alpha^{g} \cdot R_{\textnormal{in}}(\psi_2,\boldsymbol{\kappa})[v].
\end{split}
\end{equation}
Pulling this back to $\M_{g,1+n}^{\bullet}$ replaces $\psi_1,\psi_2$ by $0$, and evaluating in the stable cohomology to $\boldsymbol{\kappa} = \boldsymbol{0}$ and comparing with Proposition~\ref{prop:Zkappa} gives $R_{\textnormal{in}}(0,\boldsymbol{0}) = R_{\textnormal{out}}(0,\boldsymbol{0}) = \Id_V$.

\begin{figure}[h!]
\begin{center}
\includegraphics[width=0.45\textwidth]{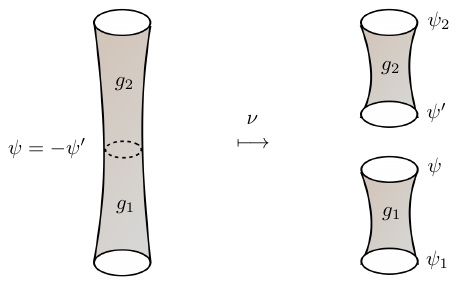}
\caption{\label{fig:nupsimap} Geometry of the map $\nu$.}
\end{center}
\end{figure}

Let $g_1,g_2 \geq 1$ and set $g = g_1 + g_2$. On the space $\M_{g_1,1+1}^{\circ} \times \M_{g_2,1+1}^{\circ}$ we have the classes $\psi_{1} := \psi_1^{(1)}$ and $\psi := \psi_2^{(1)}$ from the first factor, and $\psi' := \psi^{(2)}_1$ and $\psi_2 := \psi_2^{(2)}$ from the second factor (Figure~\ref{fig:nupsimap}). We apply Theorem~\ref{LerayGysin} to the $\mathbb{S}_1$-bundle $\nu : \partial \mathcal{N} \rightarrow \M_{g_1,1+1}^{\circ} \times \M_{g_2,1+1}^{\circ}$ which has first Chern class $-(\psi + \psi')$. By Theorem~\ref{Mumfordwithpsiclasses}, for fixed  $k$ when $g_1$ is large enough, we have an isomorphism between $H^k(\M_{g_1,1+1}^{\circ} \times \M_{g_2,1+1}^{\circ})$ and the degree $k$ part of $H^{\textnormal{even}}(\M_{g_2,1+1}^{\circ})[\psi_1,\psi,\kappa_1^{(1)},\kappa_2^{(1)},\ldots]$. In this ring the multiplication by the class $- (\psi + \psi')$ is injective (simply by decomposing on monomials in $\psi$) so the second summand in \eqref{HSS} is absent. The same argument would hold if $g_2$ (instead of $g_1$) were large enough. So, for $k$ fixed and $g_1$ \emph{or} $g_2$ large enough, we have
\begin{equation}
\label{delNHk} H^k(\partial \mathcal{N}) \simeq \frac{H^k(\M_{g_1,1+1}^{\circ} \times \M_{g_2,1+1}^{\circ})}{(\psi + \psi')}.
\end{equation}
The compatibility property \eqref{compascirc} for $\Omega^{\circ}$ yields
\[
\Omega_{g,1+1}^{\circ}{}_{|\partial \mathcal{N}} = \nu^*(\Omega_{g_1,1+1}^{\circ} \circ \Omega_{g_2,1+1}^{\circ}),
\]
After lifting to $\pi^{-1}(\partial \mathcal{N}) \subset \M_{g,1+1}^{\bullet}$ (like in the proof of Proposition~\ref{propF1}) we get
\begin{equation*}
\Omega_{g,1+1}^{\bullet}|_{\pi^{-1}(\partial \mathcal{N})} = \pi_{\textnormal{out}}^*\Omega_{g_1,1+1}^{\circ} \circ  \pi_{\textnormal{in}}^* \Omega_{g_2,1+1}^{\circ} |_{\psi' = - \psi}.
\end{equation*}
We know $\Omega_{g,1+1}^{\bullet}$ from \eqref{omen0}, and restricting to $\pi^{-1}(\partial \mathcal{N})$ just replaces $\boldsymbol{\kappa} = \boldsymbol{\kappa}^{(1)} + \boldsymbol{\kappa}^{(2)}$ to compare with the right-hand side (this is the pullback of \eqref{delNHk} via $\pi$). The right-hand side itself can be rewritten with help of \eqref{RinRoutstable}. Using commutativity of the product, we get
\[
\forall v \in V\qquad \alpha^{g_1} \cdot \hat{T}(\boldsymbol{\kappa}^{(1)} + \boldsymbol{\kappa}^{(2)}) \cdot \alpha^{g_2} \cdot v = \alpha^{g_1} \cdot R_{\textnormal{in}}(\psi,\boldsymbol{\kappa}^{(1)}) \circ R_{\textnormal{out}}(-\psi,\boldsymbol{\kappa}^{(2)})[\alpha^{g_2} \cdot v].
\]
By invertibility of $\alpha$ we deduce
 \begin{equation}
 \label{beforeev}
 \forall v \in V\qquad \hat{T}(\boldsymbol{\kappa}^{(1)} + \boldsymbol{\kappa}^{(2)}) \cdot v = R_{\textnormal{in}}(\psi,\boldsymbol{\kappa}^{(1)}) \circ R_{\textnormal{out}}(-\psi,\boldsymbol{\kappa}^{(2)})[v].
 \end{equation}
As the stable cohomology is freely generated, we can evaluate this to $\boldsymbol{\kappa}^{(1)} = \boldsymbol{\kappa}^{(2)} = \boldsymbol{0}$. Due to the exponential form of $\hat{T}(\boldsymbol{\kappa})$ from Proposition~\ref{prop:Zkappa}, we get
\begin{equation}
\label{inverseR}
\textnormal{Id}_{V} = R_{\textnormal{in}}(\psi,\boldsymbol{0}) \circ R_{\textnormal{out}}(-\psi,\boldsymbol{0}).
\end{equation}
If we only evaluate \eqref{beforeev} to $\boldsymbol{\kappa}^{(2)} = \boldsymbol{0}$ or to $\boldsymbol{\kappa}^{(1)} = \boldsymbol{0}$ and simply call $\boldsymbol{\kappa}$ the other one (which is a free generator in the stable cohomology), we get for any $v \in V$
 \begin{equation}
 \label{Rnotzer}
 \begin{split}
 R_{\textnormal{in}}(\psi,\boldsymbol{\kappa})[v] & = \hat{T}(\boldsymbol{\kappa}) \cdot R_{\textnormal{out}}^{-1}(-\psi,\boldsymbol{0})[v] = \hat{T}(\boldsymbol{\kappa}) \cdot R_{\textnormal{in}}(\psi,\boldsymbol{0})[v], \\
 R_{\textnormal{out}}(\psi',\boldsymbol{\kappa})[v] & = R_{\textnormal{in}}^{-1}(-\psi',\boldsymbol{0})\big[\hat{T}(\boldsymbol{\kappa}) \cdot v\big] = R_{\textnormal{out}}(\psi',\boldsymbol{0})\big[\hat{T}(\boldsymbol{\kappa}) \cdot v\big],
 \end{split}
 \end{equation}
For the rightmost equalities we made use of \eqref{inverseR}. This reproduces \eqref{thetwotoR}.
 
 \begin{figure}[h!]
\begin{center}
\includegraphics[width=\textwidth]{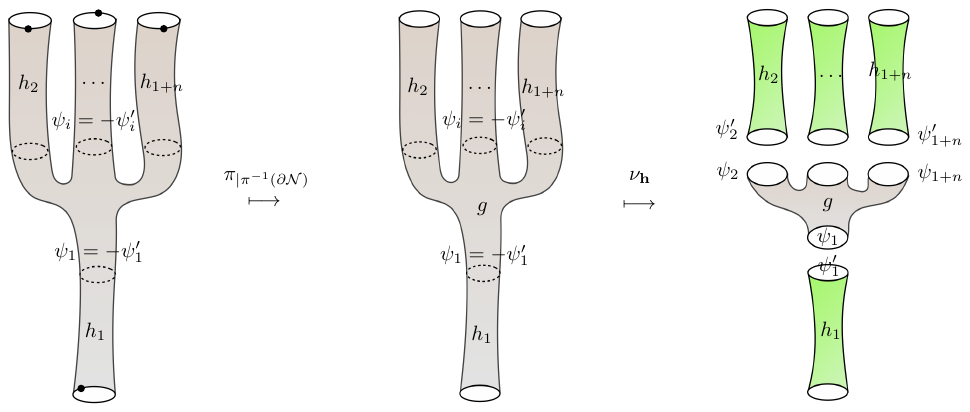}
\caption{\label{fig:mapnuh} Geometry of the map $\nu_{\boldsymbol{h}}$. The components in green have large genera $h_1,\ldots,h_{1+n}$. Note the unusual convention that $\psi_1'$ is associated to an ingoing (instead of outgoing) edge, which is responsible for the minus signs in the formulae.}
\end{center}
\end{figure}
 
Eventually, let $g,n \geq 0$ such that $2g - 1 + n > 0$ and $v_1',\ldots,v_n' \in V$. We take a $(1+n)$-tuple of positive integers $\boldsymbol{h}$, set $h = h_1 + \cdots + h_{1+n}$ and consider the $(\mathbb{S}_1)^{1 + n}$-bundle map (Figure~\ref{fig:mapnuh})
\begin{equation}
\label{eq_nuh}
\nu_{\boldsymbol{h}} : \partial \mathcal{N} \longrightarrow \M_{h_1,1+1}^{\circ} \times \M_{g,1+n}^{\circ} \times \prod_{i = 1}^{n} \M^{\circ}_{h_{1+i},1+1} \qquad \textnormal{with}\quad \partial \mathcal{N} \subset \M_{g+h,1+n}^{\circ}
\end{equation}
obtained by iterations of the maps $\nu$ like in \eqref{numap}. The compatibility of $\Omega^{\circ}$ with \eqref{eq_nuh} yields 
\[
\Omega_{g+h}^{\circ}(v_1' \otimes \cdots \otimes v_n')_{|\partial \mathcal{N}}  = \nu^*_{\boldsymbol{h}} \bigg(\Omega_{h_1,1+1}^{\circ} \circ \Omega_{g,1+n}^{\circ} \bigg[ \bigotimes_{i = 1}^{n} \Omega_{h_{1+i},1+1}^{\circ}[v_i']\bigg]\bigg).
\]
Pulling back by $\pi$ to the moduli with pinned boundaries, we get
\begin{equation}
\label{oibycglb}
\Omega_{g+h}^{\bullet}(v_1' \otimes \cdots \otimes v_n')|_{\pi^{-1}(\partial \mathcal{N})} = \pi_{\textnormal{out}}^*\Omega^\circ_{h_1,1+1} \circ \Omega_{g,1+n}^{\circ}\bigg[ \bigotimes_{i = 2}^{1+n}  \pi_{\textnormal{in}}^* \Omega_{h_{i},1+1}^{\circ}[v_i']\bigg]\bigg|_{\substack{\psi_j' = - \psi_j \\ j \in [n + 1]}},
\end{equation}
where we call $\psi_i$ the class on $\M_{g,1+n}^{\circ}$ coming from the $i$-th boundary, $\psi'_{1+i}$ the class in $\M_{h_{1+i},1+1}^{\circ}$ coming from the boundary identified with this $i$-th boundary in $\partial \mathcal{N}$. When looking at those equalities for an arbitrary but fixed cohomology degree, we can take $h_1,\ldots,h_{1+n}$ large enough so that  statements like \eqref{delNHk} are available. Then, on $\pi^{-1}(\partial \mathcal{N})$ we have the relations $\psi_i + \psi'_i = 0$ for every $i$ coming from various instances of pullback of \eqref{delNHk} via $\pi$, and the kappa classes $\tilde{\boldsymbol{\kappa}}$ from the ambient $\M_{g+h,1+n}^{\bullet}$ decompose as
\begin{equation}
\label{sumkapp}
\tilde{\boldsymbol{\kappa}} = \boldsymbol{\kappa} + \boldsymbol{\kappa}^{(1)} + \cdots + \boldsymbol{\kappa}^{(1+n)},
\end{equation}
where $\boldsymbol{\kappa}^{(i)}$ are the kappa classes pulled back from $\M_{h_{i},1+1}^{\circ}$.  As before, we know the left-hand side of \eqref{oibycglb} from Proposition~\ref{prop:Zkappa}
\[
\Omega_{g+h}^{\bullet}(v_1' \otimes \cdots \otimes v_n')_{|\pi^{-1}(\partial \mathcal{N})} = \hat{T}(\tilde{\boldsymbol{\kappa}}) \cdot \omega_{g+h,1+n}(v_1' \otimes \cdots \otimes v_n') = \hat{T}(\tilde{\boldsymbol{\kappa}}) \cdot \alpha^{h} \cdot \omega_{g,1+n}(v_1' \otimes \cdots \otimes v_n'),
\]
where we should substitute \eqref{sumkapp}. In the right-hand side of \eqref{oibycglb}, using \eqref{RinRoutstable} together with \eqref{Rnotzer} and the relations $\psi_i' = - \psi_i$, the first and last factors become
\begin{equation}
\begin{split}
\pi_{\textnormal{out}}^* \Omega_{h_1,1+1}^{\circ}[v'] & = \alpha^{h_1} \cdot R_{\textnormal{in}}(\psi_1',\boldsymbol{\kappa}^{(1)})[v'] = \alpha^{h_i} \cdot \hat{T}(\boldsymbol{\kappa}^{(1)}) \cdot R_{\textnormal{in}}(\psi'_1,\boldsymbol{0})[v'] \\
& = \alpha^{h_i} \cdot \hat{T}(\boldsymbol{\kappa}^{(1)}) \cdot R_{\textnormal{out}}^{-1}(\psi_1,\boldsymbol{0})[v_1'], \\
\pi_{\textnormal{in}}^* \Omega_{h_{1+i},1+1}^{\circ}(v_i') & = R_{\textnormal{out}}(\psi_{1+i}',\boldsymbol{\kappa}^{(1+i)})[\alpha^{h_{1+i}} \cdot v_i'] = R_{\textnormal{out}}(\psi_{1+i}',\boldsymbol{0})\big[\hat{T}(\boldsymbol{\kappa}^{(1+i)}) \cdot \alpha^{h_{1+i}} \cdot v'_i\big] \\
& = R_{\textnormal{in}}^{-1}(\psi_{1+i},\boldsymbol{0})\big[\hat{T}(\boldsymbol{\kappa}^{(1+i)}) \cdot \alpha^{h_{1+i}} \cdot v_i'\big].
\end{split}
\end{equation}
Now take $v_1,\ldots,v_n \in V$ and use the previous identities for $v_i' = \alpha^{-h_{1+i}} \cdot \hat{T}(-\boldsymbol{\kappa}^{(1+i)}) \cdot R_{\textnormal{in}}(\psi_{1+i},\boldsymbol{0})$, that is $\pi_{\textnormal{in}}^* \Omega_{h_{1+i},1+1}^{\circ}(v_i') = v_i$. Thanks to the exponential form of $\hat{T}$ and \eqref{sumkapp}, we extract from \eqref{oibycglb}  the equality
\begin{equation}
\begin{split}
& \quad \alpha^{h_1} \cdot \hat{T}(\boldsymbol{\kappa}^{(1)}) \cdot R_{\textnormal{out}}^{-1}(\psi_1,\boldsymbol{0})\bigg[\Omega^\circ_{g,1+n}(v_1 \otimes \cdots \otimes v_n)\bigg] \\
& = \hat{T}(\tilde{\boldsymbol{\kappa}}) \cdot \omega_{g+h,1+n}\bigg(\bigotimes_{i = 1}^{n} \alpha^{-h_{1+i}} \cdot \hat{T}(-\boldsymbol{\kappa}^{(1+i)}) \cdot R_{\textnormal{in}}(\psi_{1+i},\boldsymbol{0})[v_{i}]\bigg) \\
& =  \hat{T}(\boldsymbol{\kappa} + \boldsymbol{\kappa}^{(1)}) \cdot \alpha^{h_1} \cdot \omega_{g,1+n}\bigg(\bigotimes_{i = 1}^{n} R_{\textnormal{in}}(\psi_{1+i},\boldsymbol{0})[v_i]\bigg).
\end{split}
\end{equation}
Thus, we can isolate
\[
\Omega^\circ_{g,1+n}(v_1 \otimes \cdots \otimes v_n) = R_{\textnormal{out}}(\psi_1,\boldsymbol{0})\bigg[\hat{T}(\boldsymbol{\kappa}) \cdot \omega^\circ_{g,1+n}\bigg(\bigotimes_{i = 1}^{n} R_{\textnormal{in}}(\psi_{1+i},\boldsymbol{0})[v_i]\bigg)\bigg].
\]
\end{proof}

\begin{proposition}
\label{prop:RTflatunit}
If $\Omega$ is an invertible F-CohFT, take $R(z) \in \End(V)\llbracket z \rrbracket$ from Proposition~\ref{prop:Rkappapsi} and Definition~\ref{defR}, and $T(z) \in  z^2V\llbracket z \rrbracket$ from Proposition~\ref{prop:Zkappa} and Definition~\ref{defT}.  Set
\[
\hat{T}(z) := \bigg(\1 - \frac{T(z)}{z}\bigg)^{-1},\qquad \Upsilon(z) := R(z)[\hat{T}^{-1}(z)]
\]
Then, pulling back by $f_{\theta} = \theta^{-1} \circ f \circ \theta$ coming from the forgetful morphism $f : \M_{g,1+n+1} \rightarrow \M_{g,1+n}$ takes the form
\[
f_{\theta}^* \Omega^\circ_{g,1+n}(v_1 \otimes \cdots \otimes v_n) = \Omega^\circ_{g,1+n + 1}\big(v_1 \otimes \cdots \otimes v_n \otimes \Upsilon(\psi_{1+n+1})\big).
\]
If furthermore $\Omega$ has flat unit $\1$, then  $\Upsilon(z) = \1$ and $T(z) = z\big(\1 - R^{-1}(z)[\1]\big)$.
\end{proposition}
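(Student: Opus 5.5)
The plan is to compute both sides directly from the closed formula of Corollary~\ref{coformsmooth}. Write $\Omega^\circ = \theta^*(\Omega_{|\M})$, so that $f_\theta^*\Omega^\circ_{g,1+n} = \theta^*\big(f^*\Omega_{g,1+n}\big)_{|\M}$. It therefore suffices to work on the moduli of smooth curves $\M_{g,1+n+1}$, and to pull back the right-hand side of \eqref{Omegacirquesmooth} along $f$ restricted to smooth curves.

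On $\M_{g,1+n+1}$ the remark after Lemma~\ref{lem39} gives $f^*\psi_i = \psi_i$ for $i \in [1+n]$, because the correction $p_{i*}$ is supported on the boundary. It also gives $f^*\kappa_m = \kappa_m - \psi_{1+n+1}^m$. Since $\hat{T}(\boldsymbol{\kappa}) = \exp(\sum_m \hat{t}_m\kappa_m)$ by Proposition~\ref{prop:Zkappa}, we obtain
\[
f^*\Omega^\circ_{g,1+n}(v_1\otimes\cdots\otimes v_n) = R(-\psi_1)\Big[\hat{T}(\boldsymbol{\kappa})\cdot \exp\Big(-\sum_{m\geq 1}\hat{t}_m\psi_{1+n+1}^m\Big)\cdot\alpha^g\cdot\prod_{i=1}^n R^{-1}(\psi_{1+i})[v_i]\Big].
\]
On the other side, Corollary~\ref{coformsmooth} with $n+1$ inputs and last input $\Upsilon(\psi_{1+n+1})$ produces the same expression, except that the exponential factor is replaced by $R^{-1}(\psi_{1+n+1})[\Upsilon(\psi_{1+n+1})] = \hat{T}^{-1}(\psi_{1+n+1})$. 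Definition~\ref{defT} gives $\1 - T(z)/z = \exp(-\sum_m \hat{t}_m z^m)$, so $\hat{T}^{-1}(z)$ is exactly that exponential, and the two sides agree. This proves the first claim, with the same formal-series-in-nilpotent-classes reasoning as before.

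For the flat unit statement, the flat unit axiom gives $f^*\Omega_{g,1+n}(w) = \Omega_{g,1+n+1}(w\otimes\1)$. Restricting to $\M$ and pulling back by $\theta$, then comparing with the first part, we get
\[
\Omega^\circ_{g,1+n+1}\big(w\otimes(\Upsilon(\psi_{1+n+1})-\1)\big) = 0 \quad \text{for all } g,n,w.
\]
To conclude $\Upsilon = \1$, take $n=0$. By Corollary~\ref{coformsmooth},
\[
\Omega^\circ_{g,1+1}(u) = R(-\psi_1)\big[\hat{T}(\boldsymbol{\kappa})\cdot\alpha^g\cdot R^{-1}(\psi_2)[u]\big].
\]
Fix a cohomological degree $2k$ and choose $g$ large enough that Theorem~\ref{Mumfordwithpsiclasses} applies, so that $H^{\le 2k}(\M_{g,1+1})$ is freely generated by $\psi_1,\psi_2,\kappa_m$.

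In this range we specialise $\boldsymbol{\kappa}=0$ and $\psi_1=0$. The operators $R(0)=\Id$, $\hat{T}(\mathbf 0)=\1$ and multiplication by $\alpha^g$ are all invertible, with $\alpha^g$ invertible by invertibility of $\alpha$. Hence $R^{-1}(\psi_2)[\Upsilon(\psi_2)-\1]=0$ as a series in the free variable $\psi_2$ up to degree $k$, and therefore $\Upsilon(z)=\1$ up to order $z^k$. Since $k$ is arbitrary, $\Upsilon=\1$. Unwinding, $R(z)[\hat{T}^{-1}(z)] = \1$ gives $\1 - T(z)/z = R^{-1}(z)[\1]$, that is $T(z) = z(\1 - R^{-1}(z)[\1])$.

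The main point requiring care is this last injectivity step. One must make sure the specialisation $\boldsymbol{\kappa}=0$, $\psi_1=0$ is legitimate, which holds because these are free generators in the stable range. One must also check that the identity $f^*\kappa_m = \kappa_m - \psi^m_{1+n+1}$ without boundary corrections is valid on the open part, which the remark after Lemma~\ref{lem39} provides.
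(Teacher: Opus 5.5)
Your proposal is correct and follows essentially the same route as the paper. You pull back the formula of Corollary~\ref{coformsmooth} along the forgetful map on smooth curves using $f^*\psi_i=\psi_i$ and $f^*\kappa_m=\kappa_m-\psi_{1+n+1}^m$, recognise $\hat{T}^{-1}(\psi_{1+n+1})=R^{-1}(\psi_{1+n+1})[\Upsilon(\psi_{1+n+1})]$, and in the flat-unit case compare with the flat unit axiom in the stable range where the $\psi$- and $\kappa$-classes are free. The differences are cosmetic: you use $n=0$ and spell out the specialisation $\boldsymbol{\kappa}=0$, $\psi_1=0$, whereas the paper takes $n=1$ and leaves that step implicit.
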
 

\begin{proof} We transport the identity \eqref{Omegacirquesmooth} to $\M_{g,1+n}$ using the isomorphism $\theta_*$ and examine the behavior of its right-hand side under the forgetful morphism $f : \M_{g,1+n+1} \rightarrow \M_{g,1+n}$ between moduli of \emph{smooth} curves. Restricting Lemma~\ref{lem39} to the latter, we have $f^*\psi_i = \psi_i$ for $i \in [n + 1]$ and  $f^*\kappa_m = \kappa_m - \psi_{1+n+1}^{m}$ for $m \geq 0$ (here kept the same notations for classes on the two spaces). In particular, $f^*$ preserves the structure of the leaf factors involving $R(\psi_i)$ and the exponential form of $\hat{T}(\boldsymbol{\kappa})$ implies that 
\[
f^*\big(\hat{T}(\boldsymbol{\kappa})\big) = \hat{T}(\boldsymbol{\kappa}) \cdot \hat{T}^{-1}(\psi_{1+n+1})\qquad \textnormal{with}\quad \hat{T}(z) = \exp\bigg(\sum_{m \geq 1} \hat{t}_m z^{m}\bigg).
\]
Noticing that the F-TFT in genus $0$ is the iteration of the commutative product $\cdot$ and comparing again with \eqref{Omegacirquesmooth} for one more boundary, we obtain
\begin{equation*}
\begin{split}
f^*\Omega_{g,1+n}^{\circ}(v_1 \otimes \cdots \otimes v_n) & = R(-\psi_1)\bigg[\hat{T}(\boldsymbol{\kappa}) \cdot \omega_{0,1+n}\bigg(\bigotimes_{i = 1}^{n} R^{-1}(\psi_{1+i})[v_i]\bigg) \cdot \hat{T}^{-1}(\psi_{1 + n + 1})\bigg] \\
& = R(-\psi_1)\Bigg[\hat{T}(\boldsymbol{\kappa}) \cdot \omega_{0,1+n+1}\Bigg(\bigg(\bigotimes_{i = 1}^{n} R^{-1}(\psi_{1+i})[v_i]\bigg) \otimes \hat{T}^{-1}(\psi_{1+n+1})\Bigg)\Bigg] \\
& = \Omega_{g,1+n+1}^{\circ}\big(v_1 \otimes \cdots \otimes v_n \otimes R(\psi_{1+n+1})[\hat{T}^{-1}(\psi_{1+n+1})]\big).
\end{split}
\end{equation*}
If the F-CohFT satisfies the flat unit axiom, this should also be equal to $\Omega_{g,1+n+1}^{\circ}(v_1 \otimes \cdots \otimes v_n \otimes \1)$.  As $g \rightarrow \infty$ it induces an identity in the stable cohomology where the $\psi$-classes are free. Taking $n = 1$ it forces $R(z)[\hat{T}^{-1}(z)] = \1$. Taking into account Lemma~\ref{expkappa}, this makes $T(z) = z(\1 - R^{-1}(z)[\1])$.
\end{proof}

\subsection{Unique patching of cohomology classes on strata}

\label{sec:patch}

For the proof of Theorem~\ref{thm:transfree} in the next Section, we need a technical result showing that classes (not necessarily F-CohFTs) are uniquely determined by their restriction to each strata of $\M^{\textnormal{ct}}_{g,1+n}$ at least in some stability range. The original argument for $\Mbar_{g,n}$ is Teleman's second main idea \cite[Section 5]{teleman_structure_2012}, see also the review \cite{chiarello_telemans_2016}. Recall the description of strata $\mathcal{S}_{\Gamma} = \textnormal{gl}(\M_{\Gamma}) \subset \M_{g,1+n}^{\textnormal{ct}}$ in terms of stable trees $\Gamma \in T_{g,1+n}$ in Definition~\ref{def:inclu}.

We start by showing that if classes agree in restriction to two nearby strata, they must agree in the stable range on the union of the two.
\begin{lemma}s
\label{MVlem} Let $\Gamma,\Gamma' \in T_{g,1+n}$ such that $\Gamma'$ is obtained from $\Gamma$ by splitting a vertex into two, and call $h_1,h_2$ the genera of the two new vertices. Let $\phi \in H^k(\M_{g,1+n}^{\textnormal{ct}})$ with $k \leq 1+ \frac{2}{3}\max(h_1,h_2)$ whose restriction to $\mathcal{S}_{\Gamma}$ and to $\mathcal{S}_{\Gamma}$ vanish. Then the restriction of $\phi$ to $\mathcal{S}_{\Gamma} \sqcup \mathcal{S}_{\Gamma'}$ vanishes.
\end{lemma}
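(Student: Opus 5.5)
We will follow Teleman's Mayer--Vietoris argument, using the thickening of Section~\ref{sec:variantFCohFT}. Here $\mathcal{S}_{\Gamma'}$ lies in the closure of $\mathcal{S}_{\Gamma}$. Inside $\mathcal{S}_{\Gamma}\sqcup\mathcal{S}_{\Gamma'}$, the vertex $v$ of $\Gamma$ that splits carries a factor $\M_{g(v),1+n(v)}$. The locus $\mathcal{S}_{\Gamma'}$ corresponds in that factor to a new separating node. As in the construction of $\mathcal{N}_\theta$, take a tubular neighbourhood $\mathcal{N}'$ of $\mathcal{S}_{\Gamma'}$ in $X:=\mathcal{S}_{\Gamma}\sqcup\mathcal{S}_{\Gamma'}$, defined by hyperbolic geometry on the $v$-component. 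It retracts onto $\mathcal{S}_{\Gamma'}$, and its boundary is the circle bundle $\rho:\partial\mathcal{N}'\to\mathcal{S}_{\Gamma'}$. Then $X=\mathcal{N}'\cup(X\setminus \mathrm{int}\,\mathcal{N}')$, and $X\setminus\mathrm{int}\,\mathcal{N}'$ deformation retracts onto $\mathcal{S}_\Gamma$ (up to homotopy). The intersection is homotopy equivalent to $\partial\mathcal{N}'$.

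The Mayer--Vietoris sequence reads
\[
\cdots\to H^{k-1}(\mathcal{S}_\Gamma)\oplus H^{k-1}(\mathcal{S}_{\Gamma'})\to H^{k-1}(\partial\mathcal{N}')\xrightarrow{\delta} H^k(X)\to H^k(\mathcal{S}_\Gamma)\oplus H^k(\mathcal{S}_{\Gamma'})\to\cdots
\]
Since $\phi_{|X}$ maps to zero, it equals $\delta(\eta)$ for some $\eta\in H^{k-1}(\partial\mathcal{N}')$. It therefore suffices to show that the restriction map $H^{k-1}(\mathcal{S}_\Gamma)\oplus H^{k-1}(\mathcal{S}_{\Gamma'})\to H^{k-1}(\partial\mathcal{N}')$ is surjective. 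In fact surjectivity of $\rho^*:H^{k-1}(\mathcal{S}_{\Gamma'})\to H^{k-1}(\partial\mathcal{N}')$ alone is enough.

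For this we use Theorem~\ref{LerayGysin}. The circle bundle $\rho$ has first Chern class $-(\psi+\psi')$, where $\psi,\psi'$ are the classes at the two branches of the new node, on the vertices of genera $h_1$ and $h_2$. By \eqref{HSS}, the cokernel of $\rho^*$ in degree $k-1$ is $\textnormal{Ker}\big((\psi+\psi')\cup-\big)$ acting on $H^{k-2}(\mathcal{S}_{\Gamma'})$. Suppose without loss of generality that $h_1\ge h_2$. By Künneth, $H^*(\mathcal{S}_{\Gamma'})$ is $H^*(\M_{h_1,\cdot})$ tensored with the cohomology of the other factors, up to the finite automorphism quotient, which over $\mathbb{C}$ only takes invariants. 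Since $k-2\le \frac{2}{3}h_1-1$, Theorem~\ref{Mumfordwithpsiclasses} gives, in this degree range, that $H^*(\M_{h_1,\cdot})$ is a polynomial ring in which $\psi$ is a free variable. Multiplication by $\psi+\psi'$ is then injective, as in the argument before \eqref{delNHk}: decompose by the highest power of $\psi$. Hence the kernel vanishes in degree $k-2$, $\rho^*$ is surjective in degree $k-1$, and so $\delta(\eta)=0$ and $\phi_{|X}=0$.

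The main obstacle is the degree bookkeeping in this last step. Injectivity of multiplication by $\psi+\psi'$ from degree $k-2$ into degree $k$ requires the target degree $k$ to lie within the stable range as well, not only the source degree $k-2$. The stated bound $k\le 1+\frac{2}{3}\max(h_1,h_2)$ appears calibrated so that only the source needs to be stable, together with freeness of $\psi$ up to the relevant degree. I would check this carefully against the precise range in Theorem~\ref{thm:Harerstab} and Looijenga's result. Two further points need care: the passage to automorphism invariants, and the fact that vertices other than $v$ contribute only as a tensor factor.
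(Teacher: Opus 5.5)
Your proposal is correct and is essentially the paper's proof: Mayer--Vietoris for $\mathcal{S}_\Gamma$ together with a tubular neighbourhood of $\mathcal{S}_{\Gamma'}$, whose boundary is a circle bundle with Chern class $-(\psi+\psi')$. The Gysin sequence and the stable freeness of $\psi$ then make $H^{k-1}(\partial\mathcal{N})$ a quotient of $H^{k-1}(\mathcal{S}_{\Gamma'})$, so $\delta_{k-1}=0$. The degree-bookkeeping point you flag is real: the target degree $k$ of multiplication by $\psi+\psi'$ must also lie in the range where the polynomial description is at least injective. The paper glosses over this equally, but it only shifts the precise constant, which is immaterial because the lemma is only applied with genera taken large.
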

\begin{proof}
We apply the Mayer--Vietoris sequence to $\mathcal{X} = \mathcal{S}_{\Gamma} \sqcup \mathcal{S}_{\Gamma'}$ that we write as union of $\mathcal{U} = \mathcal{S}_{\Gamma}$ and an open tubular neighborhood $\mathcal{V}$ of $\mathcal{S}_{\Gamma'} \subseteq \mathcal{X}$ which strongly retracts to $\mathcal{S}_{\Gamma'}$. The intersection of the two opens is a strong deformation retract of the total space of a circle bundle $\nu : \partial \mathcal{N} \rightarrow \mathcal{S}_{\Gamma'}$ that we already studied. We have the long exact sequence
\[
\cdots \longrightarrow H^{k - 1}(\mathcal{S}_{\Gamma}) \oplus H^{k-1}(\mathcal{S}_{\Gamma'}) \mathop{\longrightarrow}^{j_{k-1}} \,\,H^{k - 1}(\partial \mathcal{N}) \mathop{\longrightarrow}^{\delta_{k-1}}\,\, H^k(\mathcal{X}) \mathop{\longrightarrow}^{i_k} \,\,H^k(\mathcal{S}_{\Gamma}) \oplus H^k(\mathcal{S}_{\Gamma'})\,\,\longrightarrow \cdots
\]
where $i_k$ restricts to the two strata and $j_k$ is the difference of the restrictions from the two strata to the intersection. By assumption, the restriction of $\phi$ to $\mathcal{X}$ is in $\textnormal{Ker}(i_k) = \textnormal{Im}(\delta_{k-1})$. Based on Theorem~\ref{Mumfordwithpsiclasses}, we already saw in \eqref{delNHk} that in the degree range $k \leq 1 + \frac{2}{3}\max(h_1,h_2)$ the group $H^{k -1}(\partial \mathcal{N})$ is a quotient of $H^{k - 1}(\mathcal{S}_{\Gamma'})$, so the map $j_{k-1}$ is surjective. Thus $\delta_{k-1} = 0$ and $\phi_{|\mathcal{X}} = 0$.  
\end{proof}

We want to extend  Lemma~\ref{MVlem} to unions of many strata. To this end we will in fact work directly with union of strata sharing the same topological type for the root (this allows controlling the stability range) and we define a partial order so that strata will be added step by step in a descending order.

\begin{definition}
\label{Def:tautype}
If $\Gamma$ is a stable tree in $T_{g,1+n}$, we call \emph{root type} the triple $\tau = (g_1,\ell_1,k_1)$, where $g_1$ is the genus of the vertex carrying the root, $\ell_1$ its number of ingoing leaves, and $k_1$ its number of ingoing edges. We call $\M_{g,1+n}^{\tau}$ the union of strata $\mathcal{S}_{\Gamma}$ over $\Gamma \in T_{g,1+n}$ of root type $\tau$, and $\M_{g,1+n}^{\textnormal{ct},\tau}$ its closure in $\M_{g,1+n}^{\textnormal{ct}}$ (\textit{cf.} Figure~\ref{fig:Mtau}). 

We say that $\tau' \preceq \tau$ if $\M_{g,1+n}^{\tau'} \cap \M_{g,1+n}^{\textnormal{ct},\tau} \neq \emptyset$. In other words, a stable curve can only degenerate into a stable curve of lower root type; in particular we must have $g_1' \leq g_1$. We also denote
\[
\M_{g,1+n}^{\succeq \tau} = \bigsqcup_{\tau' \succeq \tau} \M_{g,1+n}^{\tau'}.
\]
\end{definition} 
\begin{figure}[h]
\begin{center}
\includegraphics[width=\textwidth]{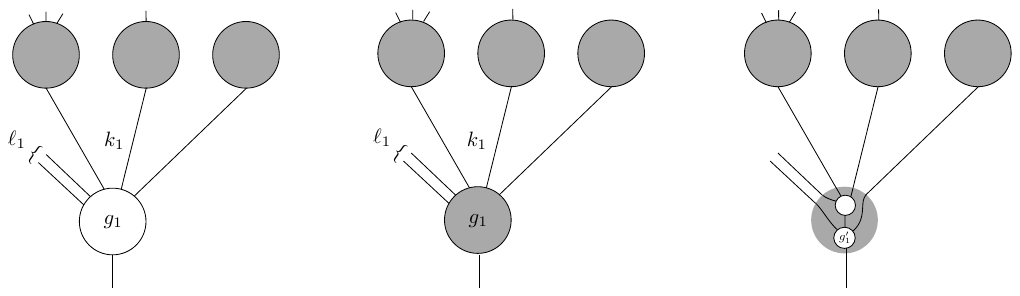}
\caption{\label{fig:Mtau} We depict $\M_{g,1+n}^{\tau}$ on the left, $\M_{g,1+n}^{\textnormal{ct},\tau}$ in the middle, and $\M_{g,1+n}^{\tau'} \cap \M_{g,1+n}^{\textnormal{ct},\tau}$  for some $\tau' \preceq \tau$ on the right. The moduli spaces $\M^{\textnormal{ct}}$ appear as gray vertices and $\M$ as white vertices. One should take the union over all possible topologies for vertices on the top so that the total genus is $g$ and total number of leaves is $n$, and eventually apply the gluing morphism to land in $\M_{g,1+n}^{\textnormal{ct}}$.}
\end{center}
\end{figure}

\begin{lemma}
Let $\tau = (g_1,\ell_1,k_1)$ be the root type of a stable tree in $T_{g,1+n}$, and $\phi \in H^k(\M_{g,1+n}^{\textnormal{ct}})$ such that $k \leq 1 + \frac{2g_1}{3}$. Assume that the restriction of $\phi$ to $\M_{g,1+n}^{\tau'}$ vanishes for every $\tau' \succeq \tau$. Then the restriction of $\phi$ to $\M_{g,1+n}^{\succeq \tau}$ vanishes.
\end{lemma}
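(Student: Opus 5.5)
The plan is an induction that adds the strata $\M_{g,1+n}^{\tau'}$, $\tau' \succeq \tau$, one root type at a time in decreasing order for $\preceq$, using Mayer--Vietoris at each step in the spirit of Lemma~\ref{MVlem}. First extend $\preceq$ to a total order on the finitely many root types $\succeq \tau$. Then enumerate them as $\tau_0 \succ \tau_1 \succ \cdots \succ \tau_N = \tau$. Set $\mathcal{X}_j := \bigsqcup_{i \leq j} \M_{g,1+n}^{\tau_i}$. Because a curve can only degenerate to a lower root type, each $\mathcal{X}_j$ is open in $\M_{g,1+n}^{\succeq \tau}$, and $\M^{\tau_{j}}$ is closed in $\mathcal{X}_j$. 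We show by induction on $j$ that $\phi_{|\mathcal{X}_j} = 0$. The base case $j=0$ is the hypothesis.

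For the induction step, write $\mathcal{X}_j = \mathcal{X}_{j-1} \cup \mathcal{V}$. Here $\mathcal{V}$ is an open tubular neighbourhood of $\M^{\tau_j}$ in $\mathcal{X}_j$ that strongly retracts onto it. It is built, as in Section~\ref{sec:variantFCohFT}, from the hyperbolic collar construction: a surface lies in $\mathcal{V}$ if the root component with its ingoing nodes is separated from the rest by geodesics of length $\leq 1$. The intersection $\mathcal{V} \cap \mathcal{X}_{j-1}$ retracts onto the boundary $\partial\mathcal{V}$, which is a torus bundle (a product of $\mathbb{S}_1$-bundles, one per node that gets smoothed) over the part of $\M^{\tau_j}$ whose smoothings leave $\M^{\tau_j}$. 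Mayer--Vietoris then gives the exact sequence
\[
H^{k-1}(\mathcal{X}_{j-1}) \oplus H^{k-1}(\mathcal{V}) \xrightarrow{j_{k-1}} H^{k-1}(\mathcal{V}\cap\mathcal{X}_{j-1}) \xrightarrow{\delta} H^k(\mathcal{X}_j) \to H^k(\mathcal{X}_{j-1}) \oplus H^k(\M^{\tau_j}).
\]
By the induction hypothesis and the assumption, $\phi_{|\mathcal{X}_j}$ lies in $\textnormal{Im}\,\delta$. It therefore suffices to show that $j_{k-1}$ is surjective, and in fact already that restriction from $H^{k-1}(\mathcal{V}) \cong H^{k-1}(\M^{\tau_j})$ is surjective.

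That surjectivity is the main obstacle, and I would handle it with the Gysin--Leray description, Theorem~\ref{LerayGysin}, applied one circle at a time, exactly as in \eqref{delNHk}. Every stratum in $\M^{\tau_j}$ has root vertex carrying a moduli space $\M_{g_1',1+\ell_1'+k_1'}$ with $g_1' \geq g_1$, since $\tau_j \succeq \tau$. In degree $\leq k-1 \leq \frac{2g_1}{3}$, Theorem~\ref{Mumfordwithpsiclasses} identifies its cohomology with a free polynomial ring in $\kappa$'s and in the $\psi$-classes at the node half-edges. The first Chern class of each circle factor is $-(\psi+\psi')$, where $\psi$ is a free generator on the root side. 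Multiplication by it is therefore injective, the kernel summand in \eqref{HSS} vanishes, and the cohomology of the torus bundle is a quotient of that of the base.

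A subtlety is that $\M^{\tau_j}$ and the relevant part of its boundary are unions of several strata rather than a single product. I would deal with this by working fibrewise over the root factor: a Künneth-type argument shows that the freeness in the root $\psi$-classes persists after tensoring with the cohomology of the non-root part. One also has to check the automorphism and finite-quotient issues, which are harmless over $\mathbb{C}$. Granting this, $\delta = 0$ in degree $k$, so $\phi_{|\mathcal{X}_j}=0$. Concluding at $j=N$ gives the vanishing on $\M_{g,1+n}^{\succeq\tau}$.
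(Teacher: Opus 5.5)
Your proposal follows the paper's proof: descending induction over root types, Mayer--Vietoris for the union of the already-treated open part with a tubular neighbourhood of the next piece $\M_{g,1+n}^{\tau_j}$, and Gysin--Leray. The last step uses freeness of the root-side $\psi$-classes in the stable range (root genus $g_1'\geq g_1$) to show that $H^{k-1}$ of the overlap is a quotient of $H^{k-1}(\M_{g,1+n}^{\tau_j})$, so the connecting map vanishes. Your linear extension of $\preceq$ is cleaner bookkeeping than the paper's choice of a single minimal $\tau''\succ\tau'$, since several root types can cover $\tau'$.

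One geometric claim needs correcting. When the root vertex has $k_1'\geq 2$ ingoing edges, $\mathcal{V}\cap\mathcal{X}_{j-1}$ is the tubular neighbourhood minus the zero section of the rank-$k_1'$ normal bundle $\bigoplus_e L_e$, where $c_1(L_e)=-(\psi_e+\psi_e')$. It therefore retracts onto a sphere bundle with fibres of real dimension $2k_1'-1$, lying over all of $\M_{g,1+n}^{\tau_j}$, not onto a torus bundle. A punctured polydisc is homotopy equivalent to a sphere; the torus is only its distinguished boundary.

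The repair uses the same ingredients. The Gysin sequence of this sphere bundle has Euler class $\prod_e\bigl(-(\psi_e+\psi_e')\bigr)$, and this is still a non-zero-divisor in the relevant degrees, because each factor is monic in a distinct free generator $\psi_e$ of the root factor. Hence $H^{k-1}$ of the overlap is again a quotient of the cohomology of the base, and your argument goes through. The paper's own proof simplifies at the same point, speaking of a single circle bundle with first Chern class $-(\psi+\psi')$.
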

\begin{proof}
We prove by descending induction on $\tau'$ until reaching $\tau' = \tau$ that the restriction of $\phi$ to $\M_{g,1+n}^{\succeq \tau'}$ vanishes. It is important to observe that  $g_1' \geq g_1$, so the smallest stability range we may encounter is controlled by $g_1$.  The maximal root type of a stable tree in $T_{g,1+n}$ is $(g,n,0)$, and we have $\M_{g,1+n}^{(g,n,0)} = \M_{g,1+n}$, so the claim holds for $\tau' = (g,n,0)$ by assumption. Let $\tau' \succeq \tau$ and assume that the restriction of $\phi$ to $\M_{g,1+n}^{\succeq \tau''}$ vanishes for every $\tau'' \succ \tau'$. If we take such a  $\tau''$ minimal, then  every stable tree with root type $\tau'$ is obtained by splitting into two the root vertex of a stable tree with root type $\tau'$. Let us call $\psi$ the class associated to the half-edge incoming to the new root vertex, and $\psi'$ the one associated to the opposite half-edge. We decompose
\begin{equation}
\label{unionM} \M_{g,1+n}^{\succeq \tau'} = \M_{g,1+n}^{\succeq \tau''} \sqcup \M_{g,1+n}^{\tau'}
\end{equation}
and the boundary $\partial\mathcal{N}$ of a tubular neighborhood of $\M_{g,1+n}^{\tau}$ in $\M_{g,1+n}^{\succeq \tau'}$ is a circle bundle with first Chern class $-(\psi+ \psi')$. As $\psi$ is not a zero-divisor in the cohomology degree range $\leq \frac{2g_1'}{3}$ for each of the strata of $\M_{g,1+n}^{\tau'}$, it cannot be a zero-divisor in the same cohomology range of $\M_{g,1+n}^{\tau'}$. Then, by Theorem~\ref{LerayGysin} the cohomology group $H^{k-1}(\partial\mathcal{N})$ is a quotient of $H^{k-1}(\mathcal{M}_{g,1+n}^{\tau'})$ for degree $k \leq 1 + \frac{2g_1'}{3}$ and a fortiori for $k \leq 1 + \frac{2g_1}{3}$. The Mayer--Vietoris argument in the proof of Lemma~\ref{MVlem} then shows that  $\phi$ vanishes in restriction to $\M_{g,1+n}^{\succeq \tau'}$ written as the union \eqref{unionM}.
 \end{proof}

\subsection{Proof of Theorem~\ref{thm:transfree}}
\label{sec:proofthmA}

Given an invertible compact-type F-CohFT $\Omega$, we have so far constructed $R(z) \in \End(V)\llbracket z \rrbracket$ and $T(z) \in z^2V\llbracket z \rrbracket$ such that\footnote{In the context of Proposition~\ref{afterbeforeprop} this is $T(z) = T_{\textnormal{B}}(z)$.} the two F-CohFTs $\Omega$ and $RT\omega$ coincide upon restriction to $\M_{g,1+n}$. We are now in position to upgrade the result and show they coincide on $\M_{g,1+n}^{\textnormal{ct}}$.

\begin{proposition}
\label{thm:recon}
Let $\Omega$ and $\Omega'$ be invertible F-CohFTs whose restriction to $\M$ coincide. Then, their restriction to $\M^{\textnormal{ct}}$ coincide.
\end{proposition}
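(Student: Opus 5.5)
We argue by induction on $(g,n)$, ordered lexicographically (first by $2g-1+n$, say), proving that $\Omega_{g,1+n}$ and $\Omega'_{g,1+n}$ agree on $\M^{\textnormal{ct}}_{g,1+n}$. Set $\phi := \Omega_{g,1+n}(w) - \Omega'_{g,1+n}(w)$ for $w \in V^{\otimes n}$. First, the two theories share the same F-TFT, since it is the degree-$0$ part of the restriction to $\M$. By the compatibility \eqref{compamor} applied to each stable tree $\Gamma$ with at least one edge, $\textnormal{gl}_\Gamma^*\phi$ is expressed through classes of strictly smaller type $(g(v),1+n(v))$. By induction these agree on $\M^{\textnormal{ct}}$, hence a fortiori on the open parts $\M_\Gamma$. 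Together with the hypothesis on $\M_{g,1+n}$, this shows that $\phi$ restricts to zero on every stratum $\mathcal{S}_\Gamma$. The problem is therefore purely cohomological: we must show that a class vanishing on each stratum vanishes on the union. The Mayer--Vietoris lemmas of Section~\ref{sec:patch} do this only in a stability range $k \le 1 + \frac{2g_1}{3}$, where $g_1$ is the genus at the root.

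To escape the stability range we use Teleman's trick of adding genus: this is where invertibility of $\alpha$ enters. Choose $h$ large and glue a genus-$h$ surface at the root. Concretely, use the gluing $\textnormal{gl} : \Mbar_{h,1+1}\times\Mbar_{g,1+n} \to \Mbar_{g+h,1+n}$, or more symmetrically the map $\mu$ with a fixed pair of pants and a $\M_{h,1}$ factor as in the proof of Proposition~\ref{prop:Zkappa}, now extended to compact type. Compatibility gives
\[
\textnormal{gl}^*\Omega_{g+h,1+n}(w) = \Omega_{h,1+1}\big(\Omega_{g,1+n}(w)\big),
\]
and similarly for $\Omega'$. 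We take the gluing so that the new genus-$h$ vertex sits at the root. Then every stratum of $\{\textnormal{pt}\}\times\M^{\textnormal{ct}}_{g,1+n}$ lands in a root type of $\M^{\textnormal{ct}}_{g+h,1+n}$ whose root genus is at least $h$.

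The induction step proceeds as follows.
\begin{itemize}
\item Apply the previous argument to $(g+h,n)$: the difference $\Phi := \Omega_{g+h,1+n}(w) - \Omega'_{g+h,1+n}(w)$ vanishes on each stratum. Here it matters that all proper subtrees have smaller type; this is true for types strictly below $(g+h,n)$ if the induction hypothesis is set up as ``for all smaller types'', but $(g+h,n)$ is larger than $(g,n)$. To avoid circularity, we instead run the root-type descending induction of the last lemma within a fixed large genus.
\item Use the lemma with $\tau$ of root genus $\ge h$ to conclude that $\Phi$ vanishes on $\M^{\succeq\tau}_{g+h,1+n}$ in degrees $\le 1+\frac{2h}{3}$, which covers all degrees $\le \dim \Mbar_{g,1+n}$.
\item Pull back to the image of $\M_{h,1+1}\times\M^{\textnormal{ct}}_{g,1+n}$, which lies in $\M^{\succeq\tau}$, and restrict the first factor to a point. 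This gives $\omega_{h,1+1}(\phi) = \alpha^h\cdot\phi = 0$.
\item Invertibility of $\alpha$ gives $\phi = 0$.
\end{itemize}

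The main obstacle is organizing the induction so that vanishing on strata of $\M^{\textnormal{ct}}_{g+h,1+n}$ is available without circularity. My plan is to induct on $g' := g-g_1$ (the genus not at the root) together with $n$. Strata of root type with root genus $\ge h$ have vertices off the root of total genus $\le g$, and these are handled by the induction hypothesis for smaller $(g,n)$ via compatibility. The open part of the root vertex is handled by the hypothesis on $\M$. A secondary point to check is that the image of the fixed-genus-$h$ gluing really lies inside $\M^{\succeq\tau}$ for a single root type $\tau$ with $g_1 \geq h$, so that the last lemma of Section~\ref{sec:patch} applies.
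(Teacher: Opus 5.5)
There is a genuine gap, and it sits at the stratum you glue along. In your last two bullets you restrict $\Phi=\Omega_{g+h,1+n}(w)-\Omega'_{g+h,1+n}(w)$ to $\mathcal{S}=\textnormal{gl}(\M_{h,1+1}\times\M^{\textnormal{ct}}_{g,1+n})$. This is the locus of root type $(h,0,1)$: a smooth genus-$h$ root with no ingoing leaf and one ingoing edge.

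The root-type lemma of Section~\ref{sec:patch} gives vanishing on $\M^{\succeq\tau}$ only if $\Phi$ already vanishes on $\M^{\tau'}$ for every $\tau'\succeq\tau$. For $\M^{\succeq\tau}$ to contain $\mathcal{S}$, the type $(h,0,1)$ must be among these $\tau'$. By compatibility, vanishing of $\Phi$ on $\mathcal{S}$ reads $\Omega_{h,1+1}{}_{|\M_{h,1+1}}(\phi)=0$, which by invertibility is equivalent to $\phi=0$ --- the statement you are proving.

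Your induction on the genus away from the root does not break this loop. For root type $(h,0,1)$ the unique hanging subtree has type exactly $(g,1+n)$, not smaller. Incidentally, vanishing on each individual stratum $\mathcal{S}_\Gamma$ needs no induction: it follows from compatibility and the hypothesis on $\M$ at every vertex. Induction is needed only for the unions $\M^{\tau'}$, whose non-root vertices are of compact type, and that is exactly where $(h,0,1)$ is not covered.

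The missing idea, which is how the paper argues, is to restrict not to $\mathcal{S}$ but to the boundary $\partial\mathcal{N}_\theta$ of a tubular neighbourhood of it. This is the circle bundle $\rho:\partial\mathcal{N}_\theta\to\mathcal{S}$ obtained by smoothing the node between the genus-$h$ part and the genus-$g$ part.

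On $\partial\mathcal{N}_\theta$ the genus-$h$ surface merges with the root component of the genus-$g$ curve. So every stratum it meets has root genus $\geq h$, and its hanging subtrees are proper subtrees of a tree in $T_{g,1+n}$, hence of strictly smaller dimension. The induction hypothesis on $d_{g,1+n}=3g-2+n$, together with Mayer--Vietoris in degrees $\leq 1+\frac{2h}{3}$, then gives $\Phi_{|\partial\mathcal{N}_\theta}=0$ without circularity.

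One must then check that nothing is lost in passing from $\mathcal{S}$ to $\partial\mathcal{N}_\theta$:
\begin{itemize}
\item The bundle has first Chern class $-(\psi_2^{(1)}+\psi_1^{(2)})$, and $\psi_2^{(1)}$ is a free generator of $H^*(\M_{h,1+1})$ in the stable range.
\item The Gysin sequence therefore gives $H^k(\partial\mathcal{N}_\theta)\simeq H^k(\mathcal{S})/(\psi_2^{(1)}+\psi_1^{(2)})$.
\item So $\rho^*$ amounts to substituting $\psi_2^{(1)}=-\psi_1$, which is injective on classes pulled back from $\M^{\textnormal{ct}}_{g,1+n}$.
\item Inverting the substituted class $\Omega_{h,1+1}$, which is common to $\Omega$ and $\Omega'$, recovers $\phi=0$.
\end{itemize}

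This also explains why, once you pass to the circle bundle, the genus-$h$ factor cannot simply be restricted to a point. Over $\{\textnormal{pt}\}\times\M^{\textnormal{ct}}_{g,1+n}$ the bundle has Chern class $-\psi_1$, and pulling back kills the ideal generated by $\psi_1$. You need the full stable cohomology of $\M_{h,1+1}$ to make $\rho^*$ injective.
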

\begin{proof}
The compatibility axiom of F-CohFTs implies that the restrictions of $\Omega_{g,1+n}$ and $\Omega'_{g,1+n}$ to each strata of $\M^{\textnormal{ct}}_{g,1+n}$ coincide. The agreement on $\M_{g,1+n}^{\textnormal{ct}}$ does not yet follow from Section~\ref{sec:patch} which only applies in stability ranges. We will prove it differently, by induction on the complex dimension $d_{g,1+n} = 3g - 2 + n$ of the moduli spaces. The dimension $0$ case is obvious: it corresponds to $(g,n) = (0,3)$ and $\M_{0,3} = \M_{0,3}^{\textnormal{ct}}$.

Let $d > 0$ and suppose that the restrictions of $\Omega_{g',1+n'}$ and $\Omega'_{g',1+n'}$ to $\M_{g',1+n'}^{\textnormal{ct}}$ agree for $d_{g',1+n'} < d$. Let $(g,n)$ such that $d = d_{g,1+n}$. We shall first express $\Omega$ and $\Omega'$ in $\M_{g,1+n}^{\textnormal{ct}}$ in terms of their value on a thickening of a stratum $\mathcal{S}$ in a moduli space involving curves of large genus. More precisely, we take $h \geq 1$ and $\mathcal{S}$ to be the image of the gluing morphism 
\[
\textnormal{gl} : \M_{h,1+1} \times \M_{g,1+n}^{\textnormal{ct}} \longrightarrow \M_{g+h,1+n}^{\textnormal{ct}}.
\]
Recall from Section~\ref{sec:variantFCohFT} that the restriction of $\Omega_{h,1+1}$ to the moduli space of smooth curves is equal to $\theta_*\Omega_{h,1+1}^{\circ}$, where $\theta : \M_{g,1+n}^{\circ} \rightarrow \M_{g,1+n}$ is an isomorphism. The compatibility axiom of F-CohFTs after restriction to $\mathcal{S}$ yields for any $w \in V^{\otimes n}$
\[
\textnormal{gl}^*\Omega_{g+h,1+n}(w) = \theta_*\Omega_{h,1+1}^{\circ}\big(\Omega_{g,1+n}(w)_{|\textnormal{ct}}\big)
\]
and likewise for $\Omega'$. The element $\Omega_{h,1+1}^{\circ} \in \End(V) \otimes H^{\textnormal{even}}(\M_{h,1+1})$ coincides with $\Omega_{h,1+1}^{\circ\,'}$ by assumption. Its cohomology degree $0$ part is the $h$-th power of $\omega_{1,1} = \alpha$. As its invertibility is assumed, $\theta_*\Omega_{h,1+1}^{\circ}$  is also invertible and we can write
\begin{equation}
\label{undd}\Omega_{g,1+n}(w)_{|\textnormal{ct}} = (\theta_*\Omega_{h,1+1}^{\circ})^{-1}\big(\textnormal{gl}^*\Omega_{g+h,1+n}(w)_{|\mathcal{S}}\big).
\end{equation}
The left-hand side should be understood as $\mathbf{1} \otimes \Omega_{g,1+n}(w)_{|\textnormal{ct}} \in V \otimes H^{*}(\M_{h,1+1}) \otimes H^{*}(\M_{g,1+n}^{\textnormal{ct}})$ and it suffices to study the right-hand side in degree $\leq 6g - 4 + 2n$ to extract the left-hand side.

We are going to lift this relation to the $\mathbb{S}_1$-bundle $\rho : \partial \mathcal{N}_{\theta} \rightarrow \mathcal{S}$ specified by the boundary of a tubular neighborhood of $\mathcal{S}$. The first Chern class of this bundle is $- (\psi_2^{(1)} + \psi_1^{(2)})$, where the exponents refer to the first or second factor in $\mathcal{S}$. The cohomology of $\partial\mathcal{N}_{\theta}$ is computed by the Gysin--Leray sequence in Theorem~\ref{LerayGysin}. For fixed cohomology degree $k$, the multiplication by $(\psi_2^{(1)}+\psi_1^{(2)})$ is injective provided we choose $h$ large enough (repeat the justification of \eqref{delNHk} with $\M_{g,1+n}^{\textnormal{ct}}$ instead of $\M^{\circ}_{g_2,1+1}$), hence
\begin{equation}
\label{HkdNdS}
H^{k}(\partial\mathcal{N}_{\theta}) \simeq \frac{H^k(\mathcal{S})}{(\psi_2^{(1)}+\psi_1^{(2)})} \simeq \frac{H^{k}(\mathcal{M}_{h,1+1}\times \M_{g,1+n}^{\textnormal{ct}})}{(\psi_2^{(1)}+\psi_1^{(2)})}.
\end{equation}
The choice of $h$ can be made to depend only on $(g,n)$ so that \eqref{HkdNdS} holds for any $k \leq 6g - 4 + 2n$. Since in the stable range $H^*(\mathcal{M}_{h,1+1})$ is the free ring $\mathbb{C}[\psi_1^{(1)},\psi_2^{(1)},\kappa_1,\kappa_2,\ldots]$  (Theorem~\ref{Mumfordwithpsiclasses}) and we have $\boldsymbol{\kappa} = \boldsymbol{\kappa}^{(1)} + \boldsymbol{\kappa}^{(2)}$ (Lemma~\ref{lem39}), pulling back \eqref{undd} via $\rho$ amounts to specialising to $\psi_2^{(1)} = - \psi_1$, where $\psi_1 := \psi_1^{(2)}$ is the class associated to the first marked point in $\M_{g,1+n}^{\textnormal{ct}}$. We do not lose information on the left-hand side if we further specialise to $\boldsymbol{\kappa}^{(1)} = 0$: this has the advantage to equate the kappa classes associated to $\M_{g,1+n}^{\textnormal{ct}}$ with the ones associated to $\M_{g+h,1+n}^{\textnormal{ct}}$. In short
\begin{equation}
\label{Homcun}
\Omega_{g,1+n}(w)_{|\textnormal{ct}}{} = (\theta_*\Omega_{h,1+1}^{\circ})^{-1}\big(\Omega_{g+h,1+n}(w)_{|\partial\mathcal{N}_{\theta}}\big)_{|\boldsymbol{\kappa}^{(1)} = 0,\psi_2^{(1)} = - \psi_1}. 
\end{equation}

Let now $\mathcal{U}$ be the union of strata of $\M_{g+h,1+n}^{\textnormal{ct}}$ meeting $\partial \mathcal{N}_{\theta}$, and $\mathfrak{t}$ the set of root types $\tau'$ such that $\M_{g+h,1+n}^{\tau'} \cap \partial\mathcal{N}_{\theta} \neq \emptyset$. Then we have a decomposition
\[
\partial\mathcal{N}_{\theta} \subseteq \mathcal{U} = \bigsqcup_{\tau' \in \mathfrak{t}} \M_{g+h,1+n}^{\tau'},
\]
and $\tau := (h,1,1)$ is a minimal element of $\mathfrak{t}$. For any $\tau' \succeq \tau$ the space $\M_{g+h,1+n}^{\tau'}$ is the disjoint union of (the image under a gluing morphism of) the product of $\M_{g_1',1+\ell_1' + k_1'}$ (with $g_1' \geq h$) with other $k_1'$ moduli spaces of compact type, each of them having dimension $< d_{g,n}$ because the large genus $h$ was concentrated in the root. The induction hypothesis guarantees that the restrictions of $\Omega$ and $\Omega'$ to $\M_{g,1+n}^{\tau'}$ coincide for every $\tau' \succeq \tau$. From Lemma~\ref{MVlem} we deduce that the restriction of $\Omega_{g+h,1+n}$ and $\Omega'_{g+h,1+n}$ to $\M_{g+h,1+n}^{\succeq \tau}$ coincide in cohomology degree $k \leq 1 + \frac{2h}{3}$, and this space contains $\mathcal{U}$ and thus $\partial\mathcal{N}_{\theta}$. Therefore, \eqref{Homcun} implies that $\Omega_{g,1+n}{}_{|\textnormal{ct}} = \Omega_{g,1+n}'{}_{|\textnormal{ct}}$. By induction, this holds for any $(g,n)$.
\end{proof}

In particular, Theorem~\ref{thm:recon} proves the transitivity of the F-Givental group action on invertible F-CohFTs having a given F-TFT.  To complete the proof of Theorem~\ref{thm:transfree} it remains to justify freeness.

\begin{lemma}
       The F-Givental group acts freely on the set of invertible F-CohFTs.
\end{lemma}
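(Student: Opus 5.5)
Since the F-Givental group is a semi-direct product of the translation group and the group of group-like $R$-elements (Proposition~\ref{afterbeforeprop}), every element can be written uniquely as $RT$ with $R(z) \in \End(V)\llbracket z \rrbracket$ group-like and $T \in z^2V\llbracket z \rrbracket$. Freeness means: if $\Omega$ is an invertible F-CohFT with $RT\Omega = \Omega$ (in restriction to compact type, which is the setting of Theorem~\ref{thm:transfree}), then $R = \Id$ and $T = 0$. Acting on $\Omega$ is not harder than acting on its F-TFT. By the transitivity just proved we can write $\Omega_{|\textnormal{ct}} = (R_0T_0\omega)_{|\textnormal{ct}}$. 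Then $RT\Omega = \Omega$ becomes $(R'T'\omega)_{|\textnormal{ct}} = (R_0T_0\omega)_{|\textnormal{ct}}$, where $R'T' = RTR_0T_0$ is computed in the group. So it suffices to prove the following: if two pairs $(R_1,T_1)$ and $(R_2,T_2)$ give the same compact-type F-CohFT on the invertible F-TFT $\omega$, then they coincide. Invertibility is preserved, since the degree-$0$ part is untouched.

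To show this uniqueness, I would recover $R$ and $T$ intrinsically from the F-CohFT $\Omega := R_iT_i\omega$ via the limits of Propositions~\ref{prop:Zkappa} and~\ref{prop:Rkappapsi}. First restrict $R_iT_i\omega$ to the smooth locus $\M_{g,1+n}$. Only the trivial stable tree contributes, and Lemma~\ref{expkappa} gives
\[
(R_iT_i\omega)_{g,1+n}(w)_{|\M} = R_i(-\psi_1)\Big[\hat T_i(\boldsymbol{\kappa})\cdot \alpha^g \cdot \textstyle\prod_j R_i^{-1}(\psi_{1+j})[v_j]\Big].
\]
This is exactly the shape of Corollary~\ref{coformsmooth}.

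\textbf{Recovering $T$.} Pull back to $\M^{\bullet}_{h,1}$, where all $\psi$-classes die, so we get $\alpha^h\cdot\hat T_i(\boldsymbol\kappa)$. Then $\lim_h \alpha^{-h}\cdot\Omega^\bullet_{h,1}$ equals $\hat T_i(\boldsymbol\kappa)$ in the completed stable cohomology. Since $\mathbb{C}\llbracket\kappa_1,\kappa_2,\ldots\rrbracket$ is free by Theorem~\ref{Mumfordwithpsiclasses}, this determines the coefficients $\hat t_m$, and hence $T_i$ by Lemma~\ref{expkappa}. Thus $T_1 = T_2$.

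\textbf{Recovering $R$.} Pull back $\Omega^\circ_{h,1+1}$ by $\pi_{\textnormal{out}}$, which kills $\psi_1$. This gives $\alpha^h\cdot\hat T(\boldsymbol\kappa)\cdot R_i^{-1}(\psi_2)[v]$. Taking the limit in $h$ and setting $\boldsymbol\kappa = 0$ yields $R_i^{-1}(z)$. Here I would use that $\psi_2$ is a free generator of the stable cohomology of $\M^{\bullet\textnormal{out}}$, as discussed after Theorem~\ref{LerayGysin}, and that $\alpha$ is invertible. Hence $R_1 = R_2$.

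The main obstacle is making this identification honest. The limits only see the classes in the stability range, and an identity of power series in $z$ must be extracted from identities of cohomology classes. This requires that the stable ring be genuinely free in $\psi$ and $\kappa$ in every fixed degree for large $h$, which Theorem~\ref{Mumfordwithpsiclasses} provides. A second point is that the definitions in Definitions~\ref{defT} and~\ref{defR} take as input only the restriction of $\Omega$ to the smooth locus, which is determined by $\Omega_{|\textnormal{ct}}$, so equality on compact type suffices.
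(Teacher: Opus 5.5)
Your proof is correct and follows essentially the same route as the paper. Both arguments use transitivity to reduce to the F-TFT $\omega$, restrict $RT\omega$ to the smooth locus where only the trivial tree contributes, and read off $R$ and the $\hat t_m$ from the freeness of $\psi$- and $\kappa$-classes in the stable range; the paper simply specialises the single identity $R(-\psi_1)[\hat T(\boldsymbol{\kappa})\cdot v] = v$ (coming from $RT\omega=\omega$) directly, instead of passing through the limits of Propositions~\ref{prop:Zkappa} and~\ref{prop:Rkappapsi}.
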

\begin{proof} 
By transitivity, it is enough to check that invertible F-TFTs have trivial stabilisers. Assume we have $R(z) \in \End(V)\llbracket z \rrbracket$ such that $R(0) = \Id_V$ and $T(z) \in z^2V\llbracket z \rrbracket$ such that $RT\omega = \omega$. For any $g \geq 1$ and $v \in V$, this identity on $\M_{g,1}$ evaluated on the vector $\alpha^{-g} \cdot v$ yields
\begin{equation}
\label{RTfree1}
R(-\psi_1)\big[\hat{T}(\boldsymbol{\kappa}) \cdot v\big] = v
\end{equation}
where $\hat{T}(\boldsymbol{\kappa}) = \exp(\sum_{m \geq 1} \hat{t}_m \kappa_m)$ is determined from $T(z)$ as in Lemma~\ref{expkappa}. Since \eqref{RTfree1} is valid for any $g$, the same identity holds in the completed stable cohomology where $\psi_1,\kappa_1,\kappa_2,\ldots$ are free generators. Evaluating $\boldsymbol{\kappa}$ to zero gives $R(z) = \Id_V$, while evaluating $\psi_1$ and all $\kappa$s to zero except $\kappa_m$ yields $\hat{t}_m = 0$, for each $m \geq 1$. Hence $\hat{T}(\boldsymbol{\kappa}) = \1$ and $T(z) = 0$.
\end{proof}

\subsection{Adaptation to compact-type F-CohFTs}

We indicate how to adapt the previous results to the case of compact-type F-CohFTs.

\begin{definition}
A compact-type F-CohFT is a collection
\[
\Omega_{g,1+n} \in \Hom\big(V^{\otimes n},V \otimes H^{\mathrm{even}}(\Mbar_{g,1+n})\big)
\]
indexed by integers $g,n \geq 0$ such that $2g - 1 + n > 0$ and satisfying $\mathfrak{S}$-equivariance and the weaker compatibility axiom, for any $v_1,\ldots,v_n \in V$
\[
\textnormal{gl}^* \Omega_{g,1+n}(v_1 \otimes \cdots v_n)_{|\textnormal{ct}} = \Omega_{g_1,1+n_1 + 1}\big(v_1 \otimes \cdots \otimes v_{n_2} \otimes \Omega_{g_2,1+n_2}(v_{n_1 + 1} \otimes \cdots \otimes v_{n})_{|\textnormal{ct}}\big)_{|\textnormal{ct}}
\]
\end{definition}

Since we can extend any cohomology class from $\M_{g,1+n}^{\textnormal{ct}}$ to $\Mbar_{g,1+n}$ using differential form representatives and partitions of unity, it is equivalent to consider compact-type F-CohFTs as classes on $\M_{g,1+n}^{\textnormal{ct}}$. The F-TFT part of a compact-type F-CohFT only depends on its restriction to $\M^{\textnormal{ct}}$ because the moduli space of stable maps are connected. Additional properties (flat unit, invertibility) we may ask for F-CohFTs can be asked for compact-type F-CohFTs as well. The F-Givental group preserves compact-type F-CohFTs: for the translation action it is because $f^{-1}(\Mbar_{g,1+n}^{\textnormal{ct}}) \subset \M_{g,1+n+1}^{\textnormal{ct}}$; for the R-action the gluing maps $\textnormal{gl}_{\Gamma}$ have a similar property and we can apply the restriction to compact type at each vertex. The discussions of Sections~\ref{sec:variantFCohFT}--\ref{sec:free} only involve restrictions of $\Omega$ on moduli of smooth curves, so apply verbatim when the starting $\Omega$ is a compact-type F-CohFT instead of a F-CohFT. And, in Section~\ref{sec:patch} we are only patching strata of the moduli of compact type and use F-CohFTs axioms on the latter. So, the proof of Theorem~\ref{thm:transfree} is valid for compact-type F-CohFTs as well.

\section{{\Large Reconstruction from flat F-manifolds}}

\label{S4}
\medskip

\subsection{Flat F-manifolds}

\label{flat F-manifold, def section}
Flat F-manifold were introduced by Hertling and Manin in \cite{hertling_weak_1999}, see also \cite{getzler_jet-space_2004,manin_f_2005}. We summarise here the basic definitions and properties. Upper indices indicate components of vectors, lower indices components of linear forms. In particular, $A^{\mu}_{\nu}$ are entries of a matrix with row index $\mu$ and column index $\nu$. Einstein summation convention for Greek indices appearing in lower and upper position will be assumed, but repeated Latin indices are not summed over unless the sum is explicitly written.

\begin{definition}
    A \emph{flat F-manifold} $(M, \nabla, \cdot)$ is the datum of an analytic manifold $M$, an analytic affine connection $\nabla$ on $M$, and for each $p \in M$ the structure a bilinear product $\mathop{\cdot}_p$ depending analytically on $p$, such that
    \begin{equation}
    \label{deformcon}
   \forall v,w \in \Gamma(TM)\qquad  \nabla^z_vw = \nabla_v w + z^{-1} v \cdot w
    \end{equation}
    defines a family of flat torsion-free connections parameterised by $z \in \widehat{\mathbb{C}}^*$.
    \end{definition}

  This last condition is equivalent to requiring that $\nabla$ is flat and torsion-free, that the product $\cdot$ is commutative and associative, and that $\nabla_u(\cdot)(v \otimes w) = \nabla_v(\cdot)(u \otimes w)$ for any $u,v,w \in \Gamma(TM)$. Call $N$ the complex dimension of $M$, denote $t = (t^{\mu})_{\mu = 1}^{N}$ flat coordinates for $M$ (with respect to the connection $\nabla$), and $(\partial_{\mu})_{\mu = 1}^{N}$ the corresponding coordinate vector fields. Then
  \[
  \forall \mu,\nu,\rho,\sigma \in [N]\qquad \nabla_{\partial_{\mu}} \partial_{\nu} = 0 \qquad \textnormal{and} \qquad \nabla_{\partial_\mu} c_{\nu \rho}^{\tau} = \nabla_{\partial_\nu} c_{\mu \rho}^{\tau}.
 \]
This second property together with the commutativity of the product is equivalent to the local existence of a vector potential $F = F^{\mu} \partial_{\mu}$ such that
\begin{equation}
\label{strcons}
\forall \mu,\nu,\rho \in [N]\qquad c_{\mu\nu}^{\rho} = \frac{\partial^2 F^{\rho}}{\partial t^{\mu} \partial t^{\nu}}.
\end{equation}
Associativity then translates into the \emph{oriented WDVV equations}
\begin{equation}
\label{asso}
\forall \mu,\nu,\rho,\lambda \in [N]\qquad \frac{\partial^2 F^{\mu}}{\partial t^{\nu} \partial t^{\beta}} \frac{\partial^2 F^{\beta}}{\partial t^{\rho} \partial t^{\lambda}} = \frac{\partial^2 F^{\mu}}{\partial t^{\rho} \partial t^{\beta}} \frac{\partial^2 F^{\beta}}{\partial t^{\nu} \partial t^{\lambda}}. 
\end{equation}
Conversely, if we have an analytic function $F : U \rightarrow \mathbb{C}^N$ defined in some open $U \subset \mathbb{C}^N$ with standard coordinates $t = (t^1,\ldots,t^N)$ satisfying \eqref{unit}, it defines a structure of flat F-manifold on $U$ with structure constants \eqref{strcons}. For instance, if $\Omega$ is a (compact-type) F-CohFT on a vector space $V$, its genus $0$ part defines a germ of flat F-manifold structure near $0 \in V$ with
\begin{equation}
\label{FCohFTpotential}
F(t) = \sum_{n \geq 2} \frac{1}{n!} \int_{\Mbar_{0,1+n}} \Omega_{0,1+n}(t^{\otimes n})
\end{equation}
We can also define the \emph{formal shift} of $\Omega$, which is a germ near $0$ of a family of F-CohFTs on $V$, given for $g \geq 0$, $n \geq 0$ and $w \in V^{\otimes n}$ by
\begin{equation}
\label{formalt}{}_{t}\Omega_{g,1+n}(w) = \sum_{m \geq 0} \frac{1}{m!} (f_m)_* \Omega_{g,1+n+m}(w \otimes t^{\otimes m}),
\end{equation}
where $f_m : \Mbar_{g,1+n+m} \rightarrow \Mbar_{g,1+n}$ forgets the $m$ last marked points. Unlike the translation of Definition~\ref{Taction}, in general the series \eqref{Taction} does not truncate and may not converge (the reason why we only talk about germs). Nevertheless, ${}_{t} \Omega$ satisfy the F-CohFT axioms order by order in $t$, and the equations \eqref{asso} are satisfied.

\begin{definition}
	A flat F-manifold \emph{with flat unit} $(M, \nabla, \cdot, \1)$ is the datum of a flat F-manifold $(M, \nabla, \cdot)$ and a unit vector field $\1_p$ for $\cdot$ depending analytically on the point $p\in M$ such that  $\nabla \1 = 0$.
\end{definition}
The condition $\nabla \1 = 0$ implies that $\1 = \1^{\mu} \partial_{\mu}$, where $\1^{\mu}$ is a scalar constant for every $\mu \in [N]$, and that the vector potential $F$ satisfies
\begin{equation}
	\label{unit}
	\1^{\beta} \frac{\partial^2 F^{\mu}}{\partial t^{\beta} \partial t^{\nu}} = \delta^{\mu}_{\nu}. 
\end{equation}
If a (compact-type) F-CohFT admits a flat unit, then it is flat as well for the flat F-manifold  \eqref{FCohFTpotential}.
  
\subsection{Semi-simplicity and further constructions}
\label{sec:semisimpleF} In the rest of the article we will mostly work under a semi-simplicity assumption, which make many computations possible.

\begin{definition}
A flat F-manifold is semi-simple at a point $p$ if the corresponding algebra $(T_pM,\cdot_p)$ is semi-simple. It is semi-simple if it is semi-simple at any point.
\end{definition}
A semi-simple flat F-manifold admits canonical coordinates $u = (u^{i})_{i = 1}^{N}$: the corresponding coordinate vector fields $(\partial_i)_{i = 1}^{N}$ satisfy
\begin{equation}
\label{prodcan}
\forall i,j \in [N]\qquad \partial_i \cdot \partial_j =  \delta_{ij} \partial_i.
\end{equation}
Note that, when an algebra is semi-simple, there always exists a unit. In canonical coordinates the unit takes the form $\1 = \sum_{i = 1}^{N} \partial_i$ but it is may not be flat. We will reserve Latin indices for tensor components in canonical coordinates; as we see in \eqref{prodcan} and unlike the convention for Greek indices, we refrain from summing repeated Latin indices. 

Let us introduce the Christoffel symbols of the connection in canonical coordinates
\[
\nabla_{\partial_i} \partial_j = \sum_{k = 1}^{N} \Gamma_{ij}^{k} \partial_k
\]
The zero-torsion property translates into $\Gamma_{ij}^{k} = \Gamma_{ji}^{k}$, but further symmetries of Christoffel symbols are implied by the axioms of flat F-manifolds.
\begin{proposition} \cite{arsie_darbouxegorov_2013,arsie_semisimple_2023}
\label{prop:H}
There exists a matrix-valued function $\gamma = (\gamma^{i}_{j})_{1 \leq i,j \leq N}$ with zero diagonal and there exists locally scalar functions $H^1,\ldots,H^N$ such that, for any pairwise distinct $i,j,k \in [N]$ we have
\begin{equation}
\label{chissy}
\Gamma_{ij}^{k} = 0, \qquad \gamma^{i}_j =
 \frac{\partial \log H^i}{\partial u^j} = \Gamma_{ij}^{i} = \Gamma_{ji}^{i} = - \Gamma_{jj}^{i},\qquad \frac{\partial \log H^i}{\partial u^i} = \Gamma_{ii}^{i}.
 \end{equation}
Moreover, if our flat F-manifold admits a flat unit, then we also have: 
\[
\frac{\partial \log H^i}{\partial u^i} = \Gamma_{ii}^{i} = - \sum_{j = 1}^{N}\gamma^{i}_j.
\]
\end{proposition}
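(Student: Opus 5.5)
The plan is to derive all the symmetries of the Christoffel symbols from the flatness and torsion-freeness of the whole family $\nabla^z = \nabla + z^{-1}\cdot$, written in canonical coordinates. In these coordinates the product has constant structure constants $c_{ij}^k = \delta_{ij}\delta_{jk}$. Hence the compatibility condition $\nabla_u(\cdot)(v\otimes w) = \nabla_v(\cdot)(u\otimes w)$ becomes purely algebraic in the $\Gamma$'s.

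\textbf{Step 1: the algebraic symmetries.} Expand
\[
(\nabla_{\partial_l}\cdot)(\partial_i\otimes\partial_j) = \nabla_{\partial_l}(\delta_{ij}\partial_i) - (\nabla_{\partial_l}\partial_i)\cdot\partial_j - \partial_i\cdot(\nabla_{\partial_l}\partial_j),
\]
which gives the component
\[
\delta_{ij}\Gamma_{li}^k - \Gamma_{li}^j\delta_{jk} - \Gamma_{lj}^i\delta_{ik}.
\]
Then impose symmetry under $l\leftrightarrow i$ and specialise the indices.
\begin{itemize}
\item Taking $j=k$ pairwise distinct from $i,l$ gives $\Gamma_{li}^k = 0$ for pairwise distinct indices; torsion-freeness handles the remaining orderings.
\item Taking $i=j\neq l$ and $k=i$ gives $\Gamma_{li}^i - 2\Gamma_{li}^i = -\Gamma_{il}^i - \Gamma_{ii}^l\delta_{il}$, which is trivial. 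Instead take $i=j$, $k=l\neq i$: this should produce $\Gamma_{li}^l = -\Gamma_{ii}^l$.
\end{itemize}
The relation $\Gamma_{ij}^i = \Gamma_{ji}^i$ is torsion-freeness. I would record these case checks briefly, choosing index specialisations until each identity in \eqref{chissy} appears.

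\textbf{Step 2: existence of $H^i$.} Define $\gamma^i_j := \Gamma_{ij}^i$ for $j\neq i$ and $\gamma^i_i := 0$. We need $H^i$ with $\partial_j \log H^i = \Gamma_{ij}^i$ for all $j$, including $j=i$. This amounts to showing that the $1$-form $\sum_j \Gamma_{ij}^i\,du^j$ is closed, i.e.\ $\partial_k\Gamma_{ij}^i = \partial_j\Gamma_{ik}^i$. This should follow from flatness of $\nabla$: write the curvature component $R(\partial_j,\partial_k)\partial_i$ projected on $\partial_i$. Using $\Gamma_{ab}^c = 0$ for distinct indices, the quadratic terms $\Gamma_{ka}^i\Gamma_{ji}^a - \Gamma_{ja}^i\Gamma_{ki}^a$ reduce to a small number of terms which should cancel pairwise (for $j,k,i$ distinct, and separately when one index coincides with $i$). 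The Poincaré lemma then gives $H^i$ locally.

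\textbf{Step 3: the flat unit case.} Here $\1 = \sum_j\partial_j$ and $\nabla\1 = 0$. Compute
\[
\nabla_{\partial_i}\1 = \sum_{j,k}\Gamma_{ij}^k\partial_k,
\]
and take its $\partial_i$-component, which is $\sum_j \Gamma_{ij}^i = \Gamma_{ii}^i + \sum_{j\neq i}\gamma^i_j = 0$. This is the claim.

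The main obstacle is Step 2, namely checking that the curvature terms cancel exactly. If the direct computation becomes messy, a fallback is to use flatness of $\nabla^z$ at order $z^{-1}$, which is already the compatibility condition of Step 1. The closedness would then come from the order $z^0$ terms combined with the vanishing of the mixed Christoffel symbols.
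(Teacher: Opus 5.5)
The paper does not prove this proposition; it quotes it from \cite{arsie_darbouxegorov_2013,arsie_semisimple_2023}, so there is no internal argument to compare against. Your plan is a correct, self-contained derivation. Constant structure constants in canonical coordinates turn the compatibility condition into linear relations among the $\Gamma$'s. Flatness of $\nabla$ gives closedness of $\omega^i=\sum_j\Gamma^i_{ij}\,\dd u^j$. Finally, $\nabla\1=0$ gives the last identity. Two points need fixing or completing.

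\textbf{The first bullet of Step 1 is vacuous.} With $j=k\notin\{i,l\}$ the condition only says $\Gamma^j_{li}=\Gamma^j_{il}$, which is torsion-freeness. After cancelling the two middle terms by torsion-freeness, the condition reads
\[
\delta_{ij}\Gamma^k_{li}-\delta_{ik}\Gamma^i_{lj}=\delta_{lj}\Gamma^k_{il}-\delta_{lk}\Gamma^l_{ij}.
\]
The useful choice is $i=j\neq l$, which gives $\Gamma^k_{li}-\delta_{ik}\Gamma^i_{li}=-\delta_{lk}\Gamma^l_{ii}$. Taking $k\notin\{i,l\}$ yields $\Gamma^k_{li}=0$, and taking $k=l$ yields $\Gamma^l_{li}=-\Gamma^l_{ii}$. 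So both identities come from this single specialisation.

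\textbf{Step 2 does go through, and no fallback is needed.} Your fallback is in any case the same argument, since the $z^0$ part of the curvature of $\nabla^z$ is just the curvature of $\nabla$. The $\partial_i$-component of $R(\partial_j,\partial_k)\partial_i$ is
\[
\partial_j\Gamma^i_{ik}-\partial_k\Gamma^i_{ij}+\sum_e\big(\Gamma^i_{je}\Gamma^e_{ki}-\Gamma^i_{ke}\Gamma^e_{ji}\big).
\]
For $i,j,k$ pairwise distinct, only $e=i$ survives in both sums, and the two products cancel identically.

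For $k=i\neq j$, only $e\in\{i,j\}$ survive, and the $e=i$ terms cancel. What remains is
\[
\Gamma^i_{jj}\Gamma^j_{ii}-\Gamma^i_{ij}\Gamma^j_{ji}.
\]
This vanishes only after using the Step 1 relation for both ordered pairs, namely $\Gamma^i_{jj}=-\Gamma^i_{ij}$ and $\Gamma^j_{ii}=-\Gamma^j_{ji}$. So in this case the cancellation is not purely formal.

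Hence $\partial_k\Gamma^i_{ij}=\partial_j\Gamma^i_{ik}$ and $\partial_j\Gamma^i_{ii}=\partial_i\Gamma^i_{ij}$. Thus $\omega^i$ is closed, and $H^i=\exp\int\omega^i$ exists on simply connected opens. Step 3 is correct as written.
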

For each $i \in [N]$ the function $H^i$ is uniquely characterised by \eqref{chissy} up to multiplication by a non-zero constant. We define the metric\footnote{The metric is automatically compatible with the product since it is diagonal: for any $i,j,k \in [N]$ we have $\eta(\partial_i \cdot \partial_j,\partial_k) = \delta_{i,j,k} (H^i)^{-2} = \eta(\partial_i,\partial_j \cdot \partial_k)$. Hence $(V,\cdot,\eta)$ is a Frobenius algebra. However, it does not define a Frobenius manifold structure on $M$. Indeed, $\eta$ may not be flat for $\nabla$ and the properties \eqref{chissy} do not say that $\nabla$ is the Levi--Civita connection of $\eta$ (the structure of Christoffel symbols does not in general match the one of a diagonal metric).}
\begin{equation}
\label{metriceta}
\eta := \sum_{i = 1}^N (H^i\dd u^i)^{\otimes 2}
\end{equation}
The vector fields $\tilde{\partial}_i = (H^i)^{-1} \partial_i$ then form the orthonormal canonical basis, \textit{i.e.} satisfy
\[
\forall i,j \in [N]\qquad \eta(\tilde{\partial}_i,\tilde{\partial}_j) = \delta_{i,j} \qquad \tilde{\partial}_i \cdot \tilde{\partial}_j = \frac{\delta_{i,j}}{H^i}\, \tilde{\partial}_i.
\]
\begin{definition}
The change of basis from the canonical to the flat basis of vector fields is denoted $\Psi_{\beta}^{i} := \frac{\partial u^i}{\partial t^{\beta}}$. In other words
\[
\forall \beta \in [N]\qquad \partial_{\beta} = \sum_{i = 1}^{N} \Psi^i_{\beta} \partial_{i}.
\]
We also introduce $\tilde{\Psi}^i_{\beta} = H^i \Psi^i_{\beta}$, which is the change of basis from the orthonormal canonical basis to the flat basis\footnote{\label{cornot}Our convention is to consistently use tilde for objects involving the orthonormal canonical basis and no tilde for objects involving the canonical basis (we will carry all our computations with the latter). In particular, $\tilde{\Psi}$ and $\tilde{\Gamma}$ in \cite{arsie_semisimple_2023} are our $\Psi$ and $\gamma$, and vice versa.}.
\end{definition}

With these objects  we can write down the connection $\nabla^z$ on $TM$ from \eqref{deformcon}, as well as the dual connection $\nabla^{*,z}$ on $T^*M$. For this purpose we use the canonical basis to represent vector fields as column vectors, $1$-forms as line vectors, sections of $\End(TM)$ as matrices, and let the differential $\dd$ act entrywise on them. Introduce the matrices
\begin{equation}
\label{grago}
U = \textnormal{diag}(u^1,\ldots,u^N)\qquad \textnormal{and}\qquad  H = \textnormal{diag}(H^1,\ldots,H^N)
\end{equation}
representing sections of $\textnormal{End}(TM)$ that are diagonal on the canonical basis. Then, the deformed connection \eqref{deformcon} acting on a vector field $X$ reads
\begin{equation}
\label{conntriv}
\nabla^{z} X = \dd X - (\dd \Psi)\Psi^{-1} X + z^{-1}(\dd U) X,
\end{equation}
and the dual connection acting on a $1$-form $L$ is
\begin{equation}
\label{dualconntriv}
\nabla^{*,z} L = \dd L +  L (\dd \Psi) \Psi^{-1} - z^{-1} L(\dd U).
\end{equation}
\begin{lemma} \cite[Propositions 1.4 and 1.5]{arsie_semisimple_2023}
\label{lem:gadi} We have $(\dd \Psi) \Psi^{-1} = -(\dd H) H^{-1} + \textnormal{\textbf{[}}\gamma,\dd U\textnormal{\textbf{]}}$.
\end{lemma}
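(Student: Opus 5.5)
The identity is a direct computation in the canonical frame. It uses only that the flat vector fields $\partial_\beta$ are $\nabla$-parallel, together with the structure of the Christoffel symbols in Proposition~\ref{prop:H}. I would proceed as follows.

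\textbf{Step 1: express $(\dd\Psi)\Psi^{-1}$ through Christoffel symbols.} Since the $t^\beta$ are flat coordinates, $\nabla\partial_\beta = 0$ for every $\beta$. Expanding $\partial_\beta = \sum_i \Psi^i_\beta \partial_i$ and using $\nabla_{\partial_j}\partial_i = \sum_k \Gamma^k_{ji}\partial_k$, I obtain
\[
0 = \sum_{k}\Big(\dd\Psi^k_\beta + \sum_{i,j}\Gamma^k_{ji}\,\Psi^i_\beta\,\dd u^j\Big)\partial_k .
\]
Hence $\dd\Psi^k_\beta = -\sum_{i,j}\Gamma^k_{ji}\,\dd u^j\,\Psi^i_\beta$. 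Since $\Psi$ is invertible, the $(k,i)$ entry of $(\dd\Psi)\Psi^{-1}$ is
\[
\big((\dd\Psi)\Psi^{-1}\big)^k_i = -\sum_j \Gamma^k_{ji}\,\dd u^j .
\]
This is consistent with the form \eqref{conntriv} of the connection, which is exactly the statement that $\nabla$ kills the columns of $\Psi$.

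\textbf{Step 2: off-diagonal entries, $k\neq i$.} By \eqref{chissy}, $\Gamma^k_{ji}$ vanishes unless $j\in\{k,i\}$. In those two cases $\Gamma^k_{ki} = \gamma^k_i$ and $\Gamma^k_{ii} = -\gamma^k_i$. The entry therefore equals
\[
-\gamma^k_i\,\dd u^k + \gamma^k_i\,\dd u^i = \gamma^k_i\,(\dd u^i - \dd u^k).
\]
Since $\dd U$ is diagonal, this coincides with $\textbf{[}\gamma,\dd U\textbf{]}^k_i = \gamma^k_i\,\dd u^i - \dd u^k\,\gamma^k_i$. The off-diagonal part of $(\dd H)H^{-1}$ is zero, so the off-diagonal entries agree.

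\textbf{Step 3: diagonal entries, $k=i$.} Here the entry is
\[
-\sum_j\Gamma^i_{ji}\,\dd u^j = -\sum_{j\neq i}\partial_{u^j}\log H^i\,\dd u^j - \partial_{u^i}\log H^i\,\dd u^i = -\dd\log H^i .
\]
This is the $(i,i)$ entry of $-(\dd H)H^{-1}$. The commutator $\textbf{[}\gamma,\dd U\textbf{]}$ has zero diagonal because $\gamma$ has zero diagonal and $\dd U$ is diagonal, so the diagonal entries also agree.

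There is no real obstacle. The only points needing care are the index conventions: $\Psi$ has row index $i$ for the canonical direction and column index $\beta$ for the flat one, and the entry $\Gamma^k_{ji}$ must carry the direction of differentiation in the lower-left slot. Torsion-freeness makes the latter order immaterial in any case.
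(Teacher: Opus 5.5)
Your proof is correct. The paper gives no argument for this lemma and imports it from \cite[Propositions 1.4 and 1.5]{arsie_semisimple_2023}, which uses the opposite tilde conventions: by the footnote above, their $\Psi$ is our $\tilde{\Psi} = H\Psi$. In those terms the statement is equivalent to $(\dd\tilde{\Psi})\tilde{\Psi}^{-1} = \textnormal{\textbf{[}}H\gamma H^{-1},\dd U\textnormal{\textbf{]}}$, where $(H\gamma H^{-1})^i_j = \partial_{u^j}H^i/H^j$ are the classical rotation coefficients. The extra $-(\dd H)H^{-1}$ term only arises when passing from $\tilde{\Psi}$ to $\Psi$.

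Your route is self-contained once Proposition~\ref{prop:H} is granted. Step~1 is the flatness of the columns of $\Psi$, which is the same fact behind \eqref{conntriv}. It identifies $(\dd\Psi)\Psi^{-1}$ with minus the connection matrix in the canonical frame. Steps~2--3 then read off the entries of that matrix from \eqref{chissy}. This shows directly that the commutator $\textnormal{\textbf{[}}\gamma,\dd U\textnormal{\textbf{]}}$ is the off-diagonal part of the connection matrix and $-(\dd H)H^{-1}$ is its diagonal part. It also shows that no flat-unit assumption is needed, since you only use the first half of Proposition~\ref{prop:H}.

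A small remark on Step~3: the commutator has zero diagonal simply because $\dd U$ is diagonal. The vanishing of the diagonal of $\gamma$ plays no role there.
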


\subsection{Differential equations for semi-simple F-CohFTs: results}
\label{sec:resdiff}
Given an invertible (compact-type) F-CohFT $\Omega$ with associated F-TFT $\omega$, Theorem~\ref{thm:transfree} provides unique $R(z) \in \End(V)\llbracket z \rrbracket$ and $T(z) \in z^2 V\llbracket z \rrbracket$ such that $R(0) = \Id_V$ and $\Omega_{|\textnormal{ct}} = RT\omega_{|\textnormal{ct}}$. Assuming that the formal shift \eqref{formalt} of the F-CohFT has non-zero radius of convergence, we can find a contractible open neighborhood\footnote{This is unnecessary if we allow ourselves to work over $\textnormal{Spec}\,\mathbb{C}\llbracket t^1,\ldots,t^N\rrbracket$; the differential equations are then true order by order in $t^{\alpha}$.} $M \subset V$ of $0$ such that ${}_t \Omega$ remains invertible for $t \in M$ and $M$ is a flat $F$-manifold with vector potential \eqref{FCohFTpotential}. Therefore, we have $R(z)$ and $T(z)$ depending on $t$ as well (we use the flat connection to identify $T_pM$ to $V$ for any $p \in M$) and we can look for differential equations they may satisfy.

On the one hand, by construction of the formal shift using the flat basis, we have for any $g,n \geq 0$ such that $2g - 1 + n > 0$ and $\nu,\mu_1,\ldots,\mu_n \in [N]$ and 
\begin{equation}
\label{nablaomega}
\nabla_{\partial_{\nu}}\Omega_{g,1+n}(\partial_{\mu_1} \otimes \cdots \otimes \partial_{\mu_n}) = f_*\Omega_{g,1+n+1}(\partial_{\mu_1} \otimes \cdots \otimes \partial_{\mu_n} \otimes \partial_\nu),
\end{equation}
where $f : \Mbar_{g,1+n+1} \rightarrow \Mbar_{g,1+n}$ forget the last marked point. Since $f^{-1}(\M_{g,1+n}^{\textnormal{ct}}) \subseteq \M_{g,1+n+1}^{\textnormal{ct}}$, there is no harm in replacing $\Omega$ by $RT\omega$ in \eqref{nablaomega}. Independently, we can use a trivialisation of $\nabla$ to act on tensor fields like $RT\omega_{g,1+n}$. Here, assuming semi-simplicity, there is a clear advantage in using the canonical coordinates to trivialise $\nabla$, because the product (involved in the F-TFT $\omega$) has structure constants $0$ or $1$ independently of $t$. For instance, \eqref{conntriv} and \eqref{dualconntriv} gave the action of $\nabla$ on vector fields and $1$-forms represented in the canonical basis. Comparing the two approaches yield differential equations for $R$ and $T$, allowing to relate them to the constituents of the flat F-manifolds met in Section~\ref{sec:semisimpleF}

Doing so for $\nabla \omega_{0,1+2}$ and $\nabla \omega_{1,1} = \nabla \alpha$, we first obtain formulae for the first-order coefficients in 
\[
R(z) = \Id_V + R_1 z + O(z^2),\qquad \hat{T}(z) = \1 + \hat{t}_1z + O(z^2).
\]

\begin{lemma}
\label{lem:alphaHR1}
Let $\Omega$ be an invertible semi-simple (compact-type) F-CohFT. The R- and T-elements of the F-Givental group associated to its formal shift by Theorem~\ref{thm:transfree} satisfy
\[
\forall i,j \in [N]\qquad (R_1)^j_i - (\hat{t}_1)^i \delta_i^j = \frac{\partial \log H^j}{\partial u^i},\qquad \dd \big(H^i \sqrt{\alpha^i}\big) = 0.
\]
In particular, in canonical coordinates $R_1 - \gamma$ is a diagonal matrix and $\textnormal{\textbf{[}}R_1,\dd U\textnormal{\textbf{]}} = (\dd \Psi)\Psi^{-1} + (\dd H)H^{-1}$.
\end{lemma}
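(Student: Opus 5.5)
The plan is to compare two computations of the same derivative. The first is the formal-shift formula \eqref{nablaomega}, with $\Omega$ replaced by $RT\omega$; this replacement is legitimate because $f^{-1}(\M^{\textnormal{ct}})\subseteq\M^{\textnormal{ct}}$ and by Theorem~\ref{thm:transfree}. The second is the covariant derivative of the tensors $\omega_{0,1+2}$ (the product) and $\omega_{1,1}=\alpha$, written in the canonical frame using \eqref{conntriv}--\eqref{dualconntriv}. I work in canonical coordinates throughout, where $c_{ij}^k=\delta_{ij}\delta_{jk}$ and the Christoffel symbols are described by Proposition~\ref{prop:H}. Only the degree-$1$ coefficients of $R$ and $\hat T$ can contribute, because the relevant pushforwards land in degree $0$ from one-dimensional fibres.

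\textbf{Step 1: the product.} $\nabla_{\partial_k}\omega_{0,1+2}(\partial_i\otimes\partial_j)$ equals $\int_{\Mbar_{0,4}}(RT\omega)_{0,1+3}(\partial_i\otimes\partial_j\otimes\partial_k)$. I expand the degree-$2$ part of $RT\omega$ on $\Mbar_{0,4}$ using Definitions~\ref{Reaction} and~\ref{Taction}, or equivalently Lemma~\ref{expkappa}. There are four kinds of contributions.
\begin{itemize}
\item The root leaf gives $-R_1\psi_1$ applied to $\partial_i\cdot\partial_j\cdot\partial_k$.
\item Each ingoing leaf gives $-R_1\psi_{1+a}$ on the corresponding input.
\item The translation gives $\hat t_1\kappa_1$, multiplied into the product.
\item The three separating boundary points of $\Mbar_{0,4}$ each carry the constant term of $E_R$, which is $R_1$, inserted on the edge.
\end{itemize}
Since $\int\psi_a=\int\kappa_1=1$, this produces an explicit bilinear expression in $R_1$ and $\hat t_1$. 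For example, one edge term is $R_1(\partial_i\cdot\partial_j)\cdot\partial_k$.

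On the other side, the constancy of $c$ in canonical coordinates gives
\[
(\nabla_k c)^l_{ij}=\Gamma^l_{k\,\cdot}c_{ij}-c_{\cdot j}^l\Gamma^{\cdot}_{ki}-c^l_{i\cdot}\Gamma^{\cdot}_{kj}.
\]
Comparing components for pairwise distinct $i,j,k$, and for the cases $i=j\neq k$ and $i=j=k$, should give $(R_1)^j_i=\Gamma^j_{ji}=\gamma^j_i$ for $i\neq j$ and $(R_1)^i_i-(\hat t_1)^i=\Gamma^i_{ii}$. By \eqref{chissy} both identities can be written as $\partial\log H^j/\partial u^i$. The cases with repeated indices are where the $\hat t_1$ term and the diagonal of $R_1$ separate.

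\textbf{Step 2: the vector $\alpha$.} Here $\nabla_{\partial_k}\alpha=\int_{f}(RT\omega)_{1,1+1}(\partial_k)$ over the fibre of $\Mbar_{1,2}\to\Mbar_{1,1}$. Only the compact-type divisor $\delta_{0,\{1,2\}}$ enters. I use $f_*\psi_1=f_*\psi_2=f_*\kappa_1=1$ (the last via Lemma~\ref{lem39}), together with the edge term $R_1$ on that divisor. In canonical coordinates $\alpha=\sum_i\alpha^i\partial_i$, and this gives a formula for $\dd\alpha^i$ in terms of $R_1$, $\hat t_1$ and $\alpha$. Substituting the Step~1 result and comparing with $\nabla\alpha=\dd\alpha+\Gamma\alpha$, I expect the derivative terms to reduce to $\dd\alpha^i=-2\alpha^i\,\dd\log H^i$, which is the identity $\dd(H^i\sqrt{\alpha^i})=0$. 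The factor $2$ should come from the two ingoing/outgoing $R_1$ insertions at the same canonical index.

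\textbf{Step 3: the consequences.} By Proposition~\ref{prop:H}, $R_1-\gamma$ is diagonal. A diagonal matrix commutes with $\dd U$, so $\textnormal{\textbf{[}}R_1,\dd U\textnormal{\textbf{]}}=\textnormal{\textbf{[}}\gamma,\dd U\textnormal{\textbf{]}}$. Lemma~\ref{lem:gadi} then identifies this with $(\dd\Psi)\Psi^{-1}+(\dd H)H^{-1}$.

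The main obstacle is the bookkeeping in Steps~1--2: getting the signs right (the root contributes $R(-\psi_1)$, the leaves contribute $R^{-1}$) and placing the edge insertion correctly along the orientation. I also cannot use a flat unit to simplify, since the unit need not be flat here. I would first check the whole computation on the rank-one case, where the answer can be read off directly from \eqref{RT1}.
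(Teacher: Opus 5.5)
Your proposal is correct and follows the paper's proof. You compute the formal-shift derivative of the product from the four trees of $T_{0,1+3}$, and the derivative of $\alpha$ from the two trees of $T_{1,1+1}$ using $f_*\tilde\psi_1=f_*\tilde\psi_2=f_*\tilde\kappa_1=1$ and the edge weight $E_R(0,0)=R_1$, then match both against the connection in canonical coordinates; the only differences are that you read the connection from the Christoffel symbols of Proposition~\ref{prop:H} instead of $(\dd\Psi)\Psi^{-1}$ and Lemma~\ref{lem:gadi} (equivalent), and that the single component with all three indices equal, as in the paper, already yields both the diagonal and off-diagonal identities.
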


This shows that, if we normalise $H^i$ to have $H^i \sqrt{\alpha^i} = 1$ at $t = 0$ for every $i \in [N]$, Lemma~\ref{lem:alphaHR1}, it remains so for all $t$. We give an interpretation of the corresponding metric $\eta$ in Appendix~\ref{appA}. Lemma~\ref{lem:alphaHR1} is instrumental in obtaining the sought-for differential equations for $R(z)$ and $T(z)$. Commutators of endomorphism will be denoted $\textbf{[}\,,\textbf{]}$ to distinguish them for $[ \cdot ]$ which are used for the evaluation of an endomorphism on a vector.

\begin{proposition}
\label{prop:diffTR}
Let $\Omega$ be an invertible semi-simple (compact-type) F-CohFT. Then $R(z)$ and $T(z)$ for its formal shift satisfy
\begin{equation*}
\begin{split}
& \quad \dd R(z) - R(z) (\dd H)H^{-1} - (\dd \Psi)\Psi^{-1} R(z) + z^{-1}\textnormal{\textbf{[}}R(z),\dd U \textnormal{\textbf{]}} = 0, \\
& \dd \hat{T}(z) - (\dd H)H^{-1}\hat{T}(z) + z^{-1}\hat{T}(z) \cdot \big(\Id - \hat{T}(z) \cdot R^{-1}(z)\big) \dd \overline{U} = 0.
 \\
\end{split}
\end{equation*}
In these formulae we used the canonical basis to consider $\hat{T}(z)$ as a column vector and $R(z)$ as a matrix. $\dd \overline{U}$ is the column vector $(\dd u^1,\ldots,\dd u^N)^{\textnormal{T}}$ and $\cdot$ is the F-TFT product of vectors\footnote{The differential equation for $R$ involves only usual matrix products, not the F-TFT product.}.
\end{proposition}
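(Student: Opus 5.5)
The plan is to differentiate the formula of Corollary~\ref{coformsmooth} for the formal shift ${}_t\Omega$ along $M$ in two ways and to compare the results in the stable cohomology, where $\psi$- and $\kappa$-classes are free. On $\M_{g,1+n}$ we have
\[
{}_t\Omega^{\circ}_{g,1+n}(w) = R(-\psi_1)\Big[\hat T(\boldsymbol{\kappa})\cdot\alpha^g\cdot\prod_i R^{-1}(\psi_{1+i})[w_i]\Big].
\]
The first computation uses \eqref{nablaomega}. The covariant derivative $\nabla_{\partial_\nu}$ of the left-hand side equals $f_*$ of the class with one extra insertion $\partial_\nu$. By Proposition~\ref{thm:recon}, this class can be evaluated with the $RT\omega$ formula on compact type. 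Since the target is the restriction to smooth curves $\M_{g,1+n}$, only two kinds of stable trees contribute.
\begin{itemize}
\item The smooth tree: the pushforward of $R^{-1}(\psi_{n+2})[\partial_\nu]$ together with the $\kappa$-exponential. Via Lemma~\ref{lem39} and Lemma~\ref{expkappa}, this produces $\kappa$-class corrections.
\item Trees with one edge where the forgotten point lies on a rational bubble attached to one leaf $i$ (or to the root): these give edge-weight terms $E_R$. After pushforward they yield terms of the form $\psi_i^{-1}$ times the difference between $R^{-1}(\psi_i)[\partial_\nu\cdot R^{-1}(\ldots)]$ and its value at $\psi_i=0$ (with the analogous expression at the root).
\end{itemize}

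The second computation applies $\dd$ directly to the right-hand side of the formula, written in the canonical frame. There $\omega$ is $t$-independent except through $\alpha$, and we use \eqref{conntriv}--\eqref{dualconntriv}. The covariant derivative acts on the input slots through $\Psi$ and on the output slot through $-\Psi$. This gives terms in $\dd R$, $(\dd\Psi)\Psi^{-1}$, $\dd\hat T$ and $\dd\alpha$.

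We then equate the two expressions, first with $g$ large and a single input ($n=1$). Isolating the part depending only on $\psi_2$ with $\boldsymbol{\kappa}=0$ gives an equation for $R^{-1}$. After conjugation it becomes the stated equation for $R$: the $z^{-1}[R,\dd U]$ term comes from the $\psi^{-1}$ difference quotient of the leaf-bubble contributions, with the product by $\partial_\nu$ becoming $\dd U$ in canonical coordinates. The $(\dd\Psi)\Psi^{-1}$ term comes from the connection. The $R(\dd H)H^{-1}$ term arises after using Lemma~\ref{lem:alphaHR1}, namely $\dd(H^i\sqrt{\alpha^i})=0$, to convert $\dd\alpha\cdot\alpha^{-1}$ (which multiplies the $\alpha^g$ factor) into $-2(\dd H)H^{-1}$ split symmetrically between input and output. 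The $\kappa_m$-linear coefficients give the $\hat T$ equation. The smooth-tree contribution $f_*(\hat T(\boldsymbol{\kappa})\,R^{-1}(\psi)[\partial_\nu])$ yields $\dd\hat T$ against the quantity $z^{-1}\hat T\cdot(\Id-\hat T\cdot R^{-1})\dd\overline U$: the identity part comes from $\kappa$-shift via $f_*\psi^{m+1}=\kappa_m$, and the $R^{-1}$ part from the inserted vector. The $(\dd H)H^{-1}\hat T$ term again comes from $\dd\alpha$.

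The main obstacle is the bookkeeping of the boundary contributions near the root versus the leaves, together with the correct handling of the $\alpha^g$ and $\dd H$ terms so that the equation is $g$-independent. I would first check the $z^0$ and $z^1$ orders against Lemma~\ref{lem:alphaHR1}, which already encodes $[R_1,\dd U]=(\dd\Psi)\Psi^{-1}+(\dd H)H^{-1}$, to fix signs before treating general orders.
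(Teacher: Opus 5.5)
Your overall strategy is the paper's. You differentiate the $(g,1+1)$ smooth-locus formula for large $g$ in two ways: geometrically, via $f_*$ of the $(g,1+2)$ class on $f^{-1}(\M_{g,1+1})=\M_{g,1+2}\sqcup\mathcal{S}\sqcup\mathcal{S}'$, and directly, through the connection in the canonical frame. You then specialise the free stable classes. Isolating the $\psi_2$-dependence instead of setting $\psi_1=-z$ is a harmless variant: it yields $\dd R^{-1}+R^{-1}(\dd\Psi)\Psi^{-1}+(\dd H)H^{-1}R^{-1}-z^{-1}[\dd U,R^{-1}]=0$, which is equivalent to the claim after inversion. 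Extracting $\kappa$-linear coefficients instead of setting $\kappa_m=z^m$ is also harmless, since everything in the $\hat T$ equation is diagonal.

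However, the bookkeeping you sketch, which you rightly flag as the crux, attributes the terms wrongly and would not close as written.

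\emph{The $\dd\alpha$ terms.} The term $\dd(\alpha^g\cdot)=-2g(\dd H)H^{-1}\alpha^g\cdot$ is proportional to $g$, so no splitting between input and output can turn it into the $g$-independent $(\dd H)H^{-1}$ term. It is instead cancelled exactly by a term absent from your sketch. In the smooth-tree pushforward, $f_*\tilde\psi_3=\kappa_0=2g$ (which must not be set to zero with the other $\kappa$'s) contributes $2g\,X\cdot(\hat t_1\cdot\partial_\nu-R_1[\partial_\nu])$. By the first identity of Lemma~\ref{lem:alphaHR1}, this equals $-2g(\dd H)H^{-1}X$.

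\emph{Where $(\dd H)H^{-1}$ actually comes from.} The surviving term comes from the boundary stratum not carrying $z$. In your route this is the root bubble $\mathcal{S}'$ together with the correction $\tilde\psi_1=f^*\psi_1+p_{1*}\mathbf{1}$. Their $E_R(0,0)=R_1$ terms assemble into $[\partial_\nu\cdot,R_1]$, that is $-[R_1,\dd U]=-(\dd\Psi)\Psi^{-1}-(\dd H)H^{-1}$ by the last statement of Lemma~\ref{lem:alphaHR1}. The $(\dd\Psi)\Psi^{-1}$ part cancels one half of the connection commutator.

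\emph{The $z^{-1}$ commutator.} The leaf bubble $\mathcal{S}$ alone gives $E_R(z,0)[\partial_\mu\cdot\partial_\nu]$, which is not a commutator. The term $z^{-1}[\dd U,R^{-1}(z)]$ appears only after adding $-\partial_\nu\cdot E_R(z,0)[\partial_\mu]$, which comes from $R^{-1}(\tilde\psi_2)=R^{-1}(f^*\psi_2)-p_{2*}E_R(\psi_2,0)$. So Lemma~\ref{lem39} contributes essential $\psi$-corrections on the bubble strata, not only $\kappa$-corrections.

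\emph{The $\hat T$ equation.} The same mechanism operates. Write $c_m$ for the $z^m$-coefficient of $\hat T(z)\cdot R^{-1}(z)[\partial_\nu]$. The boundary $R_1$-commutators cancel the connection term, and the $\dd\alpha$ term cancels $\kappa_0c_1$. The $(\dd H)H^{-1}\hat T$ term arises because resumming $\sum_{m\geq1}\kappa_m c_{m+1}$ into $z^{-1}\big(\hat T\cdot R^{-1}-\Id-zc_1\big)$ leaves out $c_1=\hat t_1\cdot\partial_\nu-R_1[\partial_\nu]$, which becomes $-(\dd H)H^{-1}$ in the canonical frame.
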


These equations can be transformed, giving a more geometric meaning to $R(z)$ and to the vacuum vector $\Upsilon(z) = R(z)[\hat{T}^{-1}(z)]$ that we already met in Proposition~\ref{prop:RTflatunit}.

\begin{corollary}\label{cor:RTfial}
Let $\Omega$ be an invertible semi-simple (compact-type) F-CohFT. Denote $\xi_j(z)$ the vector field whose components in the canonical basis are given by the $j$-th column of $R(z)H^{-1}e^{U/z}$, that is
\begin{equation}
\label{xijpsi} \xi_j(z) = (\Psi^{-1}R(z)H^{-1}e^{U/z})_j^{\mu} \partial_\mu.
\end{equation}
Then, $(\xi_j(z))_{j = 1}^N$ is a basis of flat sections for $\nabla^{-z}$, that is $\nabla^{-z} \xi_j = 0$ for any $j \in [N]$. Furthermore
\begin{equation}
\label{vaceqn}
\Upsilon(z) := R(z)[\hat{T}^{-1}(z)] = \sum_{z \geq 0} z^m \nabla_{\1}^{m}(\1).
\end{equation}
\end{corollary}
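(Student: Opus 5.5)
The proof rests on Proposition~\ref{prop:diffTR}, together with the connection formulas \eqref{conntriv}--\eqref{dualconntriv} and Lemma~\ref{lem:gadi}. There are two parts: the flatness of the $\xi_j$, and the formula for the vacuum $\Upsilon$. Throughout, we work in the canonical basis. A vector field $X$ with flat components $X^\mu$ has canonical components $\Psi X$. Flatness for $\nabla^{-z}$ then reads, by \eqref{conntriv} with $z \to -z$,
\[
\dd Y - (\dd\Psi)\Psi^{-1}Y - z^{-1}(\dd U)Y = 0,\qquad Y = R(z)H^{-1}e^{U/z}.
\]

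\textbf{Step 1 (flatness).} Set $Y = R H^{-1}e^{U/z}$ and compute its differential:
\[
\dd Y = (\dd R)H^{-1}e^{U/z} - R(\dd H)H^{-2}e^{U/z} + z^{-1}RH^{-1}(\dd U)e^{U/z}.
\]
All diagonal matrices commute, so $RH^{-1}(\dd U)e^{U/z} = R(\dd U)H^{-1}e^{U/z}$. Factoring out $H^{-1}e^{U/z}$ on the right, the flatness equation becomes
\[
\dd R - R(\dd H)H^{-1} - (\dd\Psi)\Psi^{-1}R + z^{-1}\textbf{[}R,\dd U\textbf{]} = 0,
\]
which is exactly the first equation of Proposition~\ref{prop:diffTR}. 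The columns form a basis because $R(z)H^{-1}e^{U/z}$ is invertible: $R(0)=\Id$ and $H$ is invertible.

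\textbf{Step 2 (vacuum).} First I derive a differential equation for $\Upsilon = R\hat T^{-1}$. From the second equation of Proposition~\ref{prop:diffTR}, using that the product is componentwise in the canonical basis,
\[
\dd\hat T^{-1} = -\hat T^{-2}\cdot\dd\hat T = -(\dd H)H^{-1}\hat T^{-1} + z^{-1}\bigl(\Id - \hat T\cdot R^{-1}\bigr)\dd\overline U .
\]
Let $\Upsilon^\flat := \Psi^{-1}\Upsilon$ denote the flat components of $\Upsilon$. Combining this with Step 1 for $\dd R$, the terms $R(\dd H)H^{-1}\hat T^{-1}$ cancel. For a vector $w$ one has $(\dd U)w = w\cdot\dd\overline U$ and $\textbf{[}R,\dd U\textbf{]}\hat T^{-1} = R(\hat T^{-1}\cdot\dd\overline U) - (\dd U)\Upsilon$. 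Here I need to be careful with the $R\,(\hat T\cdot R^{-1}\dd\overline U)$ term, and I expect it to give $z^{-1}\dd\overline U$ once the diagonal structure is handled. The expected result is
\[
\nabla\Upsilon = z^{-1}\bigl(\1 - \Upsilon\bigr)\cdot\dd U ,
\]
that is, for each $X$, $\nabla_X\Upsilon = z^{-1}X\cdot(\1-\Upsilon)$. Equivalently, $\nabla^{z}\Upsilon = z^{-1}(\cdot\,\1)$, where $\nabla^z$ is the deformed connection acting on vector fields and $\dd\overline U$ represents the unit-valued $1$-form $X\mapsto X\cdot\1$.

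Given this equation, take $X = \1$, which is the unit in the canonical frame. It gives $z\nabla_\1\Upsilon = \1 - \Upsilon$, i.e. $(1 + z\nabla_\1)\Upsilon = \1$. Since $\Upsilon = \1 + O(z)$, this determines $\Upsilon$ uniquely as a formal power series: inverting $1 + z\nabla_\1$ yields $\Upsilon = \sum_m (-z)^m\nabla_\1^m\1$. So the computation must also fix the sign. If the formula $\Upsilon = \sum_m z^m\nabla_\1^m\1$ is correct as stated, the sign of $z$ in the derived equation must come out so that $(1 - z\nabla_\1)\Upsilon = \1$. I expect the sign conventions in Step 2 (the $-z$ in $\nabla^{-z}$ and the $z^{-1}$ terms in Proposition~\ref{prop:diffTR}) to be the main obstacle, and I would track them by checking the order-$z$ term directly against Lemma~\ref{lem:alphaHR1}. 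The check is that $\Upsilon_1 = R_1\1 - \hat t_1$ should equal $\pm\nabla_\1\1$. Indeed, by Lemma~\ref{lem:alphaHR1}, $R_1\1 - \hat t_1$ has $j$-th component $\sum_i\partial_i\log H^j$. On the other side, $\nabla_\1\1$ has canonical components $\sum_{i,k}\Gamma^j_{ik}$, and Proposition~\ref{prop:H} gives $\sum_{i,k}\Gamma^j_{ik} = \sum_i\partial_i\log H^j$. This fixes the plus sign and confirms the claimed formula.
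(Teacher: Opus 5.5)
Your Step~1 is correct and is the paper's argument. Step~2 has a genuine gap. The equation you ``expect'', $\nabla_X\Upsilon = z^{-1}X\cdot(\1-\Upsilon)$ (equivalently $\nabla^{z}\Upsilon = z^{-1}\dd\overline U$), is never derived, and it is false: it has the wrong sign, which is why it leads you to $\sum_m(-z)^m\nabla_{\1}^m\1$.

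Your computation fails to close because of a slip in $\dd\hat T^{-1}$. From Proposition~\ref{prop:diffTR} one gets
\[
\dd\hat T^{-1} = -\hat T^{-2}\cdot\dd\hat T = -(\dd H)H^{-1}\hat T^{-1} + z^{-1}\hat T^{-1}\cdot\dd\overline U - z^{-1}R^{-1}\dd\overline U .
\]
So the $z^{-1}$-term is $z^{-1}\hat T^{-1}\cdot(\Id - \hat T\cdot R^{-1})\dd\overline U$; you dropped the factor $\hat T^{-1}\cdot$. With the correct expression, nothing delicate remains in $\dd\Upsilon = (\dd R)\hat T^{-1} + R\,\dd\hat T^{-1}$:
\begin{itemize}
\item the $(\dd H)H^{-1}$ terms cancel;
\item $z^{-1}R(\hat T^{-1}\cdot\dd\overline U)$ cancels $-z^{-1}R(\dd U)\hat T^{-1}$;
\item the term you were worried about is simply $-z^{-1}RR^{-1}\dd\overline U = -z^{-1}\dd\overline U$.
\end{itemize}
This yields
\[
\dd\Upsilon - (\dd\Psi)\Psi^{-1}\Upsilon - z^{-1}(\dd U)\Upsilon = -z^{-1}\dd\overline U ,
\]
that is, $\nabla^{-z}_X\Upsilon = -z^{-1}X$, or equivalently $\nabla_X\Upsilon = z^{-1}X\cdot(\Upsilon-\1)$. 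The connection here is $\nabla^{-z}$, consistent with your Step~1, not $\nabla^{z}$. Taking $X=\1$ gives $\nabla_{\1}\Upsilon_m = \Upsilon_{m+1}$, and $\Upsilon_0=\1$ then gives the stated formula. This is exactly the paper's proof.

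Your fallback of fixing the sign from the order-$z$ coefficient does not repair the gap. A single-coefficient check can at best decide an overall sign in an equation whose form is already established. Establishing that form is precisely the missing step: showing that all $R$- and $\hat T$-dependent terms reduce to $\pm z^{-1}\dd\overline U$.

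Moreover, your check $\Upsilon_1 = R_1\1 - \hat t_1 = \nabla_{\1}\1$ is correct by Lemma~\ref{lem:alphaHR1} and Proposition~\ref{prop:H}. But it directly contradicts the equation you wrote down, which forces $\Upsilon_1 = -\nabla_{\1}\1$. As written, the argument is therefore inconsistent, not merely incomplete.
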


Note the minus sign in front of $z$. The $H^{-1}$ on the right-hand side in \eqref{xijpsi} would be absent if we had read $R(z)$ in the orthonormal canonical basis, and in that case there would be have been $\tilde{\Psi}^{-1}$ instead of $\Psi^{-1}$ on the left. All results will be proved in the next Section~\ref{sec:proofnu} and establish Theorem~\ref{thm:rec}, but a few comments are in order. In Lemma~\ref{lem:alphaHR1} we see that only the off-diagonal part of $R_1$ is determined from the flat F-manifold. In Proposition~\ref{prop:diffTR}, the differential equation for $R(z)$ only determines it uniquely up to right-multiplication by $\exp(\sum_{k \geq 1} D_k z^k)$, where $D_k$ are constant diagonal matrices (this covers the diagonal ambiguity in $R_1$). The odd part of this ambiguity comes from the vanishing of Hodge classes in genus $0$, while the even part comes from another vanishing relation in cohomology mentioned in Section~\ref{Sec1}. In contrast, $\Upsilon(z)$ is uniquely determined by the flat F-manifold.

Given a semi-simple flat F-manifold, \cite[Proposition 1.6]{arsie_semisimple_2023} 
rather considers\footnote{See footnote \ref{cornot} for the correspondence of notations.\cite[Proposition 1.6]{arsie_semisimple_2023} is announced assuming a flat unit, but this assumption is not used in the proof.}
 a matrix $\mathsf{R}(z)$ solving the differential equation
\begin{equation}
\label{dRRossi}
\dd \mathsf{R}(z) + \mathsf{R}(z) (\dd \tilde{\Psi}) \tilde{\Psi}^{-1} + z^{-1}\textbf{[} \dd U,\mathsf{R}(z)\textbf{]}\Psi = 0
\end{equation}
The ambiguity on $\mathsf{R}(z)$ is now by left-multiplication. Since $\tilde{\Psi} = H\Psi$ this can be rewritten
\begin{equation}
\label{mathsfR}
\dd(\mathsf{R}(z)H) + \mathsf{R}(z)H (\dd \Psi)\Psi^{-1} + z^{-1}\textbf{[}\dd U,\mathsf{R}(z)\textbf{]} H= 0,
\end{equation}
which allows a more direct comparison with \eqref{dualconntriv}. This equation amounts to saying that the $1$-forms whose components in the dual canonical basis give the lines of the matrix $e^{U/z}\mathsf{R}(z)H$, that is
\begin{equation}
\label{dualflatsec}
L^{i}(z) =  (e^{U/z}\mathsf{R}(z)H\Psi)^i_{\mu} \dd t^{\mu} \qquad i \in [N].
\end{equation}
form a basis of flat sections of $\nabla^{z,*}$. An equivalent statement (compare \eqref{conntriv} and \eqref{dualconntriv}) is that the vector fields whose components in the canonical basis give the columns of $(e^{U/z}\mathsf{R}(z)H)^{-1}$ form a basis of flat sections of $\nabla^{z}$.  Accordingly, the correspondence 
\begin{equation}
\label{Rcorres}
R(z) = H^{-1}\mathsf{R}(-z)^{-1}H
\end{equation}
transforms \eqref{dRRossi} of \cite{arsie_semisimple_2023} into the equation for $R(z)$ found in Proposition~\ref{prop:diffTR}.

\subsection{Differential equations for semi-simple F-CohFTs: proofs}

\label{sec:proofnu}

The derivation of differential equations combine the geometry of tautological classes in the moduli space of curves (the strategy was outlined at the beginning of Section~\ref{sec:resdiff}) with the geometry of the flat F-manifold (reviewed in Section~\ref{sec:semisimpleF}).  Although nothing prevents us applying this strategy to any invertible F-CohFT, there are many simplifications when we can use a basis of vector fields making the product constant. This is where the semi-simplicity assumption becomes handy.

\begin{proof}[Proof of Lemma~\ref{lem:alphaHR1}]
Since $\Psi$ is the change of basis from canonical to flat, we have
\[
\forall \beta,\gamma \in [N]\qquad \partial_{\beta} \cdot \partial_{\gamma} = \sum_{j = 1}^{N} \Psi_{\beta}^j \Psi_{\gamma}^j \partial_j = \sum_{j = 1}^{N} \Psi_{\beta}^j \Psi_{\gamma}^{j} \Psi_{j}^{\epsilon}\partial_{\epsilon},
\]
where we wrote $\Psi_{i}^{\beta}$ for the matrix elements of $\Psi^{-1}$. Differentiating this equation we get
\[
\forall i,\beta,\gamma \in [N]\qquad \nabla_{\partial_i}\big(\partial_{\beta} \cdot \partial_{\gamma}\big) = \sum_{j = 1}^{N} \left( \frac{\partial \Psi_{\beta}^{j}}{\partial u^i} \Psi_{\gamma}^j  + \Psi_\beta^j  \frac{\partial \Psi_{\gamma}^j}{\partial u^i} - \sum_{k = 1}^{N} \Psi_{\beta}^k \Psi_{\gamma}^k  \frac{\partial \Psi_\rho^j}{\partial u^i} \Psi^{\rho}_k \right) \partial_{j}.
\]
Multiplying by $\Psi_{b}^{\beta} \Psi_{c}^{\gamma}$ and summing over $\beta,\gamma,i$, we arrive for every $i,b,c \in [N]$ to the identity
\begin{equation}
\label{equatelet}
\begin{split}
\Psi_{b}^{\beta} \Psi_{c}^{\gamma}\nabla_{\partial_{i}}\big(\partial_{\beta} \cdot \partial_{\gamma}\big) & = \frac{\partial \Psi_{\beta}^c}{\partial u^i} \Psi_{b}^{\beta} \partial_c  + \frac{\partial \Psi^b_{\gamma}}{\partial u^i} \Psi_{c}^{\gamma}\partial_b - \sum_{j = 1}^{N} \delta_{b,c} \frac{\partial \Psi_{\rho}^j}{\partial u^i} \Psi_{b}^{\rho} \partial_j \\
& = \left(\frac{\partial \Psi}{\partial u^i} \Psi^{-1}\right)_{b}^{c} \partial_c + \left(\frac{\partial \Psi}{\partial u^i} \Psi^{-1}\right)_c^b \partial_b -  \sum_{j = 1}^{N} \delta_{b,c} \left(\frac{\partial \Psi}{\partial u^i} \Psi^{-1}\right)_{b}^{j} \partial_j.
\end{split}
\end{equation}

\begin{figure}[h!]
\begin{center}
\includegraphics[width=0.7\textwidth]{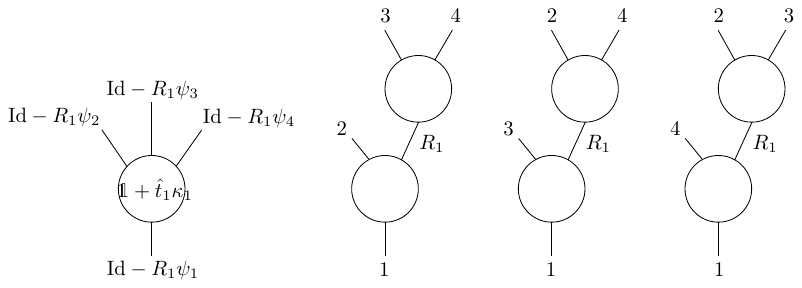}
\caption{\label{fig:F04strat} Contributions to $(RT\Omega)_{0,1+3}$. The stratum on the left has complex dimension $1$ so we just need to linearise in $\psi$ and $\kappa$. The strata on the right have dimension $0$ so only remains the F-TFT product at vertices and $E_R(0,0) = R_1$ on the edge (see \eqref{Wweight}).}
\end{center}
\end{figure}

On the other hand, from the definition of the formal shift we can compute the covariant derivative in the direction of a flat basis vector
\[
\forall \mu,\beta,\gamma \in [N]\qquad \nabla_{\partial_{\mu}} (\partial_{\beta} \cdot \partial_{\gamma}) =  f_*\big({}_{t}\Omega_{0,1+3}(\partial_{\mu} \otimes \partial_{\beta} \otimes \partial_{\gamma})\big),
\]
where $f : \Mbar_{0,1+3} \rightarrow \Mbar_{0,1+2}$ forgets the last marked point. Inserting the change of basis we get
\[
\forall i,b,c \in [N]\qquad \Psi_{b}^{\beta} \Psi_c^{\gamma} \nabla_{\partial_{i}} (\partial_{\beta} \cdot \partial_{\gamma})  = f_*\big({}_{t} \Omega_{0,1+3}(\partial_i \otimes \partial_b \otimes \partial_c)\big).
\]
We can compute $\Omega_{0,1+3} = (RT\omega)_{0,1+3}$ from the F-Givental group action. There are four stable trees in $T_{0,1+3}$ (Figure~\ref{fig:F04strat}). The tree with a single vertex  has contribution
\[
\omega_{0,1+3} + \kappa_1 \hat{t}_1 \cdot \omega_{0,1+3} - \psi_1 R_1 \circ \omega_{0,1+3} - \omega_{0,1+3} \circ (\psi_2 R_1 \otimes \Id^{\otimes 2} + \psi_3 \Id \otimes R_1 \otimes \Id + \psi_4 \Id^{\otimes 2} \otimes R_1).
\]
Applying $f_*$ kills the degree $0$ term and replace the degree $2$ classes with $\int_{\Mbar_{0,1+3}} \kappa_1 = \int_{\Mbar_{0,1+3}} \psi_l = 1$ for $l \in [4]$.  The three trees with two vertices related by an edge differ by the choice of a label $2,3$ or $4$ of the ingoing leaf on the root vertex. If $2$ is chosen, the contribution to $\Omega_{0,1+3}$ is $\textnormal{gl}_*\omega_{0,1+2} \circ (\textnormal{Id} \otimes R_1) \circ \omega_{0,1+2}$; the contributions from the two other trees are obtained by suitable permutation of inputs. For any $i,b,c \in [N]$ we arrive to
\begin{equation*}
\begin{split}
\Psi_{b}^{\beta} \Psi_c^{\gamma} \nabla_{\partial_{i}} (\partial_{\beta} \cdot \partial_{\gamma}) & =  \delta_{b,c,i}\big(\hat{t}_1 \cdot \partial_i - R_1(\partial_i)\big) + \partial_i \cdot R_1(\partial_b \cdot \partial_c)- R_1(\partial_i) \cdot \partial_b \cdot \partial_c \\
& \quad + \partial_{b} \cdot R_1(\partial_i \cdot \partial_c)  - R_1(\partial_b) \cdot \partial_i\cdot \partial_c + \partial_c \cdot R_1(\partial_i \cdot \partial_b) - R_1(\partial_c) \cdot \partial_i \cdot \partial_b.
\end{split}
\end{equation*}
Equating this to \eqref{equatelet}, specialising to $i = b = c$ and extracting the coefficient of $\partial_j$, we obtain
\[
\forall i,j \in [N]\qquad \delta_{i}^j \bigg(\frac{\partial \Psi}{\partial u^i} \Psi^{-1}\bigg)_{i}^{j} - (1 - \delta_i^j)\bigg(\frac{\partial \Psi}{\partial u^i} \Psi^{-1}\bigg)_i^j = (\hat{t}_1)^i \delta_i^j - (R_1)_i^j.
\]
As the left-hand side is computed by Lemma~\ref{lem:gadi} in terms of $H$, we find
\begin{equation}
\label{watwe}
\begin{split}
(i = j) & \qquad \frac{\partial \log H^i}{\partial u^i} = (R_1)_i^i - (\hat{t}_1)^i, \\
(i \neq j) & \qquad \frac{\partial \log H^j}{\partial u^i} = \gamma_i^j = (R_1)_i^j .
\end{split}
\end{equation}
We conclude that $R_1 - \gamma$ is diagonal, hence commutes with $\dd U$. This allows rewriting Lemma~\ref{lem:gadi} as
\begin{equation}
\label{dPsidH}
(\dd \Psi)\Psi^{-1} = -(\dd H)H^{-1} + \textbf{[}\gamma,\dd U \textbf{]} = -(\dd H)H^{-1} + \textbf{[}R_1,\dd U \textbf{]}.
\end{equation}

We now turn to the covariant derivative of $\alpha$. On the one hand, representing $\alpha$ in the canonical basis as a column vector (see \eqref{conntriv} without the $z^{-1}$ term) and using \eqref{dPsidH} we have
\begin{equation}
\label{alphaun}
\nabla \alpha = \dd \alpha - (\dd \Psi) \Psi^{-1}\alpha = \dd \alpha + (\dd H)H^{-1} \alpha - \textbf{[}\gamma,\dd U\textbf{]}\alpha = \dd \alpha + (\dd H)H^{-1}\alpha - \textbf{[}R_1,\dd U\textbf{]}\alpha.
\end{equation}
On the other hand, we have
\[
\forall \nu \in [N]\qquad \nabla_{\partial_{\nu}} \alpha = \big(f_* (RT\omega)_{1,1+1}(\partial_{\nu})\big)^{\textnormal{deg}\,0} = f_*\big((RT\omega)_{1,1+1}^{\textnormal{deg}\,2}(\partial_\nu)\big)
\]
as the forgetful map $f : \Mbar_{1,1+1} \rightarrow \Mbar_{1,1}$ has fibers of complex dimension $1$. We compute from the F-Givental group action (Figure~\ref{fig:alphastrat})
\begin{equation}
\label{regun11}
(RT\omega)_{1,1+1}(\partial_\nu) = R(-\tilde{\psi}_1)\big[\alpha \cdot \hat{T}(\tilde{\boldsymbol{\kappa}}) \cdot R^{-1}(\tilde{\psi}_2)[\partial_\nu]\big] + \textnormal{gl}_*\big(\partial_{\nu} \cdot E_R(0,\tilde{\psi}')[\alpha \cdot \hat{T}(\tilde{\boldsymbol{\kappa})}]\big).
\end{equation}
with the edge weight given by the specialisation of \eqref{Wweight}
\[
E_R(0,\tilde{\psi}') = \frac{\Id_V - R(-\tilde{\psi'})}{\tilde{\psi'}} = R_1 + O(\tilde{\psi}').
\]
Tilde refer to classes on $\Mbar_{1,1+1}$, no tilde to classes on $\Mbar_{1,1}$, and they are related by Lemma~\ref{lem39}. We only need the relations in degree $2$
\[
\tilde{\psi}_1 = f^*\psi_1 + p_{1*}\mathbf{1},\qquad \qquad \tilde{\kappa}_1 = f^*\kappa_1 + \tilde{\psi}_2.
\]
Here $\textbf{1}$ is the fundamental class. Extracting the degree $2$ part of \eqref{regun11} yields
\begin{equation}
(RT\omega)_{1,1+1}^{\textnormal{deg}\,2}(\partial_\nu) = \alpha \cdot \big((f^*\kappa_1 + \tilde{\psi}_2)\hat{t}_1 \cdot  \partial_\nu - \tilde{\psi}_2 R_1[\partial_\nu]\big)  - (f^* \tilde{\psi}_1 + p_{1*}\textbf{1})R_1[\alpha \cdot \partial_\nu] + \textnormal{gl}_*\big(\partial_{\nu} \cdot R_1[\alpha]\big).
\end{equation}
Applying $f_*$ kills the two $f^*$ terms, and noting that $f_* \tilde{\psi}_2 = \kappa_0 = \textbf{1}$ on $\Mbar_{1,1}$ and $f \circ p_1 = \textnormal{id}$, we get
\[
\nabla_{\partial_{\nu}} \alpha = \alpha \cdot \big( \hat{t}_1 \cdot \partial_\nu - R_1(\partial_{\nu})\big) - R_1[\alpha \cdot \partial_\nu] + \partial_\nu \cdot R_1[\alpha].
\]
Rewriting this in the canonical basis and inserting \eqref{watwe} for the first term, we arrive to
\[
\nabla \alpha = -(\dd H)H^{-1} \alpha -\textbf{[}R_1,\dd U\textbf{]} \alpha.
\]
Comparing with \eqref{alphaun} we find $(\dd \alpha)\cdot \alpha^{-1} + 2(\dd H)H^{-1} = 0$, that is $\dd (H^i \sqrt{\alpha^i}) = 0$ for any $i \in [N]$.
\end{proof}

\begin{figure}
\begin{center}
\includegraphics[width=0.3\textwidth]{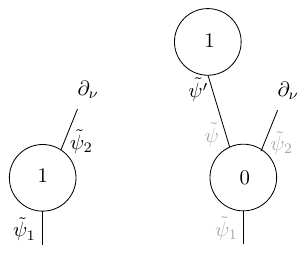}
\caption{\label{fig:alphastrat} The stable trees in $T_{1,1+1}$.} 
\end{center}
\end{figure}

\begin{proof}[Proof of Proposition~\ref{prop:diffTR}]
We can access $R(z)$ and $T(z) = z(\1 - \hat{T}^{-1}(z))$ simultaneously\footnote{Another route leading to the same result for $T(z)$ is to  come back to its definition in Proposition~\ref{prop:Zkappa}, \textit{i.e.} look at the large genus behavior of $\Omega_{g,1}{}_{|\M_{g,1}}$.} by examining for large $g$ the $\End(V)$-valued cohomology class $\Omega_{g,1+1}^{\circ}$, or equivalently the restriction of $\Omega_{g,1+1}$ to $\M_{g,1+1}$. For a fixed $g \geq 1$, Corollary~\ref{coformsmooth} says that
\begin{equation}
\label{starung}
\Omega_{g,1+1}{}_{|\M_{g,1+1}}  = R(-\psi_1) \circ (\hat{T}_g(\boldsymbol{\kappa}) \cdot) \circ R^{-1}(\psi_2),
\end{equation}
where $\hat{T}_g(\boldsymbol{\kappa}) = \alpha^g \cdot \hat{T}(\boldsymbol{\kappa})$. If we truncate up to a given cohomological degree, taking $g$ large enough psi and kappa classes become free (Theorem~\ref{Mumfordwithpsiclasses}). So, we can define two specialisations:
\begin{itemize}
\item $\textnormal{sp}_R$ takes $\boldsymbol{\kappa} = 0$, $\psi_1 = -z$ and $\psi_2 = 0$;
\item $\textnormal{sp}_T$ takes $\psi_1 = 0$, $\kappa_m = z^m$ and $\psi_2 = 0$.
\end{itemize}
They are such that
\begin{equation}
\label{starung2}
\textnormal{sp}_R\,\Omega_{g,1+1}{}_{|\M_{g,1+1}} = R(z) \circ (\alpha^{g} \cdot),\qquad \textnormal{sp}_T\,\Omega_{g,1+1}{}_{|\M_{g,1+1}} = \alpha^g \cdot \hat{T}(z) \cdot.
\end{equation}
These equations hold as well for the formal shift of the F-CohFT, and formal shifting commutes with specialising. We want to compute the covariant derivative of \eqref{starung2} in two independent ways.

\noindent \textsc{Step 1: Direct computation.} Representing endomorphisms by matrices in the canonical basis, in particular letting $A = \textnormal{diag}(\alpha^1,\ldots,\alpha^N)$ represent the multiplication by $\alpha$, we have
\begin{equation}
\label{ysho}
\begin{split}
\textnormal{sp}_R \,\nabla \Omega_{g,1+1}{}_{|\M_{g,1+1}} & = (\dd R(z)) A^{g} + g R(z)A^g(\dd A)A^{-1} + \textbf{[}R(z)A^g,(\dd \Psi)\Psi^{-1}\textbf{]} \\
& = (\dd R(z))A^g - 2g R(z)A^g (\dd H)H^{-1} + \textbf{[}R(z)A^g,(\dd \Psi)\Psi^{-1}\textbf{]} .
\end{split}
\end{equation}
The second equality comes from Lemma~\ref{lem:alphaHR1}. Likewise
\begin{equation}
\label{ysho2}\textnormal{sp}_T \,\nabla \Omega_{g,1+1}{}_{|\M_{g,1+1}} =  A^g \dd (\hat{T}(z) \cdot) -2g(\dd H)H^{-1} A^{g}\hat{T}(z)\cdot +\, \,\textbf{[}A^g \hat{T}(z) \cdot,(\dd \Psi)\Psi^{-1}\textbf{]}.
\end{equation} 
We stress that $\hat{T}(z)$ is a column vector, while $\hat{T}(z) \cdot$ is an endomorphism hence represented by a matrix.

\textsc{Step 2: Geometric computation (before specialisation).} By definition of the formal shift
\begin{equation}
\label{derOg11}
\forall \mu,\nu \in [N]\qquad \nabla_{\partial_{\nu}}\big( \Omega_{g,1+1}(\partial_{\mu})_{|\mathcal{M}_{g,1+1}}\big) = f_*\big(\Omega_{g,1+2}(\partial_{\mu} \otimes \partial_{\nu})\big)_{|\mathcal{M}_{g,1+1}}.
\end{equation}
We already carried out a similar computation in the proof of Lemma~\ref{lem:alphaHR1} for $\Omega_{1,1+1}$ instead of $\Omega_{g,1+2}$, so the ingredients will be familiar. First, we can replace $\Omega_{g,1+2}(\partial_{\mu} \otimes \partial_{\nu})$ by $(RT\omega)_{g,1+2}$, and when we apply $f_*$ only the stable trees corresponding to the strata in
\begin{equation}
\label{f1Mg1}
f^{-1}(\M_{g,1+1}) = \M_{g,1+2} \sqcup \mathcal{S} \sqcup \mathcal{S}' \subseteq \M_{g,1+2}^{\textnormal{ct}}
\end{equation}
contribute. If classes in $\M_{g,1+2}$ have tilde and those in $\M_{g,1+1}$ do not, setting $\hat{T}_g = \alpha^g \cdot \hat{T}$ we find
\begin{equation}
\label{uoohu}
\begin{split}
& \quad \Omega_{g,1+2}(\partial_{\mu} \otimes \partial_{\nu}) \\
& = R(-\tilde{\psi}_1)\big[\hat{T}_{g}(\tilde{\boldsymbol{\kappa}} ) \cdot R^{-1}(\tilde{\psi}_2)[\partial_{\mu}] \cdot R^{-1}(\tilde{\psi}_3)[\partial_{\nu}]\big] + \cdots \\
& \quad +\textnormal{gl}_*\Big(R(-\tilde{\psi}_1)\big[\hat{T}_g(\tilde{\boldsymbol{\kappa}}) \cdot E_R(\tilde{\psi},0)[\partial_{\mu} \cdot \partial_{\nu}]\big]\Big) + \textnormal{gl}_*\Big(\partial_{\nu} \cdot E_R(0,\tilde{\psi}')\big[\hat{T}_g(\tilde{\boldsymbol{\kappa}}) \cdot R^{-1}(\tilde{\psi}_2)[\partial_{\mu}]\big]\Big),
\end{split}  
\end{equation} 
where $\cdots$ will eventually be projected to zero. The relevant specialisations of the edge weight \eqref{Wweight} are
\begin{equation}
\label{WWeight2}
E_R(\tilde{\psi},0) = \frac{\Id_V - R^{-1}(\tilde{\psi})}{\tilde{\psi}} \qquad \textnormal{and} \qquad E_R(0,\tilde{\psi}') = \frac{\Id_V - R(-\tilde{\psi}')}{\tilde{\psi}'}.
\end{equation}
Comparing the tilde and non-tilde classes with help of Lemma~\ref{lem39}, we find
\begin{equation*}
\begin{split}
R(-\tilde{\psi}_1)  & = R(-f^*\psi_1) - p_{1*} E_{R}(0,\psi_1),\\
R^{-1}(\tilde{\psi}_2) & = R^{-1}(f^*\psi_2) - p_{2*} E_R(\psi_2,0), \\
\hat{T}_g(\tilde{\boldsymbol{\kappa}})  & = \alpha^g \cdot \hat{T}(f^*\boldsymbol{\kappa}) \cdot \hat{T}(\tilde{\psi}_3).
\end{split}
\end{equation*}
In $f^{-1}(\M_{g,1+1})$ the section $p_1$ is supported on $\mathcal{S}'$  (where $\tilde{\psi}_1,\tilde{\psi},\tilde{\psi}_3$ restrict to zero) while the section $p_2$ is supported on $\mathcal{S}$ (where $\tilde{\psi}',\tilde{\psi}_2,\tilde{\psi}_3$ restrict to zero), and these two strata are disjoint. Applying $f_*$ to the first term in \eqref{uoohu} then yields
\begin{equation}
\label{nterm}
\begin{split}
& \quad  \quad f_*\Big(R(-f^*\psi_1)\big[\hat{T}_g(f^*\boldsymbol{\kappa}) \cdot R^{-1}(f^*\psi_2)[\partial_\mu] \cdot \hat{T}(\tilde{\psi}_3) \cdot R^{-1}(\tilde{\psi}_3)[\partial_\nu]\big]\Big) \\
& \quad - f_*\Big(p_{1*}E_R(0,\psi_1)\big[ \hat{T}_g(\boldsymbol{\kappa}) \cdot R^{-1}(\tilde{\psi}_2)[\partial_\mu] \cdot \partial_\nu\big] \Big) - f_*\Big(R(-\tilde{\psi}_1)\big[\hat{T}_g(\boldsymbol{\kappa}) \cdot  p_{2*} E_R(\psi_2,0)[\partial_\mu] \cdot \partial_{\nu}\big]\Big) \\
& = \quad R(-\psi_1)\big[\hat{T}_g(\boldsymbol{\kappa}) \cdot R^{-1}(\psi_2)[\partial_\mu] \cdot f_*\big(\hat{T}(\tilde{\psi}_3) \cdot R^{-1}(\tilde{\psi}_3)[\partial_\nu]\big)\big] \\
& \quad - E_R(0,\psi_1)\big[\hat{T}_g(\boldsymbol{\kappa}) \cdot R^{-1}(\psi_2)[\partial_\mu] \cdot \partial_\nu\big] - R(-\psi_1)\big[\hat{T}_g(\boldsymbol{\kappa}) \cdot E_R(\psi_2,0)[\partial_{\mu}] \cdot \partial_\nu\big].
\end{split}
\end{equation} 
In the last equality we used that $f_*(\phi_1 f^* \phi_2) =  f_*(\phi_1) \phi_2$ holds for any cohomology classes $\phi_1,\phi_2$.

For the second term in \eqref{uoohu} we can use  $(f \circ \textnormal{gl})_* \tilde{\psi}_1 = \psi_1$ and $(f \circ \textnormal{gl})_* \tilde{\psi} = \psi_2$, while for the third term we can use $(f \circ \textnormal{gl})_* \tilde{\psi}' = \psi_1$ and $(f \circ \textnormal{gl})_* \tilde{\psi}_2 = \psi_2$, and $(f \circ \textnormal{gl})_* \tilde{\boldsymbol{\kappa}} = \boldsymbol{\kappa}$ in both cases.  Therefore, applying $f_*$ to these two terms yields 
\begin{equation}
\label{n2eterm} R(-\psi_1)\big[\hat{T}_g(\boldsymbol{\kappa}) \cdot E_R(\psi_2,0)[\partial_\mu \cdot \partial_\nu]\big] + \partial_\nu \cdot E_R(0,\psi_1)\big[\hat{T}_g(\boldsymbol{\kappa}) \cdot R^{-1}(\psi_2)[\partial_{\mu}]\big].
\end{equation}
The formula we need is $f_*\big(\Omega_{g,1+2}(\partial_{\mu} \otimes \partial_{\nu})\big)_{|\mathcal{M}_{g,1+1}} = \eqref{nterm} + \eqref{n2eterm}$.

\medskip

\textsc{Step 3: Applying $\textnormal{sp}_R$.} In the first line of \eqref{nterm}, it sends the classes $f_*\tilde{\psi}_3^{m} = \kappa_{m - 1}$ for $m \geq 2$  to $0$, while $f_* \tilde{\psi}_3 = \kappa_0 = 2g$ on $\Mbar_{g,1+1}$, and $f_*\tilde{\psi}_3^0 = 0$; in the third line, we can use $E_R(0,0) = R_1$. Applying $\textnormal{sp}_R$ to \eqref{nterm} yields
\begin{equation}
\label{contr1}
2g R(z)\big[\alpha^{g} \cdot \partial_\mu \cdot (\hat{t}_1 \cdot \partial_\nu  - R_1[\partial_\nu])\big] - E_R(0,-z)[\alpha^g \cdot \partial_{\mu} \cdot \partial_{\nu}] - R(z)\big[\alpha^g \cdot R_1[\partial_\mu] \cdot \partial_\nu\big]. 
\end{equation}
while applying it to \eqref{n2eterm} results in
\begin{equation}
\label{contr2}
R(z)\big[\alpha^g \cdot R_1[\partial_\mu \cdot \partial_\nu]\big] + \partial_\nu \cdot E_R(0,-z)\big[\alpha^g \cdot \partial_\mu].
\end{equation}
In the sum \eqref{contr1} and \eqref{contr2} appears the commutator of $E_R(0,-z)$ with the operator of multiplication by $\partial_\nu$. Due to \eqref{WWeight2} this is also the commutator of $z^{-1}R(z)$ with the multiplication by $\partial_\nu$. Hence
\begin{equation*}
\begin{split}
\textnormal{sp}_R\,f_*\big(\Omega_{g,1+2}(\partial_\mu,\partial_\nu)\big)_{|\M_{g,1+1}} & = 2gR(z)\big[\alpha^g  \cdot (\hat{t}_1 \cdot \partial_{\nu} - R_1[\partial_\nu]) \cdot \partial_\mu\big] - z^{-1}\textbf{[}R(z),\partial_\nu \cdot \textbf{]}\big[\alpha^g \cdot \partial_\mu\big] \\
& \quad +  R(z)\big[\alpha^g \cdot \textbf{[}R_1,\partial_\nu \cdot \textbf{]}[\partial_\mu]\big],
\end{split}
\end{equation*}
We transformed it with help of Lemma~\ref{lem:alphaHR1}. In matrix form in the canonical basis, we can rewrite
\begin{equation*}
\begin{split}
\textnormal{sp}_R\,f_*\Omega_{g,1+2}{}_{|\M_{g,1+1}} & = - 2gR(z)A^g (\dd H)H^{-1} - z^{-1}\textbf{[}R(z),\dd U\textbf{]}A^g + R(z)A^g \textbf{[}R_1,\dd U\textbf{]} \\
& = -2gR(z)A^g (\dd H)H^{-1} - z^{-1}\textbf{[}R(z),\dd U\textbf{]}A^g + R(z)A^g\big((\dd H)H^{-1} + (\dd \Psi)\Psi^{-1}\big).
\end{split}
\end{equation*}
Equating this to \eqref{ysho}, we observe simplifications and $A^g$ appears as prefactor on the right. Since $A^g$ is invertible we conclude that
\[
\dd R(z) - (\dd \Psi)\Psi^{-1} R(z) = - z^{-1} \textbf{[}R(z),\dd U\textbf{]} + R(\dd H)H^{-1}.
\]
This is an equivalent form of the claimed differential equation for $R(z)$.

\medskip

\noindent \textsc{Step 4: Applying $\textnormal{sp}_T$.} In this case $f_* \tilde{\psi}_3^m = \kappa_{m - 1}$ is sent to $z^{m - 1}$ for $m \geq 2$, while $f_*\tilde{\psi}_3 = \kappa_0 = 2g$ and $f_*\textbf{1} = 0$. Then, applying $\textnormal{sp}_T$ to \eqref{nterm} yields
\begin{equation}
\label{Tcont1}
\begin{split}
& \quad z^{-1} \cdot \hat{T}_g(z) \cdot \partial_\mu \cdot (\hat{T}(z) \cdot R^{-1}(z) - \Id)[\partial_\nu]  + (2g-1) \hat{T}_g(z) \cdot \partial_\mu \cdot (\hat{t}_1 \cdot \partial_\nu - R_1[\partial_\nu])  \\
& \quad - R_1[\hat{T}_g(z) \cdot \partial_\mu \cdot \partial_\nu] -  \hat{T}_g(z) \cdot R_1[\partial_\mu] \cdot \partial_\nu,
\end{split}
\end{equation}
while applying it to \eqref{n2eterm} results in
\begin{equation}
\label{Tcont2}
\hat{T}_g(z) \cdot R_1[\partial_\mu \cdot \partial_\nu] + \partial_\nu \cdot R_1\big[\hat{T}_g(z) \cdot \partial_\mu\big].
\end{equation}
The sum of \eqref{Tcont1} and \eqref{Tcont2} reconstructs 
\begin{equation*}
\begin{split}
 \textnormal{sp}_T f_*\big(\Omega_{g,1+2}(\partial_\mu,\partial_\nu)\big)_{|\M_{g,1+1}} & = z^{-1}\hat{T}_g(z) \cdot \partial_\mu \cdot (\hat{T}(z) \cdot R^{-1}(z) - \Id)[\partial_\nu]   \\
& \quad + (2g-1) \hat{T}_g(z) \cdot \partial_\mu \cdot (\hat{t}_1 \cdot \partial_\nu - R_1[\partial_\nu]) + \textbf{[} \hat{T}_g(z) \cdot, \textbf{[}R_1,\partial_\nu \cdot\textbf{]}\textbf{]}[\partial_\mu].
\end{split}
\end{equation*}
In the canonical basis, this reads
\begin{equation}
\begin{split}
\textnormal{sp}_T f_*(\Omega_{g,1+2})_{|\M_{g,1+1}} & = z^{-1} A^g \hat{T}(z) \cdot \big(\hat{T}(z) \cdot R^{-1}(z) - \Id\big)\dd \overline{U} \cdot\,- (2g - 1)A^g \hat{T}(z) \cdot (\dd H)H^{-1} \\
& \quad + \textbf{[}A^g\hat{T}(z) \cdot ,(\dd\Psi)\Psi^{-1}\textbf{]},
\end{split}
\end{equation}
Here we used again Lemma~\ref{lem:alphaHR1} and the fact that $A^g \hat{T}(z) \cdot$ and $H$ are diagonal matrices, as well as the notation $\dd \overline{U}$ for the column vector whose $i$-th component is $\dd u^i$ for $i \in [N]$. Equating this to \eqref{ysho2} we find that commutator and the terms containing $2g$ cancel out and $A^g$ factors out. The result is an equation for diagonal matrices, which can also be written for the vector made out of the diagonal entries. This replaces the matrix $\hat{T}(z) \cdot$ with the vector $\hat{T}(z)$ and takes the announced form.
\end{proof}

\begin{figure}[h!]
\begin{center}
\includegraphics[width=0.5\textwidth]{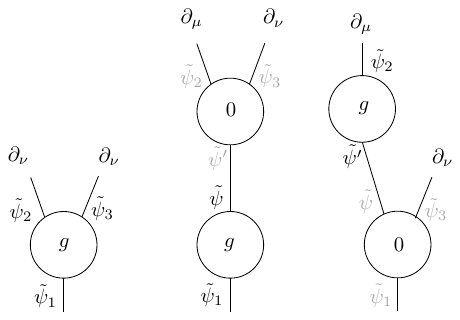}
\caption{\label{fig:DiffRstrat} The stable trees associated to the strata $\M_{g,1+1}$, $\mathcal{S}$, $\mathcal{S}'$. Psi-classes in grey vanish.}
\end{center}
\end{figure}

\begin{proof}[Proof of Corollary~\ref{cor:RTfial}] The differential equation for $R(z)$ in Proposition~\ref{prop:diffTR} can be rewritten
\[
\dd(R(z)H^{-1}e^{U/z}) - (\dd \Psi)\Psi^{-1}H^{-1}e^{U/z} - z^{-1}\dd U R(z)H^{-1}e^{U/z} = 0.
\]
Comparing with \eqref{conntriv} relates the columns of $R(z)H^{-1}e^{U/z}$ to vector fields that are flat for $\nabla^{-z}$.

Next, we differentiate $\Upsilon(z) = R(z)[\hat{T}^{-1}(z)]$ represented as a column vector in canonical basis. Inserting Proposition~\ref{prop:diffTR} and using commutativity of the product $\cdot$ we find
\begin{equation*}
\begin{split}
\dd \Upsilon(z) & = (\dd R(z))[\hat{T}^{-1}(z)] - R(z)\big[(\dd\hat{T}(z)) \cdot \hat{T}^{-2}(z)\big] \\
 & = \big(R(z)(\dd H)H^{-1} + (\dd \Psi)\Psi^{-1} R(z) - z^{-1}R(z)\dd U + z^{-1}(\dd U) R(z)\big)[\hat{T}^{-1}(z)] \\
 & \quad + R(z)\big[-(\dd H)H^{-1}\hat{T}^{-1}(z) + z^{-1}\dd \overline{U} \cdot \hat{T}^{-1}(z)  - z^{-1}R^{-1}(z)\dd \overline{U}\big] \\
 & = (\dd \Psi)\Psi^{-1} \Upsilon(z)  + z^{-1}(\dd U)\Upsilon(z)- z^{-1}\dd \overline{U}.
\end{split}
\end{equation*}
From \eqref{conntriv} we recognise the equation $\nabla_{X}^{-z}\Upsilon(z)= -z^{-1} X$ for any vector field $X$.  In particular for the unit vector field we get $\nabla_{\1} \Upsilon(z) = z^{-1}(\Upsilon(z) - \1)$, which gives a recursion for the coefficient $\Upsilon_m$ of  $z^m$ in $\Upsilon(z)$, namely $\nabla_{\1} \Upsilon_m = \Upsilon_{m + 1}$. Since $\Upsilon_0 = \1$, we deduce $\Upsilon(z) = \sum_{m \geq 0} z^m \nabla_\1^{m}(\1)$.
\end{proof}

\subsection{Unique reconstruction for conformal F-CohFTs}
\label{sec:recHomo}

The diagonal ambiguity in the definition of $R(z)$ and $T(z)$ via the differential equations in Proposition~\ref{prop:diffTR} can be fixed if we have conformality assumptions. Before discussing this, we shall review the general properties of conformal F-CohFTs.

Given $L \in \End(V)$ we let it act on $\Phi \in \End(V^{\otimes n},V)$ as a ``derivation''
\[
(d_L \Phi)(v_1 \otimes \cdots \otimes v_n) = L\big(\Phi(v_1 \otimes \cdots \otimes v_n)\big) - \sum_{i = 1}^{n} \Phi(v_1 \otimes \cdots \otimes v_{i - 1} \otimes L(v_i) \otimes v_{i + 1} \otimes \cdots \otimes v_n).
\]
We introduce $\textnormal{deg} \in \End(H^{\textnormal{even}}(\Mbar_{g,1+n}))$ given by multiplication by $k$ on the subspace $H^{2k}(\Mbar_{g,1+n})$. 

\begin{definition}
\label{def:Fconf} A F-CohFT $\Omega$ on $V$ is conformal  of dimension $\Delta \in \mathbb{C}$ if there exist $K \in V$ and $L \in \End(V)$ such that for $2g - 1 + n > 0$
\begin{equation}
\label{confOmega}
f_*\Omega_{g,1+n+1}(- \otimes K) = (d_{L} - \textnormal{deg} + g\Delta + n - 1) \Omega_{g,1+n}.
\end{equation}
For conformal compact-type F-CohFTs we only require this relation to hold on $\M_{g,1+n}^{\textnormal{ct}}$.
\end{definition}
This can be reformulated in terms of the formal shift of $\Omega$, which is a family of $t$-dependent F-CohFTs. Indeed, introduce the \emph{Euler vector field} in the flat basis
\[
E := K + L[t] = K^{\mu} \partial_\mu + L_{\nu}^{\mu} t^{\nu} \partial_{\mu}.
\]
Then, denoting $\mathcal{L}_{X}$ the Lie derivative on tensor fields along a vector field $X$, \eqref{confOmega} is equivalent to
\begin{equation}
(\mathcal{L}_{E} + \textnormal{deg})\Omega_{g,1+n} = (g\Delta + n - 1)\Omega_{g,1+n}
\end{equation}
at $t = 0$. The definition of the formal shift immediately implies that it remains valid to all order\footnote{Remark that at a point $p$ in the flat F-manifold, the vector $K$ in \eqref{confOmega} should be replaced by the vector field $E$ at $p$.}. In particular for $(g,1+n) = (1,1)$ in degree $0$, the conformality property implies
\begin{equation}
\label{LEA}
\mathcal{L}_{E} \alpha = (\Delta - 1)\alpha.
\end{equation}
And, if there is a (non-necessarily flat) unit $\1$, it implies $\mathcal{L}_{E} \1 = -\1$.
\begin{proposition}
\label{prop:c1conf}
Let $\Omega$ be an invertible conformal (compact-type) F-CohFT. Then, the R- and T-elements of the F-Givental group that Theorem~\ref{thm:transfree} associates to its formal shift obey
\begin{equation}
\label{LERT}
(\mathcal{L}_E + z\partial_z) R(z) = 0,\qquad (\mathcal{L}_{E} + z \partial_z + 1)\hat{T}(z) = 0.
\end{equation}
In particular $(\mathcal{L}_{E} + z\partial_z)T(z) = 0$ and $(\mathcal{L}_{E} + z\partial_z + 1)\Upsilon(z) = 0$. If furthermore $\Omega$ is semi-simple, then the equation \eqref{LERT} for $R(z)$ together with Proposition~\ref{prop:diffTR} determines uniquely $R(z)$ and $T(z)$.
\end{proposition}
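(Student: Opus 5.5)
The plan is to read off \eqref{LERT} from the conformality relation applied to the classes $\Omega_{g,1+1}$ and $\Omega_{g,1}$ restricted to moduli of smooth curves. These are known in closed form from Corollary~\ref{coformsmooth}:
\[
\Omega_{g,1+1}{}_{|\M_{g,1+1}} = R(-\psi_1)\circ(\alpha^g\cdot\hat{T}(\boldsymbol{\kappa})\cdot)\circ R^{-1}(\psi_2),\qquad \Omega_{g,1}{}_{|\M_{g,1}} = R(-\psi_1)\big[\alpha^g\cdot\hat{T}(\boldsymbol{\kappa})\big].
\]
We use the reformulation $(\mathcal{L}_E+\textnormal{deg})\,{}_t\Omega_{g,1+n} = (g\Delta+n-1)\,{}_t\Omega_{g,1+n}$, valid for the formal shift at every $t$, and take $g$ large for a fixed cohomological degree. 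Then $\psi$- and $\kappa$-classes are free generators by Theorem~\ref{Mumfordwithpsiclasses}, and the specialisations $\textnormal{sp}_R$ and $\textnormal{sp}_T$ from the proof of Proposition~\ref{prop:diffTR} are legitimate ring maps. Under either specialisation, $\textnormal{deg}$ becomes $z\partial_z$, because $\psi_1=-z$ and $\kappa_m=z^m$ are homogeneous of the right weight. The key point is that $\mathcal{L}_E$ commutes with both specialisations, since these act only on the cohomological variables.

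For $R$, apply $\textnormal{sp}_R$ to the $(g,1+1)$ relation, whose right-hand side is $g\Delta$. This gives
\[
(\mathcal{L}_E+z\partial_z)\big(R(z)\circ(\alpha^g\cdot)\big)=g\Delta\, R(z)\circ(\alpha^g\cdot).
\]
By the Leibniz rule for the Lie derivative on compositions of endomorphism fields, the left-hand side splits as $\big((\mathcal{L}_E+z\partial_z)R\big)\circ(\alpha^g\cdot)+R\circ\mathcal{L}_E(\alpha^g\cdot)$. The degree-$0$ part of the same relation is $\mathcal{L}_E\,\omega_{g,1+1}=g\Delta\,\omega_{g,1+1}$, so the $g\Delta$ terms cancel. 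Invertibility of $\alpha$ then yields $(\mathcal{L}_E+z\partial_z)R=0$.

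For $\hat{T}$, I would use $\Omega_{g,1}$ after setting $\psi_1=0$ and $\kappa_m=z^m$, where the constant is $g\Delta-1$; the $(g,1+1)$ class under $\textnormal{sp}_T$ is an equivalent alternative. To expand $\mathcal{L}_E(\alpha^g\cdot\hat{T})$ one needs the Lie derivative of the product itself, which is fixed by the $(0,1+2)$ case of conformality; one also needs $\mathcal{L}_E\alpha=(\Delta-1)\alpha$ from \eqref{LEA}. The $\alpha^g$ contributions cancel against $g\Delta$, and after dividing by $\alpha^g$ one is left with an equation for $\hat{T}$ alone. The main obstacle is purely the bookkeeping of signs and constants: matching $d_L$ with $-\mathcal{L}_E$ on the various tensor types, including the product and the vector versus endomorphism readings of $\hat{T}$. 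The constant must come out as $+1$. A consistency check is the $z^0$ term: $\hat{T}(0)=\1$ and $\mathcal{L}_E\1=-\1$, so $(\mathcal{L}_E+1)\1=0$ as required.

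The consequences for $T$ and $\Upsilon$ then follow formally. Write $T(z)=z(\1-\hat{T}^{-1}(z))$ and $\Upsilon=R[\hat{T}^{-1}]$, and use that $z\partial_z$ and $\mathcal{L}_E$ are derivations. The shift by $+1$ is converted by the prefactor $z$ in $T$, and it is preserved by the action of $R$ in $\Upsilon$ because $(\mathcal{L}_E+z\partial_z)R=0$.

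For uniqueness in the semi-simple case, recall from Proposition~\ref{prop:diffTR} that $R$ is determined up to $R\mapsto R\exp(\sum_k D_kz^k)$, with $D_k$ constant and diagonal in the canonical frame. Homogeneity of the product, $\mathcal{L}_E(\cdot)\propto(\cdot)$, forces $[E,\partial_i]=-\partial_i$, so $E(u^i)=u^i+c_i$. Consequently $\mathcal{L}_E D=0$ for any constant diagonal $D$. Applying $\mathcal{L}_E+z\partial_z$ to the modified $R$ produces $R\cdot\sum_k kD_kz^k\exp(\cdots)$, which must vanish, so all $D_k=0$. Finally $\hat{T}$, and hence $T$, is recovered from $R$ through $\hat{T}^{-1}=R^{-1}[\Upsilon]$, where $\Upsilon$ is fixed by \eqref{vaceqn}.
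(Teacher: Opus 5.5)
Your proposal is correct and follows essentially the paper's proof: conformality of $\Omega_{g,1+1}$ on smooth curves under $\textnormal{sp}_R$, cancellation of the $g\Delta$ terms via $\mathcal{L}_E(\alpha^g\cdot) = g\Delta\,\alpha^g\cdot$, and the same diagonal-ambiguity argument for uniqueness, which you spell out in more detail through $[E,\partial_i] = -\partial_i$. The only variation is for $\hat{T}$: the paper applies $\textnormal{sp}_T$ to $\Omega_{g,1+1}$ and evaluates the resulting endomorphism identity on $\1$ using $\mathcal{L}_E\1 = -\1$, whereas your use of $\Omega_{g,1}$ (constant $g\Delta - 1$) gives the vector equation directly, with the same constant $+1$.
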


\begin{proof}
First remark that the properties $\mathcal{L}_{E} \cdot = \cdot$ and $\mathcal{L}_{E} \alpha = (\Delta - 1)\alpha$ imply that $\mathcal{L}_{E}(\alpha^g \cdot) = g\Delta \alpha^g \cdot$. Then, we come back to \eqref{starung}, that is
\[
\textnormal{sp}_R \Omega_{g,1+1}{}_{|\M_{g,1+1}} = R(z) \circ (\alpha^g \cdot),\qquad \textnormal{sp}_T \Omega_{g,1+1}{}_{|\M_{g,1+1}} =  \hat{T}(z) \cdot \alpha^g \cdot.
\]
and apply $\mathcal{L}_E$. This gives
\begin{equation}
\begin{split}
 \textnormal{sp}_{R} (\mathcal{L}_{E} \Omega_{g,1+1})_{|\M_{g,1+1}} & = (\mathcal{L}_{E} R(z)) \circ (\alpha^g \cdot) + g\Delta  R(z) \circ (\alpha^g \cdot), \\
  \textnormal{sp}_T(\mathcal{L}_{E} \Omega_{g,1+1})_{|\M_{g,1+1}} & = (\mathcal{L}_{E}\hat{T}(z) \cdot) \circ (\alpha^g \cdot) + g \Delta \hat{T}(z) \cdot \alpha^g \cdot.
 \end{split}
\end{equation}
On the other hand
\[
(\mathcal{L}_{E} \Omega_{g,1+1})_{|\M_{g,1+1}} = \big(f_* \Omega_{g,1+2}(-\otimes E)-  d_{L} \Omega_{g,1+1}\big)_{|\M_{g,1+1}} = (-\textnormal{deg} + g \Delta)\Omega_{g,1+1}{}_{|\M_{g,1+1}}
\]
due to the conformality assumption \eqref{confOmega}. As $z$ appears from specialisation of cohomological degree $2$ classes, $\textnormal{deg}$ is realised by $z\partial_z$ after the specialisation. Comparing the two equations and using invertibility of $\alpha^g \cdot$ we get the claim for $R(z)$, and the relation $\mathcal{L}_{E}( \hat{T}(z) \cdot) + z\partial_z \hat{T}(z)\cdot = 0$ between endomorphism. If we apply it to the vector $\1$ we find
\[
\mathcal{L}_{E} \hat{T}(z) = \mathcal{L}_{E}(\hat{T}(z) \cdot )[\1] + \hat{T}(z) \cdot \mathcal{L}_{E}\1 = -z\partial_z \hat{T}(z) - \hat{T}(z).
\]
The last two claims follow from $T(z) = z(\1 - \hat{T}^{-1}(z))$ and $\Upsilon(z) = R(z)[\hat{T}^{-1}(z)]$.

Assume now semi-simplicity. If $R(z)$ and $R'(z)$ (not a derivative) are solutions of \eqref{LERT} and the equation in Proposition~\ref{prop:diffTR}, then $R''(z) := R'(z)R^{-1}(z)^{-1} = \exp(\sum_{m \geq 1} D_m z^m)$ for some constant matrices $D_m$ which are diagonal in the canonical basis and we have $(\mathcal{L}_{E} + z\partial_z)R''(z) = 0$. This implies $D_m = 0$ for every $m$, making $R(z)$ unique. As $\Upsilon(z)$ is specified by \eqref{vaceqn}, $T(z)$ is unique as well.
 \end{proof}

We now turn to the flat F-manifold side.

\begin{definition}
\label{flathomodef} A flat F-manifold is \emph{homogeneous} if there exists a vector field $E$, called \emph{Euler vector field}, \st $\nabla\nabla E = 0$ and $\mathcal{L}_{E} \cdot = \cdot$, where the product $\cdot$ is seen as a section of $\End(TM^{\otimes 2},TM)$.
\end{definition}
The first property is equivalent to  having $E = (K^\mu + L_{\nu}^{\mu}t^{\nu}) \partial_\mu$ for some $K \in \Gamma(TM)$ and $L \in \Gamma(\End(TM))$. If the flat F-manifold has a (non-necessarily flat) unit $\1$, then $\mathcal{L}_{E} \1 = -\1$ holds automatically. If it is semi-simple, up to translation we can choose canonical coordinates such that $E = \sum_{i = 1}^{N} u^i \partial_i$, and there exist \cite{Lore} scalar constants $\Delta^1,\ldots,\Delta^N$ such that $\mathcal{L}_{E}(H^i) = -\frac{1}{2}\Delta^i H^i$ for any $i \in [N]$. In particular, if $\Delta^i =: \Delta$ does not depend on $i$, then the metric of \eqref{metriceta} satisfies $\mathcal{L}_{E} \eta = (2 - \Delta)\eta$.

\begin{definition}
\label{confhomodef}A flat F-manifold is conformal of dimension $\Delta \in \mathbb{C}$ if it is homogeneous, semi-simple and $\Delta^i = \Delta$ for any $i \in [N]$.
\end{definition}

We did not include the semi-simplicity assumption for conformal F-CohFTs (Definition~\ref{def:Fconf} makes sense without it), but we do include it for conformal flat F-manifolds (Definition~\ref{confhomodef} does not make sense without it). 
\begin{proposition}
\label{prop:invertif}
	 If $\Omega$ is an invertible semi-simple conformal F-CohFT, then the underlying flat F-manifold is conformal with same Euler vector field and dimension.
\end{proposition}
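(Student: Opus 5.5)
The plan is to check the three ingredients of Definition~\ref{confhomodef} in turn. Semi-simplicity is assumed. Homogeneity comes from the genus-$0$, degree-$0$ part of the conformality relation \eqref{confOmega}. The equality of the constants $\Delta^i$ comes from combining \eqref{LEA} with the relation $\dd(H^i\sqrt{\alpha^i})=0$ of Lemma~\ref{lem:alphaHR1}.

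\textbf{Homogeneity.} The candidate Euler vector field is $E=K+L[t]$, as introduced after Definition~\ref{def:Fconf}. By construction it is affine in the flat coordinates, so $\nabla\nabla E=0$ is automatic.

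For the product condition, take \eqref{confOmega} with $(g,1+n)=(0,1+2)$ and project to cohomological degree $0$. This means pushing forward $\Omega_{0,1+3}$ along the forgetful map $\Mbar_{0,1+3}\to\Mbar_{0,1+2}=\mathrm{pt}$. By the formal-shift identity \eqref{nablaomega}, the left-hand side is $\nabla_K(\cdot)$, and the right-hand side is $(d_L+1)(\cdot)$. Evaluated at a general point $t$ of the formal shift, where $K$ is replaced by $E(t)$ (footnote after Definition~\ref{def:Fconf}), this reads $\nabla_E(\cdot)-d_L(\cdot)=(\cdot)$.

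Since $\nabla E=L$ in flat coordinates, $\nabla_E(\cdot)-d_{\nabla E}(\cdot)$ is exactly $\mathcal{L}_E(\cdot)$ for the torsion-free flat connection. Hence $\mathcal{L}_E\cdot=\cdot$, which is the equation $(\mathcal{L}_E+\textnormal{deg})\Omega=(g\Delta+n-1)\Omega$ specialised to $(0,1+2)$ in degree $0$. So the flat F-manifold is homogeneous with Euler field $E$.

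\textbf{Dimension.} By semi-simplicity and homogeneity, after a translation we may choose canonical coordinates with $E=\sum_i u^i\partial_i$. Write $\alpha=\sum_i\alpha^i\partial_i$ in the canonical basis. Invertibility of $\alpha$ means $\alpha^i\neq 0$ for every $i$. Using $[u^j\partial_j,\partial_i]=-\delta_{ij}\partial_i$, we get
\[
\mathcal{L}_E\alpha=\sum_i\big(E(\alpha^i)-\alpha^i\big)\partial_i .
\]
Comparing with \eqref{LEA}, $\mathcal{L}_E\alpha=(\Delta-1)\alpha$, gives $E(\alpha^i)=\Delta\,\alpha^i$ for each $i$.

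Lemma~\ref{lem:alphaHR1} says $H^i\sqrt{\alpha^i}$ is a non-zero constant, so $H^i=c_i(\alpha^i)^{-1/2}$. Therefore
\[
\mathcal{L}_E H^i=E(H^i)=-\tfrac12 c_i(\alpha^i)^{-3/2}E(\alpha^i)=-\tfrac{\Delta}{2}H^i .
\]
This holds for every choice of constant normalising $H^i$. Hence $\Delta^i=\Delta$ for all $i$, and the flat F-manifold is conformal of dimension $\Delta$ with Euler vector field $E$.

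\textbf{Main obstacle.} The delicate step is the first one: identifying the degree-$0$ part of \eqref{confOmega} in genus $0$, transported to all $t$ through the formal shift, with $\mathcal{L}_E\cdot=\cdot$. Two points need checking there. First, the sign conventions of $d_L$ must match those of the Lie derivative of a $(2,1)$-tensor. Second, the shift of $K$ to $E(t)$ must be verified to hold order by order. I would first do this at $t=0$ in flat coordinates, where $\mathcal{L}_E c^\rho_{\mu\nu}=E^\beta\partial_\beta c^\rho_{\mu\nu}-L^\rho_\beta c^\beta_{\mu\nu}+L^\beta_\mu c^\rho_{\beta\nu}+L^\beta_\nu c^\rho_{\mu\beta}$, and then appeal to the fact that the formal shift propagates the identity to all orders. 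The existence of canonical coordinates with $E=\sum u^i\partial_i$ is taken from the discussion following Definition~\ref{flathomodef}.
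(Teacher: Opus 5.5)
Your proposal is correct and follows essentially the same route as the paper: homogeneity from the genus-$0$, degree-$0$ conformality relation (which the paper dismisses as clear from the definitions and you spell out, correctly matching $\nabla_E - d_L$ with $\mathcal{L}_E$), then combining $\mathcal{L}_E\alpha=(\Delta-1)\alpha$ with $\dd(H^i\sqrt{\alpha^i})=0$ from Lemma~\ref{lem:alphaHR1} to get $\mathcal{L}_E H^i=-\tfrac{\Delta}{2}H^i$. The only cosmetic difference is that you express $H^i$ through $\alpha^i$, whereas the paper expresses $\alpha^i$ through $H^i$ and expands $\mathcal{L}_E\alpha$.
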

\begin{proof}
It is clear from the definitions that a conformal F-CohFT gives rise to a homogeneous flat F-manifold. If the F-CohFT is also invertible and semi-simple, then for any $i \in [N]$ we have $\alpha^i = (H^i)^{-2}$ up to a multiplicative constant (Lemma~\ref{lem:alphaHR1}). We compute
\begin{equation}
	\mathcal{L}_{E} \alpha = \sum_{i = 1}^{N} \big((H^i)^{-2} \mathcal{L}_{E} \partial_i - 2(H^i)^{-2} \mathcal{L}_{E}(\log H^i) \partial_i\big) = \sum_{i = 1}^{N} (H^i)^{-2}\big(-1 - 2 \mathcal{L}_{E}(\log H^i)\big) \partial_i,
\end{equation}
which is equal to $(\Delta - 1)\alpha$ due to \eqref{LEA}. Hence $\mathcal{L}_{E} H^i = -\frac{\Delta}{2} H^i$.
\end{proof}
\begin{corollary}
	A homogeneous semi-simple flat F-manifold which is not conformal cannot be produced by an invertible conformal F-CohFT.
\end{corollary}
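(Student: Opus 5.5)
The corollary is the contrapositive of Proposition~\ref{prop:invertif}, so the plan is a short argument by contradiction. Suppose $(M,\nabla,\cdot)$ is a homogeneous semi-simple flat F-manifold with Euler vector field $E$. Suppose also that it arises as the genus-$0$ flat F-manifold \eqref{FCohFTpotential} of an invertible conformal F-CohFT $\Omega$, whose Euler field $K + L[t]$ matches $E$. We must show that $(M,\nabla,\cdot)$ is then conformal in the sense of Definition~\ref{confhomodef}.

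The first step checks that $\Omega$ is semi-simple, which is needed to apply Proposition~\ref{prop:invertif}. Semi-simplicity of a F-CohFT refers to the algebra $(V,\omega_{0,1+2})$. This is the tangent algebra of the associated flat F-manifold at the origin, which is semi-simple by assumption. After formal shift, the same holds at every point of the germ. Invertibility is assumed.

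The second step applies Proposition~\ref{prop:invertif}. Its proof uses Lemma~\ref{lem:alphaHR1}, which gives $\alpha^i \propto (H^i)^{-2}$, together with \eqref{LEA}. It yields $\mathcal{L}_E H^i = -\tfrac{\Delta}{2} H^i$ for every $i$, with the same $\Delta$ as the conformal dimension of $\Omega$. Hence all the constants $\Delta^i$ of \cite{Lore} coincide, so the flat F-manifold is conformal of dimension $\Delta$, contradicting the hypothesis.

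The only point needing care is the choice of Euler vector field. A homogeneous flat F-manifold might admit several Euler fields, and the $\Delta^i$ are defined relative to one of them. I would read ``not conformal'' as: no Euler vector field makes the $\Delta^i$ equal. Since the conformal F-CohFT supplies an Euler field $E = K + L[t]$ with $\nabla\nabla E = 0$ and $\mathcal{L}_E\cdot = \cdot$, Proposition~\ref{prop:invertif} shows that for this particular $E$ the $\Delta^i$ all equal $\Delta$, and the contradiction follows. No real obstacle arises; this bookkeeping is the only subtle step.
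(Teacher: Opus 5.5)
Your proposal is correct and matches the paper, which states this corollary without a separate proof as the contrapositive of Proposition~\ref{prop:invertif}. Your extra checks, that semi-simplicity of the flat F-manifold at the origin makes $\Omega$ semi-simple and that the F-CohFT's own Euler field $K + L[t]$ is the one witnessing conformality, spell out steps the paper leaves implicit.
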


For any semi-simple homogeneous flat F-manifold with $\Delta^i$ possibly distinct, \cite[Proposition 1.16]{arsie_semisimple_2023} produces a unique $\mathsf{R}(z)$ associated to a basis of flat sections of the dual connection like in \eqref{dualflatsec} and satisfying (their $\delta^i$ is our $-\frac{\Delta^i}{2}$)
\begin{equation}
	\label{LEDel}
	(\mathcal{L}_{E} + z\partial_z)\mathsf{R}(z)  + \frac{1}{2} \textbf{[}\Delta,\mathsf{R}(z) \textbf{]} = 0.
\end{equation}

Then $R(z) = H^{-1}\mathsf{R}(-z)^{-1}H$ (see \eqref{Rcorres}) does satisfy the equation $(\mathcal{L}_{E}+z\partial_z) R(z)=0 $; in particular, in the case of conformal semi-simple invertible F-CohFTs, the uniqueness statements in Proposition~\ref{prop:c1conf} and in \cite{arsie_semisimple_2023} agree. In the spirit of \cite[Section 8.4]{teleman_structure_2012} for CohFTs, the unique solution can be constructed more directly in the following way.

\begin{proposition}
\label{prop:c2conf} Besides, $R(z)$ and $T(z)$ from Theorem~\ref{thm:transfree} are uniquely determined by
\begin{equation}
\label{dzdzR} R(z)\boldsymbol{\mu} + z\partial_zR(z) = z^{-1}\textnormal{\textbf{[}}R(z),K\cdot \textnormal{\textbf{]}},\qquad 
\big(\boldsymbol{\mu} + \tfrac{\Delta}{2} + z\partial_z\big) \Upsilon(z) = -z^{-1} K \cdot (\Upsilon(z) - \1),
\end{equation}
where $\boldsymbol{\mu}  = -L + (1 - \frac{\Delta}{2}) \Id_V$ is the Hodge grading operator and $\Upsilon(z) = R(z)[\1 - \frac{T(z)}{z}]$.
\end{proposition}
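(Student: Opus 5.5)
The plan is to evaluate everything at the origin $t = 0$ and combine two facts: the homogeneity relations of Proposition~\ref{prop:c1conf}, and the differential equations of Proposition~\ref{prop:diffTR} (equivalently, the flat section and vacuum equations of Corollary~\ref{cor:RTfial}) contracted along the Euler vector field $E$. Since $E = K + L[t]$, at $t=0$ we have $E = K$, and in canonical coordinates (after translation) $E = \sum_i u^i \partial_i$, so that $\iota_E \dd U = U$ is multiplication by $E$. The first step is to rewrite the Lie derivatives of tensors in terms of covariant derivatives. For an endomorphism field $R$ one has $\mathcal{L}_E R = \nabla_E R - \textbf{[}\nabla E, R\textbf{]}$ with $\nabla E = L$ (constant in flat coordinates). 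For a vector field $Y$, $\mathcal{L}_E Y = \nabla_E Y - L[Y]$.

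\textbf{The equation for $R(z)$.} Use the form $\nabla_X R = z^{-1}\textbf{[}X\cdot, R\textbf{]} + R\,(\text{diagonal part})$ coming from Proposition~\ref{prop:diffTR}. Written as an endomorphism-valued statement independent of basis, this says $\nabla_X R(z) + z^{-1}\textbf{[}R(z), X\cdot\textbf{]} - R(z)\,\mathcal{D}_X = 0$, where $\mathcal{D}_X$ is the endomorphism represented by $(\iota_X\dd H)H^{-1}$ in the canonical basis. For $X = E$, homogeneity $\mathcal{L}_E H^i = -\frac{\Delta}{2}H^i$ (Proposition~\ref{prop:invertif}) turns $\mathcal{D}_E$ into $-\frac{\Delta}{2}\Id$ plus possible correction terms. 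These corrections arise from comparing $\nabla_E$ in the flat basis with $\dd$ in the canonical basis, and I expect them to combine into the correct multiple of the identity.

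Substituting into $(\mathcal{L}_E + z\partial_z)R = 0$ gives
\[
z\partial_z R = -\nabla_E R + \textbf{[}L,R\textbf{]} = z^{-1}\textbf{[}R,K\cdot\textbf{]} - \tfrac{\Delta}{2}R + \textbf{[}L,R\textbf{]}.
\]
This should rearrange to $R\boldsymbol{\mu} + z\partial_z R = z^{-1}\textbf{[}R,K\cdot\textbf{]} + \boldsymbol{\mu}R$-type terms. The \textbf{main obstacle} is the bookkeeping of the $L$-commutator and the $H$-term needed to land on exactly $R\boldsymbol{\mu}$, with no $\boldsymbol{\mu}R$ on the left. I would first check this on the rank-one example $V = \mathbb{C}$, using the explicit series \eqref{RT1}, to fix the signs and the $\frac{\Delta}{2}$ shift. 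Then I would track the identity $\mathcal{L}_E\Psi$ through Lemma~\ref{lem:gadi}.

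\textbf{The equation for $\Upsilon$.} Contract the vacuum identity $\nabla^{-z}_X\Upsilon = -z^{-1}X$ from the proof of Corollary~\ref{cor:RTfial} with $X = E$, which gives $\nabla_E\Upsilon = z^{-1}E\cdot(\Upsilon - \1)$ up to sign conventions. Combine this with $(\mathcal{L}_E + z\partial_z + 1)\Upsilon = 0$ and $\mathcal{L}_E\Upsilon = \nabla_E\Upsilon - L\Upsilon$. This yields $(z\partial_z + 1 - L)\Upsilon = -z^{-1}K\cdot(\Upsilon - \1)$ at $t = 0$, and $1 - L = \boldsymbol{\mu} + \frac{\Delta}{2}$.

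\textbf{Uniqueness.} Expand in powers of $z$. Coefficient-wise, the $R$-equation reads $R_m(\boldsymbol{\mu} + m) - \boldsymbol{\mu}$-corrections $= \textbf{[}R_{m+1}, K\cdot\textbf{]}$. Since $K\cdot$ is semisimple with distinct eigenvalues $u^i(0)$, this determines the off-diagonal part of $R_{m+1}$ from $R_m$. The diagonal part of $R_{m+1}$ is then fixed by the next equation at its diagonal entries, where the factor $(m+1)$ is invertible, exactly as in Teleman's argument. Similarly, the $\Upsilon$-equation gives $K\cdot\Upsilon_{m+1} = -(\boldsymbol{\mu} + \frac{\Delta}{2} + m)\Upsilon_m$, and $K\cdot$ is invertible when all $u^i(0) \neq 0$, which can be arranged by the translation freedom. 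Finally $T$ is recovered from $\Upsilon$ and $R$.
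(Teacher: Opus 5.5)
Your route is the paper's. You contract Proposition~\ref{prop:diffTR}, and the vacuum identity $\nabla^{-z}_X\Upsilon = -z^{-1}X$ from the proof of Corollary~\ref{cor:RTfial}, with $E$. You then combine with Proposition~\ref{prop:c1conf} and evaluate at $t=0$. The $\Upsilon$ half is correct and is a frame-free version of the paper's component computation. The $R$ half has a genuine gap: it is never carried out, and the step you leave as an expectation is false.

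Rewriting Proposition~\ref{prop:diffTR} with $\nabla R = \dd R + \textbf{[}R,(\dd\Psi)\Psi^{-1}\textbf{]}$ gives $\nabla_X R + z^{-1}\textbf{[}R,X\cdot\textbf{]} - R\,\mathcal{D}_X = 0$, where
\[
\mathcal{D}_X = \iota_X\big((\dd\Psi)\Psi^{-1} + (\dd H)H^{-1}\big) = \textbf{[}R_1,X\cdot\textbf{]}
\]
by Lemma~\ref{lem:alphaHR1}, not the $H$-term alone. For $X=E$ you have $\iota_E(\dd\Psi)\Psi^{-1} = \Id - L$ (from $E(u^i)=u^i$, exactly as in the paper's proof) and $\iota_E(\dd H)H^{-1} = -\frac{\Delta}{2}$. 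Hence $\mathcal{D}_E = \boldsymbol{\mu}$. This has zero diagonal in the canonical basis, so it is not a multiple of the identity unless it vanishes. Using $R(\boldsymbol{\mu}+L) = (1-\frac{\Delta}{2})R$, homogeneity then gives, at $t=0$,
\[
z\partial_z R = -\nabla_E R + \textbf{[}L,R\textbf{]} = z^{-1}\textbf{[}R,K\cdot\textbf{]} - R\boldsymbol{\mu} + LR - RL = z^{-1}\textbf{[}R,K\cdot\textbf{]} - \boldsymbol{\mu}R .
\]
So the bookkeeping you postponed yields $\boldsymbol{\mu}R(z) + z\partial_z R(z) = z^{-1}\textbf{[}R(z),K\cdot\textbf{]}$, with $\boldsymbol{\mu}$ composed on the left. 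Your rank-one test cannot decide the ordering, since in rank one everything commutes and $\boldsymbol{\mu}=0$.

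The ``main obstacle'' you flagged is therefore real and cannot be argued away. The paper's proof reaches $R(z)\boldsymbol{\mu}$ only because it writes $R(z)\,\iota_E(\dd\Psi)\Psi^{-1}$ where Proposition~\ref{prop:diffTR} has $(\dd\Psi)\Psi^{-1}R(z)$.

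You can confirm the ordering independently. Put $\partial_\nu = K$ in the formula for $\textnormal{sp}_R f_*\Omega_{g,1+2}$ from Step~3 of the proof of Proposition~\ref{prop:diffTR}, and compare with \eqref{confOmega}, with $A$ denoting multiplication by $\alpha$. The input-side term $RA^g\boldsymbol{\mu}$, which comes from $\textbf{[}R_1,K\cdot\textbf{]}$, and the input-side term $-RA^gL$ of $d_L(RA^g)$ differ by the scalar $(1-\frac{\Delta}{2})RA^g$. Only $LRA^g$ survives, on the output side, and one gets $(m+\boldsymbol{\mu})R_m = \textbf{[}R_{m+1},K\cdot\textbf{]}$. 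This is the ordering of the classical Frobenius-manifold recursion, so the first equation of \eqref{dzdzR} should be read with $\boldsymbol{\mu}$ on the left. With this ordering your uniqueness argument for $R$ goes through, since the diagonal of $\boldsymbol{\mu}R_{m+1}$ only involves off-diagonal entries of $R_{m+1}$.

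One smaller correction concerns $\Upsilon$. You cannot make $K\cdot$ invertible by a translation: that freedom is already spent on normalising $E=\sum_i u^i\partial_i$, which makes the $u^i(0)$ the intrinsic eigenvalues of $K\cdot$. If one of them vanishes, the corresponding component of $\Upsilon_{m+1}$ must instead be extracted from the next order of the $\Upsilon$-equation, provided $\frac{\Delta}{2}+m+1 \neq 0$.
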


The difference with Proposition~\ref{prop:c1conf} is that \eqref{dzdzR} does not involve differentiation in $t$, it holds pointwise in the flat F-manifold. Clearly, these equations determine uniquely $R(z)$ and $T(z)$ order by order in $z$.  This proves Theorem~\ref{thm:recHom}.

\begin{proof}

Next, we want to translate the equations \eqref{dzdzR} at $t = 0$ in the canonical basis. For any $i,\mu \in [N]$ we have
\begin{equation*}
\begin{split}
\mathcal{L}_E (\Psi_{\mu}^i) & = E^{\nu} \frac{\partial}{\partial t^{\nu}}\bigg( \frac{\partial u^i}{\partial t^{\mu}}\bigg) = E^{\nu} \frac{\partial}{\partial t^{\mu}} \bigg(\frac{\partial u^i}{\partial t^{\nu}}\bigg) \\
& = \frac{\partial}{\partial t^{\mu}}\bigg(E^{\nu}\frac{\partial u^i}{\partial t^{\nu}}\bigg) - L_\mu^{\nu} \Psi_{\nu}^i  = \frac{\partial}{\partial t^{\mu}}\bigg(\sum_{j = 1}^{N} u^j \frac{\partial u^i}{\partial u^j}\bigg) - L_\mu^{\nu} \Psi_{\nu}^i \\
& =  \frac{\partial u^i}{\partial t^{\mu}} - L_{\mu}^{\nu} \Psi_{\nu}^i = \Psi^i_{\mu} - L_{\mu}^{\nu} \Psi_{\nu}^i,
\end{split}
\end{equation*}
Therefore $\iota_E(\dd \Psi)\Psi^{-1} = \Id_V - L$, where $\iota_E$ is the interior product of a $1$-form with $E$. Since $\mathcal{L}_{E} \dd u^i = \dd u^i$ and $\mathcal{L}_{E}\partial_i = -\partial_i$ for any $i \in [N]$, there is no distinction between the matrix of $\mathcal{L}_{E}R(z)$ in the canonical basis and $\mathcal{L}_{E}$ applied entrywise to the matrix of $R(z)$. We compute it thanks to Proposition~\ref{prop:diffTR}
\begin{equation}
\begin{split}
\mathcal{L}_ER(z) & = R(z) \iota_E(\dd H)H^{-1} + R(z)\iota_E(\dd \Psi)\Psi^{-1} - z^{-1}\textbf{[} R(z),\iota_E \dd U \textbf{]} \\
& = - \frac{\Delta}{2}R(z) + R(z)(\textnormal{Id} - L) - z^{-1} \textbf{[} R(z), E \cdot \textbf{]}.
\end{split}
\end{equation}
This relation is still valid for the formal shift (at any $t$), and specialising at $t = 0$  replaces $E$ by $K$ in the last term, while in the first three terms we recognise the Hodge grading operator. This is a relation between matrices which can also be directly interpreted as relation between endomorphisms. Besides, we have $-(z\partial_z + 1)\Upsilon(z)^i = (\mathcal{L}_E\Upsilon(z))^i = \mathcal{L}_E(\Upsilon(z)^i) - \Upsilon(z)^i$ for any $i \in [N]$, and by Proposition~\ref{prop:diffTR} the column vector $(\mathcal{L}_E(\Upsilon(z)^i))_{i = 1}^{N}$ is equal to $(\Id - L) \Upsilon(z) + z^{-1} K \cdot (\Upsilon(z) - \1)$. Substituting $\boldsymbol{\mu} = \frac{\Delta}{2} + \Id - L$ we get the result for $\Upsilon(z)$.
\end{proof}

\subsection{Final remarks}
\label{badex}
If we had allowed $\Delta$ in Definition~\ref{def:Fconf} to be an endomorphism instead of a scalar, Proposition \ref{prop:c1conf} and the unique reconstruction in Proposition~\ref{prop:c2conf} would be valid if we further assume that $\Delta$ (a) commutes with multiplication by $\alpha$ and $\hat{T}$, and (b) commutes with $R(z)$. Property (a) can be achieved in the semi-simple case by assuming that $\Delta$ is diagonal in the canonical basis, which is indeed what happens in general for semi-simple homogeneous flat F-manifolds. But then, (b) is equivalent to $R(z)$ having a block diagonal structure with respect to the decomposition of $V$ into eigenspaces of $\Delta$. A necessary condition for (b) is that the F-CohFT in restriction to $\M^{\textnormal{ct}}$ does not couple different eigenspaces. Such a setting does not bring anything new as the unique reconstruction separately in each eigenspace would already reconstruct the full F-CohFT.

There are interesting examples of semi-simple F-CohFTs $\Omega$ for which the underlying flat F-manifold is homogeneous with non-scalar $\Delta$ such that the F-CohFT couples different eigenspaces and for which we have $\mathcal{L}_{E} \alpha = (\Delta - 1)\alpha$. If $\alpha$ is invertible, Theorems~\ref{thm:transfree} and \ref{thm:rec} tell us that $\Omega_{|\textnormal{ct}} =  RT\omega_{|\textnormal{ct}}$, but the flat F-manifold determines $R(z)$ only up to the diagonal ambiguity and we do not know a priori if it satisfies an additional equation like $(\mathcal{L}_{E} + z\partial_z)R(z) = 0$ which would kill the ambiguity. In concrete geometric examples, such an equation (or a variant of it) could perhaps be proved by ad hoc methods. If $\alpha$ is non-invertible in such a way that it is invertible on a single eigenspace and zero on the others, it is not impossible for $\Omega$ to be conformal, but in that case we do not know whether $\Omega_{|\textnormal{ct}}$ is of the form $RT\omega_{|\textnormal{ct}}$, and even if it were we still would not be able to derive an additional equation for $R(z)$ like we did in the proof of Proposition~\ref{prop:c2conf}.

Yet, an interesting situation occurs when the F-CohFT couples different eigenspaces but its restriction to $\M^{\textnormal{ct}}$ does not. Then, restricting/projecting the F-CohFT to an eigenspace gives a conformal compact-type F-CohFT, whose formal shift has a chance to be generically invertible and semi-simple, and to which we can apply the reconstruction theorem. We demonstrate this for the extended $r$-spin theory in the next Section.

\section{Application: extended \texorpdfstring{$r$}{r}-spin class in the extended direction}

\subsection{Conformal flat F-manifolds in dimension \texorpdfstring{$1$}{1} and compact-type vanishing}

\label{1dimFman}
Let us consider the example of any flat F-manifold structure in an open $M \subseteq \mathbb{C}$ having a conformal dimension $\Delta \neq 2$. For reasons that will appear later, we parametrise it $\Delta = 2 - \frac{2}{r}$ with $r \in \mathbb{C} \setminus \{0\}$.

The unique R-element specified by Proposition~\ref{prop:c2conf}  is $R(z) = \textnormal{Id}_{\mathbb{C}}$ and we must have $\boldsymbol{\mu} = 0$, therefore $L = (1 - \frac{\Delta}{2})\textnormal{Id}_{\mathbb{C}} = r^{-1} \textnormal{Id}_{\mathbb{C}}$. Up to translation, we can take a flat coordinate such that the Euler vector field vanishes at $t = 0$. Then:
\[
E = r^{-1}t \partial_t.
\]
Denoting $F(t)\partial_t$ the vector potential, the product is
\begin{equation}
\label{product1}
\partial_t \cdot \partial_t = F''(t) \partial_t.
\end{equation}
The property $\mathcal{L}_{E} \cdot = \cdot$ imposes that $F''(t)$ is homogeneous of degree $r - 1$, that is
\begin{equation}
\label{prepro}
F(t) = \frac{A t^{r + 1}}{r(r + 1)}
\end{equation}
for some constant $A \neq 0$, up to linear terms that we can set to zero. In particular $ \1 = A^{-1} t^{-(r - 1)}\partial_t$ is a unit at $t \neq 0$ and we can write
\[
E = r^{-1}A t^{r} \1.
\]
Solving the second differential equation of Proposition~\ref{prop:c2conf} we deduce
\[
\hat{T}^{-1}(z) = \Upsilon(z) = \sum_{m \geq 0}   \Big[\prod_{j = 1}^{m} (jr - 1)\Big]  (-A^{-1}t^{-r}z)^m \1.
\]
If $r$ were an integer (as it will be later), the product in the formula above would simply be the $r$-fold factorial $(rm - 1)!^{(r)}$.
\begin{lemma} 
Equivalently, we can derive the presentation
\begin{equation}
\label{TQT}
\hat{T}(z) = \exp\Bigg[-\sum_{m \geq 1} (-A^{-1}t^{-r}z)^m s_m\Bigg] \1,
\end{equation}
where $s_0 = 0$, $s_1 = r - 1$ and
\[
\forall m \geq 1 \qquad (m + 1)s_{m + 1} = m\big(r(m + 2) - 1\big) s_{m} + r \sum_{\ell + \ell' = m} \ell\ell' s_{\ell} s_{\ell'}. 
\]
\end{lemma}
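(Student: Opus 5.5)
The plan is to turn the explicit coefficients of $\Upsilon(z)$ into a first-order differential equation in the single variable $x := -A^{-1}t^{-r}z$, and then translate that equation into one for the logarithm. Since $M$ is one-dimensional, $\1$ spans $T_pM$ and the product is just multiplication of coefficients in front of $\1$. So everything reduces to identities between scalar formal power series in $x$. By the formula just derived, $\Upsilon(z) = Y(x)\,\1$ with
\[
Y(x) = \sum_{m \geq 0} a_m x^m, \qquad a_m = \prod_{j=1}^{m}(jr-1).
\]
Because $\hat{T}(z) = \Upsilon(z)^{-1}$ for the product $\cdot$, the claimed presentation \eqref{TQT} is equivalent to
\[
Y(x) = \exp\big(S(x)\big), \qquad S(x) := \sum_{m \geq 1} s_m x^m .
\]
Since $Y(0) = 1$, the logarithm $S = \log Y$ exists and is unique, with $s_0 = 0$. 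The whole task is therefore to show that the coefficients of $\log Y$ obey the stated recursion with $s_1 = r-1$.

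\textbf{Step 1: an ODE for $Y$.} I will encode the ratio $a_{m+1} = \big((m+1)r - 1\big)a_m$ as a differential equation. Multiplying by $x^{m+1}$ and summing over $m \geq 0$ gives
\[
Y - 1 = x\big(r x Y' + (r-1)Y\big).
\]

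\textbf{Step 2: pass to $S$.} Substituting $Y = e^S$ and dividing by $Y$ yields
\[
e^{-S} = 1 - (r-1)x - r x^2 S'.
\]
This still contains $e^{-S}$, which is not polynomial in $S$. To remove it, I differentiate once, giving $-S' e^{-S} = -(r-1) - 2rxS' - rx^2 S''$. Then I substitute the previous expression for $e^{-S}$ back in, which produces the polynomial Riccati-type equation
\[
S' = (r-1) + (r-1)xS' + 2rxS' + rx^2 S'' + rx^2 (S')^2 .
\]

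\textbf{Step 3: extract coefficients.} I read off the coefficient of $x^m$ in this equation. The constant term gives $s_1 = r-1$. For $m \geq 1$, the linear terms contribute
\[
m\big((r-1) + 2r + r(m-1)\big)s_m = m\big(r(m+2)-1\big)s_m .
\]
The quadratic term contributes $r\sum_{\ell+\ell'=m}\ell\ell' s_\ell s_{\ell'}$, since $x^2(S')^2$ pairs $x^{\ell-1}$ with $x^{\ell'-1}$. This is exactly the stated recursion. Conversely, the recursion and the initial value determine $S$ uniquely, so the presentation is equivalent to the product formula for $\Upsilon$.

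The only delicate point is the elimination of $e^{-S}$ in Step 2 by differentiating and back-substituting. Differentiation loses information only about the constant term, and that term is fixed by $S(0) = 0$ together with the $m=0$ coefficient check. So I expect no real obstacle beyond careful bookkeeping of the indices in the quadratic term.
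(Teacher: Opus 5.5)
Your proof is correct and follows the paper's route. The paper substitutes $\hat{T}^{-1}(z)=e^{s(z)}\1$ directly into the conformal equation for $\Upsilon$ from Proposition~\ref{prop:c2conf}, which is exactly your Step~1 ODE written in $z$ rather than in $x=-A^{-1}t^{-r}z$; it then differentiates once to eliminate $e^{-s}$ and reads off coefficients, and your use of the rescaled variable $x$ only makes the index bookkeeping cleaner.
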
 
\begin{proof}
Plugging $\hat{T}^{-1}(z) = e^{s(z)} \1$ in the second differential equation of Proposition~\ref{prop:c2conf} yields
\[
1 - r^{-1} + z s'(z) = z^{-1} r^{-1}At^{r} (e^{-s(z)} - 1)
\]
Differentiating once more with respect to $z$ yields
\[
z s''(z) + s'(z) = -z^{-2} r^{-1}At^{r}(e^{-s(z)} - 1) -  z^{-1} r^{-1}At^{r} s'(z)e^{-s(z)}
\]
We then eliminate $e^{\hat{t}(z)}$ between the two equations, and arrive to
\[
zs''(z) + (3 - r^{-1} + zs'(z) + z^{-1}r^{-1}At^r)s'(z) = z^{-1}(r^{-1} - 1)
\]
Inserting an expansion $s(z) = \sum_{m \geq 0} (-A^{-1}t^{-r}z)^m s_m$ readily gives the announced recursion. 
\end{proof}

\begin{definition}
We define uniquely classes $P_m^{(r)} \in H^{2m}(\M_{g,1+n}^{\textnormal{ct}})$ by writing
\[
\exp\bigg(-\sum_{m \geq 1} s_m \kappa_m\bigg) := 1+\sum_{m \geq 1} P_{m}^{(r)}(\boldsymbol{\kappa}).
\]
\end{definition}
For $r=1$, $P_{m}^{(1)}=0$ for all $m\geq 1$. For $r \neq 1$, Writing $P_m^{(r)}= -\frac{(r - 1)}{m!}\tilde{P}_m^{(r)}$, we find in low degrees
\begin{equation*}
\begin{split}
\tilde{P}_1^{(r)}(\boldsymbol{\kappa}) & = \kappa_1, \\
\tilde{P}_2^{(r)}(\boldsymbol{\kappa}) & = (3r - 1)\kappa_2 - (r-1)\kappa_1^2, \\
\tilde{P}_3^{(r)}(\boldsymbol{\kappa}) & = 2(13r^2 - 8r + 1) \kappa_3 - 3(3r^2 - 4r + 1)\kappa_2\kappa_1 + (r-1)^2 \kappa_1^3, \\
\tilde{P}_4^{(r)}(\boldsymbol{\kappa}) & = 6(71r^3 - 61r^2 + 15r - 1)\kappa_4 - 8(13r^3 - 21r^2 + 9r - 1) \kappa_3\kappa_1 - 3(3r - 1)^2(r-1)\kappa_2^2 \\
& \quad + 6(3r - 1)(r -1)^2 \kappa_2\kappa_1^2 - (r-1)^3\kappa_1^4.
\end{split}
\end{equation*}

\begin{proposition}
\label{prop:1FCoh} Suppose that there exists a (compact-type) F-CohFT $\Omega$ on $V = \mathbb{C}$, which is conformal of dimension $\Delta \notin \{0,2\}$ but not invertible and not semi-simple, but whose formal shift exists for $t$ in a small neighborhood of $0 \in \mathbb{C}$ and is invertible and semi-simple for $t \neq 0$. Then, there exists an integer $r \geq 2$ and constants $A,a \neq 0$ such that $\Delta = 2 - \frac{2}{r}$ and the formal shift at $t$ is
\begin{equation}
\label{eqnOmegunasg}
\Omega_{g, 1+n}(\partial_{t}^{\otimes n})_{|\textnormal{ct}} = a^{g} A^{g + n - 1} t^{(r - 1)(2g + n - 1)}\bigg(\sum_{m \geq 0} (-A)^{-m} t^{-rm} P_m^{(r)}(\boldsymbol{\kappa})\bigg) \partial_t.
\end{equation}
\end{proposition}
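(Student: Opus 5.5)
Fix $t\neq 0$ in the small neighbourhood where the formal shift ${}_t\Omega$ is invertible and semi-simple. We then apply Theorem~\ref{thm:transfree} together with the conformal reconstruction (Propositions~\ref{prop:c1conf} and \ref{prop:c2conf}) to ${}_t\Omega$, and extend the resulting formula to $t=0$ by analyticity in $t$.

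\textbf{Step 1: identify the flat F-manifold.} The genus-zero potential of $\Omega$ defines a one-dimensional flat F-manifold. By Proposition~\ref{prop:invertif} it is conformal of dimension $\Delta$, with Euler field $E=K+Lt$.
\begin{itemize}
\item In dimension one, Proposition~\ref{prop:c2conf} forces $R(z)=\mathrm{Id}$ and $\boldsymbol{\mu}=0$, so $L=1-\Delta/2$. Write this as $r^{-1}$, which is possible since $\Delta\neq 2$.
\item Non-invertibility at $t=0$ should force $K=0$, so that the origin is the fixed point of $E$. Indeed, if $K\neq 0$ we could translate and the structure would be regular at $0$.
\item The computation of Section~\ref{1dimFman} then gives $\partial_t\cdot\partial_t=At^{r-1}\partial_t$ with $A\neq 0$.
\end{itemize}
Non-semisimplicity at $0$ combined with the existence of the formal shift means the potential is a power series in $t$ vanishing to order $\geq 2$ at $0$ in $F''$. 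Hence $r-1$ is a positive integer, giving $r\geq 2$.

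\textbf{Step 2: determine $\alpha$ and $\hat T$.} Equation \eqref{LEA} gives $\mathcal L_E\alpha=(\Delta-1)\alpha$. With $E=r^{-1}t\partial_t$ this yields $\alpha=a\,t^{c}\partial_t$ for a constant $a$, and matching the weights gives $c=r-2$. Invertibility for $t\neq0$ forces $a\neq 0$. In the multiplicative normalisation where the product is $At^{r-1}$, this gives $\alpha^g = a^g A^{g}t^{(r-1)2g}\cdot(\text{unit})$ up to bookkeeping. The Lemma just proved gives $\hat T(z)$ through \eqref{TQT}, and Lemma~\ref{expkappa} converts it into $\exp\bigl(-\sum_m s_m(-A^{-1}t^{-r})^m\kappa_m\bigr)$. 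By homogeneity in $m$, this equals $\sum_m(-A)^{-m}t^{-rm}P^{(r)}_m(\boldsymbol\kappa)$.

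\textbf{Step 3: assemble.} Since $R=\mathrm{Id}$, Theorem~\ref{thm:transfree} gives ${}_t\Omega_{|\mathrm{ct}}=T\omega$. Corollary~\ref{coformsmooth}, and more generally the translation action combined with Lemma~\ref{expkappa}, shows that $T\omega_{g,1+n}(\partial_t^{\otimes n})=\hat T(\boldsymbol\kappa)\cdot\alpha^g\cdot\partial_t^{\cdot n}$. Here $\partial_t^{\cdot n}=A^{n-1}t^{(r-1)(n-1)}\partial_t$ in the unit-normalised sense. Collecting the exponents gives the prefactor $a^gA^{g+n-1}t^{(r-1)(2g+n-1)}$, which is \eqref{eqnOmegunasg}. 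This identity holds for $t\neq0$, and both sides are analytic in $t$.

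\textbf{Main obstacle.} The delicate points are the careful power counting in $\alpha$, so that the exponent is exactly $(r-1)(2g+n-1)$, and the integrality argument for $r$. For integrality, I would first try to compare the conformal weight of $\Omega_{0,1+2}$ at $t=0$, which is $\partial_t^2F$ at $0$, with polynomiality in $t$ of the shifted correlators.
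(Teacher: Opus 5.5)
Your strategy is the paper's own. You identify the one-dimensional conformal flat F-manifold of Section~\ref{1dimFman}, with $R=\Id$, $\boldsymbol{\mu}=0$, $E=r^{-1}t\partial_t$ and $F''=At^{r-1}$. You get $r\in\mathbb{Z}_{\geq 2}$ from analyticity of $F''$ at $0$ together with $F''(0)=0$. You then fix $\alpha$ by \eqref{LEA}, read off $\hat T$ from \eqref{TQT}, and assemble $\hat T(\boldsymbol{\kappa})\cdot\alpha^g\cdot\partial_t^{\cdot n}$, using that $R=\Id$ kills every tree with an edge.

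The one step that fails as written is the weight computation for $\alpha$ in Step 2. For $\alpha=a t^{c}\partial_t$ and $E=r^{-1}t\partial_t$,
\[
\mathcal{L}_E\alpha=[E,\alpha]=\tfrac{c-1}{r}\,a t^{c}\partial_t .
\]
You appear to have dropped the contribution $[E,\partial_t]=-r^{-1}\partial_t$. Equating with $(\Delta-1)\alpha=\frac{r-2}{r}\alpha$ gives $c=r-1$, not $c=r-2$. Hence $\alpha=a t^{r-1}\partial_t=aA\,t^{2r-2}\1$, as in \eqref{formealpha}.

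This corrected value is exactly what produces the exponent $2g(r-1)$ in your formula for $\alpha^g$. Your stated $c=r-2$ would instead give $\alpha=aA\,t^{2r-3}\1$ and a prefactor $t^{(2r-3)g+(r-1)(n-1)}$, contradicting \eqref{eqnOmegunasg}. So Step 2 is internally inconsistent, and ``up to bookkeeping'' hides this; with the bracket done correctly it goes through.

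Smaller points:
\begin{itemize}
\item Non-semisimplicity at $0$ only means $F''$ vanishes there to order $\geq 1$, not $\geq 2$; for $r=2$, $F''=At$.
\item Your Step 1 argument is already the paper's integrality proof: a nonzero multiple of $t^{r-1}$ that is analytic at $0$ and vanishes there forces $r-1\in\mathbb{Z}_{\geq 1}$. No comparison of conformal weights is needed, so integrality is not an obstacle.
\item For $K=0$, state the reason explicitly. If $E$ vanished at some $t_0\neq 0$, then $F''=A(t-t_0)^{r-1}$ and $\alpha=a(t-t_0)^{r-1}\partial_t$ would both be nonzero at $t=0$, with $A\neq 0$ from semisimplicity and $a\neq 0$ from invertibility at $t\neq 0$. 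The theory at $0$ would then be semisimple and invertible, contradicting the hypotheses. This must come before the integrality argument, since for $t_0\neq 0$ no integrality would be forced.
\item The right-hand side of \eqref{eqnOmegunasg} is a priori only a Laurent polynomial in $t$, so the identity is established for $t\neq 0$. Analyticity of the left-hand side is not an input to this proposition; it is what then gives Corollary~\ref{vanishingcor}.
\end{itemize}
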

For every $r \geq 2$ we will exhibit in Section~\ref{Sec:extrspin} a compact-type F-CohFT  (which is not a F-CohFT) satisfying these assumptions (Lemma~\ref{crstarlema}). Since the formal shift is analytic at $t = 0$, the coefficient of every negative power of $t$ in \eqref{eqnOmegunasg} must vanish and we deduce compact-type vanishing relations.
\begin{corollary}
\label{vanishingcor}
For any integer $r \geq 2$ and $g,n \geq 0$ such that $2g - 1 + n > 0$ we have
\[
(r - 1)(2g - 1 + n) < rm \qquad \Longrightarrow \qquad P_m^{(r)}(\boldsymbol{\kappa}) = 0 \in H^{2m}(\M_{g,1+n}^{\textnormal{ct}}).
\]
\end{corollary}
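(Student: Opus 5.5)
The plan is to apply Proposition~\ref{prop:1FCoh} to one concrete compact-type F-CohFT for each integer $r \geq 2$ and then read off the constraint that analyticity at $t = 0$ imposes on \eqref{eqnOmegunasg}. For the input I will use the rank-one compact-type F-CohFT announced before the corollary and constructed in Section~\ref{Sec:extrspin}, namely the restriction $c^{r,\star}$ of the extended $r$-spin class to its $r^{\textnormal{th}}$ (extended) direction. I will take from Lemma~\ref{crstarlema} that it satisfies the hypotheses of Proposition~\ref{prop:1FCoh}: it is conformal of dimension $\Delta = 2 - \frac{2}{r} \notin \{0,2\}$, it is neither invertible nor semi-simple at $t = 0$, and its formal shift exists near $0$ and is invertible and semi-simple for $t \neq 0$.

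Proposition~\ref{prop:1FCoh} produces an integer $r'$ with $\Delta = 2 - \frac{2}{r'}$. Since $\Delta = 2 - \frac{2}{r}$, this forces $r' = r$, so the polynomials $P_m^{(r)}$ in \eqref{eqnOmegunasg} are the ones in the corollary. The same proposition gives constants $a, A \neq 0$ such that, for $t \neq 0$ small, the formal shift satisfies
\[
\Omega_{g,1+n}(\partial_t^{\otimes n})_{|\textnormal{ct}} = a^{g}A^{g+n-1}\sum_{m \geq 0} (-A)^{-m}\, t^{(r-1)(2g+n-1) - rm}\, P_m^{(r)}(\boldsymbol{\kappa})\,\partial_t .
\]

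Next I project onto the cohomological degree $2m$ summand of $H^{*}(\M_{g,1+n}^{\textnormal{ct}})$. The class $P_m^{(r)}(\boldsymbol{\kappa})$ is homogeneous of degree $2m$, so the projection isolates the single term
\[
a^{g}A^{g+n-1}(-A)^{-m}\, t^{(r-1)(2g-1+n) - rm}\, P_m^{(r)}(\boldsymbol{\kappa})\,\partial_t .
\]
The left-hand side comes from the formal shift \eqref{formalt}, which is defined in the flat coordinate $t$ by pushing forward classes evaluated on $t^{\otimes k}$. Its degree-$2m$ part is therefore a power series in $t$ with no negative powers, convergent near $0$ by assumption. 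Both sides are $\partial_t$ times a class, so I compare their scalar coefficients.

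If $(r-1)(2g-1+n) < rm$, the exponent of $t$ on the right is a negative integer. A nonzero class times $t^{-k}$ with $k > 0$ cannot equal an analytic function of $t$ on a punctured neighbourhood of $0$. Since $a, A \neq 0$, this forces $P_m^{(r)}(\boldsymbol{\kappa}) = 0$ in $H^{2m}(\M_{g,1+n}^{\textnormal{ct}})$, which is the claim.

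The logic of the corollary is thus almost formal once Proposition~\ref{prop:1FCoh} is granted. The real obstacle is supplying a valid input, that is, establishing Lemma~\ref{crstarlema}. That requires three things: checking that the extended $r$-spin class restricted to the extended direction is a compact-type F-CohFT in rank one (its coupling to other directions must vanish on $\M^{\textnormal{ct}}$); verifying its conformality with the right dimension; and showing that its formal shift has product $\propto t^{r-1}$ and $\alpha \propto t^{2(r-1)}$, hence becomes invertible and semi-simple for $t \neq 0$. I would obtain these from the known genus-$0$ and genus-$1$ values of the extended $r$-spin classes together with their homogeneity.
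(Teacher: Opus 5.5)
Your proposal is correct and follows the paper's argument. You apply Proposition~\ref{prop:1FCoh} to $c^{r,\star}$ using Lemma~\ref{crstarlema}, then use analyticity of the formal shift at $t = 0$ to force the coefficients of negative powers of $t$ in \eqref{eqnOmegunasg} to vanish. Your projection onto degree $2m$ simply makes explicit that each $P_m^{(r)}(\boldsymbol{\kappa})$ is isolated at the single power $t^{(r-1)(2g-1+n)-rm}$.
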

Equation \eqref{eqnOmegunasg} also gives a formula for the original compact-type F-CohFT (\textit{i.e.} pick the coefficient of $t^{0}$) in terms of $\kappa$-classes.  This proves Theorem~\ref{thm:vani}.

\begin{proof}[Proof of Proposition~\ref{prop:1FCoh}]
The assumptions turn the formal shift into a family of F-CohFTs over an open neighborhood $M \subseteq \mathbb{C}$ of $0$, with $F'' \neq 0$ and $\alpha \neq 0$ for $t \neq 0$. Then, it must be conformal of dimension $\Delta = 2 - \frac{2}{r}$ with $r \neq 0,1$ and the underlying flat F-manifold is also conformal away from $t = 0$. The vector potential must be given by \eqref{prepro} up to a shift of origin of $t$, but non semi-simplicity at $t = 0$ and semi-simplicity at $t \neq 0$ impose that origins agree and that $A \neq 0$ and $\Delta \neq 0$. The vector potential should also be analytic near $t = 0$, forcing $r$ to be an integer $r \geq 2$. Conformality imposes that $\mathcal{L}_{E}\alpha = (\Delta - 1)\alpha = \frac{r - 2}{r} \alpha$, whose solution is
\begin{equation}
\label{formealpha}
\alpha = a t^{r - 1} \partial_t = aA t^{2r - 2} \1
\end{equation}
for some constant $a$. As we assume $\alpha \neq 0$ for $t \neq 0$ we have $a \neq 0$. Then, the reconstruction Theorem~\ref{thm:recHom} for $t \neq 0$ yields $\Omega_{g,1+n}(\partial_t^{\otimes n})_{|\textnormal{ct}} = \hat{T}(\boldsymbol{\kappa})_{|\textnormal{ct}} \cdot \alpha^{g} \cdot \partial_t^{\cdot n}$ (notice that having R(z) = 1, the edge weight is zero so that only the tree with one vertex contributes to the reconstruction formula). This is the announced formula after we take into account  \eqref{product1}, \eqref{prepro} and \eqref{formealpha}.\end{proof}

\subsection{Construction from the extended \texorpdfstring{$r$}{r}-spin class}
\label{Sec:extrspin}

The extended $r$-spin class $c^{r,\textnormal{ext}}$ was first constructed in genus $0$ in \cite{JKV} and further studied in \cite{BCT}. The definition in all genera was proposed in \cite{buryak_extended_2021} and shown to be a F-CohFT, see especially \cite[Theorem 3.9]{buryak_extended_2021} for its properties. The underlying vector space is
\[
V^{\textnormal{ext}} = V \oplus V',\qquad V = \mathbb{C}^{r - 1},\qquad V' = \mathbb{C}
\]
We denote $(\partial_\mu)_{\mu = 1}^{r - 1}$ the standard basis in $V$, while $\partial_t$ is the basis in $V'$ and $t$ the corresponding coordinate.

\medskip

\noindent \textsc{Homogeneity.} For any $\nu,\mu_1,\ldots,\mu_n \in [r]$ the component of $c^{r,\textnormal{ext}}_{g,1+n}(\partial_{\mu_1} \otimes \cdots \otimes \partial_{\mu_n})$ along $\partial_{\nu}$ has half-cohomological degree
\[
\frac{2g(r - 1) - (\nu - 1) + \sum_{i = 1}^{n} (\mu_i - 1)}{r}.
\]

\medskip

\noindent \textsc{Projection.} Call $\pi_V$ the projection onto the subspace $V$. For $n \geq 1$ and any $w \in (V^{\textnormal{ext}})^{\otimes (n - 1)}$ we have the Ramond vanishing
\[
\pi_V\big[c_{g,1+n}^{r,\textnormal{ext}}(\partial_t \otimes w)\big]= 0.
\]
Besides, for any $n \geq 0$ and $w \in (V^{\textnormal{ext}})^{\otimes n}$ we have
\begin{equation}
\label{projsepc}
\pi_V\big[c^{r,\textnormal{ext}}_{g,1+n}(w)\big] = \sum_{\mu = 1}^{r - 1} \lambda_g c^r_{g,1+n}(\partial_{r - \mu} \otimes w) \partial_\mu,
\end{equation}
where $c^{r}$ is the Witten $r$-spin class \cite{PV01,ChiodoWitten,FSZ}.

\medskip

\noindent \textsc{Compatibility with self-gluing.} On top of the F-CohFT axioms, the extended $r$-spin classes are compatible with the self-gluing morphism $\textnormal{gl} : \Mbar_{g-1,1+n+2} \rightarrow \Mbar_{g,1+n}$ up to a prefactor:
\begin{equation}
\label{compaself}
\forall w \in (V^{\textnormal{ext}})^{\otimes n} \qquad \textnormal{gl}^* c^{r,\textnormal{ext}}_{g,1+n}(w) = -r c^{r,\textnormal{ext}}_{g-1,1+n+2}(w \otimes \partial_t^{\otimes 2}).
\end{equation}

\medskip

\noindent \textsc{Flat F-manifold.} The underlying flat F-manifold is semi-simple away from the origin and homogeneous with
\[
\{\Delta^1,\ldots,\Delta^{r-1},\Delta^r\} = \big\{\tfrac{r - 2}{r},\ldots,\tfrac{r - 2}{r} ,\tfrac{2(r - 1)}{r}\}.
\]
Indeed, if we restrict this flat F-manifold to the $(r-1)$-dimensional subspace $V \subset V^{\textnormal{ext}}$, we obtain the Frobenius manifold for the Witten $r$-spin CohFT, for which the conformal dimension $\Delta=\tfrac{r - 2}{r}=\Delta^i$, $i \in [r-1]$, see e.g. \cite{pandharipande_relations_2015}. If we restrict to the $1$-dimensional subspace $V' \subset V^{\textnormal{ext}}$, we are exactly in the situation described at the end of Section~\ref{badex}.

\begin{definition} Let $c^{r,\star}$ be the restriction/projection of $c^{r,\textnormal{ext}}$ to $V' = \mathbb{C}.\partial_t$.
\end{definition}
\begin{lemma}
\label{crstarlema}
$c^{r,\star}$ is a compact-type F-CohFT on $V'$. Its formal shift exists for any $t \in \mathbb{C}$ and is an invertible, compact-type conformal F-CohFT for $t \neq 0$, with conformal dimension $\Delta$ and vector potential $F(t)\partial_t$ given by
\[
\Delta = 2 - \frac{2}{r},\qquad F(t) = \frac{(-t)^{r + 1}}{(r + 1) r^{r}}.
\]
It fulfills the assumption of Proposition~\ref{prop:1FCoh} with
\[
A = (-r)^{1 - r},\qquad a = (-r)^{2 - r}
\]
\end{lemma}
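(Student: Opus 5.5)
The plan has two parts: first show that $c^{r,\star}$ satisfies the compact-type F-CohFT axioms, then compute its genus $0$ and genus $1$ data and read off $F$, $\Delta$, $A$ and $a$.

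\textbf{Axioms.} Define $c^{r,\star}_{g,1+n}(\partial_t^{\otimes n}) := \pi_{V'}\big[c^{r,\textnormal{ext}}_{g,1+n}(\partial_t^{\otimes n})\big]$. Equivariance is inherited from $c^{r,\textnormal{ext}}$. For compatibility with the gluing morphisms \eqref{glmor}, apply the F-CohFT axiom \eqref{compamor} of $c^{r,\textnormal{ext}}$ with all inputs equal to $\partial_t$. The inner class $c^{r,\textnormal{ext}}_{g_2,1+n_2}(\partial_t^{\otimes n_2})$ splits into its $V'$-part and its $V$-part. By \eqref{projsepc}, the $V$-part is proportional to $\lambda_{g_2}$.
\begin{itemize}
\item If $n_2\ge 1$, the Ramond vanishing kills the $V$-part altogether.
\item If $n_2=0$ (so $g_2\ge1$), the $V$-part is $\lambda_{g_2}\cdot(\ldots)$. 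After gluing, $\lambda_{g_2}$ is the top Chern class of the Hodge bundle of the genus $g_2$ component, and it vanishes in restriction to compact type on the relevant strata. This uses $\lambda_{g}{}_{|\partial\textnormal{ct}}$-type vanishing, i.e.\ $\lambda_g$ restricted to the boundary of $\M^{\textnormal{ct}}_{g_2,1}$ is zero. It may be cleaner to argue through the outer vertex instead: inserting a $V$-vector into $c^{r,\textnormal{ext}}_{g_1,1+n_1+1}$ and projecting to $V'$ gives zero by the structure of $c^{r,\textnormal{ext}}$, since $V$ is a sub-F-CohFT and the $V$-inputs do not produce $V'$-outputs.
\end{itemize}
The second option is preferable because it avoids $\lambda_g$ subtleties. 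So the first step is to check from \cite[Theorem 3.9]{buryak_extended_2021} that $\pi_{V'}c^{r,\textnormal{ext}}(\ldots\otimes\partial_\mu\otimes\ldots)=0$ whenever some $\mu\in[r-1]$ is inserted. If this holds, the projection is an honest F-CohFT, not only a compact-type one. Since the paper says it is only compact-type, the mixed term must survive on $\Mbar$ and die on $\M^{\textnormal{ct}}$ via $\lambda_{g_2}$. I therefore expect to need the fact that $\lambda_{g_2}$ vanishes on the complement of $\M^{\textnormal{ct}}_{g_2,1+n_2}$. I expect this matching of the $\lambda_g$ factor with compact type to be the main obstacle.

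\textbf{Genus $0$ and homogeneity.} In genus $0$ the extended theory is the classical one of \cite{JKV,BCT}, and $c^{r,\star}_{0,1+n}$ is a number times $\partial_t$. The homogeneity formula with all $\mu_i=\nu=r$ gives half-degree $\frac{2g(r-1)+(n-1)(r-1)}{r}$. In genus $0$ this equals $\dim\Mbar_{0,1+n}=n-2$ only for $n=r+1$, so $F(t)=c\,t^{r+1}$. The normalisation $c=\frac{(-1)^{r+1}}{(r+1)r^r}$ comes from the known value of the $(r+1)$-point extended correlator $\int c^{r,\textnormal{ext}}_{0,r+2}$ with all inputs $\partial_t$, which I would quote from \cite{BCT,buryak_extended_2021}. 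Comparing with \eqref{prepro} then gives $A=r\cdot r(r+1)c/(r+1)\ldots$, which simplifies to $(-r)^{1-r}$.

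\textbf{Conformality and $\alpha$.} Homogeneity of the whole theory becomes \eqref{confOmega} with $K=0$, $L=r^{-1}$ and $\Delta=2-\frac2r$, i.e.\ $E=r^{-1}t\partial_t$. For $\alpha$ I would use self-gluing \eqref{compaself}. Restricting $c^{r,\textnormal{ext}}_{1,1}$ to the irreducible boundary point gives $-r\,c^{r,\star}_{0,1+2}(\partial_t^{\otimes 2})$ in degree $0$. The degree $0$ part of the formal shift in genus $1$ is then $\alpha(t)=-r F''(t)=-rA t^{r-1}$, and comparing with \eqref{formealpha} gives $a=-rA=(-r)^{2-r}$.

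\textbf{Conclusion.} For $t\ne0$ both $F''$ and $\alpha$ are nonzero, which gives invertibility and semi-simplicity. Convergence of the formal shift holds because, by homogeneity, only finitely many insertions contribute in each degree.
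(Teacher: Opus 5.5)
Your genus $0$ and genus $1$ computations are fine, but you have not established the first claim of the lemma, namely that $c^{r,\star}$ is a compact-type F-CohFT.

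\textbf{The route you prefer is false.} The components $\pi_{V'}c^{r,\textnormal{ext}}(\cdots\otimes\partial_\mu\otimes\cdots)$ with $\mu\in[r-1]$ do not vanish, and $V$ is not a sub-F-CohFT of $c^{r,\textnormal{ext}}$. For example, $\partial_1$ is the flat unit of the extended theory, so $\pi_{V'}c^{r,\textnormal{ext}}_{0,1+2}(\partial_t\otimes\partial_1)=\partial_t\neq 0$. This non-vanishing is what prevents $c^{r,\star}$ from being an F-CohFT on $\Mbar$.

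\textbf{The fact you invoke at the inner vertex is the wrong one.} For $n_2=0$, after projecting onto $V'$ the mixed term is $\lambda_{g_2}\,c^r_{g_2,1}(\partial_{r-\mu})\otimes\pi_{V'}c^{r,\textnormal{ext}}_{g_1,1+n_1+1}(\partial_t^{\otimes n_1}\otimes\partial_\mu)$. Since $\textnormal{gl}^{-1}(\M^{\textnormal{ct}}_{g,1+n_1})=\M^{\textnormal{ct}}_{g_1,1+n_1+1}\times\M^{\textnormal{ct}}_{g_2,1}$, you need $\lambda_{g_2}$ to vanish on all of $\M^{\textnormal{ct}}_{g_2,1}$, including the open part $\M_{g_2,1}$. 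Vanishing on the boundary of $\M^{\textnormal{ct}}_{g_2,1}$ is not enough, and neither is vanishing on its complement $\delta_{\textnormal{irr}}$ (true, but irrelevant). Either way the mixed term survives on the open stratum $\M_{g_1,1+n_1+1}\times\M_{g_2,1}$.

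\textbf{The missing input.} The Hodge bundle on $\M^{\textnormal{ct}}_{g,1+n}$ is pulled back from $\mathcal{A}_g$ by the Torelli map, and $\lambda_g=0$ in the cohomology of $\mathcal{A}_g$ by \cite{vdG}. Hence $\lambda_g{}_{|\textnormal{ct}}=0$, and the mixed term dies after restriction to compact type. This is exactly the paper's argument.

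\textbf{The other steps.} Your finiteness argument is correct: after pushforward only insertions with $m\le(2g+n-1)(r-1)$ survive, so the shift is polynomial in $t$. This justifies existence of the formal shift for all $t$, which the paper only asserts. Your value $A=r(r+1)c=(-r)^{1-r}$ is also correct, despite the garbled intermediate expression.

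\textbf{Comparison of routes for $a$.} The paper pairs $(f_{r-1})_*c^{r,\star}_{1,r}(\partial_t^{\otimes(r-1)})$ with $[\delta_{\textnormal{irr}}]$ and uses the projection formula. You instead apply \eqref{compaself} directly to the formal shift in degree $0$, which gives $\alpha(t)=-rF''(t)\partial_t$ and hence $a=-rA$. This is valid and shorter. You should state why the shifted theory still satisfies \eqref{compaself}:
\begin{itemize}
\item forgetful maps commute with self-gluing by base change, $\textnormal{gl}^*(f_m)_*=(f_m)_*\textnormal{gl}^*$;
\item the shift point $t\partial_t$ lies in $V'$, so projecting onto $V'$ commutes with shifting.
\end{itemize}
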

\begin{proof}
Comparing to \eqref{projsepc}, the definition means
\[
c_{g,1+n}^{r,\textnormal{ext}}(\partial_t^{\otimes n}) = \sum_{\mu = 1}^{r - 1} \lambda_g c^{r}_{g,1+n}(\partial_{r-\mu} \otimes \partial_t^{\otimes n}) \partial_{\mu} + c^{r,\star}_{g,1+n}(\partial_t^{\otimes n}).
\]
Due to Ramond vanishing, for $n \geq 1$ we simply have $c_{g,1+n}^{r,\textnormal{ext}}(\partial_t^{\otimes n}) = c_{g,1+n}^{r,\star}(\partial_t^{\otimes n})$, but for $n = 0$ the first term is not always zero, preventing $c^{r,\star}$ to be a F-CohFT. Yet, the vanishing\footnote{The Hodge bundle on $\M_{g,1+n}{}_{|\textnormal{ct}}$ is a pullback from the moduli space of principally polarised abelian varieties $\mathcal{A}_g$, and $\lambda_g$ vanishes there due to \cite[1.2]{vdG}.} of $\lambda_g$ on $\M^{\textnormal{ct}}$ makes $c^{r,\star}$ a compact-type F-CohFT. The homogeneity properties of $c^{r,\textnormal{ext}}$ show that $c^{r,\star}$ is conformal with
\[
K = 0,\qquad L = r^{-1} \textnormal{Id}_{V'},\qquad \Delta = \frac{2(r - 1)}{r}.
\]
The vector potential for $c^{r,\star}$ can be computed from the values of $X_{\alpha}$ given in  \cite[Proof of Theorem 4.6]{BCT}. We see that it corresponds to a conformal unital flat F-manifold away from $t = 0$ of the given conformal dimension. Since the formal shift of $c^{r,\star}$ remains conformal of the same dimension, it must have --- \textit{cf.} \eqref{formealpha}  --- $\alpha = a t^{r - 1}\partial_t$ for some constant $a$, which we now compute. This $a$ is the coefficient of $t^{r - 1}$ of the cohomological degree-zero part of the formal shift of $c^{r,\star}$ for $(g,1+n) = (1,1)$, that is
\begin{equation}
\label{fromtehts}
(r - 1)!a \partial_t = \Big[(f_{r - 1})_*\big(c^{r,\star}_{1,r}(\partial_t^{\otimes (r - 1)})\big)\big]^{\textnormal{deg}\,0},
\end{equation}
where $f_{r - 1} : \Mbar_{1,r} \rightarrow \Mbar_{1,1}$ is the forgetful morphism. As $\Mbar_{1,1}$ is one-dimensional, it is enough to understand how it pairs with the class $[\delta_{\textnormal{irr}}]$ of the (irreducible) boundary divisor, which has top-degree $2$. The pullback of $[\delta_{\textnormal{irr}}]$ via $f_{r - 1}$ is again the class of the irreducible boundary divisor $[\delta_{\textnormal{irr}}] \in H^{2r - 2}(\Mbar_{1,r})$. Thus 
\[
[\delta_{\textnormal{irr}}] \cup (f_{r - 1})_*\big(c^{r,\star}_{1,r}(\partial_t^{\otimes (r - 1)})\big)  = (f_{r - 1})_*\big(c^{r,\star}_{1,r}(\partial_t^{\otimes (r - 1)}) \cup [\delta_{\textnormal{irr}}]\big).
\]
By compatibility with the self-gluing morphism \eqref{compaself}, we have
\begin{equation*}
\int_{\Mbar_{1,r}} c^{r,\star}_{1,r}(\partial_t^{\otimes (r - 1)}) \cup [\delta_{\textnormal{irr}}] = \frac{1}{2} \int_{\Mbar_{0,r+2}} \!\!\!\!\!\textnormal{gl}^* c^{r,\star}_{1,r}(\partial_t^{\otimes(r - 1)}) = - \frac{r}{2} \int_{\Mbar_{0,r+2}}  \!\!\!\!\!c^{r,\star}_{0,r+2}(\partial_t^{\otimes (r + 1)}) = - \frac{r! A}{2} \partial_t.
\end{equation*}
By \eqref{fromtehts} this is equal to $(r - 1)!a \int_{\Mbar_{1,1}} [\delta_{\textnormal{irr}}] = \frac{(r - 1)!a}{2}$. Thus $a = - rA = (-r)^{2 - r}$.  In particular $a \neq 0$ and the formal shift of $c^{r,\textnormal{ext}}$ is invertible for $t \neq 0$.
\end{proof}

  \appendix
  
  \section{Metric for invertible semi-simple F-CohFTs}
 
 \label{appA}
 
For invertible semi-simple F-CohFTs, the metric $\eta$ of \eqref{metriceta} together with Proposition can be interpreted as coming from a Frobenius algebra structure on $TM$, although it does not in general corresponds to a Frobenius manifold because $\eta$ may not be flat. For Frobenius manifolds and CohFTs, $\eta$ is flat but this does not play a role in the proof of the reconstruction theorem of \cite{teleman_structure_2012}.

  \begin{lemma}
\label{comparmet}
Let $\Omega$ be an invertible semi-simple F-CohFT on $V$ with associated F-TFT $\omega$ and $\alpha := \omega_{1,1} = \sum_{i \in I} \alpha^i \partial_i$. Keep the same notations (with implicit $t$-dependence) for its formal shift, and define the metric
\[
\eta =\sum_{i=1}^N \frac{(\dd u^i)^{\otimes 2}}{\alpha^i}.
\]
Then, $(V,\cdot,\eta)$ is a $t$-dependent Frobenius algebra. Calling $\omega^{\eta}$ the associated TFT, we have $\omega^{\eta}_{g,1+n} = \# \circ \omega_{g,1+n}$ for any $g,n \geq 0$ such that $2g - 1 + n > 0$, where $\# : V \rightarrow V^*$ is the isomorphism specified by $\eta$.
\end{lemma}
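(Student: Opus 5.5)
The plan is to work pointwise in $t$ and in the canonical basis $(\partial_i)_{i=1}^N$, where the product is $\partial_i \cdot \partial_j = \delta_{ij}\partial_i$. Invertibility of $\alpha$ gives $\alpha^i \neq 0$ for every $i$, so $\eta$ is a nondegenerate symmetric bilinear form. Its matrix in the canonical basis is diagonal, $\eta(\partial_i,\partial_j) = \delta_{ij}/\alpha^i$.

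The Frobenius property $\eta(a\cdot b,c) = \eta(a,b\cdot c)$ is trilinear, so it suffices to check it on basis vectors. Both sides equal $\delta_{ijk}/\alpha^i$, exactly as in the footnote following \eqref{metriceta}. Hence $(V,\cdot,\eta)$ is a Frobenius algebra with unit $\1 = \sum_i \partial_i$ for each $t$. By Lemma~\ref{lem:alphaHR1}, $H^i\sqrt{\alpha^i}$ is constant, so $\eta$ agrees with the metric \eqref{metriceta} up to constant rescaling of each $H^i$, which gives the interpretation announced in the appendix.

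Next I compute the TFT $\omega^\eta$ of this Frobenius algebra. The standard formula is $\omega^\eta_{g,m}(v_1\otimes\cdots\otimes v_m) = \varepsilon(\mathcal{E}^{g}\cdot v_1\cdots v_m)$. Here $\varepsilon(v) = \eta(\1,v)$ is the counit, and $\mathcal{E} = \sum_{a} e_a\cdot e^a$ is the Euler (handle) element built from dual bases for $\eta$. Take $e_a = \partial_i$ with $\eta$-dual $e^a = \alpha^i\partial_i$; this gives $\mathcal{E} = \sum_i \alpha^i \partial_i = \alpha$, and this identification is the key point. The counit is $\varepsilon(\partial_i) = 1/\alpha^i$.

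Viewing $\omega^\eta_{g,1+n}$ as a map $V^{\otimes n}\to V^*$ by freeing the first slot, I get $\omega^\eta_{g,1+n}(v_1\otimes\cdots\otimes v_n)(v_0) = \eta(1, \alpha^g\cdot v_0\cdot v_1\cdots v_n)$. The Frobenius property rewrites this as $\eta(v_0,\alpha^g\cdot v_1\cdots v_n)$. By Lemma~\ref{lem:FTFT} this equals $\eta(v_0,\omega_{g,1+n}(v_1\otimes\cdots\otimes v_n))$, i.e.\ $\#\circ\omega_{g,1+n}$ evaluated on $v_0$, which is the claim.

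The main obstacle is convention rather than computation: one must fix which TFT normalisation is meant. The handle element should be $\sum_a e_a e^a$ with respect to $\eta$, and the $(1+n)$-point function is dualised on the first (root) slot. With that fixed, the identity $\mathcal{E} = \alpha$ forces the choice of $\eta$ with entries $1/\alpha^i$, and everything else is a diagonal check. I would also note that $\#$ depends on $t$ but is nondegenerate throughout, since $\alpha$ stays invertible on the neighbourhood $M$.
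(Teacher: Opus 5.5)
Your proof is correct and follows the paper's argument. You check the Frobenius property on the diagonal canonical basis, then compare the TFT of $(V,\cdot,\eta)$ with Lemma~\ref{lem:FTFT} through $\#$. The only difference is presentation: the paper evaluates $\omega^{\eta}_{g,n}$ componentwise as $\sum_i (\alpha^i)^{g-1}\delta_{i,i_1,\ldots,i_n}$ and uses $\#(\partial_i) = (\alpha^i)^{-1}\dd u^i$, whereas you package the same computation invariantly via the handle element $\mathcal{E} = \alpha$ and the identity $\eta(\1,\alpha^g\cdot v_0\cdots v_n) = \eta(v_0,\alpha^g\cdot v_1\cdots v_n)$.
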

\begin{proof}
The metric is automatically compatible with the product since it is diagonal: for any $i,j,k \in [N]$ we have $\eta(\partial_i \cdot \partial_j,\partial_k) = \delta_{i,j,k} \alpha_i^{-1} = \eta(\partial_i,\partial_j \cdot \partial_k)$. Hence $(V,\cdot,\eta)$ is a Frobenius algebra. Let $\omega^{\eta}$ be the associated TFT. A standard computation yields
\[
\forall i_1,\ldots,i_n \in [N]\qquad \omega_{g,n}(\partial_{i_1} \otimes \cdots \otimes \partial_{i_n}) = \sum_{i = 1}^{N} (\alpha^i)^{g - 1} \delta_{i,i_1,\ldots,i_n}
\]
We can compare the result to Lemma~\ref{lem:FTFT} in the canonical basis
\[
\forall i_1,\ldots,i_n \in [N] \qquad \omega_{g,1+n}(\partial_{i_1} \otimes \cdots \otimes \partial_{i_n}) = \sum_{i = 1}^{N} (\alpha^i)^{g} \delta_{i,i_1,\ldots,i_n} \partial_i.
\]
Since $\sharp(\partial_i) := \eta(\partial_i \otimes -) = \alpha_i^{-1} \dd u^i$ for any $i \in [N]$, we indeed have $\sharp \circ \omega_{g,1+n} = \omega_{g,1+n}^{\eta}$.
\end{proof}

\newpage
\bibliographystyle{amsalpha}
\bibliography{BibliF}

@article{SM,
author = {Khoroshkin, Anton and Markarian, Nikita and Shadrin, Sergey},
title = {Hypercommutative operad as a homotopy quotient of {BV}},
journal = {Commun. Math. Phys.},
volume = {322},
pages = {697--729},
year = {2013},
note = {\href{https://arxiv.org/abs/1206.3749}{math.QA/1206.3749}}}

@article{lu_GW_for_non_compact_targets_2006,
author = {Lu, Guangcun},
title = {Virtual moduli cycles and {G}romov--{W}itten Invariants of noncompact symplectic manifolds},
journal = {Commun. Math. Phys.},
volume = {43},
number = {131},
year = {2006},
pages = {43--131},
note = {\href{https://arxiv.org/abs/math/0306255}{math.DG/0306255}}}

@article{buryak_xu_yang_2026,
	title = {Bihamiltonian tests for integrable systems associated to rank-$1$ {F}-{C}oh{F}{T}s},
	author = {Buryak, Alexandr Y. and Xu, Jianghao and Yang, Di},
	year = {2026},
	note = {\href{https://arxiv.org/abs/2601.13203}{nlin/2601.13203}}}

@article{buryak_rossi_classification_2025,
	title = {Deformations of the {R}iemann hierarchy and the geometry of {$\overline{\mathcal{M}}_{g,n}$}},
	author = {Buryak, Alexandr Y. and Rossi, Paolo},
	year = {2025},
	note = {\href{https://arxiv.org/abs/2504.02079}{math.ph/2504.02079}}}

@inbook{vdG,
author = {van der Geer, Gerard},
booktitle = {Moduli of curves and abelian varieties},
title = {Cycles on the moduli space of abelian varieties},
series = {Aspects of Mathematics},
volume = {33},
publisher = {Vieweg+Teubner Verlag},
address = {Braunschweig},
year = {1999},
pages = {65--89},
note = {\href{https://arxiv.org/pdf/alg-geom/9605011}{math.AG/9605011}}}

@article{Solomonopen,
author = {Solomon, Jake P. and Tukachinsky, Sara B.},
title = {Relative quantum cohomology},
journal = {J. Eur. Math. Soc.},
volume = {26},
number = {9},
year = {2024},
pages = {3497--3573},
note = {\href{https://arxiv.org/abs/1906.04795}{math.SG/1906.04795}}}

@article{Zongopen,
author = {Yu, Song and Zong, Zhengyu},
title = {Open {WDVV} equations and {F}robenius structures for toric
{C}alabi--{Y}au {$3$}-folds},
journal = {Forum Math. Sigma},
year = {2025},
volume = {13},
number = {e76},
pages = {1--29},
note = {\href{https://arxiv.org/abs/2312.06160}{math.AG/2312.06160}}}

@article{BCT,
author = {Buryak, Alexandr Y. and Clader, Emily and Tessler, Ran J.},
title = {Closed extended {$r$}-spin theory and the {G}elfand--{D}ickey wave function},
journal = {J. Geom. Phys.},
volume = {137},
year = {2019},
pages = {132--153},
note = {\href{https://arxiv.org/abs/1710.04829}{math.AG/1710.04829}}}

@inproceedings{JKV,
author = {Jarvis, Tyler J. and Kimura, Takashi and Vaintrob, Arkady},
title = {Gravitational descendants and the moduli space of higher spin curves},
booktitle = {Advanced in algebraic geometry motivated by physics (Lowell, MA, 2000)},
address = {Providence, RI},
year = {2001},
series = {Contemp. Math.},
publisher = {Amer. Math. Soc.},
note = {\href{https://arxiv.org/abs/math/0009066}{math.AG/0009066}}}

@article{MW,
author = {Madsen, Ib and Weiss, Michael},
title = {The stable moduli space of {R}iemann surfaces: {M}umford's conjecture},
journal = {Ann. Math.},
volume = {165},
pages = {843--941},
year = {2007},
note = {\href{https://arxiv.org/abs/math/0212321}{math.AT/0212321}}}

@article{borot_symmetries_2024,
	title = {Symmetries of {F}-cohomological field theories and {F}-topological recursion},
	author = {Borot, Gaëtan and Giacchetto, Alessandro and Umer, Giacomo},
	journal = {Commun. Math. Phys.},
	volume = {406},
	number = {248},
	year = {2025},
	note = {\href{http://arxiv.org/abs/2406.06304}{math-ph/2406.06304}}}

@article{abrams_two-dimensional_1996,
	title = {Two-dimensional topological quantum field theories and {F}robenius algebras},
	volume = {05},
	number = {05},
	journal = {J. Knot Theory Ramifications},
	author = {Abrams, Lowell},
	year = {1996},
	pages = {569--587}}

@article{BCFG,
author = {Liu, Si-Qi and Ruan, Yongbin and Zhang, Youjin},
title = {{BCFG} {D}rinfeld--{S}okolov hierarchies and {FJRW}-theory},
journal = {Invent. math.},
volume = {201},
pages = {711--772},
year = {2015},
note = {\href{https://arxiv.org/abs/1312.7227}{math.AG/1312.7227}}}

@article{buryak_new_2017,
	title = {New approaches to integrable hierarchies of topological type},
	volume = {72},
	number = {5},
	journal = {Russ. Math. Surv.},
	author = {Buryak, Alexandr Y.},
	year = {2017},
	pages = {841}}

@article{FSZ,
author = {Faber, Carel and Shadrin, Sergey and Zvonkine, Dimitri},
title = {Tautological relations and the {$r$}-spin {W}itten conjecture},
journal = {Ann. Sci. {\'{E}}c. {N}orm. {S}up.},
volume = {43},
number = {4},
pages = {621--665},
year = {2010},
note = {\href{https://arxiv.org/abs/math/0612510}{math.AG/0612510}}}

@incollection{Wahl,
author = {Wahl, Nathalie},
title = {Homological stability for mapping class groups of surfaces},
booktitle = {Handbook of moduli, Vol. III},
series = {Advanced Lectures in Mathematics},
volume = {26},
year = {2012},
pages = {547--583},
publisher = {International Press},
editor = {Farkas, Gavril and Morrison, Ian},
note = {\href{https://arxiv.org/abs/1006.4476}{math.GT/1006.4476}}}

@incollection{laptev_stable_2005,
	address = {Zürich, Switzerland},
	title = {The stable mapping class group and stable homotopy theory},
	booktitle = {European {Congress} of {Mathematics} {Stockholm}, {June} 27 -- {July} 2, 2004},
	publisher = {Eur. Math. Soc.},
	author = {Andersen, Jørgen Ellegaard and Weiss, Michael},
	editor = {Laptev, Ari},
	year = {2005},
	pages = {283--308}}

@article{MVpaper,
author = {Andersen, Jørgen Ellegaard and Borot, Ga\"etan and Charbonnier, S\'everin and Delecroix, Vincent and Giacchetto, Alessandro and Lewa\'nski, Danilo and Wheeler, Campbell},
title = {Topological recursion for {M}asur--{V}eech volumes},
journal = {J. Lond. Math. Soc.},
volume = {107},
number = {2},
pages = {254--332},
year = {2023},
note = {\href{https://arxiv.org/abs/1905.10352}{math.GT/1905.10352}}}

@article{chiarello_telemans_2016,
	title = {Teleman's classification of {2D} semisimple cohomological field theories},
	author = {Chiarello, Simone M.},
	year = {2016},
	note = {\href{http://arxiv.org/abs/1610.04368}{math.AG/1610.04368}}}

@article{buryak_extended_2021,
	title = {Extended {$r$}-spin theory in all genera and the discrete {KdV} hierarchy},
	volume = {386},
	journal = {Adv. Math.},
	author = {Buryak, Alexandr Y. and Rossi, Paolo},
	year = {2021},
	number = {6},
	pages = {107794},
	note = {\href{https://arxiv.org/abs/1806.09825}{math.AG/1806.09825}}}

@article{arsie_semisimple_2023,
	title = {Semisimple flat {F}-manifolds in higher genus},
	volume = {397},
	number = {1},
	journal = {Commun. Math. Phys.},
	note = {\href{https://arxiv.org/abs/2001.05599}{math.AG/2001.05599}},
	author = {Arsie, Alessandro and Buryak, Alexandr Y. and Lorenzoni, Paolo and Rossi, Paolo},
	year = {2023},
	pages = {141--197}}

@book{Eynardbook,
author = {Eynard, Bertrand},
title = {Counting surfaces},
publisher = {Birkh\"auser},
year = {2016},
series = {Progress in Mathematics}}

@article{AC94,
author = {Arbarello, Enrico and Cornalba, Maurizio},
title = {Combinatorial and algebro-geometric cohomology classes on the moduli spaces of curves},
journal = {J. Alg. Geom.},
volume = {5},
year = {1996},
number = {4},
pages = {705--749},
note = {\href{https://arxiv.org/abs/alg-geom/9406008}{math.AG/9406008}}}

@article{Lore,
author = {Lorenzoni, Paolo},
title = {Darboux--{E}gorov system, bi-flat {F}-manifolds and {P}ainlevé {VI}},
journal = {Int. Math. Res. Not.},
volume = {2014},
year = {2014},
number = {12},
pages = {3279--3302},
note = {\href{https://arxiv.org/abs/1207.5979}{math-ph/1207.5979}}}

@article{ABLR2,
author = {Arsie, Alessandro and Buryak, Alexandr Y. and Lorenzoni, Paolo and Rossi, Paolo},
title = {Riemannian {F}-manifolds, bi-flat {F}-manifolds, and flat pencils of metrics},
journal = {Int. Math. Res. Not.},
year = {2022},
volume = {2022},
number = {21},
pages = {16730--16778},
note = {\href{https://arxiv.org/abs/2104.09380}{math.DG/2104.09380}}}

@article{Basa,
author = {Basalaev, Alexey and Buryak, Alexandr},
title = {Open {WDVV} equations and {V}irasoro constraints},
journal = {Arnold Math. J.},
volume = {5},
pages = {145--186},
year = {2019},
note = {\href{https://arxiv.org/abs/1901.10393}{math-ph/1901.10393}}}

@article{ABLRint,
author = {Arsie, Alessandro and Buryak, Alexandr Y. and Lorenzoni, Paolo and Rossi, Paolo},
title = {Flat {F}-manifolds, {F}-{C}oh{FT}s, and integrable hierarchies},
journal = {Commun. Math. Phys.},
year = {2021},
volume = {388},
pages = {291--328},
note = {\href{https://arxiv.org/abs/2012.05332}{math-ph/2012.05332}}}

@article{hertling_weak_1999,
	title = {Weak {F}robenius manifolds},
	volume = {1999},
	number = {6},
	journal = {Int. Math. Res. Not.},
	author = {Hertling, Claus and Manin, Yuri I.},
	year = {1999},
	note = {\href{https://arxiv.org/abs/math/9810132}{math.OA/9810132}},
	pages = {277--286}}

@article{Lee1,
author = {Lee, Yuan-Pin},
title = {Invariance of tautological equations {I}: conjectures and applications},
journal = {J. Eur. Math. Soc.},
volume = {10},
number = {2},
pages = {399--413},
year = {2008},
note = {\href{https://arxiv.org/abs/math/0604318}{math.AG/0604318}}}

@article{Lee2,
author = {Lee, Yuan-Pin},
title = {Invariance of tautological equations {II}: {G}romov--{W}itten theory},
journal = {J. Amer. Math. Soc.},
volume = {22},
number = {2},
year = {2009},
pages = {331--352},
note = {\href{https://arxiv.org/abs/math/0605708}{math.AG/0605708}}}

@article{manin_f_2005,
	title = {{F}-manifolds with flat structure and {D}ubrovin's duality},
	volume = {198},
	number = {1},
	journal = {Adv. Math.},
	author = {Manin, Yuri I.},
	year = {2005},
	note = {\href{https://arxiv.org/abs/math/0402451}{math.DG/0402451}},
	pages = {5--26}}

@book{bott_differential_2010,
	address = {New York, NY},
	edition = {Reprint of 1st edition 1982},
	series = {Graduate texts in {Mathematics}},
	title = {Differential {forms} in {algebraic} {topology}},
	number = {82},
	publisher = {Springer},
	author = {Bott, Raoul and Tu, Loring W.},
	year = {2010}}

@article{harer_stability_1985,
	title = {Stability of the {homology} of the {mapping} {class} {groups} of {orientable} {surfaces}},
	volume = {121},
	number = {2},
	journal = {Ann. Math.},
	author = {Harer, John L.},
	year = {1985},
	pages = {215}}

@book{ivanov_subgroups_1992,
	address = {Providence, R.I},
	title = {Subgroups of {Teichmüller} modular groups},
	publisher = {Amer. Math. Soc.},
	author = {Ivanov, Nikolai V.},
	volume = {115},
	year = {1992}}

@article{arsie_darbouxegorov_2013,
	title = {From the {Darboux}--{Egorov} system to bi-flat {F}-manifolds},
	volume = {70},
	journal = {J. Geom. Phys.},
	author = {Arsie, Alessandro and Lorenzoni, Paolo},
	year = {2013},
	pages = {98--116},
	note = {\href{https://arxiv.org/abs/1205.2468}{math-ph/1205.2468}}}

@article{kontsevich_gromov-witten_1994,
	title = {Gromov--{Witten} classes, quantum cohomology, and enumerative geometry},
	volume = {164},
	number = {3},
	journal = {Commun. Math. Phys.},
	author = {Kontsevich, Maxim and Manin, Yuri I.},
	year = {1994},
	note = {\href{https://arxiv.org/abs/hep-th/9402147}{hep-th/9402147}},
	pages = {525--562}}

@incollection{dubrovin_geometry_1996,
	address = {Berlin, Heidelberg},
	title = {Geometry of {2D} topological field theories},
	booktitle = {Integrable {systems} and {quantum} {groups}: {Lectures} given at the 1st {session} of the {Centro} {Internazionale} {Matematico} {Estivo} ({C}.{I}.{M}.{E}.) held in {Montecatini} {Terme}, {Italy}, {June} 14–22, 1993},
	publisher = {Springer},
	author = {Dubrovin, Boris},
	editor = {Donagi, Ron and Dubrovin, Boris and Frenkel, Edward and Previato, Emma and Francaviglia, Mauro and Greco, Silvio},
	year = {1996},
	pages = {120--348},
	note = {\href{https://arxiv.org/abs/hep-th/9407018}{hep-th/9407018}}}

@article{pandharipande_relations_2015,
	title = {Relations on {$\overline{\mathcal{M}}_{g,n}$} via {$3$}-spin structures},
	volume = {28},
	number = {1},
	author = {Pandharipande, Rahul and Pixton, Aaron and Zvonkine, Dimitri},
	journal = {J. Amer. Math. Soc.},
	year = {2015},
	note = {\href{https://arxiv.org/abs/1303.1043}{math.AG/1303.1043}},
	pages = {279--309}}

@article{givental_semisimple_2001,
	title = {Semisimple {Frobenius} structures at higher genus},
	volume = {2001},
	number = {23},
	journal = {Int. Math. Res. Not.},
	author = {Givental, Alexander},
	year = {2001},
	note = {\href{https://arxiv.org/abs/math/0008067}{math.AG/0008067}},
	pages = {1265--1286}}

@article{looijenga_stable_1994,
	title = {Stable cohomology of the mapping class group with symplectic coefficients and of the universal {Abel}--{Jacobi} map},
	volume = {5},
	journal = {J. Alg. Geom.},
	author = {Looijenga, Eduard},
	year = {1994},
	note = {\href{https://arxiv.org/abs/alg-geom/9401005}{math.AG/9401005}}}

@incollection{getzler_jet-space_2004,
	address = {Wiesbaden},
	title = {The jet-space of a {Frobenius} manifold and higher-genus {Gromov}--{Witten} invariants},
	booktitle = {Frobenius {manifolds}: {quantum} {cohomology} and {singularities}},
	publisher = {Vieweg+Teubner Verlag},
	author = {Getzler, Ezra},
	editor = {Hertling, Claus and Marcolli, Matilde},
	year = {2004},
	note = {\href{https://arxiv.org/abs/math/0211338}{math.AG/0211338}},
	pages = {45--89}}

@article{teleman_structure_2012,
	title = {The structure of {2D} semi-simple field theories},
	volume = {188},
	number = {3},
	journal = {Invent. math.},
	author = {Teleman, Constantin},
	year = {2012},
	pages = {525--588},
	note = {\href{https://arxiv.org/abs/0712.0160}{math.AT/0712.0160}}}

@article{LPR09,
author = {Lorenzoni, Paolo and Pedroni, Marco and Raimondo, Andrea},
title = {{F}-manifolds and integrable systems of hydrodynamic type},
journal = {Arch. Math.},
volume = {47},
year = {2011},
number = {3},
pages = {163--180},
note = {\href{https://arxiv.org/abs/0905.4054}{math.DG/0905.4054}}}

@article{Pixtonthesis,
title = {The tautological ring of the moduli space of curves},
author = {Pixton, Aaron},
note = {PhD Thesis, Princeton University, 2013},
year = {2013}}

@article{DRJanda,
author = {Janda, Felix and Pandharipande, Rahul and Pixton, Aaron and Zvonkine, Dimitri},
title = {Double ramification cycles on the moduli spaces of curves},
journal = {Publ. Math. Inst. Hautes \'Etudes Sci.},
volume = {125},
year = {2017},
pages = {221--266},
note = {\href{https://arxiv.org/abs/1602.04705}{math.AG/1602.04705}}}

@article{BuryakDR,
author = {Buryak, Alexandr Y.},
title = {Double ramification cycles and integrable hierarchies},
journal = {Commun. Math. Phys.},
volume = {336},
year = {2015},
number = {3},
pages = {1085--1107},
note = {\href{https://arxiv.org/abs/1403.1719}{math-ph/1403.1719}}}

@inproceedings{PV01,
author = {Polishchuk, Alexander and Vaintrob, Arkady},
title = {Algebraic construction of {W}itten's top {C}hern class},
booktitle = {Advances in algebraic geometry motivated by physics (Lowell, MA, 2000)},
volume = {276},
year = {2001},
pages = {229--249},
series = {Contemp. Math.},
publisher = {AMS},
address = {Providence},
note = {\href{https://arxiv.org/abs/math/0011032}{math.AG/0011032}}}

@article{ChiodoWitten,
author = {Chiodo, Alessandro},
title = {The {W}itten top {C}hern class via {K}-theory},
journal = {J. Alg. Geom.},
volume = {15},
number = {4},
year = {2006},
pages = {681--707},
note = {\href{https://arxiv.org/abs/math/0210398}{math.AG/0210398}}}
\end{document}